\documentclass[10pt]{amsart}
\usepackage{amsfonts,url}
\usepackage{todonotes}
\usepackage[margin=20mm]{geometry}
\usepackage{amssymb, amsmath, amsthm, amscd, accents, mathtools}
\usepackage[T1]{fontenc}
\usepackage{hyperref}
\usepackage{enumitem}
\linespread{1.25}
\allowdisplaybreaks

\makeatletter

\renewcommand*{\descriptionlabel}[1]{%
  \let\orglabel\label
  \let\label\@gobble
  \phantomsection
  \edef\@currentlabel{#1\unskip}%
  \let\label\orglabel
  \hspace\labelsep \upshape\bfseries #1 %
}
\makeatother

\setcounter{tocdepth}{2}

\let\oldtocsection=\tocsection

\let\oldtocsubsection=\tocsubsection

\renewcommand{\tocsection}[2]{\hspace{0em}\oldtocsection{#1}{#2}}
\renewcommand{\tocsubsection}[2]{\hspace{1em}\oldtocsubsection{#1}{#2}}

 \newcommand*{\cm}{}
 \newcommand*{\cb}{}

\newcommand*{\erj}{}
\newcommand*\ar{}
\newcommand{\arr}{}

\newtheorem{lemma}{Lemma}
\newtheorem{theorem}[lemma]{Theorem}
\newtheorem{proposition}[lemma]{Proposition}
\newtheorem{corollary}[lemma]{Corollary}

\theoremstyle{definition}
\newtheorem{definition}[lemma]{Definition}
\newtheorem{remark}[lemma]{Remark}
\newtheorem{example}[lemma]{Example}
\numberwithin{lemma}{section}
\numberwithin{equation}{section}

\DeclareMathOperator{\Div}{div}

\DeclareMathOperator*{\esssup}{ess\, sup}
\newcommand*\Lip{{\rm Lip}_{1,1}(\mR^d)}

\newcommand*\mR{\mathbb{R}}
\newcommand*\mL{\mathcal{L}}
\newcommand*\mP{\mathcal{P}(\mR^d)}
\newcommand{\dm}[1]{\frac{\delta{#1}}{\delta m}}
\newcommand*\wtm{\widetilde{m}}
\newcommand*\wtmu{\widetilde{\mu}}
\newcommand*\eps{\varepsilon}
\newcommand*\alow{\underline{\alpha}}

\subjclass[2020]{35Q89, 35S10, 35A01, 35A08, 35K08, 49L12, 45K05, 35K61, 46E15, 46G05}

\keywords{Mean field games, master equation, nonlocal operators, parabolic equations, forward-backward systems}
\title[Master equation with nonlocal diffusions]{
The master equation for mean field game systems with 
fractional and nonlocal diffusions
}
\author{Espen R. Jakobsen}
\address{ Norwegian University of Science and Technology, H\o{}gskoleringen 1, 7034 Trondheim, Norway}
\email{espen.jakobsen@ntnu.no}
\author{Artur Rutkowski}
\address{ Norwegian University of Science and Technology, H\o{}gskoleringen 1, 7034 Trondheim, Norway}
\address{Faculty of Pure and Applied Mathematics,
Wroc\l aw University of Science and Technology,
Wyb. Wyspia\'nskiego 27, 50-370 Wroc\l aw, Poland}
\email{artur.rutkowski@pwr.edu.pl}
\begin{document}
\begin{abstract}
    We prove existence and uniqueness of  classical solutions of the master equation for mean field game (MFG) systems with fractional and nonlocal diffusions. We cover a large class of L\'evy diffusions of order greater than one, including 
    purely nonlocal, local, and even mixed local-nonlocal operators. In the process we prove refined well-posedness results for the 
    MFG systems, results that include the mixed local-nonlocal case. We also show various auxiliary results on viscous Hamilton-Jacobi equations, linear parabolic equations, and linear forward-backward systems that may be of independent interest. This includes a rigorous treatment of certain equations and systems with data and solutions in the duals of H\"older spaces $C^\gamma_b$ on the whole of $\mR^d$. We do not assume existence of any moments for the initial distributions of players. 
%
     In a future work we will use the results of this paper to prove the convergence of $N$-player games to mean field games as $N\to\infty$.
\end{abstract}
\maketitle\vspace{-40pt}
\tableofcontents
\section{Introduction}
The goal of this paper is to show well-posedness for the master equation associated to the 
mean field game (MFG) system
\begin{align}
    \begin{cases}
    -\partial_t u  - \mL u + H(x,u,Du) = F(x,m(t))\quad &\textrm{in}\ (t_0,T)\times \mR^d,\\
    \partial_t m  - \mL^* m - \Div (mD_pH(x,u,Du)) = 0\quad &\textrm{in}\ (t_0,T)\times \mR^d,\\
    m(t_0) = m_0,\qquad u(T,x) = G(x,m(T)).\label{eq:MFG}
    \end{cases}
\end{align}
Here $\mL$ is a nonlocal or mixed local-nonlocal (or even just local) diffusion operator, a constant coefficient L\'evy operator defined for $u\in C_b^2(\mR^d)$ as
\begin{equation}\label{eq:operator}
    \mL u (x) = B\cdot Du(x) + \Div(A Du(x))+ \int_{\mR^d}(u(x+z) - u(x) - Du(x)\cdot z \textbf{1}_{B(0,1)}(z))\, \nu(dz),
\end{equation}
 with L\'evy triplet $(B,A,\nu)\in \mR^d \times \mR^{d\times d}\times \mathcal M_+(\mR^d)$ where 
  $A$ 
 is a positive semi-definite symmetric $d\times d$ matrix, 
 and $\nu$ is a 
 nonnegative Borel measure satisfying $\int_{\mR^d} (1 \wedge |z|^2)\nu(dz) < \infty$, called the L\'evy measure.  By $\mL^*$ we denote the formal adjoint of $\mL$, cf. Subsection~\ref{sec:hk}. The system describes a Nash equilibrium of a mean field game, a limit of $N$-player games as $N\to\infty$ under suitable assumptions. We refer to \cite{MR4309434} for a heuristic derivation of \eqref{eq:MFG} from the related control problem. The first equation of \eqref{eq:MFG} is a backward in time viscous Hamilton--Jacobi (H--J) equation for the value function $u$ of a generic player. The second equation is a forward in time Fokker--Planck (F--P) equation for 
 the distribution of players $m$. The equations are nonlinear and coupled. The coupling terms $F$ and $G$ are nonlocal in the sense that they depend only on $m(t)$ instead of $m(t,x)$. The operator $\mL$ corresponds to the so-called individual/idiosyncratic noise, we do not study cases with  common noise in this paper.

MFG systems were first introduced by Lasry and Lions \cite{MR2295621} and Huang, Malham\'e, and Caines \cite{huang2006large} as a way to study games with a large number of players. Since then, the topic grew immensely and found its applications, e.g., in finance, economy, energy management, crowd and traffic modelling, and mathematical biology. There are two main approaches to MFGs: probabilistic \cite{huang2006large,MR3752669,MR3753660} and PDE \cite{MR2295621,MR4214773}. This paper is written within the PDE setting.  We refer to the books \cite{MR4214773,MR3967062,MR3752669,MR3753660,MR3559742} for derivations of the models, discussion of the different approaches,  
and further reading. 
MFGs with non-Gaussian noise and corresponding nonlocal diffusion operators $\mL$ are still a rather new research direction with relatively few works, especially compared to MFGs with Gaussian noise/local operators. Cesaroni et al. \cite{MR3912635} studied stationary (time-independent) systems on the torus driven by the fractional Laplacian $(-\Delta)^{\alpha/2}$ with $\alpha\in (1,2)$. In \cite{MR3934106} Cirant and Goffi work with time-dependent equations, allowing all $\alpha \in (0,2)$, but the supercritical exponents $\alpha\in (0,1]$ required an entirely different approach and weaker type of solutions. Recently, Ye, Zou, and Liu \cite{fracnonsep} gave a well-posedness result for subcritical fractional time-dependent MFG systems on torus, with non-separable Hamiltonians. Some 
mixed local-nonlocal diffusions were studied in the context of Bertrand and Cournot MFGs by Graber, Ignazio, and Neufeld \cite{MR4223351}. \arr Benazzoli, Campi, and Di Persio \cite{MR4158808} have studied mean field games from the probabilistic point of view for diffusions with jumps of finite intensity.  We also mention here the recent work of Cavallazzi \cite{2022arXiv221201079C} on the propagation of chaos for stable driven McKean--Vlasov SDEs.
One of the key references for our work is \cite{MR4309434} by Ersland and Jakobsen. The paper provides well-posedness for the MFG systems in the whole Euclidean space, driven by more general nonlocal operators of order greater than $1$, i.e. nondegenerate operators with a sufficiently strong regularizing effect to produce smooth solutions of the $u$-equation in \eqref{eq:MFG}.  
The assumption on the order of the operator was given in \cite{MR4309434} in terms of the heat kernel\ar{. We use a slightly weaker version, see} \ref{eq:K} below. It allows very general operators, e.g., $(-\Delta)^{\alpha/2}$ with $\alpha\in (1,2]$, mixed local-nonlocal operators, strongly anisotropic stable operators like $ (-\partial^2_{x_1})^{\alpha_1/2}+(-\partial^2_{x_2})^{\alpha_2/2} +\ldots +(-\partial^2_{x_d})^{\alpha_d/2}$,  tempered operators used in finance like the CGMY model, strongly non-symmetric operators, and operators generating L\'evy processes with no finite moments,
 see Section~\ref{sec:examples} below. While most analytic results on MFGs are formulated on bounded domains \ar{\cm(but 
see e.g. \cite{MR4509653})}, this paper considers the whole space $\mR^d$ without any moment conditions on $m$. 

Note that it is important for us to work without moment conditions to cover important \arr classes of purely nonlocal operators on $\mR^d$. The moments of $m$ are determined by the moments of \arr the initial measure $m_0$ and  the driving noise generated by $\mL$, where the latter is determined by \arr the moments of $\nu$ at infinity \cite{MR3185174}.  As opposed to the local case with Gaussian noise having  finite moments of all orders, processes generated by nonlocal operators may have infinite moments. %
A canonical example is 
 \arr the isotropic $\alpha$-stable process generated by the fractional Laplacian $\mL=-(-\Delta)^{\alpha/2}$, $\alpha\in (0,2)$,  which has infinite moments of all orders $\geq\alpha$.




By a solution to the MFG system \eqref{eq:MFG} we mean a pair $(u,m)$ such that $u$ solves the Hamilton--Jacobi equation in the classical sense and $m\in C([t_0,T],\mP)$ solves the Fokker--Planck equation in the distributional sense. Because we do not assume moments, we work in $\mP$, the space of probability measures, endowed with the Kantorovich--Rubinstein (or bounded-Lipschitz) metric $d_0$, see Section~\ref{sec:d0}. 
The existence and uniqueness of solutions to \eqref{eq:MFG} is given in Theorem~\ref{th:MFGwp}. Despite the similar setting this is a different result than the one in \cite{MR4309434}, where the authors pursued a unified approach for local and nonlocal couplings, which produced two artifacts: the need to consider smooth solutions in the Fokker--Planck equation and the restriction to purely nonlocal operators. Still, many of the results and arguments of \cite{MR4309434} are applicable in our work. \arr We obtain slightly stronger H\"older estimates  for the Hamilton--Jacobi equation than in \cite{MR4309434} by using in full the regularity given by the heat kernel. Similar to \cite{MR4309434} we do not assume any moment conditions on $m_0$ or $\nu$ at infinity.  Instead we use the fact that probability measures $m_0$ and $\nu|_{B(0,1)^c}$ (properly normalized) \arr both have a certain generalized moment. 
We then show that this moment is 
 uniformly bounded for the solution $m(t)$ of the Fokker--Planck equation and hence get 
tightness of $\{m(t): t\in [t_0,T]\}$, a property crucial for compactness arguments. The preservation of the generalized moment for 
MFG systems  in the whole space was observed by Chowdhury, Jakobsen, and Krupski \cite{2021arXiv210406985C}. Note that this is not an issue on the torus or other bounded domains,  as every probability measure there has all the moments. However, other papers  on the whole space adopt some moment assumptions due to using Wasserstein $d_1$ or $d_2$ distances,  or arguments from probability and stochastic calculus, which usually require second moments. \arr Note that the probabilistic approach 
based on 
a McKean--Vlasov FBSDE reformulation of the MFG, 
has been quite successful in dealing with problems on $\mR^d$, see e.g. \cite{MR3091726} or \cite[Chapter~3]{MR3752669} and references therein. 

Our Hamiltonian is separable 
of the form $H(x,Du) - F(x,m)$. Models with more general non-separable Hamiltonians $\widetilde{H}(x,Du,m)$ have been considered in 
e.g. \cite{2021arXiv210503926A,MR4509653,2021arXiv210112362G,MR4686386,2022arXiv220110762M,fracnonsep}. 
On the other hand, our existence and uniqueness result for the MFG system allows $u$ in the Hamiltonian, which to our knowledge is new. The main obstacle in this case is the uniqueness -- with $u$-dependent Hamiltonian it requires a modified version of the Lasry--Lions monotonicity argument, which we give in Lemma~\ref{lem:LL}. Although it is sufficient for uniqueness for the MFG system, Lemma~\ref{lem:LL} does not seem strong enough to get stability results needed for the master equation. Thus, in the further results we switch to Hamiltonians depending only on $x$ and $Du$.

The main result of this work, Theorem~\ref{th:ME}, gives the existence and uniqueness for the master equation associated with the MFG system \eqref{eq:MFG}:
\begin{align}\label{eq:master}
    \begin{cases}
    \partial_t U(t,x,m)\hspace{-10pt} &=-\mL_x U(t,x,m) + H(x,D_xU(t,x,m)) - \int_{\mR^d} \mL_y \dm{U}(t,x,m,y) \, m(dy) - F(x,m)\\ &\quad+ \int_{\mR^d} D_y\dm{U}(t,x,m,y) D_pH(y,D_yU(t,y,m)) \, m(dy)\quad \mathrm{in}\ (0,T)\times \mR^d\times \mP,\\
    U(T,x,m)\hspace{-10pt} &= G(x,m)\quad \mathrm{in}\ \mathbb{R}^d\times \mP.
    \end{cases}
\end{align}
The equation is set in the infinite dimensional space $\mP$ of probability measures. The expression $\dm{U}$ is the derivative in this space; it is somewhat similar to the Gateaux/Fr\'echet derivative, but here we do not have a linear structure, see Section~\ref{sec:dmUdef} for the definition and properties of $\dm{U}$. The master equation contains full information about the MFG systems with different initial times $t_0$ and measures $m_0$.
To be precise, for any point $(t_0,x)$ and measure $m_0$, the solution $U$ of the master equation is given as
\begin{align}U(t_0,x,m_0) := u(t_0,x)\label{eq:U}\end{align}
where $(u,m)$ solves \eqref{eq:MFG} on $[t_0,T]$ with initial measure $m_0$. The function $U$ is sometimes called the master field, or in analogy with the theory of FBSDEs, the decoupling field \cite{MR4214773,MR3753660}.

The master equation was introduced by P.-L. Lions in his lectures at Coll\`ege de France, and the first result about its well-posedness was given by Chassagneux, Crisan and Delarue \cite{2014arXiv1411.3009C}. The key reference for us is the seminal work of Cardaliaguet, Delarue, Lasry, and Lions \cite{MR3967062}, in which the master equation \eqref{eq:master} is studied from a PDE/analytical point of view.
The purpose of the master equation is to provide a way of quantifying how Nash equilibria in finite-player games converge to Nash equilibria in mean field games,
i.e., it facilitates solving the so-called convergence problem. In recent years there have been several works in this direction, see e.g. \cite{MR3967062,MR4135293,MR3753660,2022arXiv220315583D,2022arXiv220307882R}. We also expect this approach to work in the nonlocal case, 
and we will address this problem in a future work using results of the current paper. 


Aside from \cite[Chapter~3]{MR3967062}, the most relevant references using the deterministic setting, are the works on bounded domains, by Ricciardi \cite{MR4420941,2022arXiv220307882R} for the Neumann condition and by Di Persio, Garbelli, and Ricciardi \cite{2022arXiv220315583D} for the Dirichlet condition. Here we also mention Graber and Sircar \cite{2021arXiv211107020J} who investigate the master equation for Cournot MFGs on the half-line, Ambrose and M\'{e}sz\'{a}ros \cite{2021arXiv210503926A} who consider non-separable Hamiltonians and the Sobolev space setting, and Cecchin and Pelino \cite{MR4013871} who work on the whole space with initial measures being a finite sum of Dirac deltas. \arr Recently, Bansil and M\'esz\'aros \cite{2024arXiv240305426B} have given a framework for obtaining well-posedness for the master equation by appropriately shifting the Hamiltonian. 
There are also studies of the master equation in numerous different settings, e.g., with common noise \cite{MR3967062,MR4451309,MR4079435,MR4214776,2021arXiv210112362G}, on finite state spaces \cite{MR4275225,MR4214776}, with a major player \cite{2020arXiv200110406C,MR3851543}, and many more.

Our work is the first study of  master equations with nonlocal diffusion operators. Our results also include the case of mixed local-nonlocal operators. For the MFG systems in general, along with \cite{MR4223351}, it is the first
to study mixed local-nonlocal operators. Whereas \cite{MR4223351} considers one-dimensional diffusions and the nonlocal part is treated as a lower order perturbation, our setting is much more general and works without any dominating local diffusion. 
 It is also the first paper to study local MFG systems on the whole space without any moment conditions on the initial distributions of players.

 On the structural level, the proof of Theorem~\ref{th:ME} follows the program of \cite[Chapter~3]{MR3967062}. The majority of the work goes into establishing the existence and regularity of $\dm{U}$ for $U$ defined in \eqref{eq:U}. The first step is the result on the stability with respect to $m_0$ for the MFG system in Theorem~\ref{th:Lip}, which yields Lipschitz continuity in $m$ for $U$. Then, in order to access and study $\dm{U}$, we linearize the MFG system \eqref{eq:MFG}, by subtracting two systems with different initial measures $m_0$. This motivates the generic linear system result in Theorem~\ref{th:linsyst}. Taking derivatives in the ambient variable $y$ (e.g., $D_y \dm{U}$) corresponds to differentiating the initial condition in the linearized system, and therefore we need to allow some of the data and solutions in Theorem~\ref{th:linsyst} to belong to negative order H\"older spaces (like distributional derivatives of Dirac measures). This is a major technical issue producing strong assumptions, as this lack of regularity needs to be compensated by higher regularity of the other coefficients in the system. Ricciardi \cite{MR4420941} proposed an improvement in this regard, exploiting the fact that some of the coefficients need not be regular uniformly in time. For the short-time existence, Cardaliaguet, Cirant, and Porretta \cite{2020arXiv200110406C} managed to separate the regularity of the coefficients in the linearized system, but because of the forward-backward nature of the problem, their results cannot be applied to arbitrarily long (but finite) time intervals that we study here. We note that while working in the whole space we need to handle many technical issues arising from the fact that the duals of H\"older spaces are much more complicated on unbounded sets. For example, there are functionals with support ``at infinity'' like the Banach limits, see Definition~\ref{ex:Banach}. Furthermore, a new compactness argument is needed in the proof of Theorem~\ref{th:linsyst}, because in the whole space the H\"older spaces are not embedded in their duals and the Arzel\`a--Ascoli approach used in previous works can no longer be used. To overcome this we work in an $L^1$-setting and use the Kolmogorov--Riesz theorem.

\arr We note that the probabilistic approach has been effective for solving master equations with local diffusions in the whole space \cite{MR3753660,2014arXiv1411.3009C}. 
By a 
lifting procedure 
$D_y\dm{U}$ is obtained directly as a Fr\'echet derivative and $\dm{U}$ is no longer needed in the analysis. 
Stochastic calculus then seems to allow for data of lower regularity than we use here, see \cite[Section~5.1.5]{MR3753660}. Some significant
developments of this theory are likely still needed to cover the case of jump diffusions and diffusions without moments. 

The results used for well-posedness of the master equation involve viscous Hamilton--Jacobi and linear parabolic problems in many different settings. In the case of $\mL = \Delta$ and other local operators, there is a well-established extensive literature, e.g., \cite{MR0241822,MR1465184}. Regularity for parabolic equations with nonlocal operators was also intensively studied over the last decades, see e.g., \cite{MR4056997,MR3803717,MR2019032,MR2121115,MR1246036}, but it is much more difficult to find the desired theorems under the right assumptions. Therefore, in this paper we formulate and prove several results for single equations: apart from the classical solution setting (Lemma~\ref{lem:cd}, Theorem~\ref{th:HJ}), which was largely present in \cite{MR4309434}, we also discuss the $L^1$ setting (Lemma~\ref{lem:FPL1}), the probability measure setting (Theorem~\ref{th:FP}), and the setting of the negative order H\"{o}lder spaces (Lemma~\ref{lem:l1cneg}). These results are quite 
general and could be of independent interest.

Due to the excessive amount of technical details and the complicated structure of the proof,  we found that some results in the previous papers require modifications of the proofs or formulations. The details and references are given in Section~\ref{sec:err}.  Of course, this does not affect the high value of the original and sometimes groundbreaking ideas which these works contributed. 
In the work of Cardaliaguet, Delarue, Lasry, and Lions \cite{MR3967062}, which is a basis for many further papers (including this one), the  necessary amendments are limited to  using slightly stronger assumptions and correcting some of the intermediate results by weakening their conclusions. We made a substantial effort to make the arguments rigorous while keeping the assumptions fairly mild.


The paper is organized as follows: in Section~\ref{sec:Holder} we discuss the H\"older spaces, Section~\ref{sec:d0} is devoted to the Kantorovich--Rubinstein metric, in Section~\ref{sec:dmUdef} we introduce the measure derivatives, and in Section~\ref{sec:hk} the operator $\mL$ and its heat kernel. Assumptions are formulated in Section~\ref{sec:assume} and examples are given in Section~\ref{sec:examples}. In Section~\ref{sec:err} we discuss the previous mistakes. Section~\ref{sec:single} contains the results about the single equations. In Section~\ref{sec:mfg} we prove well-posedness of the MFG system. Section~\ref{sec:further} gives further regularity for the MFG system, including the existence of $\dm{U}$. Finally, in Section~\ref{sec:master} we prove the main result -- well-posedness of classical solution of the master equation with nonlocal diffusion.

\subsection{Summary of the main results and contributions.}\label{sec:contributions}
\begin{itemize}
\item 
Well-posedness of the master equation -- Theorem~\ref{th:ME}:
\begin{itemize}
    \item First such result for nonlocal diffusions.
    \item Allowing mixed local-nonlocal operators.
    \item Allowing local diffusion  operators in the whole space without moment conditions.
    \item Rigorous treatment of duals of H\"older spaces in an unbounded domain, new also for the Laplacian.
\end{itemize}
\item Well-posedness of the MFG system -- Theorem~\ref{th:MFGwp}.
\begin{itemize}
    \item  First such result for mixed local-nonlocal operators.
    \item No moment conditions imposed for $m_0$ in the case of local diffusions.
    \item More optimal Schauder estimates and general (non-smooth) $m_0$ compared to \cite{MR4309434}.
    \item Hamiltonians depending on $u$.
\end{itemize}
\item Auxiliary results for Hamilton--Jacobi equations and linear equations with divergence and non-divergence form drift with quite general L\'evy diffusions -- Lemma~\ref{lem:cd}, Lemma~\ref{lem:FPL1}, Theorem~\ref{th:FP}, Theorem~\ref{th:HJ}, Lemma~\ref{lem:l1cneg}.
\item Observing and fixing some  oversights  in  previous work -- Section~\ref{sec:err}.
\end{itemize}



\section{Preliminaries}\label{sec:prelim}
Constants (usually denoted by $c$ or $C$) may change value from line to line. We define the family of cut-off functions $\chi_R\colon \mR^d \to [0,1]$ for $R\geq 1$, as functions satisfying: $\chi_R = 1$ on $B(0,R)$, $\chi_R = 0$ on $B(0,R+1)^c$, and $\|\chi_R\|_{C^2_b(\mR^d)} \leq C$, with $C$ independent of $R$.
In several results we use approximation by convolution with a standard mollifier $\eta$, that is, a non-negative radially decreasing smooth function supported in the unit ball such that $\int \eta = 1$. For $\eps>0$ we denote  $\eta_\eps(x) = \eps^{-d}\eta(x/\eps)$. Note that $\|D\eta_\eps\|_{L^1(\mR^d)} \leq \eps^{-1}$ and $\|D^2\eta_\eps\|_{L^1(\mR^d)} \leq \eps^{-2}$.
Spaces $L^p(\mR^d)$, $p\in [1,\infty]$ are the usual spaces of equivalence classes of measurable functions with the $p$-th power integrable (or essentially bounded for $p=\infty$).

\subsection{H\"older and related function spaces} 
\begin{remark}
    This subsection is quite long and might be skipped on the first reading. We explain the intricacies of the duals of H\"older spaces on unbounded domains. Furthermore, we address in detail the issue of passing to the limit inside $C^{\alpha}$--$(C^{\alpha})^*$ pairings, and for this sake we define iterated spaces, see Lemma~\ref{lem:dmf} and Remark~\ref{rem:dmf} below.
\end{remark}
By $C_b(\mR^d)$ we denote the space of bounded continuous real-valued functions on $\mR^d$. If $k\in \{0,1,2,\ldots\}$, then $C^k_b(\mR^d)$ is the space of functions with continuous and bounded derivatives up to the order $k$ (in particular $C^0_b(\mR^d) = C_b(\mR^d)$). The norm is given by
\label{sec:Holder}
\begin{align*}
    \|u\|_{C^k_b(\mR^d)} = \sum\limits_{|\beta|\leq k} \|\partial^\beta u\|_{\infty}.
\end{align*}
We denote by $C^k_0(\mR^d)$ the space of functions in $C^k_b(\mR^d)$ whose derivatives up to order $k$ vanish at infinity. The space $C^k_c(\mR^d)$ consists of compactly supported functions in $C^k_b(\mR^d)$.

Let $\sigma\in (0,1)$. We define the H\"{o}lder seminorm of a function $u\colon \mR^d\to \mR$ as 
\begin{align*}
    |u|_{C^{\sigma}(\mR^d)} = \sup\limits_{x\neq y} \frac{|u(x) - u(y)|}{|x-y|^{\sigma}}.
\end{align*}
The H\"{o}lder space of order $k+\sigma$ for $k\in \{0,1,2,\ldots\}$ is
\begin{align*}
    C^{k+\sigma}_b(\mR^d) = \big\{u\in C^k_b(\mR^d): |\partial^{\beta}u|_{C^{\sigma}(\mR^d)} <\infty \ {\rm for\ all}\ |\beta|=k\big\}.
\end{align*}
Here, the norm is given by
\begin{align*}
    \|u\|_{C^{k+\sigma}_b(\mR^d)} = \|u\|_{C^k_b(\mR^d)} + \sum\limits_{|\beta|=k}|\partial^\beta u|_{C^\sigma(\mR^d)}.
\end{align*}
It is well-known that H\"{o}lder spaces are Banach spaces with the above norm.\medskip

The negative order H\"{o}lder spaces are defined as the duals, i.e., for $\gamma\geq 0$
    we let $C^{-\gamma}_b(\mR^d) := (C^{\gamma}_b(\mR^d))^*$ and
\begin{align*}
    \|\rho\|_{C^{-\gamma}_b(\mR^d)} = \sup\limits_{\|\phi\|_{C^\gamma_b(\mR^d)}\leq 1} \frac{\langle \rho,\phi\rangle}{\|\phi\|_{C^\gamma_b(\mR^d)}}.
\end{align*}
Note that $C^{-0}_b(\mR^d)\neq C^0_b(\mR^d)$.

\arr Some of the $\rho\in C^{-\gamma}_b(\mR^d)$ can be identified with an integrable function. Indeed, if we let $f\in L^1(\mR^d)$ and
\begin{align}\label{eq:represent}
    \langle \rho, \phi \rangle = \int_{\mR^d} \phi(x) f(x)\, dx,
\end{align}
then $\rho\in C^{-\gamma}_b(\mR^d)$ for any $\gamma\geq 0$ and $\|\rho\|_{C^{-\gamma}_b(\mR^d)}\leq C(\gamma,d) \|f\|_{L^1(\mR^d)}$. This gives an embedding of $L^1(\mR^d)$ into $C^{-\gamma}_b(\mR^d)$. Similar embedding holds for the space of finite signed Borel measures $\mathcal{M}(\mR^d)$ with the total variation norm $\|\cdot\|_{TV}$. Furthermore, if $0\leq\gamma_1\leq\gamma_2$ and $\rho \in C^{-\gamma_1}_b(\mR^d)$, then we have
\begin{align*} \|\rho|_{C^{\gamma_2}_b(\mR^d)}\|_{C^{-\gamma_2}_b(\mR^d)} \leq \|\rho\|_{C^{-\gamma_1}_b(\mR^d)},
\end{align*}
although this does not yield an \textit{embedding} between the spaces, because the restriction map is not injective. Note that in contrast with bounded domains, on the whole space we do not have the \ar{natural} embedding of $C_b$ into $C^{-0}_b$, because the integral $\int_{\mR^d} fg$ for $f,g\in C_b(\mR^d)$ may not exist.

Contrary to the spaces over compact sets, not all functionals in $C^{-\gamma}_b(\mR^d)$ can be represented using integrals against (countably additive) measures. For example, the space $C^{-0}_b(\mR^d)$ can be identified with \textit{finitely} additive set functions, but they are much more difficult to work with, see Dunford and Schwartz \cite[Theorem~IV.6.1.2]{MR1009162}. Note that the space $C^{-\gamma}_b(\mathbb{R}^d)$ does not fall into any family 
of distributions, because the functionals may depend on the behavior of function at infinity.
\begin{definition}
    \label{ex:Banach}We call $L\in C^{-0}_b(\mR)$ a Banach limit, if $L(f) = \lim_{x\to\infty} f(x)$ whenever the limit exists.
    \end{definition}
    The existence of Banach limits follows from the Hahn--Banach theorem. The restriction of a Banach limit to $C_c^\infty(\mR)$ is the zero distribution. Furthermore, a Banach limit cannot be represented as integration against a signed measure.

If $f\in C^{\gamma}_b(\mR^d)$, then we let
$\langle \rho f,\phi\rangle = \langle \rho,f\phi\rangle$ and we have
\begin{align*}
    \|\rho f\|_{C^{-\gamma}_b(\mR^d)} \leq C(\gamma,d)\|\rho\|_{C^{-\gamma}_b(\mR^d)}\|f\|_{C^{\gamma}_b(\mR^d)}.
\end{align*}
For smooth $f\in L^1(\mR^d)$ with all derivatives integrable we let $\overline{f}(x) = f(-x)$ and we define
\begin{align*}
    \langle \rho\ast f,\phi\rangle = \langle \rho, \phi\ast \overline{f}\rangle,
\end{align*}
for all $\phi$ for which the right-hand side makes sense.
\begin{lemma}\label{lem:cnegapprox}
Let $\rho\in C^{-\gamma}_b(\mR^d)$ for some $\gamma \geq 0$ and assume that $f\in C^{\infty}(\mathbb{R}^d)$ with $f,Df,D^2f,\ldots\in L^1(\mR^d)$. Then $\rho\ast f\in C^{-0}_b(\mR^d)$. Furthermore, for $f=\eta_\eps$ we have $\rho\ast\eta_\eps \mathop{\longrightarrow}\limits^{\eps\to 0^+} \rho$ in $C^{-\gamma-\delta}_b(\mR^d)$ for every $\delta > 0$.
\end{lemma}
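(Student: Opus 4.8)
The plan is to prove the two assertions separately. For the first, I would show directly that $\rho \ast f$ extends to a bounded linear functional on $C_b(\mathbb{R}^d)$, i.e. on $C^0_b(\mathbb{R}^d)$. Given $\phi \in C_b(\mathbb{R}^d)$, the candidate value is $\langle \rho, \phi \ast \overline{f}\rangle$, so I need to check that $\phi \ast \overline{f} \in C^\gamma_b(\mathbb{R}^d)$ with norm controlled by $\|\phi\|_{\infty}$. Boundedness is immediate from $\|\phi \ast \overline{f}\|_\infty \le \|\phi\|_\infty \|f\|_{L^1}$. For the derivatives up to the relevant integer order $k = \lceil \gamma \rceil$, one moves the derivatives onto $\overline{f}$: $\partial^\beta(\phi \ast \overline{f}) = \phi \ast \partial^\beta \overline{f}$, which is bounded by $\|\phi\|_\infty \|\partial^\beta f\|_{L^1}$, using the hypothesis that all derivatives of $f$ are integrable. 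For the fractional H\"older seminorm of the top-order derivative, I would interpolate: $\partial^\beta(\phi \ast \overline f)$ is bounded and has a bounded derivative (namely $\phi \ast \partial^{\beta'} \overline f$ with $|\beta'| = k+1$), hence is Lipschitz, hence lies in $C^\sigma$ with seminorm bounded in terms of $\|\phi\|_\infty$ and the $L^1$ norms of derivatives of $f$ up to order $k+1$. This shows $\phi \mapsto \langle \rho, \phi \ast \overline f\rangle$ is a bounded functional on $C^0_b$, so $\rho \ast f \in C^{-0}_b(\mathbb{R}^d)$, and it agrees with the given definition on the dense-enough class of $\phi \in C^\gamma_b$ where both make sense.

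For the convergence statement, fix $\delta > 0$ and let $\phi \in C^{\gamma+\delta}_b(\mathbb{R}^d)$ with $\|\phi\|_{C^{\gamma+\delta}_b} \le 1$. Then
\begin{align*}
  \langle \rho \ast \eta_\eps - \rho, \phi\rangle = \langle \rho, \phi \ast \eta_\eps - \phi\rangle,
\end{align*}
using that $\overline{\eta_\eps} = \eta_\eps$ since $\eta$ is radial. So the whole matter reduces to the elementary estimate
\begin{align*}
  \|\phi \ast \eta_\eps - \phi\|_{C^\gamma_b(\mathbb{R}^d)} \le C\, \eps^{\delta}\, \|\phi\|_{C^{\gamma+\delta}_b(\mathbb{R}^d)},
\end{align*}
which gives $\|\rho \ast \eta_\eps - \rho\|_{C^{-\gamma-\delta}_b} \le C \eps^\delta \|\rho\|_{C^{-\gamma}_b} \to 0$. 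This mollification estimate is standard: for the integer-order derivatives one writes $\partial^\beta(\phi \ast \eta_\eps - \phi) = (\partial^\beta \phi) \ast \eta_\eps - \partial^\beta\phi$ and uses that for a function $g$ with $|g|_{C^{\delta'}} < \infty$, $\|g \ast \eta_\eps - g\|_\infty \le |g|_{C^{\delta'}} \int |z|^{\delta'} \eta_\eps(z)\,dz \le C \eps^{\delta'} |g|_{C^{\delta'}}$; applying this with $g = \partial^\beta \phi$ (which is in $C^\delta$ when $|\beta| = k$) handles the integer part, and a similar argument with difference quotients, or splitting $\delta$ between the two seminorms, handles the H\"older seminorm of the top derivative. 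One should be slightly careful when $\gamma$ is itself an integer versus not, but in all cases $\gamma + \delta$ gives enough extra regularity to produce the $\eps^{\delta'}$ gain for some $\delta' > 0$ depending on $\delta$ (one may shrink $\delta$ WLOG so that the gain is exactly a positive power of $\eps$).

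The only genuinely delicate point is bookkeeping the H\"older exponents at the boundary cases — when $\gamma \in \mathbb{Z}$, or when one needs the $C^\sigma$ seminorm of a derivative of $\phi \ast \overline f$ — but this is routine once one systematically uses the two facts ``convolution commutes with derivatives'' and ``$\|g \ast \eta_\eps - g\|_\infty \lesssim \eps^{\delta'} |g|_{C^{\delta'}}$''. I do not anticipate any conceptual obstacle; the lemma is essentially the statement that convolution against a fixed nice kernel is a bounded operation on negative H\"older spaces landing in $C^{-0}_b$, together with the quantitative approximate-identity property of $\eta_\eps$, both transported to the dual side by duality.
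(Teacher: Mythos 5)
Your proposal is correct and follows essentially the same route as the paper: duality to transfer the estimate onto $\phi\ast\overline f$, bounding $\|\phi\ast\overline f\|_{C^\gamma_b}$ by $\|\phi\|_\infty$ times $L^1$-norms of derivatives of $f$, and for the convergence the identity $\langle \rho\ast\eta_\eps-\rho,\phi\rangle=\langle\rho,\phi\ast\eta_\eps-\phi\rangle$ combined with the mollification bound $\|\phi\ast\eta_\eps-\phi\|_{C^\gamma_b}\lesssim\eps^\delta\|\phi\|_{C^{\gamma+\delta}_b}$. The paper packages the first step slightly more compactly by observing $\|\cdot\|_{C^\gamma_b}\le\|\cdot\|_{C^{\lceil\gamma\rceil}_b}$ and then moving derivatives, which avoids handling the fractional seminorm explicitly, but this is the same interpolation you describe.
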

\begin{proof}
  Let $\phi\in C_b(\mR^d)$. Then
\begin{align*}
      |\langle \rho\ast f,\phi\rangle| = |\langle \rho, \phi\ast \overline{f}\rangle| \leq \|\rho\|_{C^{-\gamma}_b(\mR^d)} \|\phi\ast \overline{f}\|_{C^{\gamma}_b(\mR^d)} \leq \|\rho\|_{C^{-\gamma}_b(\mR^d)} \|\phi\ast \overline{f}\|_{C^{\lceil\gamma\rceil}_b(\mR^d)} \leq \|\rho\|_{C^{-\gamma}_b(\mR^d)}\|\phi\|_{\infty}\sum\limits_{k=0}^{\lceil \gamma\rceil} \|D^kf\|_{L^1(\mR^d)}.
  \end{align*}
  Dividing by $\|\phi\|_{\infty}$ implies that $\rho\ast f\in C^{-0}_b(\mR^d)$. For the second part of the statement let $\phi \in C^{\gamma+\delta}_b(\mR^d)$. Then,
  \begin{align*}|\langle \rho\ast\eta_\eps - \rho,\phi\rangle| = |\langle \rho,\phi\ast \eta_\eps - \phi\rangle| \leq \|\phi\ast \eta_\eps - \phi\|_{C^{\gamma}_b(\mR^d)} \|\rho\|_{C^{-\gamma}_b(\mR^d)}\leq C\eps^\delta \|\phi\|_{C^{\gamma+\delta}_b(\mR^d)}\|\rho\|_{C^{-\gamma}_b(\mR^d)},
\end{align*}
with $C$ independent of $\phi$ and $\eps$. It follows that $\rho\ast\eta_\eps \to \rho$ in $C^{-\gamma-\delta}_b(\mR^d)$ as $\eps\to 0^+$.
\end{proof}
\begin{remark}\label{rem:Banach}
In the above result we cannot infer that $\rho\ast f$ can be represented as integration against a measure. For example if $L$ is a Banach limit, then it is easy to see that $\langle L\ast\eta_\eps, \phi\rangle = \lim_{x\to\infty} \phi(x)$ if the limit exists. Hence, $L\ast\eta_\eps$ is also a Banach limit.
\end{remark}
The existence of functionals like the Banach limit motivates the need to consider a subclass of $C^{-\gamma}_b(\mR^d)$ which can be represented as integrals with respect to countably additive measures. 
\begin{definition}\label{def:MR}
Let $n\in \{0,1,\ldots\}$. We say that $\rho \in C^{-n}_b(\mR^d)$ is measure representable if there exist $\mu_\alpha\in \mathcal{M}(\mR^d)$, $|\alpha|\leq n$, such that for any $\phi\in C^n_b(\mR^d)$ we have
\begin{align}\label{eq:intrep}
    \langle \rho, \phi\rangle  = \sum\limits_{|\alpha|\leq n} \int_{\mR^d} \partial^\alpha \phi(x)\, \mu_\alpha(dx).
\end{align}
We also let
\begin{align*}
    \mathfrak{C}^{-n}_b(\mR^d) = \{\rho \in C^{-n}_b(\mR^d): \rho\ \textrm{is measure representable}\}.
\end{align*}
\end{definition}
\noindent The representation in \eqref{eq:intrep} is in general not unique.
\begin{lemma}\label{lem:intrep}
The class $\mathfrak{C}^{-n}_b(\mR^d)$ is a norm-closed subset of $C^{-n}_b(\mathbb{R}^d)$. Consequently, $(\mathfrak{C}^{-n}_b(\mR^d),\|\cdot\|_{C^{-n}_b(\mR^d)})$ is a Banach space. Furthermore, every $\rho \in \mathfrak{C}^{-n}_b(\mR^d)$ has a representation such that $\|\rho\|_{C^{-n}_b(\mR^d)} = \sum\limits_{|\alpha|\leq n}\|\mu_\alpha\|_{TV}.$
\end{lemma}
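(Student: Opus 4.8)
The plan is to prove the three assertions of Lemma~\ref{lem:intrep} in order: norm-closedness of $\mathfrak{C}^{-n}_b(\mR^d)$ in $C^{-n}_b(\mR^d)$, the Banach space conclusion, and the existence of a norm-optimal representation. First I would set up notation: for $\rho\in\mathfrak{C}^{-n}_b(\mR^d)$ with representing measures $(\mu_\alpha)_{|\alpha|\le n}$, write $\Phi\colon\phi\mapsto (\partial^\alpha\phi)_{|\alpha|\le n}$, a bounded linear injection of $C^n_b(\mR^d)$ into the product $Y:=\prod_{|\alpha|\le n} C_b(\mR^d)$ equipped with the sup-over-$\alpha$ norm (this injectivity is exactly why $\|\phi\|_{C^n_b}$ and $\|\Phi\phi\|_Y$ are comparable). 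A tuple of measures $(\mu_\alpha)$ acts on $Y$ with operator norm bounded by $\sum_\alpha\|\mu_\alpha\|_{TV}$, and restricting that action to $\Phi(C^n_b)$ gives $\rho$; so $\|\rho\|_{C^{-n}_b}\le\sum_\alpha\|\mu_\alpha\|_{TV}$ always. The content of the lemma is the reverse inequality together with closedness.

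For norm-closedness, suppose $\rho_k\to\rho$ in $C^{-n}_b(\mR^d)$ with each $\rho_k\in\mathfrak{C}^{-n}_b(\mR^d)$. Passing to a subsequence I may assume $\|\rho_{k+1}-\rho_k\|_{C^{-n}_b}\le 2^{-k}$. The key point is that each difference $\rho_{k+1}-\rho_k$, being measure representable, admits representing measures $(\mu^k_\alpha)$ with $\sum_\alpha\|\mu^k_\alpha\|_{TV}\le C\,\|\rho_{k+1}-\rho_k\|_{C^{-n}_b}$ for a dimensional constant $C$ — and this near-optimal choice of representation for an individual element is precisely the third assertion, so I would prove that first and use it here. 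Given such a bound, $\sum_k\sum_\alpha\|\mu^k_\alpha\|_{TV}<\infty$, so for each $\alpha$ the series $\sum_k\mu^k_\alpha$ converges absolutely in the Banach space $(\mathcal{M}(\mR^d),\|\cdot\|_{TV})$ to some $\nu_\alpha$; adding the representing measures of $\rho_1$ (or $\rho_{k_0}$ for the base of the subsequence) gives a candidate tuple $(\nu_\alpha)$, and summing the pairing identities \eqref{eq:intrep} termwise — justified by absolute convergence and $|\partial^\alpha\phi(x)|\le\|\phi\|_{C^n_b}$ — shows $(\nu_\alpha)$ represents $\rho$. Hence $\rho\in\mathfrak{C}^{-n}_b(\mR^d)$. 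Since $C^{-n}_b(\mR^d)$ is a Banach space (dual of a Banach space) and $\mathfrak{C}^{-n}_b(\mR^d)$ is a norm-closed linear subspace, it is itself a Banach space, giving the second assertion.

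The main obstacle is the third assertion: producing, for a single $\rho\in\mathfrak{C}^{-n}_b(\mR^d)$, a representation with $\sum_\alpha\|\mu_\alpha\|_{TV}=\|\rho\|_{C^{-n}_b}$ (not merely $\le C\|\rho\|_{C^{-n}_b}$). Here I would argue by Hahn--Banach / duality rather than manipulating a given representation. Consider the subspace $\Phi(C^n_b(\mR^d))\subseteq Y$; the functional $\rho\circ\Phi^{-1}$ is well-defined and bounded on it with norm equal to $\|\rho\|_{C^{-n}_b}$ once one checks $\|\Phi\phi\|_Y$ can be taken as an equivalent norm on $C^n_b$ — in fact one should use the norm $\|\phi\|_{C^n_b}=\sum_{|\alpha|\le n}\|\partial^\alpha\phi\|_\infty$ matched against the dual norm $\sum_\alpha\|\mu_\alpha\|_{TV}$ on $\ell^1$-sums, so that the embedding $C^n_b\hookrightarrow Y$ (with $Y$ carrying the $\ell^1$-sum-of-sup norm, $\|(g_\alpha)\|_Y=\sum_\alpha\|g_\alpha\|_\infty$) is isometric onto its image. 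By Hahn--Banach extend $\rho\circ\Phi^{-1}$ to a functional $\Lambda$ on all of $Y$ with the same norm $\|\rho\|_{C^{-n}_b}$. Since $Y$ is a finite $\ell^1$-product of copies of $C_b(\mR^d)$, its dual is the $\ell^\infty$-product of the duals $C_b(\mR^d)^*$; thus $\Lambda=(\lambda_\alpha)$ with $\lambda_\alpha\in C_b(\mR^d)^*$ and $\|\Lambda\|=\max_\alpha\|\lambda_\alpha\|$. The remaining difficulty — and the genuinely delicate step on an unbounded domain — is that $C_b(\mR^d)^*$ is \emph{not} $\mathcal{M}(\mR^d)$ (it contains finitely additive pieces, Banach-limit-type functionals), so the $\lambda_\alpha$ need not be measures. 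I would resolve this by noting that the hypothesis $\rho\in\mathfrak{C}^{-n}_b$ already supplies \emph{some} measure representation $(\mu_\alpha^0)$; the difference $\lambda_\alpha-\mu_\alpha^0$, as a tuple, annihilates $\Phi(C^n_b)$, and I can project each $\lambda_\alpha$ onto its countably-additive part via the Yosida--Hewitt decomposition $C_b(\mR^d)^*=\mathcal{M}(\mR^d)\oplus(\text{purely finitely additive})$. Since $\Phi(C^n_b)$ separates the finitely additive parts appropriately — more precisely, the representing identity \eqref{eq:intrep} only tests against $\partial^\alpha\phi$ which, for $\phi\in C^n_b$, can be arbitrary $C_b$ functions when $\alpha$ ranges, I would instead combine the given measure representation with the norm bound: take $\mu_\alpha := $ the countably-additive part of $\lambda_\alpha$, check it still represents $\rho$ (the finitely additive part contributes zero on $C^n_b$ by comparison with $(\mu^0_\alpha)$), and observe $\sum_\alpha\|\mu_\alpha\|_{TV}\le\sum_\alpha\|\lambda_\alpha\|\le (n{+}1)^{?}$-times — here one must be careful, so the cleanest route is to equip $Y$ with the $\ell^1$-sum norm from the start so that $\|\Lambda\|_{Y^*}=\max_\alpha\|\lambda_\alpha\|$ is \emph{not} what we want; rather equip $Y$ with the sup norm $\max_\alpha\|g_\alpha\|_\infty$, making $Y^*$ carry the $\ell^1$-sum norm $\sum_\alpha\|\lambda_\alpha\|$, and match it with $\|\phi\|_{C^n_b}=\sum_\alpha\|\partial^\alpha\phi\|_\infty$ on the domain side — then $\|\Lambda\|_{Y^*}=\sum_\alpha\|\lambda_\alpha\|=\|\rho\|_{C^{-n}_b}$ and taking countably-additive parts only decreases each $\|\lambda_\alpha\|$, yielding $\sum_\alpha\|\mu_\alpha\|_{TV}\le\|\rho\|_{C^{-n}_b}$; combined with the trivial reverse inequality this is equality. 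Care with which norm is placed on the product $Y$ to make the Hahn--Banach norm come out exactly, plus the Yosida--Hewitt step to discard finitely additive parts without destroying the representation, is where essentially all the work lies.
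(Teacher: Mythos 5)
Your route is genuinely different from the paper's. The paper never touches $C_b(\mR^d)^*$: it restricts $\rho$ to $C^n_0(\mR^d)$, embeds $C^n_0$ isometrically into $C_0(\mR^d)^K$, applies Hahn--Banach \emph{there}, and then the Riesz representation theorem for $C_0(\mR^d)$ hands over honest (countably additive) measures with the right norm. Only afterwards does a cutoff argument upgrade the resulting identity from $\phi\in C^n_0$ to $\phi\in C^n_b$, using dominated convergence and the finiteness of the $\mu_\alpha$. You instead apply Hahn--Banach on $C_b(\mR^d)^K$, land in $C_b(\mR^d)^*$ where functionals may be only finitely additive, and invoke Yosida--Hewitt to strip the purely finitely additive parts. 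This is workable in principle, but it pays a price: you inherit the pathologies the paper carefully routes around. (Your telescoping-differences trick for closedness is in fact a tidier argument than the paper's, which asserts the Cauchyness of the norm-minimizing tuples $(\tilde\mu^k)$ without really justifying it.)

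Two concrete gaps remain. First, the sentence ``the finitely additive part contributes zero on $C^n_b$ by comparison with $(\mu^0_\alpha)$'' is not a proof. What you actually know is that $\sum_\alpha\langle\sigma_\alpha,\partial^\alpha\phi\rangle=\sum_\alpha\langle\mu^0_\alpha-\mu_\alpha,\partial^\alpha\phi\rangle$ for every $\phi\in C^n_b$; the left-hand side vanishes when $\partial^\alpha\phi\in C_0$, but for general $\phi\in C^n_b$ the purely finitely additive $\sigma_\alpha$ need not vanish on $\partial^\alpha\phi$. To conclude $\sum_\alpha\langle\sigma_\alpha,\partial^\alpha\phi\rangle=0$ for all $\phi\in C^n_b$ you must test with $\phi\chi_R$ and pass to the limit in the right-hand side using dominated convergence against the finite measures $\mu^0_\alpha,\mu_\alpha$ -- which is precisely the cutoff argument the paper uses in step~2; you cannot skip it. Second, your norm bookkeeping never actually closes: with $\|\phi\|_{C^n_b}=\sum_\alpha\|\partial^\alpha\phi\|_\infty$ on the source and the sup-norm on the target $Y$, the embedding $\Phi$ is only contractive, not isometric, so Hahn--Banach preserves $\|\rho\circ\Phi^{-1}\|_{Y\text{-norm}}$ and \emph{not} $\|\rho\|_{C^{-n}_b}$; conversely, with the $\ell^1$-sum norm on $Y$ (which does make $\Phi$ isometric) the dual of $Y$ carries the $\ell^\infty$-max, giving $\max_\alpha\|\lambda_\alpha\|=\|\rho\|_{C^{-n}_b}$ rather than the sum. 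You recognize this tension mid-paragraph but never resolve it; as written, your construction produces a representation with $\sum_\alpha\|\mu_\alpha\|_{TV}\le K\|\rho\|_{C^{-n}_b}$ rather than equality.
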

\arr Note that in Lemma~\ref{lem:intrep} we do not make a claim about uniqueness of the representation.  The proof is postponed to Appendix~\ref{sec:c0cb}.
\begin{lemma}\label{lem:cnegconv}
Let $\rho\in \mathfrak{C}^{-n}_b(\mR^d)$ for some $n\in \{0,1,\ldots\}$ and assume that $f\in C^{\infty}(\mathbb{R}^d)$ with $f,Df,D^2f,\ldots\in L^1(\mR^d)$. Then $ \rho\ast f\in L^1(\mR^d)\cap C^\infty_b(\mR^d)$ in the sense of \eqref{eq:represent} and $\|\rho\ast f\|_{L^1(\mR^d)} \leq C\|\rho\|_{C^{-n}_b(\mR^d)}$.
\end{lemma}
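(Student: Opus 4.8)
The plan is to use the measure representation of $\rho$ to reduce everything to estimates on convolutions of finite measures with the derivatives of $f$. By Lemma~\ref{lem:intrep} we may fix a representation $\langle \rho,\phi\rangle = \sum_{|\alpha|\le n}\int_{\mR^d}\partial^\alpha\phi\,d\mu_\alpha$ with $\mu_\alpha\in\mathcal M(\mR^d)$ and $\sum_{|\alpha|\le n}\|\mu_\alpha\|_{TV} = \|\rho\|_{C^{-n}_b(\mR^d)}$. For a test function $\phi$ in the appropriate class, unwinding the definition of $\rho\ast f$ gives
\begin{align*}
  \langle \rho\ast f,\phi\rangle = \langle \rho,\phi\ast\overline f\rangle = \sum_{|\alpha|\le n}\int_{\mR^d}\partial^\alpha(\phi\ast\overline f)(x)\,\mu_\alpha(dx) = \sum_{|\alpha|\le n}\int_{\mR^d}(\phi\ast\overline{\partial^\alpha f})(x)\,\mu_\alpha(dx),
\end{align*}
using that $\partial^\alpha(\phi\ast\overline f) = \phi\ast\partial^\alpha\overline f = \phi\ast\overline{(-1)^{|\alpha|}\partial^\alpha f}$, which is legitimate since all derivatives of $f$ are integrable. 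Now I would apply Fubini (justified because $\phi$ is bounded, $\partial^\alpha f\in L^1$, and $\mu_\alpha$ is finite) to rewrite the right-hand side as $\int_{\mR^d}\phi(y)\,g(y)\,dy$ where $g(y) := \sum_{|\alpha|\le n}(-1)^{|\alpha|}\int_{\mR^d}\partial^\alpha f(y-x)\,\mu_\alpha(dx) = \sum_{|\alpha|\le n}(-1)^{|\alpha|}(\partial^\alpha f\ast\mu_\alpha)(y)$. This identifies $\rho\ast f$ with the function $g$ in the sense of \eqref{eq:represent}.

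It then remains to check that $g\in L^1(\mR^d)\cap C^\infty_b(\mR^d)$ with the stated bound. For the $L^1$ bound, Young's inequality for the convolution of an $L^1$ function with a finite measure gives $\|\partial^\alpha f\ast\mu_\alpha\|_{L^1(\mR^d)}\le \|\partial^\alpha f\|_{L^1(\mR^d)}\|\mu_\alpha\|_{TV}$, hence
\begin{align*}
  \|g\|_{L^1(\mR^d)} \le \sum_{|\alpha|\le n}\|\partial^\alpha f\|_{L^1(\mR^d)}\|\mu_\alpha\|_{TV} \le \Big(\max_{|\alpha|\le n}\|\partial^\alpha f\|_{L^1(\mR^d)}\Big)\sum_{|\alpha|\le n}\|\mu_\alpha\|_{TV} = C\,\|\rho\|_{C^{-n}_b(\mR^d)},
\end{align*}
with $C = C(f,n)$. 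Smoothness and boundedness of $g$ together with boundedness of all its derivatives follow because each $\partial^\alpha f$ and all of its further derivatives lie in $L^1\cap C^\infty_b$ (the latter since $f\in C^\infty$ with all derivatives integrable, so in particular $\partial^\beta f\in C_0$), and convolving such a function with a finite measure preserves membership in $C^\infty_b$: one differentiates under the integral sign, $\partial^\beta(\partial^\alpha f\ast\mu_\alpha) = (\partial^{\alpha+\beta}f)\ast\mu_\alpha$, and $\|(\partial^{\alpha+\beta}f)\ast\mu_\alpha\|_\infty\le\|\partial^{\alpha+\beta}f\|_\infty\|\mu_\alpha\|_{TV}$.

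The only genuinely delicate point is the first step: justifying the manipulation $\langle\rho\ast f,\phi\rangle = \langle\rho,\phi\ast\overline f\rangle$ and the subsequent Fubini for a large enough class of $\phi$ to conclude that the functional $\rho\ast f$ really is the one induced by $g$ via \eqref{eq:represent}, rather than merely agreeing with it on a dense subspace. By Lemma~\ref{lem:cnegapprox} we already know $\rho\ast f\in C^{-0}_b(\mR^d)$, so it suffices to test against all $\phi\in C_b(\mR^d)$; for such $\phi$ one has $\phi\ast\overline f\in C^{\lceil n\rceil}_b(\mR^d) = C^n_b(\mR^d)$, so the pairing $\langle\rho,\phi\ast\overline f\rangle$ and the measure representation \eqref{eq:intrep} both apply directly, and the Fubini step is then exactly as above with $\phi$ bounded and $\partial^\alpha f\in L^1$, $\mu_\alpha$ finite. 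Care is needed that $\phi\ast\overline{\partial^\alpha f}$ is genuinely the $\alpha$-th derivative of $\phi\ast\overline f$ when $\phi$ is merely bounded and continuous; this is standard (differentiate the smooth factor) and uses that each $\partial^\alpha f\in L^1$. This establishes that $\rho\ast f$ coincides, as an element of $C^{-0}_b(\mR^d)$, with the functional $\phi\mapsto\int\phi g$, i.e. $\rho\ast f = g$ in the sense of \eqref{eq:represent}, completing the proof.
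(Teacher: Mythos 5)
Your argument is correct and essentially reproduces the paper's proof: fix a measure representation with $\sum_{|\alpha|\le n}\|\mu_\alpha\|_{TV}=\|\rho\|_{C^{-n}_b(\mR^d)}$ from Lemma~\ref{lem:intrep}, use Fubini to identify $\rho\ast f$ with the density $\sum_{|\alpha|\le n}(-1)^{|\alpha|}\,\partial^\alpha f\ast\mu_\alpha$, and estimate its $L^1$ norm via Young's inequality for convolutions with finite measures. (You drop a $(-1)^{|\alpha|}$ in your intermediate display but recover it when defining $g$; the sign is in any case immaterial to the $L^1$ bound, and the paper's own computation is similarly loose with the $\overline{\,\cdot\,}$ bookkeeping.)
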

\begin{proof}
Let $\rho\in \mathfrak{C}^{-n}_b(\mR^d)$ be represented as \eqref{eq:intrep} with $\|\rho\|_{C^{-n}_b(\mR^d)} = \sum\limits_{|\alpha|\leq n} \|\mu_\alpha\|_{TV}$, see Lemma~\ref{lem:intrep}. Then for every $\phi\in L^\infty(\mathbb{R}^d)$ we have
\begin{align*}
    \langle \rho\ast f,\phi\rangle = \sum\limits_{|\alpha|\leq n} \int_{\mR^d} \partial^\alpha (\phi\ast f)(x)\, \mu_\alpha(dx) =  \sum\limits_{|\alpha|\leq n} \int_{\mR^d}  \phi\ast \partial^\alpha f(x)\, \mu_\alpha(dx) = \int_{\mR^d} \phi(x) \sum\limits_{|\alpha|\leq n} (\overline{\partial^\alpha f} \ast \mu_\alpha)(dx).
\end{align*}
By assumptions on $f$, the last measure is absolutely continuous with a smooth integrable density. Furthermore, by the dual characterization of the $L^1$ norm and the fact that $\|\rho\|_{C^{-n}_b(\mR^d)} = \sum\limits_{|\alpha|\leq n} \|\mu_\alpha\|_{TV}$, we get
\begin{align*}
    \|\rho\ast f\|_{L^1(\mR^d)} = \bigg\|\sum\limits_{|\alpha|\leq n} (\overline{\partial^\alpha f} \ast \mu_\alpha)(dx)\bigg\|_{L^1(\mR^d)} \leq \sum\limits_{|\alpha|\leq n} \|\overline{\partial^\alpha f}\|_{L^1(\mR^d)}  \|\mu_\alpha\|_{TV} \leq C\|\rho\|_{C^{-n}_b(\mR^d)}.
\end{align*}
\end{proof}
We will use various types of differentiable/H\"{o}lder functions with two spatial or space-time variables. First, following \cite{MR3967062}, for $m,n\in \{0,1,\ldots\}$, let $C^{m,n}_{b,x,y}(\mR^{2d})$ be the space of functions $u\colon \mR^{2d}\to \mR$, such that for all $0\leq k \leq m$, $0\leq l\leq n$ the derivatives $D^{k}_xD^{l}_y u$ exist and are bounded and continuous, with the norm
\begin{align*}
    \|u\|_{C^{m,n}_{b,x,y}(\mR^{2d})} = \sum\limits_{k=0}^{m}\sum\limits_{l=0}^{n} \|D^{k}_xD^{l}_y u\|_{\infty}.
\end{align*}
   We also consider the space $C^{m+\alpha}_b(\mR^d,C^{n+\beta}_b(\mR^d))$ for $m,n\in \{0,1,\ldots\}$ and $\alpha,\beta\in (0,1)$, endowed with the norm
\begin{align}\label{eq:2varholder}
    \|u\|_{C^{m+\alpha}_b(\mR^d,C^{n+\beta}_b(\mR^d))} = \|u\|_{C^{m,n}_{b,x,y}(\mR^{2d})} + \sum\limits_{k=0}^{m}\sup\limits_{x\in \mR^d}\|D_x^k u(x,\cdot)\|_{C^{n+\beta}_b(\mR^d)} +  \sup\limits_{h\neq 0}\bigg\|\frac{D^{m}_xu(x+h,\cdot) - D^{m}_xu(x,\cdot)}{|h|^{\alpha}}\bigg\|_{C^{n+\beta}_b(\mR^d)}.
\end{align}
%
In this space we can differentiate under functionals of class $(C^{-n-\beta}_{b})_y$ with respect to the parameter $x$. 
 
\begin{lemma}\label{lem:dmf}
If $f\in C^{m+\alpha}_b(\mR^d,C^{n+\beta}_b(\mR^d))$ with $m,n\in \{0,1,\ldots\}$, $\alpha\in(0,1)$, $\beta\in [0,1)$, then for any $\phi\in C^{-n-\beta}_b(\mR^d)$ the map $x\mapsto \phi(f(x,\cdot))$ belongs to $C^{m+\alpha}_b(\mR^d)$ with
\begin{align}\label{eq:phifbound}
    \|\phi(f)\|_{C^{m+\alpha}_b(\mR^d)} \leq \|\phi\|_{C^{-n-\beta}_b(\mR^d)}\|f\|_{C^{m+\alpha}_b(\mR^d,C^{n+\beta}_b(\mR^d))},
\end{align}and we have $D^k_x\phi(f(x,\cdot)) = \phi(D^k_xf(x,\cdot))$ for $1\leq k\leq m$.
\end{lemma}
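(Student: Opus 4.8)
The plan is to establish the estimate \eqref{eq:phifbound} first, and then bootstrap to the differentiation formula via difference quotients. Let me denote $g(x) := \phi(f(x,\cdot))$, which makes sense because for each fixed $x$ the slice $f(x,\cdot)$ lies in $C^{n+\beta}_b(\mR^d)$ by the very definition of the norm \eqref{eq:2varholder}, and $\phi \in C^{-n-\beta}_b(\mR^d) = (C^{n+\beta}_b(\mR^d))^*$.

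\textbf{Step 1: Boundedness and the estimate on the $C^m_b$ part.} For the sup-norm of $g$, bound $|g(x)| = |\phi(f(x,\cdot))| \le \|\phi\|_{C^{-n-\beta}_b}\,\|f(x,\cdot)\|_{C^{n+\beta}_b} \le \|\phi\|_{C^{-n-\beta}_b}\,\|f\|_{C^{m+\alpha}_b(\mR^d,C^{n+\beta}_b(\mR^d))}$, the last inequality by the middle term in \eqref{eq:2varholder} (the $k=0$ summand). For the derivatives, the key point is that the difference quotient $\frac{f(x+he_i,\cdot)-f(x,\cdot)}{h}$ converges in the Banach space $C^{n+\beta}_b(\mR^d)$ as $h\to 0$ (to $D_{x_i}f(x,\cdot)$); this uses that $D_x f$ exists and is continuous as a map into $C^{n+\beta}_b$, which follows from the finiteness of all three terms in \eqref{eq:2varholder} together with uniform continuity coming from the Hölder term. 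Since $\phi$ is a bounded linear functional, it commutes with this limit, giving $\partial_{x_i} g(x) = \phi(D_{x_i}f(x,\cdot))$; iterating the argument (the slices $D^j_x f(x,\cdot)$ still lie in $C^{n+\beta}_b$ and depend suitably continuously on $x$ for $j \le m$) yields $D^k_x g(x) = \phi(D^k_x f(x,\cdot))$ for $1 \le k \le m$, and hence $\|D^k_x g\|_\infty \le \|\phi\|_{C^{-n-\beta}_b}\,\sup_x\|D^k_x f(x,\cdot)\|_{C^{n+\beta}_b}$, controlled by \eqref{eq:2varholder}.

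\textbf{Step 2: The top-order Hölder seminorm.} Write, for $h \ne 0$,
\[
\frac{D^m_x g(x+h) - D^m_x g(x)}{|h|^\alpha} = \phi\!\left(\frac{D^m_x f(x+h,\cdot) - D^m_x f(x,\cdot)}{|h|^\alpha}\right),
\]
using Step~1, so its absolute value is at most $\|\phi\|_{C^{-n-\beta}_b}$ times the norm in $C^{n+\beta}_b$ of the bracketed slice-valued difference quotient, which is exactly the third term of \eqref{eq:2varholder}. Collecting Steps~1 and~2 and summing gives \eqref{eq:phifbound}.

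\textbf{Main obstacle.} The one genuinely delicate point is justifying that the $x$-difference quotients of $f$ (and of its $x$-derivatives up to order $m-1$) converge \emph{in the norm of} $C^{n+\beta}_b(\mR^d)$, not merely pointwise in $y$, so that the linear functional $\phi$ — which is only continuous for that norm — may be passed through the limit. This requires extracting from the single Hölder bound in \eqref{eq:2varholder} a genuine (norm-)differentiability statement for $x \mapsto D^{m-1}_x f(x,\cdot) \in C^{n+\beta}_b(\mR^d)$; a standard argument is that the $C^\alpha$ bound on the increments of $D^m_x f$ forces the difference quotients of $D^{m-1}_x f$ to form a Cauchy net in $C^{n+\beta}_b$ as $h \to 0$, with limit $D^m_x f(x,\cdot)$. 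Everything else is a routine application of the operator-norm inequality $|\phi(v)| \le \|\phi\|\,\|v\|$ slot by slot against the three terms defining the norm \eqref{eq:2varholder}.
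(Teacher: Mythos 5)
Your proposal is correct and takes essentially the same route as the paper: both hinge on showing that the $x$-difference quotients of $D^{k-1}_x f(x,\cdot)$ converge in the Banach space $C^{n+\beta}_b(\mR^d)$ (Lipschitz control for $k\le m-1$ from the boundedness of $D^k_x f$, Hölder-$\alpha$ control for $k=m$ from the third term of \eqref{eq:2varholder}), so that $\phi$ passes through the limit, after which \eqref{eq:phifbound} is just the operator-norm inequality applied term by term. The paper simply makes the norm-convergence step slightly more explicit by writing the integral remainder $\int_0^1 (D^k_x f(x+\lambda h e_i,\cdot)-D^k_x f(x,\cdot))\,d\lambda$, which is the concrete form of the "Cauchy net" argument you gesture at in your final paragraph.
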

\begin{proof}
First, note that  for $1\leq k\leq m-1$, $1\leq i\leq d$, and $h\neq 0$ we have
\begin{align*}
    \bigg\|\frac{D^{k-1}_xf(x+he_i,\cdot) - D^{k-1}_xf(x,\cdot)}{h} - \partial_{x_i}D^{k-1}_xf(x,\cdot)\bigg\|_{C^{n+\beta}_b(\mR^d)} &\leq \bigg\|\int_0^1 (D^k_xf(x+\lambda he_i,\cdot) - D^{k}_x f(x,\cdot))\, d\lambda\bigg\|_{C^{n+\beta}_b(\mR^d)}\\ &\leq C|h|.
\end{align*}
For $k=m$ we get a similar estimate with $|h|^{\alpha}$ on the right-hand side.
Therefore, since $\phi\in C^{-n-\beta}_b(\mR^d)$, we obtain
\begin{align*}
    \partial_{x_i}D^{k-1}_x\phi(f(x,\cdot)) = \lim\limits_{h\to 0} \phi\bigg(\frac{D^{k-1}_xf(x+he_i,\cdot) - D^{k-1}_xf(x,\cdot)}{h}\bigg) = \phi(\partial_{x_i}D^{k-1}_xf(x,\cdot)),\quad 1\leq k\leq m,
\end{align*}
where for $k=m$ we used the fact that $\alpha>0$. The fact that $x\mapsto \phi(f(x,\cdot))$ belongs to $C^{m+\alpha}_b(\mR^d)$ with \eqref{eq:phifbound} now follows immediately from the definition of $\|f\|_{C^{m+\alpha}_b(\mR^d,C^{n+\beta}_b(\mR^d))}$.
\end{proof}

\begin{remark}\label{rem:dmf}
In \cite[p. 29]{MR3967062} two variable H\"{o}lder spaces $C^{m+\alpha,n+\beta}$ are defined in a similar way as our $C^{m,n}_{b,x,y}$-spaces by adding the following assumption on the highest order derivatives $D^{m}_xD^{n}_y$:
\begin{align}\label{eq:2holder}
    \sup\limits_{(x_1,y_1)\neq (x_2,y_2)} \frac{|D^{m}_xD^{n}_yu(x_1,y_1) - D^m_xD^n_y u(x_2,y_2)|}{|x_1 - x_2|^\alpha + |y_1-y_2|^\beta}<\infty.
\end{align}
In this space the highest order $y$-H\"{o}lder norms are bounded, but not necessarily continuous in $x$. This is insufficient for the proof of Lemma~\ref{lem:dmf} and one needs to consider $C^{m+\alpha}_b(\mR^d,C^{n+\beta}_b(\mR^d))$ instead.
\end{remark}

For $T_1<T_2$ and $m\in \{1,2,\ldots\}$ we let $C^{m,2m}_b([T_1,T_2]\times \mR^d)$ be the usual parabolic space \cite{MR0241822} of smooth functions with the norm
\begin{align*}
    \|u\|_{C^{m,2m}_b([T_1,T_2]\times \mR^d)} = \sum\limits_{\substack{0\leq k\leq m\\ 0\leq l\leq 2m\\ 2k+l\leq 2m}}\|\partial_t^k D^{l}_x u\|_{\infty}.
\end{align*}
We will also use other types of iterated spaces.  Let $X$ be a Banach space and define
\begin{align*}
    &B([0,T],X) = \{u\colon [0,T]\to X\ | \ \sup\limits_{t\in [0,T]} \|u(t)\|_{X} < \infty\},\\
    &L^\infty([0,T],X) = \{u\colon [0,T] \to X\  | \ u \ {\rm is\  measurable\ and}\ \mathop{{\rm ess\, sup}}\limits_{t\in [0,T]} \|u(t)\|_{X} < \infty\},\\
    &L^p([0,T],X) = \{u\colon [0,T] \to X\  | \ u \ {\rm is\  measurable\  and}\ \int_0^T\|u(t)\|_{X}^p\, dt < \infty\}.
\end{align*}
For the definition of measurability for Banach space-valued functions we refer, e.g., to \cite{MR3617205}. The last two spaces above are sometimes called the Bochner spaces. We use the space $B([0,T],X)$ to indicate that the function is defined and bounded for \textit{every} $t\in[0,T]$.

We recall some results concerning product measurability for $L^p([0,T],L^1(\mR^d))$, $p\in [1,\infty]$. First note that for any $p\in (1,\infty)$ we have
\begin{align}\label{eq:Bochner}
    C([0,T],L^1(\mR^d)) \subseteq L^\infty([0,T],L^1(\mR^d)) \subseteq L^p([0,T],L^1(\mR^d)) \subseteq L^1([0,T],L^1(\mR^d)).
\end{align}
By \cite[Theorems 4.4.1 and 4.4.3]{MR1463946} (see also \cite[Proposition~1.2.25]{MR3617205}), for every $u\in L^1([0,T],L^1(\mR^d))$ there exists $\widetilde{u}\in L^1([0,T]\times \mR^d)$ (in particular, $\widetilde{u}$ is product measurable) such that for all $t\in[0,T]$ we have $\widetilde{u}(t,\cdot) = u(t)$ a.e. Therefore, by identifying $u$ with $\widetilde{u}$ we can use Fubini's theorem for functions in $L^1([0,T],L^1(\mR^d))$, hence for all the functions in spaces appearing in \eqref{eq:Bochner}.

We also note that the pairing of $\rho\in L^1([0,T],C^{-\gamma}_b(\mR^d))$ with $\phi\in L^{\infty}([0,T],C^{\gamma}_b(\mR^d))$ makes sense, in particular, the function $t\mapsto \langle \rho(t),\phi(t)\rangle$ is measurable, see \cite[IV.1]{MR0453964}. Functions of this type will often appear in Section~\ref{sec:further}.

\subsection{Space of probability measures}\label{sec:d0}
\begin{definition}
Let $$\Lip = \bigg\{\phi\in C_b(\mR^d) : \|\phi\|_{\infty}\leq 1,\ \sup\limits_{x,y\in \mR^d}\frac{|\phi(x) - \phi(y)|}{|x-y|} \leq 1\bigg\}.$$
Following \cite[Vol.~2, Section~8.3]{MR2267655},  for $m,m'\in \mP$ we define the Kantorovich--Rubinstein  distance $d_0$ as
$$d_0(m,m') = \sup\limits_{\phi\in \Lip} \bigg|\int_{\mR^d}\phi(x)\, (m'-m)(dx)\bigg|.$$
This metric is often called the bounded-Lipschitz metric, and should not be confused with Wasserstein metric $\mathcal W_1$.\footnote{The $\mathcal W_1$-metric is sometimes also called the Kantorovich--Rubinstein metric. The Kantorovich--Rubinstein duality gives a characterization of $\mathcal W_1$ that is similar to $d_0$, but $\mathcal W_1$ is a strictly stronger metric, requiring also convergence of first moments.} 
Below, whenever we consider $\mP$ as a metric space, we mean $(\mP,d_0)$. \ar{The above definition makes sense also for $m,m'\in \mathcal{M}(\mR^d)$, and $m\mapsto d_0(m,0)$ is a norm in $\mathcal{M}(\mR^d)$.} 
\end{definition}
\begin{remark}\label{rem:d0weak}
\noindent\begin{enumerate}[label=(\alph*)]
    \item Convergence of probability measures in $d_0$ is equivalent to their weak convergence (tested on $C_b(\mR^d)$ functions), see, e.g., Dudley \cite[Chapters 11.2 and 11.3]{MR1932358}. Consequently, by Prokhorov's theorem, tight subsets of $\mP$ are precompact.
\item By \cite[Theorems 4.19 and 17.23]{MR1321597} both $\mP$ and $C([0,T],\mP)$ are complete.
\item In the Schauder-type fixed point arguments one usually needs to have a locally convex topological vector space containing $\mP$. Here, the space $(\mathcal M(\mR^d), d_0)$ shall serve this purpose. We note in passing that $(\mathcal M(\mR^d), d_0)$ is not complete, see \cite[Chapter~8.3]{MR2267655}, but the Schauder--Tychonoff theorem only requires a normed space, see, e.g., \cite[Theorem~10.1]{MR3967045}.
\item Convergence of \textit{signed} measures in $d_0$ does not imply weak convergence or tightness, take for example $m_n = \delta_{n+\frac 1n} - \delta_{n}$.  
\end{enumerate}
\end{remark}
\begin{lemma}\label{lem:molltight}
Let $f$ satisfy the assumptions of Lemma~\ref{lem:cnegconv} and assume that there exist $\gamma\geq 0$ and $n\in \{0,1,\ldots\}$ such that $\rho\in C([0,T],C^{-\gamma}_b(\mR^d))$ and $\rho(t)\in \mathfrak{C}^{-n}_b(\mR^d)$ for every $t\in [0,T]$. Then $\rho\ast f\in C([0,T],L^1(\mR^d))$ and $\{\rho(t)\ast f: t\in [0,T]\}$ is a tight family.
\end{lemma}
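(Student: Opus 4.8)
The plan is to combine the two ingredients already available: Lemma~\ref{lem:cnegconv}, which for fixed $t$ tells us that $\rho(t)\ast f\in L^1(\mR^d)$ with $\|\rho(t)\ast f\|_{L^1(\mR^d)}\leq C\|\rho(t)\|_{C^{-n}_b(\mR^d)}$, and Lemma~\ref{lem:cnegapprox}, which gives quantitative control of the convolution in a weaker negative H\"older norm. First I would establish continuity $t\mapsto \rho(t)\ast f$ into $L^1(\mR^d)$. Fix $s,t\in[0,T]$ and write $(\rho(t)-\rho(s))\ast f$. Since $\rho(t)-\rho(s)\in\mathfrak{C}^{-n}_b(\mR^d)$ (the measure-representable class is a vector space, being norm-closed and closed under the obvious linear operations, cf. Lemma~\ref{lem:intrep}), Lemma~\ref{lem:cnegconv} applies to the difference and yields
\[
\|\rho(t)\ast f-\rho(s)\ast f\|_{L^1(\mR^d)}=\|(\rho(t)-\rho(s))\ast f\|_{L^1(\mR^d)}\leq C\|\rho(t)-\rho(s)\|_{C^{-n}_b(\mR^d)}.
\]
A priori the hypothesis only gives continuity of $\rho$ into $C^{-\gamma}_b(\mR^d)$, not into $C^{-n}_b(\mR^d)$, so the right-hand side is not obviously small. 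This is the main obstacle, and I would resolve it by exploiting that $\rho$ takes values in a \emph{bounded} subset of $\mathfrak{C}^{-n}_b(\mR^d)$: by the uniform boundedness principle (or directly from $\rho\in C([0,T],C^{-\gamma}_b)$ compactness of $[0,T]$), $M:=\sup_{t}\|\rho(t)\|_{C^{-n}_b(\mR^d)}<\infty$ — wait, this is not automatic since $C^{-n}_b\hookrightarrow C^{-\gamma}_b$ fails to be compact. The cleaner route is to go through the mollified convolution: using $f=\eta_\eps\ast g$-type decompositions is awkward, so instead I estimate $\|(\rho(t)-\rho(s))\ast f\|_{L^1}$ directly via the representation in the proof of Lemma~\ref{lem:cnegconv}, $(\rho(t)-\rho(s))\ast f=\sum_{|\alpha|\le n}\overline{\partial^\alpha f}\ast(\mu^t_\alpha-\mu^s_\alpha)$, but the $\mu_\alpha$ need not depend continuously on $t$.

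Given this subtlety, the approach I would actually carry out is a soft one. Continuity: for each $\delta>0$ pick, by Lemma~\ref{lem:cnegapprox} applied with $\rho$ replaced by an arbitrary element, a smooth kernel; instead, observe that $f$ is fixed and smooth with all derivatives in $L^1$, and that $\rho\ast f$ only ever sees $\rho$ paired against translates of $f$ and its derivatives, all of which lie in $C^\gamma_b(\mR^d)$. Concretely, $(\rho(t)\ast f)(x)=\langle\rho(t),\tau_x\overline f\rangle$ where $\tau_x\overline f(y)=f(x-y)$, and $x\mapsto\tau_x\overline f$ is continuous from $\mR^d$ into $C^{\gamma}_b(\mR^d)$ with uniformly bounded norm; hence for each fixed $x$, $t\mapsto(\rho(t)\ast f)(x)$ is continuous and $\sup_{t,x}|(\rho(t)\ast f)(x)|\le \sup_t\|\rho(t)\|_{C^{-\gamma}_b}\sup_x\|\tau_x\overline f\|_{C^\gamma_b}<\infty$. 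Combined with the uniform $L^1$-bound $\sup_t\|\rho(t)\ast f\|_{L^1}<\infty$ from Lemma~\ref{lem:cnegconv} (again using the norm-closedness of $\mathfrak C^{-n}_b$ and that $\sup_t\|\rho(t)\|_{C^{-n}_b}$ is finite — which I would justify by noting $\rho([0,T])$ is compact in $C^{-\gamma}_b$, hence bounded there, but for the $C^{-n}_b$ bound I invoke that the hypothesis $\rho(t)\in\mathfrak C^{-n}_b$ in the intended applications comes with a uniform bound; alternatively assume it or derive it from the construction), a vector-valued dominated convergence / Vitali-type argument upgrades pointwise-in-$x$ continuity plus uniform integrability (tightness, see below) to continuity in $L^1(\mR^d)$.

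For tightness of $\{\rho(t)\ast f:t\in[0,T]\}$ in $L^1(\mR^d)$ I would use the Kolmogorov--Riesz theorem, exactly the tool the introduction flags. Three conditions: (i) uniform $L^1$-bound, already in hand; (ii) uniform $L^1$-equicontinuity of translates: $\|\tau_h(\rho(t)\ast f)-\rho(t)\ast f\|_{L^1}=\|\rho(t)\ast(\tau_h f-f)\|_{L^1}\le C\|\rho(t)\|_{C^{-n}_b}\|\tau_h f-f\|_{W^{n,1}}\to0$ as $h\to0$ uniformly in $t$, using Lemma~\ref{lem:cnegconv} with the shifted kernel $\tau_h f-f$ (still smooth with all derivatives in $L^1$) and the uniform $C^{-n}_b$-bound; (iii) uniform smallness of mass at infinity: $\int_{|x|>R}|(\rho(t)\ast f)(x)|\,dx$, which I would control by writing $\rho(t)\ast f=\sum_{|\alpha|\le n}\overline{\partial^\alpha f}\ast\mu^t_\alpha$ with $\sum\|\mu^t_\alpha\|_{TV}=\|\rho(t)\|_{C^{-n}_b}\le M$, and then $\int_{|x|>R}|\overline{\partial^\alpha f}\ast\mu^t_\alpha|\le\int\!\!\int_{|x|>R}|\partial^\alpha f(y-x)|\,d|\mu^t_\alpha|(y)\,dx$; since $\mu^t_\alpha$ are not tight this tail need not be small on its own — but note $\tau_x\overline{\partial^\alpha f}\to0$ in $C_0$ only, not in $C^\gamma_b$, so this naive bound fails, and the correct fix is to again split via the same Vitali argument, deducing the tail bound from $L^1$-convergence along sequences rather than uniformly; in fact (iii) is not needed for Kolmogorov--Riesz on all of $\mR^d$ if one instead combines (i)+(ii) with the pointwise continuity to get sequential compactness and closedness, giving precompactness, hence (the image of a compact set being compact) the desired tight — i.e.\ totally bounded, hence relatively compact — family. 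The delicate point throughout, and the one deserving the most care, is the passage from the $C^{-\gamma}_b$-continuity of $\rho$ to genuine $L^1(\mR^d)$-statements, which forces the Vitali/uniform-integrability detour rather than a direct norm estimate.
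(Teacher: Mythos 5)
Your proposal identifies the right obstacle --- that the hypothesis gives continuity of $\rho$ only in $C^{-\gamma}_b(\mR^d)$, while Lemma~\ref{lem:cnegconv} estimates $\|\cdot\ast f\|_{L^1}$ by the $C^{-n}_b$-norm --- but it never resolves it, and the several routes you sketch either rely on a uniform bound $\sup_t\|\rho(t)\|_{C^{-n}_b(\mR^d)}<\infty$ that you yourself flag as unjustified, or run into circularity (you need tightness to run the Vitali argument that is supposed to deliver tightness). The decisive idea you are missing is the duality trick: to estimate the $L^1$-norm of $\rho(t)\ast f-\rho(s)\ast f$, pair against $\phi\in L^\infty(\mR^d)$ and transfer the convolution onto $\phi$. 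Since $f$ is smooth with all derivatives in $L^1$, convolution with $\overline f$ maps $L^\infty(\mR^d)$ boundedly into $C^\gamma_b(\mR^d)$ for every $\gamma\ge 0$, and one gets
\[
\bigl|\langle \rho(t)\ast f-\rho(s)\ast f,\phi\rangle\bigr|=\bigl|\langle\rho(t)-\rho(s),\phi\ast\overline f\rangle\bigr|\le \|\rho(t)-\rho(s)\|_{C^{-\gamma}_b(\mR^d)}\,\|\phi\ast\overline f\|_{C^\gamma_b(\mR^d)}\le C(f)\,\|\rho(t)-\rho(s)\|_{C^{-\gamma}_b(\mR^d)}\|\phi\|_{\infty}.
\]
Combined with the fact that $\rho(t)\ast f\in L^1$ for each $t$ (Lemma~\ref{lem:cnegconv}), taking the supremum over $\|\phi\|_\infty\le1$ gives $\|\rho(t)\ast f-\rho(s)\ast f\|_{L^1}\le C(f)\|\rho(t)-\rho(s)\|_{C^{-\gamma}_b}$. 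Thus the $C^{-\gamma}_b$-continuity of $\rho$ carries over directly to $L^1$-continuity of $\rho\ast f$, with no need for a uniform $C^{-n}_b$ bound.

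Your treatment of tightness is also logically inverted. Once $\rho\ast f\in C([0,T],L^1(\mR^d))$ is established, the range $\{\rho(t)\ast f:t\in[0,T]\}$ is the continuous image of a compact interval, hence compact in $L^1(\mR^d)$. Tightness is then an immediate \emph{consequence} of Kolmogorov--Riesz (it is one of the necessary conditions for $L^1$-compactness), not something to be verified by hand via estimates on tails of $\overline{\partial^\alpha f}\ast\mu^t_\alpha$. You instead try to run Kolmogorov--Riesz in the forward direction with only two of its three hypotheses and recover the third from sequential arguments, which does not close; on all of $\mR^d$ the tail condition cannot be dropped. The paper's proof is short precisely because both the continuity and the tightness fall out once one uses the $L^\infty$--$L^1$ duality and smoothing by $f$ in the right order.
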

\begin{proof}
  Since $\rho(t)\in \mathfrak{C}^{-n}_b(\mR^d)$, by Lemma~\ref{lem:cnegconv} we get that $\rho(t)\ast f\in L^1(\mR^d)$ for all $t\in[0,T]$. Furthermore, for every $s,t\in[0,T]$ and $\phi\in L^\infty(\mR^d)$,
  \begin{align*}
      |\langle \rho(t)\ast f - \rho(s)\ast f,\phi\rangle| \leq \|\rho(t) - \rho(s)\|_{C^{-\gamma}_b(\mR^d)}\|\phi\ast \overline{f}\|_{C^\gamma_b(\mR^d)} \leq C(f)\|\rho(t) - \rho(s)\|_{C^{-\gamma}_b(\mR^d)}\|\phi\|_{L^\infty(\mR^d)},
  \end{align*}
  thus from $\rho \in C([0,T],C^{-\gamma}_b(\mR^d))$ we get $\rho\ast f\in C([0,T],L^1(\mR^d))$. Since $\{\rho(t)\ast f: t\in [0,T]\}$ is a continuous image of a compact interval, it is compact in $L^1(\mR^d)$, hence by Kolmogorov--Riesz theorem \cite[Theorem~5]{MR2734454} we get its tightness.
\end{proof}
The following result allows us to quantify the notion of tightness, with the help of generalized moments. \ar{The general idea is quite well-known, see e.g., \cite[Example~8.6.5]{MR2267655}, but the form presented here obtained in \cite[Section~4.1]{2021arXiv210406985C}, allows to use the function $\psi$ below as a test function in the Fokker--Planck equation. This is crucial for proving preservation of tightness in Lemma~\ref{lem:FPL1} and Theorem~\ref{th:FP}.}
\begin{lemma}\label{lem:tight}
A family $\mathcal W$ of probability measures  is tight if and only if there exists a constant $C>0$ and a non-negative subadditive function $\psi\in C^2(\mR^d)$ such that $\|D\psi\|_{\infty}$, $\|D^2\psi\|_{\infty}<\infty$, $\lim\limits_{|x|\to\infty} \psi(x) = \infty$ and
$$\int_{\mR^d} \psi(x)\, m(dx) \leq C,\quad m\in \mathcal W.$$
\end{lemma}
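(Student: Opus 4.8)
\textbf{Proof proposal for Lemma~\ref{lem:tight}.}

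The plan is to prove the two implications separately, with the easy direction first. For the ``if'' direction, assume such a $\psi$ exists. Given $\eps>0$, since $\psi(x)\to\infty$ as $|x|\to\infty$ we can pick $R>0$ so that $\psi(x)\geq C/\eps$ on $B(0,R)^c$. Then Markov's inequality gives, for every $m\in\mathcal W$,
\begin{align*}
m\big(B(0,R)^c\big)\leq \frac{\eps}{C}\int_{B(0,R)^c}\psi(x)\,m(dx)\leq \frac{\eps}{C}\int_{\mR^d}\psi(x)\,m(dx)\leq \eps,
\end{align*}
using $\psi\geq 0$. Since $\eps$ was arbitrary this is exactly tightness of $\mathcal W$.

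The substance is in the ``only if'' direction: constructing a single subadditive $C^2$ function $\psi$ with bounded first and second derivatives that grows to infinity and has uniformly bounded $\psi$-mass over the tight family. The idea is to build $\psi$ as a function of $|x|$ that grows slowly enough to be controlled by tightness, and concavely enough (in the radial variable) to be subadditive with bounded derivatives. First I would extract, from tightness of $\mathcal W$, an increasing sequence $R_k\to\infty$ (take $R_0=0$) such that $\sup_{m\in\mathcal W} m\big(B(0,R_k)^c\big)\leq 2^{-k}$. Then I would define a nonnegative, nondecreasing, concave, piecewise-linear function $g\colon[0,\infty)\to[0,\infty)$ with $g(0)=0$, $g\to\infty$, and slopes decreasing to $0$, chosen so that $g(R_{k+1})-g(R_k)$ is summable against the tail bounds; concretely one can arrange $g(R_{k+1})\leq g(R_k)+1$, so that $g(R_k)\leq k$, which makes
\begin{align*}
\int_{\mR^d} g(|x|)\,m(dx) = \sum_{k\geq 0}\int_{R_k\leq |x|<R_{k+1}} g(|x|)\,m(dx)\leq \sum_{k\geq 0} g(R_{k+1})\,m\big(B(0,R_k)^c\big)\leq \sum_{k\geq 0}(k+1)2^{-k} =: C<\infty
\end{align*}
uniformly in $m\in\mathcal W$. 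Concavity of $g$ together with $g(0)=0$ gives subadditivity on $[0,\infty)$, hence $\psi(x):=g(|x|)$ is subadditive on $\mR^d$ by the triangle inequality for $|\cdot|$ and monotonicity of $g$; boundedness of $g'$ (the largest slope) gives $\|D\psi\|_\infty<\infty$ away from the origin, and $g'\geq 0$ nonincreasing gives the right sign for the second derivative where $\psi$ is smooth.

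The main obstacle — and the only genuinely technical point — is that $\psi(x)=g(|x|)$ with $g$ merely piecewise linear is not $C^2$ (it fails to be differentiable at $0$ and at each breakpoint $R_k$, and $|x|$ itself is not smooth at the origin). This is fixed by a standard mollification/rounding-off: replace the piecewise-linear concave $g$ by a $C^2$ concave function $\widetilde g$ with $\widetilde g(r)=g(r)$ for $r$ outside small neighborhoods of the breakpoints and with $|\widetilde g - g|\leq 1$ everywhere, $0\leq \widetilde g' \leq \|g'\|_\infty$, $\widetilde g''\leq 0$ bounded; near $r=0$ take $\widetilde g$ to agree with a smooth concave cap so that $\widetilde g(|x|)$ extends to a $C^2$ function on $\mR^d$ (e.g. make $\widetilde g(r)=a - br^2$ for small $r$ with the constants matched to the first linear piece, which is smooth as a function of $x$ near $0$). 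One checks subadditivity is preserved (concavity plus $\widetilde g(0)\geq 0$), the growth $\widetilde g(r)\to\infty$ is preserved since $|\widetilde g-g|\leq 1$, the derivative bounds hold by construction, and the moment bound becomes $\int \psi\,dm\leq C+1$ uniformly. I would also remark that one can cite a known construction of such a ``rate of tightness'' function (e.g. \cite[Example~8.6.5]{MR2267655} or \cite[Section~4.1]{2021arXiv210406985C}) and only verify the extra properties — subadditivity and the $C^2$ bounds — that are specific to our statement.
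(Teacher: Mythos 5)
Your easy (``if'') direction and the overall strategy for the ``only if'' direction (piecewise linear concave $g$ with $g(R_k)\leq k$ calibrated to tail probabilities $2^{-k}$, then smooth) are exactly the standard argument; the paper does not actually prove the lemma but cites \cite[Example~8.6.5]{MR2267655} and \cite[Section~4.1]{2021arXiv210406985C} for it, so your remark at the end that one can invoke a known ``rate of tightness'' construction matches the paper's own treatment.

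There is, however, one genuine gap in the smoothing step, and it is precisely the point where you declare ``one checks subadditivity is preserved (concavity plus $\widetilde g(0)\geq 0$)''. You cannot have $\widetilde g$ simultaneously concave, nondecreasing, unbounded, \emph{and} such that $\psi(x)=\widetilde g(|x|)$ is $C^2$. Indeed, $C^2$-smoothness of $\psi$ at the origin forces $\widetilde g'(0)=0$ (otherwise $D\psi(x)=\widetilde g'(|x|)\,x/|x|$ has no limit as $x\to 0$, and $D^2\psi$ blows up like $\widetilde g'(r)/r$); but if $\widetilde g$ is concave with $\widetilde g'(0)=0$ then $\widetilde g'$ is nonincreasing, hence $\widetilde g'\leq 0$, hence $\widetilde g$ cannot tend to $\infty$. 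Your own concrete cap $\widetilde g(r)=a-br^2$ matched $C^1$ to a linear piece of positive slope forces $b<0$, so the cap is in fact \emph{convex}, which contradicts the concavity you later invoke to get subadditivity. So the stated justification fails, even though some version of the construction can be rescued.

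The cleanest repair is not to insist on concavity of $\widetilde g$ but to exploit the fact that the lemma only requires $\psi\geq 0$, not $\psi(0)=0$, and to absorb the loss of subadditivity into an additive constant. Concretely: build the piecewise linear concave $g$ as you do (so $g(0)=0$, $g'\leq M$, $g$ subadditive on $[0,\infty)$, $\int g(|x|)\,m(dx)\leq C$ uniformly). Extend $g$ evenly to $\mR$ and mollify: $g_\eps=g*\eta_\eps$ is $C^\infty$, even (so $g_\eps'(0)=0$, and $\psi_\eps(x):=g_\eps(|x|)$ is $C^2$ on all of $\mR^d$), has $0\leq g_\eps'\leq M$ and $|g_\eps''|$ bounded, and satisfies $|g_\eps-g|\leq M\eps$ by Lipschitz continuity of $g$. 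Then for $a,b\geq 0$,
\begin{align*}
g_\eps(a+b)\leq g(a+b)+M\eps\leq g(a)+g(b)+M\eps\leq g_\eps(a)+g_\eps(b)+3M\eps,
\end{align*}
so $\psi(x):=3M\eps+g_\eps(|x|)$ is genuinely subadditive, $C^2$, nonnegative, has bounded first and second derivatives, tends to $\infty$, and satisfies $\int\psi\,dm\leq 3M\eps+M\eps+C$ uniformly in $m\in\mathcal W$. Replacing the ``concavity is preserved'' remark by this additive-constant argument (or by a case-by-case verification of subadditivity of your specific quadratic cap, which in fact works by AM--GM but needs to be said) closes the gap and gives a complete proof.
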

\begin{lemma}\label{lem:moll}
Let $m\in \mP$ and denote $m_\eps = m\ast \eta_\eps$, where $\eta$ is a standard mollifier. Then for every bounded uniformly continuous function $\phi$ with the modulus of continuity $\omega_\phi$ we have
$$\bigg|\int_{\mR^d} \phi(x)\, (m_\eps - m)(dx)\bigg| \leq \omega_\phi(\eps).$$
As a consequence,
$$\lim\limits_{\eps\to 0^+} d_0(m\ast \eta_\eps,m) = 0.$$
\end{lemma}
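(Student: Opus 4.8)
The plan is to prove the two claims in sequence, the first being the quantitative estimate and the second a direct corollary.

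\textbf{Step 1: The integral bound.} Fix a bounded uniformly continuous $\phi$ with modulus of continuity $\omega_\phi$. Since $\eta_\eps$ is a probability density supported in $B(0,\eps)$, I would write
\begin{align*}
\int_{\mR^d}\phi\,(m_\eps - m)(dx) = \int_{\mR^d}\Big(\int_{\mR^d}\phi(x)\eta_\eps(x-y)\,dx\Big)m(dy) - \int_{\mR^d}\phi(y)\,m(dy),
\end{align*}
using Fubini (justified because $\phi$ is bounded and $\eta_\eps\in L^1$, and $m$ is a probability measure). The inner $x$-integral equals $\int_{\mR^d}\phi(y+z)\eta_\eps(z)\,dz$ after a change of variables, so the whole expression becomes $\int_{\mR^d}\int_{\mR^d}\big(\phi(y+z)-\phi(y)\big)\eta_\eps(z)\,dz\,m(dy)$. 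Since $\eta_\eps$ is supported in $B(0,\eps)$ we have $|\phi(y+z)-\phi(y)|\le\omega_\phi(\eps)$ on that support, and integrating against the two probability measures $\eta_\eps(z)\,dz$ and $m(dy)$ gives the bound $\omega_\phi(\eps)$.

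\textbf{Step 2: Convergence in $d_0$.} By the definition of $d_0$ from Section~\ref{sec:d0}, $d_0(m_\eps,m)=\sup_{\phi\in\Lip}\big|\int_{\mR^d}\phi\,(m_\eps-m)(dx)\big|$, where every $\phi\in\Lip$ is $1$-Lipschitz, hence uniformly continuous with modulus of continuity $\omega_\phi(r)\le r$. Applying Step 1 uniformly over $\phi\in\Lip$ yields $d_0(m_\eps,m)\le\eps$, which tends to $0$ as $\eps\to0^+$.

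\textbf{Main obstacle.} There is essentially no deep difficulty here; the only point requiring a little care is the Fubini interchange, but it is immediate since $\phi$ is bounded, $\eta_\eps\in L^1(\mR^d)$, and $m\in\mP$, so the double integral is absolutely convergent. The key structural observation is simply that convolution with $\eta_\eps$ amounts to averaging translates by amounts at most $\eps$, and the uniform modulus of continuity controls all such translates simultaneously — this is what makes the bound independent of $\phi$ within the class $\Lip$ and gives the uniform rate $\eps$ for $d_0$.
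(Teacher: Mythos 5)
Your proof is correct and follows essentially the same route as the paper: both arguments reduce the quantity to a double integral of the difference $\phi(x)-\phi(y)$ against $\eta_\eps(x-y)\,dx\,m(dy)$, exploit the compact support of $\eta_\eps$ to bound $|\phi(x)-\phi(y)|$ by $\omega_\phi(\eps)$, and then note that $\omega_\phi(\eps)\le\eps$ for $\phi\in\Lip$ to conclude $d_0(m_\eps,m)\le\eps$. The only cosmetic difference is that you invoke Fubini and a change of variables, whereas the paper multiplies the second term by $\int\eta_\eps(x-y)\,dx=1$ and merges the integrals directly; these are the same manipulation.
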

\begin{proof}
Let $\phi$ be bounded and uniformly continuous. We have
\begin{align*}
    \bigg|\int_{\mR^d} \phi(x)\, (m_\eps - m)(dx)\bigg| &= \bigg|\int_{\mR^d} \phi(x) \int_{\mR^d} \eta_\eps(x-y)\, m(dy)\, dx - \int_{\mR^d} \phi(y)\, m(dy)\bigg|\\
    &=\bigg|\int_{\mR^d} \phi(x) \int_{\mR^d} \eta_\eps(x-y)\, m(dy)\, dx - \int_{\mR^d} \phi(y)\int_{\mR^d} \eta_\eps(x-y)\, dx\, m(dy)\bigg|\\
    &\leq \int_{\mR^d}\int_{\mR^d}|\phi(x) - \phi(y)|\eta_\eps(x-y)\, dx\, m(dy)\\
    &\leq \int_{\mR^d}\int_{B(x,\eps)}\omega_\phi(|x-y|) \eta_\eps(x-y)\, dx\, m(dy) \leq \omega_\phi(\eps).
\end{align*}
Since $\omega_{\phi}(\eps)\leq \eps$ for every $\phi\in \Lip$, we get the convergence in $d_0$.
\end{proof}
The following standard result follows from Riesz' representation theorem and mollification.
\begin{lemma}\label{lem:d0test}
Let $m,m'\in\mP$ and assume that for every $\phi\in C_c^\infty(\mR^d)$ we have $\int_{\mR^d} \phi (m-m') = 0$. Then $m=m'$.
\end{lemma}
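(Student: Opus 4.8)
The plan is to prove that two probability measures agreeing when tested against all $\phi \in C_c^\infty(\mR^d)$ must coincide, by bootstrapping from $C_c^\infty$ to a class of test functions rich enough to separate measures. First I would recall that by Riesz' representation theorem, a finite signed Borel measure on $\mR^d$ is determined by its action on $C_c(\mR^d)$ (or equivalently $C_0(\mR^d)$): if $\int \phi\, d\mu = 0$ for all $\phi \in C_c(\mR^d)$, then $\mu = 0$. So it suffices to upgrade the hypothesis from $C_c^\infty$ to $C_c$.

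The bridge is mollification. Given $\phi \in C_c(\mR^d)$, set $\phi_\eps = \phi \ast \eta_\eps$, where $\eta$ is the standard mollifier from Section~\ref{sec:prelim}. Then $\phi_\eps \in C_c^\infty(\mR^d)$ (its support is contained in an $\eps$-neighborhood of $\operatorname{supp}\phi$, hence still compact), so by hypothesis $\int_{\mR^d} \phi_\eps\, (m - m') = 0$ for every $\eps > 0$. Since $\phi$ is continuous with compact support it is bounded and uniformly continuous, so $\phi_\eps \to \phi$ uniformly as $\eps \to 0^+$; because $m$ and $m'$ are probability measures (finite mass), $\left|\int \phi_\eps\, (m - m') - \int \phi\, (m - m')\right| \le \|\phi_\eps - \phi\|_\infty \cdot (\|m\|_{TV} + \|m'\|_{TV}) = 2\|\phi_\eps - \phi\|_\infty \to 0$. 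Hence $\int_{\mR^d} \phi\, (m - m') = 0$ for all $\phi \in C_c(\mR^d)$.

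Applying Riesz' representation theorem to the finite signed measure $m - m'$ (or invoking that $d_0$ separates points of $\mP$, which follows since $C_c^\infty$ is dense in $\Lip$ in the relevant sense via the same mollification plus truncation by the cut-offs $\chi_R$), we conclude $m - m' = 0$, i.e. $m = m'$. There is essentially no obstacle here: the only point requiring a small amount of care is confirming that $\phi_\eps$ retains compact support and that uniform convergence of test functions is enough given the finite total mass — both are routine. Everything needed (properties of $\eta_\eps$, the estimate in Lemma~\ref{lem:moll}) is already in the excerpt.
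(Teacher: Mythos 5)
Your proof is correct and follows exactly the route the paper indicates: the paper gives no written proof, only the remark that the result ``follows from Riesz' representation theorem and mollification,'' and your argument is a careful spelling-out of precisely that. The mollification step (preserving compact support, uniform convergence, and the finite-mass bound) and the appeal to Riesz for the finite signed measure $m-m'$ are all exactly what is intended.
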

\begin{lemma}\label{lem:testapprox}
Let $m,m'\in \mP$ and assume that there exists $C>0$ such that for every $\phi\in C_b^{\infty}(\mR^d)\cap \Lip$,
\begin{align*}
    \bigg|\int_{\mR^d}\phi (m-m')\bigg| \leq C.
\end{align*}
Then $d_0(m,m')\leq C$.
\end{lemma}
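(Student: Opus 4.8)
The plan is to approximate a general $\phi \in \Lip$ by smooth functions in $C_b^\infty(\mR^d) \cap \Lip$ (up to a harmless scaling factor close to $1$), apply the hypothesis to each smooth approximant, and pass to the limit. The natural approximation device is mollification: given $\phi \in \Lip$, set $\phi_\eps = \phi \ast \eta_\eps$. Then $\phi_\eps \in C_b^\infty(\mR^d)$, and since convolution with a probability density does not increase the sup-norm or the Lipschitz constant, we have $\|\phi_\eps\|_\infty \le 1$ and $\phi_\eps$ is $1$-Lipschitz; hence $\phi_\eps \in C_b^\infty(\mR^d) \cap \Lip$ exactly, with no scaling needed. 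Applying the hypothesis to $\phi_\eps$ gives $\big|\int_{\mR^d} \phi_\eps\,(m-m')\big| \le C$ for every $\eps > 0$.

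Next I would pass to the limit $\eps \to 0^+$ in this inequality. Here the key estimate is exactly Lemma~\ref{lem:moll}: for $m \in \mP$ and $\phi$ bounded uniformly continuous with modulus of continuity $\omega_\phi$, one has $\big|\int_{\mR^d} \phi\,(m\ast\eta_\eps - m)(dx)\big| \le \omega_\phi(\eps)$. Since $\phi \in \Lip$ is in particular bounded and uniformly continuous with $\omega_\phi(\eps) \le \eps$, we get
\begin{align*}
    \bigg|\int_{\mR^d}\phi\,(m-m')\bigg| &\leq \bigg|\int_{\mR^d}\phi_\eps\,(m-m')\bigg| + \bigg|\int_{\mR^d}(\phi-\phi_\eps)\,dm\bigg| + \bigg|\int_{\mR^d}(\phi-\phi_\eps)\,dm'\bigg|\\
    &\leq C + \eps + \eps.
\end{align*}
Letting $\eps \to 0^+$ yields $\big|\int_{\mR^d}\phi\,(m-m')\big| \le C$ for every $\phi \in \Lip$, and taking the supremum over $\phi \in \Lip$ gives $d_0(m,m') \le C$ by the definition of $d_0$.

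The argument is essentially routine once Lemma~\ref{lem:moll} is in hand, so there is no serious obstacle; the only point requiring a little care is the observation that mollification preserves membership in $\Lip$ on the nose (both the sup-norm bound and the Lipschitz bound), so that the hypothesis applies directly to $\phi_\eps$ without having to renormalize by a factor tending to $1$ — which would otherwise force an extra limiting step. If one preferred to avoid even that observation, one could instead mollify and then multiply by $(1+\eps)^{-1}$ or similar, apply the hypothesis, and absorb the resulting error into the $\eps$-terms above; the conclusion is the same.
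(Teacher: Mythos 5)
Your proof is correct and takes essentially the same approach as the paper's: mollify a general $\phi\in\Lip$ to get $\phi_\eps\in C_b^\infty(\mR^d)\cap\Lip$, apply the hypothesis to $\phi_\eps$, and pass to the limit $\eps\to 0^+$. The only cosmetic difference is that the paper justifies the limit by the dominated convergence theorem (using $|\phi_\eps|\le 1$ and pointwise convergence), whereas you extract a quantitative error bound via Lemma~\ref{lem:moll} (or, even more directly, $\|\phi-\phi_\eps\|_\infty\le\eps$); both are valid and yield the same conclusion.
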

\begin{proof}
  Let $\phi \in \Lip$, $\eps>0$, and $\phi_\eps = \phi\ast\eta_\eps\in C_b^{\infty}(\mR^d)\cap \Lip$. By the dominated convergence theorem,
  \begin{align*}
      \bigg|\int_{\mR^d} \phi (m-m')\bigg| =  \bigg|\int_{\mR^d} \lim\limits_{\eps\to 0^+}\phi_\eps (m-m')\bigg| = \lim\limits_{\eps\to 0^+}\bigg|\int_{\mR^d} \phi_\eps (m-m')\bigg| \leq C.
  \end{align*}
\end{proof}

\subsection{Derivative in the space of probability measures}\label{sec:dmUdef}
\begin{definition}\label{def:UC1}
Let $U\colon \mP\to \mR$. We say that $U$ is $C^1$ if there exists a continuous mapping $\dm{U}\colon \mP\times \mR^d \to \mR$ such that $\dm{U}(m,\cdot)$ is bounded for all $m\in\mP$ and the following relation holds for every $m,m' \in \mP$:
$$\lim\limits_{h\to 0^+} \frac{U(m+ h(m'-m)) - U(m)}{h} = \int_{\mR^d} \dm{U}(m,y)\, (m'-m)(dy).$$
\end{definition}
\begin{remark}\ \label{rem:normalization}

\begin{enumerate}[label=(\alph*)]
\item From the above relation we see that $\dm{U}$ is not defined uniquely, as we may add to it any constant. Nonetheless, we \textbf{do not use} the normalization condition appearing commonly in the literature:
\begin{align}\label{eq:normalization}
    \int_{\mR^d} \dm{U}(m,y)\, m(dy) = 0,\quad m\in \mP.
\end{align}
In many of the cases below, $\dm{}$ is either given explicitly, or predetermined. If it is not explicit, then by saying that $\dm{}$ exists and has certain properties, we mean that there exists a version of $\dm{}$ which has these properties. We drop the normalization as it interferes with the monotonicity condition for some natural examples of $F$ and $G$, see Remark~\ref{rem:monot} below.

\item If $\Phi_1$ and $\Phi_2$ satisfy the definition of $\dm{U}$ for all $m,m'$ and $y$, then by taking  \ar{$m' = \delta_{y_1}$  
and $m' = \delta_{y_2}$ and subtracting} we find that
\begin{align*}
    \Phi_1(m,y_1) - \Phi_1(m,y_2) = \Phi_2(m,y_1) - \Phi_2(m,y_2),\quad m\in\mP,\ y_1,y_2\in\mR^d.
\end{align*}
Therefore, the existence and values of $D_y\dm{U}, D^2_{yy}\dm{U}, \mL_y\dm{U}$, and $\mL^\ast_y\dm{U}$ do not depend on choice of $\dm{U}$. Note that this includes all the expressions appearing in the master equation.
\end{enumerate}
\end{remark}
\begin{lemma}\label{lem:fund} If $U\in C^1$ then for every $m,m'\in\mP$,
\begin{equation}\label{eq:fund}
    U(m') - U(m) = \int_0^1\int_{\mR^d} \dm{U}(\lambda m+(1-\lambda)m',y)\, (m'-m)(dy)\, d\lambda.
\end{equation}
\end{lemma}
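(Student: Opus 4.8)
The plan is to reduce the statement to the one–dimensional fundamental theorem of calculus along the segment joining $m$ and $m'$. Fix $m,m'\in\mP$, set $\mu_\lambda:=\lambda m+(1-\lambda)m'\in\mP$ for $\lambda\in[0,1]$, and $g(\lambda):=U(\mu_\lambda)$, so that $U(m')-U(m)=g(0)-g(1)$. First I would record the elementary identities $m-\mu_\lambda=(1-\lambda)(m-m')$ and $m'-\mu_\lambda=-\lambda(m-m')$, which give $\mu_{\lambda+h}=\mu_\lambda+h(m-m')=\mu_\lambda+\tfrac{h}{1-\lambda}(m-\mu_\lambda)$ and $\mu_{\lambda-t}=\mu_\lambda-t(m-m')=\mu_\lambda+\tfrac{t}{\lambda}(m'-\mu_\lambda)$, valid for $\lambda\in(0,1)$; and also the Lipschitz bound $d_0(\mu_\lambda,\mu_{\lambda'})\le|\lambda-\lambda'|\,d_0(m,m')$, so that $\lambda\mapsto\mu_\lambda$ is continuous into $(\mP,d_0)$.

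Next, for each interior $\lambda\in(0,1)$ I would compute the one–sided derivatives of $g$ by applying Definition~\ref{def:UC1} \emph{with base point $\mu_\lambda$}. Perturbing towards $m$ (using the first identity with $h'=h/(1-\lambda)$) gives $g'_+(\lambda)=\tfrac{1}{1-\lambda}\int_{\mR^d}\dm{U}(\mu_\lambda,y)\,(m-\mu_\lambda)(dy)=\int_{\mR^d}\dm{U}(\mu_\lambda,y)\,(m-m')(dy)=:\phi(\lambda)$; perturbing towards $m'$ (using the second identity with $s=t/\lambda$) gives $g'_-(\lambda)=\phi(\lambda)$ as well. Hence $g$ is differentiable on $(0,1)$ with $g'=\phi$, so in particular $g$ is continuous on $(0,1)$; and continuity of $g$ at $\lambda=0$ and $\lambda=1$ follows directly from Definition~\ref{def:UC1}, since the difference quotients $\tfrac{U(m'+h(m-m'))-U(m')}{h}$ and $\tfrac{U(m+t(m'-m))-U(m)}{t}$ have finite limits, forcing $g(h)\to g(0)$ and $g(1-t)\to g(1)$.

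Finally I would check that $\phi$ is continuous (hence integrable) on $[0,1]$: since $\mu_{\lambda'}\to\mu_\lambda$ in $d_0$ and $\dm{U}$ is jointly continuous, $\dm{U}(\mu_{\lambda'},y)\to\dm{U}(\mu_\lambda,y)$ for each $y$, and a domination argument over the compact set $\{\mu_{\lambda'}:|\lambda'-\lambda|\le\delta\}$ lets one pass to the limit under the integral against the finite signed measure $m-m'$. The fundamental theorem of calculus then yields $g(1)-g(0)=\int_0^1\phi(\lambda)\,d\lambda$, i.e. $U(m)-U(m')=\int_0^1\!\int_{\mR^d}\dm{U}(\lambda m+(1-\lambda)m',y)\,(m-m')(dy)\,d\lambda$, which rearranges to \eqref{eq:fund}. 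The main obstacle is the middle step: Definition~\ref{def:UC1} only supplies a derivative of $U$ along segments emanating from a fixed base measure, so one must reparametrize the segment $[m,m']$ around each interior $\mu_\lambda$ and carefully track the scaling factors $\tfrac{1}{1-\lambda}$ and $\tfrac1\lambda$ (which is precisely why continuity of $g$ and $\phi$ at the endpoints must be argued separately); the secondary delicate point is the interchange of limit and $y$–integration for $\phi$, relying on the joint continuity of $\dm{U}$.
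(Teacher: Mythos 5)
Your proof is correct and follows essentially the same route as the paper: set $g(\lambda)=U(\mu_\lambda)$ on the segment joining $m$ and $m'$, differentiate in $\lambda$ via Definition~\ref{def:UC1}, and apply the fundamental theorem of calculus. The paper's proof writes the derivative as a two-sided limit without spelling out the reparametrization at the base point $\mu_\lambda$ or the continuity checks; your version fills in those details (matching one-sided derivatives via the $\tfrac{1}{1-\lambda}$ and $\tfrac{1}{\lambda}$ rescalings, continuity of $\phi$ and of $g$ at the endpoints), which is a more careful rendering of the identical idea.
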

\begin{proof}
For $\lambda\in [0,1]$ let $\Phi(\lambda) = U(m+\lambda(m'-m))$. If $\lambda\in(0,1)$, then by the definition of $\dm{U},$
\begin{align*}\Phi'(\lambda) &= \lim\limits_{h\to 0} \frac{U(m+\lambda(m'-m) + h(m'-m)) - U(m+\lambda(m'-m))}{h}=\int_{\mR^d} \dm{U}(m+\lambda(m'-m),y)\, (m'-m)(dy).
\end{align*}
The statement follows from the fundamental theorem of calculus and the fact that $U(m') - U(m) = \Phi(1) - \Phi(0).$
\end{proof}
The following result has been stated in the $d_1$ setting in \cite[p. 31]{MR3967062}. The proof for $d_0$ is identical.
\begin{lemma}\label{lem:dmfacts}
Assume that $\dm{U}$ and $D_y\dm{U}$ are bounded. Then $U$ is Lipschitz with respect to $m$ and 
\begin{align*}
|U(m_1) - U(m_2)|\leq \sup\limits_{m\in \mP}\bigg\|\dm{U}(m,\cdot)\bigg\|_{C^1_b(\mR^d)}d_0(m_1,m_2). 
\end{align*}
\end{lemma}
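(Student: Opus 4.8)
The plan is to combine the fundamental-theorem-of-calculus identity from Lemma~\ref{lem:fund} with the bounded-Lipschitz duality that defines $d_0$. First I would fix $m_1,m_2\in\mP$ and apply Lemma~\ref{lem:fund} to write
\begin{align*}
U(m_1) - U(m_2) = \int_0^1\int_{\mR^d} \dm{U}(\lambda m_1+(1-\lambda)m_2,y)\, (m_1-m_2)(dy)\, d\lambda.
\end{align*}
For each fixed $\lambda\in[0,1]$ set $m_\lambda := \lambda m_1+(1-\lambda)m_2\in\mP$ and let $\phi_\lambda(y) := \dm{U}(m_\lambda,y)$; by hypothesis $\phi_\lambda\in C^1_b(\mR^d)$ with $\|\phi_\lambda\|_{C^1_b(\mR^d)}\le \sup_{m\in\mP}\|\dm{U}(m,\cdot)\|_{C^1_b(\mR^d)}=:L$, which is finite since both $\dm{U}$ and $D_y\dm{U}$ are assumed bounded.

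The key step is then to estimate the inner integral $\big|\int_{\mR^d}\phi_\lambda(y)\,(m_1-m_2)(dy)\big|$ by $L\, d_0(m_1,m_2)$. If $L=0$ this is trivial; otherwise $L^{-1}\phi_\lambda$ has sup-norm at most $1$ and Lipschitz constant at most $1$, hence $L^{-1}\phi_\lambda\in\Lip$, so by the definition of $d_0$ in Section~\ref{sec:d0} we get $\big|\int_{\mR^d}L^{-1}\phi_\lambda\,(m_1-m_2)\big|\le d_0(m_1,m_2)$, i.e. $\big|\int_{\mR^d}\phi_\lambda\,(m_1-m_2)\big|\le L\, d_0(m_1,m_2)$. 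Plugging this into the integral over $\lambda$ and using that $\int_0^1 1\,d\lambda=1$ yields $|U(m_1)-U(m_2)|\le L\, d_0(m_1,m_2)$, which is the claimed bound.

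There is no serious obstacle here; the only points requiring a word of care are that the bound on $\|\phi_\lambda\|_{C^1_b}$ is uniform in $\lambda$ (so the $\lambda$-integral of the constant $L\,d_0(m_1,m_2)$ is just that constant), and the harmless rescaling by $L$ to land inside the normalized class $\Lip$ (handling the degenerate case $L=0$ separately). Measurability of $\lambda\mapsto\int\phi_\lambda\,(m_1-m_2)$, needed to make sense of the outer integral, is already implicit in the statement of Lemma~\ref{lem:fund}, so nothing further is needed.
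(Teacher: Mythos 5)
Your proof is correct and follows the same route the paper indicates: combine the fundamental-theorem-of-calculus identity of Lemma~\ref{lem:fund} with the $\Lip$-duality defining $d_0$, using the uniform $C^1_b$ bound on $\dm{U}(m,\cdot)$ to normalize the test function into $\Lip$. The paper simply cites the $d_1$ argument of \cite[p.~31]{MR3967062} and notes the proof for $d_0$ is identical, so there is nothing to add.
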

In the next lemma we discuss interchanging space and measure derivatives.
\begin{lemma}\label{lem:Schwarz}
 Let $U\colon \mR^d\times \mP \to \mR$. Assume that $D_xU$, $\dm{U}$ and $D_x\dm{U}$ exist and are continuous and bounded in all variables.
Then $\dm{}D_xU$ exists and we can set $$\dm{}D_xU(x,m,y)= D_x\dm{U}(x,m,y),\quad x,y\in \mR^d,\, m\in \mP.$$
\end{lemma}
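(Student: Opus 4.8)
The statement to prove is Lemma~\ref{lem:Schwarz}: given $U\colon \mR^d\times\mP\to\mR$ with $D_xU$, $\dm{U}$, $D_x\dm{U}$ existing, continuous and bounded, one may take $\dm{}D_xU(x,m,y)=D_x\dm{U}(x,m,y)$. The plan is to verify directly that the function $\Phi(x,m,y):=D_x\dm{U}(x,m,y)$ satisfies the defining property of the measure derivative of $D_xU$ in Definition~\ref{def:UC1}, namely that for all $x$ and all $m,m'\in\mP$,
\begin{align*}
\lim_{h\to0^+}\frac{D_xU(x,m+h(m'-m))-D_xU(x,m)}{h}=\int_{\mR^d}D_x\dm{U}(x,m,y)\,(m'-m)(dy),
\end{align*}
together with noting that $\Phi$ is continuous and $\Phi(x,m,\cdot)$ is bounded (both immediate from the hypotheses). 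Continuity/boundedness require no work, so the content is the displayed limit.

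First I would fix $x\in\mR^d$ and $m,m'\in\mP$, and for $h\in(0,1]$ write $m_h:=m+h(m'-m)$. Applying the fundamental-theorem-of-calculus identity for measure derivatives (Lemma~\ref{lem:fund}) to the scalar function $m\mapsto U(x,m)$ for each fixed $x$ — more precisely, to the segment from $m$ to $m_h$ — gives
\begin{align*}
U(x,m_h)-U(x,m)=\int_0^1\int_{\mR^d}\dm{U}\big(x,\lambda m_h+(1-\lambda)m,\,y\big)\,(m_h-m)(dy)\,d\lambda = h\int_0^1\int_{\mR^d}\dm{U}\big(x,m+\lambda h(m'-m),y\big)\,(m'-m)(dy)\,d\lambda,
\end{align*}
using $m_h-m=h(m'-m)$ and $\lambda m_h+(1-\lambda)m=m+\lambda h(m'-m)$. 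The key step is then to differentiate this identity in $x$ and divide by $h$. Since $\dm{U}$ and $D_x\dm{U}$ are bounded and continuous in all variables, and $(m'-m)$ is a fixed finite signed measure, the integrand $(\lambda,y)\mapsto \dm{U}(x,m+\lambda h(m'-m),y)$ and its $x$-derivative are dominated uniformly (by $\|D_x\dm{U}\|_\infty$ times the total variation of $m'-m$, which is at most $2$), so differentiation under the double integral over $[0,1]\times\mR^d$ is justified and yields
\begin{align*}
\frac{D_xU(x,m_h)-D_xU(x,m)}{h}=\int_0^1\int_{\mR^d}D_x\dm{U}\big(x,m+\lambda h(m'-m),y\big)\,(m'-m)(dy)\,d\lambda.
\end{align*}
Finally, letting $h\to0^+$: for each $\lambda$, $m+\lambda h(m'-m)\to m$ in $d_0$ (indeed in total variation), so by continuity of $D_x\dm{U}$ in the measure variable and boundedness giving the dominating constant $\|D_x\dm{U}\|_\infty\,\|m'-m\|_{TV}$, the dominated convergence theorem passes the limit inside both integrals, and $\int_0^1\!\int_{\mR^d}D_x\dm{U}(x,m,y)\,(m'-m)(dy)\,d\lambda=\int_{\mR^d}D_x\dm{U}(x,m,y)\,(m'-m)(dy)$ since the integrand no longer depends on $\lambda$. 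This is exactly the required identity, so $\dm{}D_xU$ exists and equals $D_x\dm{U}$.

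The main obstacle is the justification of differentiation under the integral sign and of the final passage to the limit; both reduce to having a uniform (in $h,\lambda,x,y$) integrable dominating function, which is supplied by the assumed global boundedness of $\dm{U}$ and $D_x\dm{U}$ together with the finiteness of $\|m'-m\|_{TV}$. One mild technical point worth stating explicitly is that continuity of $D_x\dm{U}$ in $m$ is with respect to $d_0$, and convergence $m+\lambda h(m'-m)\to m$ holds in the stronger total-variation sense hence also in $d_0$ (by the embedding $\mathcal M(\mR^d)\hookrightarrow C^{-0}_b(\mR^d)$ and Remark~\ref{rem:d0weak}), so continuity applies; alternatively one invokes Lemma~\ref{lem:moll}-type reasoning. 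No smoothness beyond $C^1$ in $m$ is needed, and no normalization of $\dm{U}$ is used.
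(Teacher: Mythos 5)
Your proof is correct, but it takes a genuinely different route from the paper's. You fix $x$, write the fundamental-theorem-of-calculus identity (Lemma~\ref{lem:fund}) for the one-variable map $m\mapsto U(x,m)$ along the segment from $m$ to $m_h$, and then differentiate under the (finite) double integral in $x$, which is justified because $D_x\dm{U}$ exists and is uniformly bounded and $|m'-m|$ has finite total variation. The paper instead mimics Rudin's proof of the symmetry of mixed partials (\cite[Theorem~9.41]{MR0385023}): it forms the second-order difference $U(x',m')-U(x',m)-U(x,m')+U(x,m)$, applies the mean value theorem in $x$ to produce an intermediate point $\xi$, and then uses continuity and boundedness of $D_x\dm{U}$ together with dominated convergence to show the error after replacing $\xi$ by $x$ is $o(|x-x'|)$. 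Both arguments land on the same intermediate identity $D_xU(x,m')-D_xU(x,m)=\int_0^1\int_{\mR^d}D_x\dm{U}(x,m+\lambda(m'-m),y)\,(m'-m)(dy)\,d\lambda$ before the final $h\to0^+$ step, and both use precisely the stated hypotheses. Your variant is shorter and avoids the mean-value-theorem bookkeeping with the auxiliary point $\xi$, at the cost of explicitly invoking the standard criterion for differentiation under the integral sign (which you correctly justify via the global bound on $D_x\dm{U}$). One detail you handle well that is worth keeping: you note that $m+\lambda h(m'-m)\to m$ in total variation, hence in $d_0$, so the $d_0$-continuity of $D_x\dm{U}$ in the measure variable can be applied in the final limit.
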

\begin{proof}
 Let $x,x'\in \mR^d$ and $m,m'\in \mP$. We follow the approach of Rudin for two real variables \cite[Theorem~9.41]{MR0385023}. By Lemma~\ref{lem:fund} and the mean value theorem
\begin{align*}
    &U(x',m') - U(x',m) - U(x,m') + U(x,m)\\ = &\int_0^1\int_{\mR^d} \bigg[\dm{U}(x',m + \lambda(m'-m),y) - \dm{U}(x,m + \lambda(m'-m),y)\bigg]\, (m'-m)(dy)\, d\lambda\\
    = &\,(x'-x)\cdot\int_0^1\int_{\mR^d} D_x\dm{U}(\xi,m + \lambda(m'-m),y)\, (m'-m)(dy)\, d\lambda,
\end{align*}
where $\xi$ belongs to the line segment connecting $x$ and $x'$. By subtracting
$$(x'-x)\cdot\int_0^1\int_{\mR^d} D_x\dm{U}(x,m + \lambda(m'-m),y)\, (m'-m)(dy)\, d\lambda$$
from both sides and by using the continuity and boundedness of $D_x\dm{U}$ and the dominated convergence theorem, we find that the expression
\begin{align*}
    \bigg|U(x',m') - U(x',m) - U(x,m') + U(x,m) - (x'-x)\cdot\int_0^1\int_{\mR^d} D_x\dm{U}(x,m + \lambda(m'-m),y)\, (m'-m)(dy)\, d\lambda\bigg|
\end{align*}
is $o(|x-x'|)$ when $x'\to x$.
Therefore
\begin{align*}
    D_xU(x,m') - D_xU(x,m) = \int_0^1\int_{\mR^d} D_x\dm{U}(x,m + \lambda(m'-m),y)\, (m'-m)(dy)\, d\lambda.
\end{align*}
Thus, by taking $m' = m+h(\widetilde{m} - m)$, using the boundedness and continuity in $m$ of $D_x\dm{U}$, and the dominated convergence theorem, we find that 
\begin{align*}
    \lim\limits_{h\to 0^+} \frac{D_xU(x,m+h(\widetilde{m} - m)) - D_xU(x,m)}{h} = \int_0^1 D_x\dm{U}(x,m,y)\, (\widetilde{m} - m)(dy).
\end{align*}
This implies that $\dm{} D_x U$ exists and can be taken to be $D_x \dm{U}$.
\end{proof}

\subsection{Operator $\mL$ and its heat kernel}\label{sec:hk} Let $\mL$ be the L\'evy operator given in \eqref{eq:operator}. It is well-known (see, e.g., Khoshnevisan and Schilling \cite[Theorem~6.8]{MR3587832}) that $\mL$ can be represented as a Fourier multiplier $\Psi$, i.e., for all $u\in C_c^{2}(\mR^d)$ and $x\in \mR^d$,
\begin{align*}
    \mL u(x) = -\mathcal{F}^{-1} (\Psi \mathcal{F}u)(x),
\end{align*}
where $\mathcal{F}$ is the Fourier transform. The multiplier $\Psi$ is also called the characteristic exponent or the symbol. By the L\'evy--Khintchine representation theorem we have
\begin{align*}
    \Psi(\xi) = -i B\xi + \xi A\xi + \int_{\mR^d} (1 - e^{i\xi z} + i\xi z \textbf{1}_{B(0,1)}(z))\, \nu(dz), \quad \xi\in \mR^d.
\end{align*}
The adjoint operator has symbol \arr
$\overline{\Psi}(\xi) = i B\xi + \xi A\xi + \int_{\mR^d} (1 - e^{i\xi z} + i\xi z \textbf{1}_{B(0,1)}(z))\, \nu(-dz),$
therefore if we let $\widetilde{\nu}( E) = \nu(- E)$ for Borel sets $ E$,  we get that for $u\in C^2_c(\mR^d)$,
\begin{equation*}
    \mL^\ast u (x) = -B\cdot Du(x) + \Div(A Du(x))+ \int_{\mR^d}(u(x+z) - u(x) - Du(x)\cdot z \textbf{1}_{B(0,1)}(z))\, \widetilde{\nu}(dz),
\end{equation*} 
and for $u,v\in C_c^2(\mR^d)$,
\begin{align*}
    \int_{\mR^d} u \mL v = \int_{\mR^d} \mL^* u v.
\end{align*}
By considering a smooth cut-off function we can extend the above formula to $u\in C^2_b(\mR^d)$ and $v\in C^2_c(\mR^d)$ (note that $\mL v\in L^p(\mR^d)$ for any $p\geq 1$). Furthermore, by using the cut-off once more, we may show that it also holds if $u\in C^2_b(\mR^d)$, $v\in C^2_0(\mR^d)$ and $v,\mL v\in L^1(\mR^d)$.

For functions in $C^2_0(\mR^d)$, operator $\mL$ agrees with the generator of the semigroup corresponding to the transition probabilities of L\'evy process $X_t$ with L\'evy triplet $(B,A,\nu)$, see \cite[Theorem~31.5]{MR3185174}. The transition probability is given by the formula
\begin{align*}
    K(0) = \delta_0,\qquad K(t,x)\, dx = (\mathcal{F}^{-1}e^{-t\Psi})(x)\, dx,\quad t>0,\ x\in \mR^d.
\end{align*}
We will sometimes use the notation $K_t(x) = K(t,x)$. Recall that $K(t,\cdot)\, dx$ is a probability measure for every $t\geq 0$. We \textit{assume} that $K(t,\cdot)\in C^{\infty}_0(\mR^d)$ for every $t>0$\footnote{This is implied by assumption \ref{eq:K} below. Note that for general L\'evy operators $K(t,\cdot)$ may be a singular measure, e.g., when the L\'evy triplet equals $(0,0,\delta_{x_0})$ or $(B,0,0)$.}, which implies that $K(t,\cdot)$ is in the domain of the generator, so we can infer that $K$ solves the heat equation for $\mL$ pointwise:
\begin{align*}
    \partial_t K(t,x) = \mL K(t,x),\quad t>0,\ x\in\mR^d.
\end{align*}
In addition we have $\|K_t\ast f - f\|_{\infty} \to 0$ as $t\to 0^+$, for every $f\in C_0(\mR^d)$ --- this is the strong continuity of the semigroup corresponding to $X_t$. From now on $K$ will be called the heat kernel of $\mL$. Note that the heat kernel $K_t^\ast$ of $\mL^*$ satisfies $K_t^\ast(x) = K_t(-x)$. Recall that if $\mL = \Delta$, then
\begin{align*}
    K_t(x) = \frac{1}{(4\pi t)^{d/2}} e^{-\frac{|x|^2}{4t}},\quad t>0,\ x\in \mR^d.
\end{align*}
For most operators $\mL$ the heat kernel does not have an explicit formula.

\subsection{Assumptions}\label{sec:assume} 
\subsubsection{The Hamiltonian} The set of assumptions on the Hamiltonian $H$ is fairly standard, see \cite{MR4214773,MR4309434,Lions}.
\begin{description}
    \item[(H1)\label{eq:H}]$H\colon \mR^d\times\mR\times \mR^d\to \mR$ is smooth and for every $l\in \mathbb{N}^{2d+1}$ with $|l|\leq 4$, $\sup\limits_{x\in \mR^d}|D^lH(x,\cdot,\cdot)|$ is locally bounded.
\item[(H2)\label{eq:H2}] For every $R>0$ there exists $C_R>0$ such that for $x,y\in \mR^d$, $|u|\leq R$, and $p\in \mR^d$,
$$|H(x,u,p) - H(y,u,p)|\leq C_R(1+|p|)|x-y|.$$
\item[(H3)\label{eq:H3}] There exists $\gamma\in \mR$ such that for all $x\in \mR^d$, $u\leq v$, and $p\in\mR^d$,
$$H(x,v,p) - H(x,u,p) \geq \gamma (v-u).$$
\end{description}
\medskip
\subsubsection{The operator} The assumption on the diffusion operator $\mL$ is given in terms of its heat kernel.
\begin{description}[labelwidth=29pt]
\item[(K)\label{eq:K}]
         There is $\mathcal K > 0$ and $\alow\in (1,2]$, such that the heat kernels $K$ and $K^*$ of $\mL$ and $\mL^*$ respectively are smooth densities  of probability measures, and for $\tilde K = K,K^*$ and $\beta \in \mathbb{N}^d$ we have
        $$\|D^{\beta}\tilde K(t,\cdot)\|_{L^1(\mR^d)} \leq \mathcal Kt^{-\frac {|\beta|}{\alow} }.$$
\end{description}
\begin{remark}\ \label{rem:K}

\begin{enumerate}[label=(\alph*)]
\item Assumption \ref{eq:K} implies that $K(t,\cdot)\in W^{n,1}(\mR^d)$ for all $t>0$ and $n\in \mathbb{N}$. From the Sobolev embeddings it follows that $D^\beta K(t,\cdot) \in L^p(\mR^d)$ for all $t>0$, $\beta\in \mathbb{N}^d$, and $1<p\leq \infty$, and as a consequence $K(t,\cdot)\in C_0^\infty(\mR^d)$ for all $t>0$.
\item We note that $\|\mL u\|_{L^1(\mR^d)} \leq C(\mL)(\|u\|_{L^1(\mR^d)} +\|D^2u\|_{L^1(\mR^d)})$ and $\|\mL u\|_{\infty} \leq C(\mL)\|u\|_{C^2_b(\mR^d)}$ hold for any L\'evy operator $\mL$. The condition \cite[L2(i)]{MR4309434} yields a more precise bound on $\mL u$ (see Lemma~2.1 therein), but we do not adopt this assumption, as the only gain from it are slightly better bounds for time regularity, which are not needed in our work.
\item We will often use integration by parts in integrals involving $K$. Since $K(t)$ and $D_yK(t)$ are integrable and smooth with all derivatives bounded, we get that $\int_{\partial B_r} K(t)$ vanishes at infinity as a function of $r$. Therefore, for any $\phi \in C^1_b(\mR^d)$,
\begin{align*}
    \int_{\mR^d} D_yK(t,x-y) \phi(y)\, dy = \int_{\mR^d} K(t,x-y) D_y\phi(y)\, dy. 
\end{align*}
\item We can interpolate the bounds on derivatives in \ref{eq:K} to get similar bounds for H\"older quotients. In order to see that, take $f\in C^1(\mR^d)$ with both $f$ and $Df$ in $L^p(\mR^d)$ and note that for any $\sigma\in(0,1)$ and $h\in \mR^d\setminus\{0\}$ we have
\begin{align*}
    &\int_{\mR^d} \frac{|f(x+h) - f(x)|^p}{|h|^{p\sigma}}\, dx \leq |h|^{p(1-\sigma)}\int_0^1\int_{\mR^d}|Df(x+\lambda h)|^p\, dx\, d\lambda = |h|^{p(1-\sigma)}\|Df\|_{L^p(\mR^d)}^p,\\
    &\int_{\mR^d} \frac{|f(x+h) - f(x)|^p}{|h|^{p\sigma}}\, dx \leq 2^p|h|^{-p\sigma}\|f\|_{L^p(\mR^d)}^p,
\end{align*}
therefore
\begin{align*}
    &\sup\limits_{|h|>0}\int_{\mR^d} \frac{|f(x+h) - f(x)|^p}{|h|^{p\sigma}}\, dx \leq \sup\limits_{|h|>0}\big(|h|^{p(1-\sigma)}\|Df\|_{L^p(\mR^d)}^p\wedge  2^p|h|^{-p\sigma}\|f\|_{L^p(\mR^d)}^p\big) = 2^{p(1-\sigma)}\|f\|_{L^p(\mR^d)}^{p(1-\sigma)}\|Df\|_{L^p(\mR^d)}^{p\sigma}.
\end{align*}
\end{enumerate}
\end{remark}
\medskip
\subsubsection{Functions $F$ and $G$}
In the following assumptions we allow $k \in \{0,1,\ldots\}$ and $\sigma \in [0,1)$, but for the sake of the main results we only need $k=1$ and a fixed $\sigma \in (0,\alow-1)$.
\begin{description}[labelwidth=50pt]
\item[(F1)\label{eq:F}] $F\colon \mR^d\times \mP\to \mR$ satisfies
$$\sup\limits_{m\in\mP}\|F(\cdot,m)\|_{C^2_b(\mR^d)} < \infty,$$
$$\sup\limits_{x\in\mR^d,\, m\neq m'} \frac{|F(x,m)-F(x,m')|}{d_0(m,m')} <\infty.$$
\item[(F2(k,$\sigma$))\label{eq:F2}] There exists $C>0$ such that for all $m,m'\in \mP$,
\begin{align*}
\bigg\|\dm{F}(\cdot,m,\cdot) \bigg\|_{C^{k+1+\sigma}_b(\mR^d,C^{k+1+\sigma}_b(\mR^d))} &\leq C,\\
    \bigg\|\dm{F}(\cdot,m,\cdot) - \dm{F}(\cdot,m',\cdot)\bigg\|_{C^{k+1+\sigma}_b(\mR^d,C^{k+1+\sigma}_b(\mR^d))} &\leq Cd_0(m,m').
\end{align*}
\item[(G1($\sigma$))\label{eq:G}] $G\colon \mR^d\times \mP\to \mR$ satisfies
$$\sup\limits_{m\in\mP}\|G(\cdot,m)\|_{C^{3+\sigma}_b(\mR^d)} < \infty,$$
$$\sup\limits_{x\in\mR^d,\, m\neq m'} \frac{|G(x,m)-G(x,m')|}{d_0(m,m')} <\infty.$$
\item[(G2(k,$\sigma$))\label{eq:G2}]There exists $C>0$ such that for all $m,m'\in \mP$,
\begin{align*}
\bigg\|\dm{G}(\cdot,m,\cdot) \bigg\|_{C^{k+2+\sigma}_b(\mR^d,C^{k+1+\sigma}_b(\mR^d))} &\leq C,\\
    \bigg\|\dm{G}(\cdot,m,\cdot) - \dm{G}(\cdot,m',\cdot)\bigg\|_{C^{k+2+\sigma}_b(\mR^d,C^{k+1+\sigma}_b(\mR^d))} &\leq Cd_0(m,m').
\end{align*}
\end{description}
\medskip
\subsubsection{Monotonicity conditions} 
\begin{description}
\item[(M1)\label{eq:M1}] The Lasry--Lions monotonicity condition \cite{MR2295621} holds for $F$ and $G$, that is, for all $m,m'\in\mP$,
$$\int_{\mR^d} (F(x,m') -F(x,m))(m'-m)(dx) \geq 0,$$
$$\int_{\mR^d} (G(x,m') -G(x,m))(m'-m)(dx) \geq 0.$$
\item[(M2)\label{eq:M12}]
\ref{eq:F2} and \ref{eq:G2} hold and for every $\rho\in C^{-k-1-\sigma}_b(\mR^d)$ and $m\in \mP$ we have
\begin{align*}
    &\bigg\langle \big\langle \dm{F(\cdot,m,\cdot)}, \rho\big\rangle_{y},\, \rho\bigg\rangle_x \geq 0,\\
    &\bigg\langle \big\langle \dm{G(\cdot,m,\cdot)}, \rho\big\rangle_{y},\, \rho\bigg\rangle_x \geq 0,
\end{align*}
where $\langle\cdot,\cdot\rangle_x,\langle\cdot,\cdot\rangle_y$ are the pairings between $C^{k+1+\sigma}_b(\mR^d)$ and $C^{-k-1-\sigma}_b(\mR^d)$ in $x$ and $y$ respectively. 
\end{description}

\begin{remark}\label{rem:monot}
The assumption \ref{eq:M12} is stronger than \ref{eq:M1}, in fact, strictly stronger, if the normalization condition \eqref{eq:normalization} is assumed. In order to show that \ref{eq:M12} implies \ref{eq:M1}, it suffices to take $\rho = m'-m$ and use the fundamental theorem of calculus of Lemma~\ref{lem:fund}. \arr Though it seems plausible, it is unclear to us if there is a universal way to pick/normalize $\dm{F}$ so that \ref{eq:M1} implies \ref{eq:M12}. The results of Sections~\ref{sec:further} and \ref{sec:master} could be modified to only require $\rho$ with mean zero in the sense that $\langle \rho, 1\rangle = 0$, but we were unable to prove that \ref{eq:M1} implies \ref{eq:M12} for such $\rho$  -- even if we assume they are 
nice (e.g. measures with smooth densities and integral 0).
 
\end{remark}
\begin{example}\label{ex:monot} Let $F(x,m) = \phi\ast m(x)$ for some nontrivial odd function $\phi\in C_c^\infty(\mR^d)$. Then it is easy to verify that 
\begin{align*}
    \int_{\mR^d} (F(x,m_1) - F(x,m_2))\, (m_1 - m_2)(dx) = 0,\quad m_1,m_2\in \mP,
\end{align*}
so \ref{eq:M1} is satisfied. Under the normalization condition \eqref{eq:normalization} we have (compare that with \eqref{eq:dmF})\footnote{Accordingly, the derivative in \cite[Example 1.1]{MR4191529} should read $\dm{U}(m)(y) = h(y) - \int h(y)\, m(dy)$.}
\begin{align*}
    \dm{F}(x,m,y) = \phi(x-y) - \phi\ast m(x).
\end{align*}
Let $m=\delta_0$ and $\rho = \delta_{x_0}$ for some $x_0\neq 0$. Then 
\begin{align*}
    \bigg\langle \big\langle \dm{F(\cdot,m,\cdot)}, \rho\big\rangle_{y},\, \rho\bigg\rangle_x = \int_{\mR^d}\int_{\mR^d} \dm{F}(x,m,y)\, \delta_{x_0}(dy)\, \delta_{x_0}(dx) = \dm{F}(x_0,m,x_0) = \phi(0) - \phi\ast m(x_0) = \phi(0) - \phi(x_0).
\end{align*}
Since $\phi$ is odd we have $\phi(0)= 0$, and since it is non-trivial, the above expression does not have a constant sign for all $x_0$. Hence \ref{eq:M12} is not satisfied.
\end{example}
\begin{description}
\item[(M3)\label{eq:M2}] $H(x,u,p) = H(x,p)$ and there exists $c_1\geq 1$ such that for all $x\in \mR^d$
$$\frac 1 {c_1} I_d \leq D^2_{pp}H(x,\cdot) \leq c_1 I_d.$$
\item[(M4)\label{eq:M3}] $H(x,u,p) = H_1(x,p) + H_2(x,u)$ and there exist $c_1,c_2>0$ such that for all $x\in \mR^d$,
$$\frac 1 {c_1} I_d \leq D^2_{pp}H_1(x,\cdot) \leq c_1 I_d,$$
and
$$0\leq D_uH_2(x,\cdot) \leq c_2.$$
\end{description}
\begin{remark}
Assumption \ref{eq:M2} is strictly stronger than \ref{eq:M3}. The latter yields a weaker variant of the Lasry--Lions monotonicity lemma (Lemma~\ref{lem:LL}) which suffices for obtaining uniqueness for the MFG system, but seems ineffective in terms of stability results. That is why at some point in the paper we need to adopt \ref{eq:M2}, which implies the more standard version of the monotonicity lemma (see Remark~\ref{rem:LL}).
\end{remark}
\subsection{Examples}\label{sec:examples}
\subsubsection{The operator $\mL$}
Recall that $\mL$ is determined by its L\'evy triplet $(A,B,\nu)$. Here are some concrete assumptions on operators or the L\'evy triplets, under which \ref{eq:K} holds, taken mostly from \cite[Section~4]{MR4309434}. 
\begin{itemize}
    \item If $\mL$ satisfies \ref{eq:K} and $L$ is any L\'evy operator, then $\mL + L$ satisfies \ref{eq:K}. \footnote{Hence it suffices to specify the assumptions only on $A$ or $\nu$, or even a part of $\nu$. The remaining terms in the L\'evy triplet can be arbitrary, in particular they can vanish or be degenerate.}
    \item The diffusion matrix $A$ satisfies $\langle Ax,x\rangle \geq |x|^2$ for $x\in \mR^d$, e.g., $\mL = \Delta$. Then \ref{eq:K} is satisfied with $\alow=2$.\footnote{This was not discussed in \cite{MR4309434}, but can easily be checked by hand for the Gaussian heat kernel $\mathcal{G}_t$. For nondegenerate $A$, the heat kernel corresponding to $\Div (ADu)$ is equal to $K_t(x) = |A|^{-1}\mathcal{G}_t(Ax)$, so the estimate follows from the Gaussian case.}
    \item The L\'evy measure $\nu$ has an absolutely continuous part with density $\nu_{ac}$ such that $\nu_{ac}(z) \approx |z|^{-d-\alow}$ for all $|z|\leq 1$ and $\alow\in (1,2)$, see \cite[Theorem~4.3]{MR4309434}\footnote{The proof of this result is based on the heat kernel estimates in \cite[Theorem~5.2]{MR4308627}.}.
    \item $\mL  = -(-\partial^2_{x_1})^{\alpha_1/2}-(-\partial^2_{x_2})^{\alpha_2/2} -\ldots -(-\partial^2_{x_d})^{\alpha_d/2}$, where $\alpha_i\in (1,2)$. Then $\alow = \min_i \alpha_i$.
    \item The Riesz--Feller operator on $\mR$: $\nu(z) = |z|^{-1-\alow}\textbf{1}_{(0,\infty)}(z)$, see \cite[Lemma~2.1 (G7) and Proposition~2.3]{MR3360395}.
    \item The CGMY model operator, see \cite[Example~4.4]{MR4309434}.
\end{itemize}
\subsubsection{The coupling terms $F$ and $G$}
We will only discuss $F$, as the assumptions for $G$ are identical up to the order of differentiation. We now give two (slightly) different examples. Define
\begin{align}\label{eq:Fexample}
  &\begin{cases}
    F_1(x,m) = \phi_1\ast m(x),\quad x\in \mR^d,\ m\in \mP, \\[0.2cm]
    \qquad\text{for  $\phi_1\in C_c^\infty(\mR^d)$ positive semi-definite (see Definition~\ref{def:posdef} below).}
    \end{cases}\\
\intertext{\arr Examples similar to $F_1$ are studied in \cite[Chapter~3, Example~5]{MR3752669} in connection with \ref{eq:M1}. Following \cite{MR3967062}, we let}
    &\label{eq:Fexample2}\begin{cases}
        F_2(x,m) = \int_{\mR^d} \Phi(z,(\phi_2\ast m)(z))\, \phi_2(x-z)\, dz,\quad x\in \mR^d,\ m\in\mP, \\[0.2cm]
    \qquad\text{for  $\phi_2\in C_c^\infty(\mR^d)$ non-negative and even, and } \\ \qquad\quad\text{$\Phi\colon \mR^d\times \mR\to \mR$ smooth,  $\Phi(z,s)$ and $\partial_s\Phi(z,s)$ are bounded in $z$, locally uniformly in $s$, and $\partial_s \Phi(z,s)\geq 0$.}
    \end{cases}
    \end{align} 
    
We now check that our assumptions are satisfied. Starting with $F_1$ in \eqref{eq:Fexample} we  note that
\begin{align}\label{eq:dmF}
    \cm\dm{F_1}(x,m,y) = \phi_1(x-y),\quad x,y\in \mR^d,\ m\in \mP,
\end{align}
satisfies Definition \ref{def:UC1}. It is then obvious that \ref{eq:F} and \ref{eq:F2} are satisfied. The monotonicity properties of $F_1$ and $\dm{F_1}$  are closely related to the positive definiteness of $\phi_1$. 
\begin{definition}\label{def:posdef}
We say that a function $\phi\colon \mR^d \to \mR$ is positive semi-definite, if it is even and for any $n\in\mathbb{N}$, 
\begin{align*}
    \sum\limits_{i,j=1}^n \phi(x_i - x_j) \alpha_i \alpha_j \geq 0,\quad x_1,\ldots,x_n\in \mR^d,\ \alpha_1,\ldots,\alpha_n\in \mathbb{R},
\end{align*}
\end{definition}
\begin{lemma}\label{lem:Fexample}
The function $F_1$ defined by \eqref{eq:Fexample} satisfies \ref{eq:M1} and \ref{eq:M12}.
\end{lemma}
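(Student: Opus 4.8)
The plan is to verify the two monotonicity conditions directly from the explicit formula $\dm{F_1}(x,m,y) = \phi_1(x-y)$, exploiting the fact that $\phi_1$ is positive semi-definite in the sense of Definition~\ref{def:posdef}. For \ref{eq:M12}, the claim is that for every $\rho \in C^{-k-1-\sigma}_b(\mR^d)$ and every $m\in\mP$,
\begin{align*}
\bigg\langle \big\langle \dm{F_1}(\cdot,m,\cdot),\rho\big\rangle_y,\,\rho\bigg\rangle_x = \big\langle \langle \phi_1(\cdot-\cdot),\rho\rangle_y,\rho\big\rangle_x \geq 0.
\end{align*}
First I would handle the case where $\rho$ is measure representable, in fact where $\rho$ is given by integration against a finitely supported signed measure $\sum_{i=1}^n \alpha_i \delta_{x_i}$: then the double pairing is exactly $\sum_{i,j} \phi_1(x_i-x_j)\alpha_i\alpha_j \geq 0$ by Definition~\ref{def:posdef}. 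To pass to general $\rho$, I would use Lemma~\ref{lem:cnegapprox}: the mollification $\rho_\eps := \rho \ast \eta_\eps$ converges to $\rho$ in $C^{-k-1-\sigma-\delta}_b(\mR^d)$, and since $\phi_1(\cdot - \cdot) \in C^{k+1+\sigma}_b(\mR^d, C^{k+1+\sigma}_b(\mR^d))$ (it is smooth and compactly supported), the double pairing is continuous along this approximation (using Lemma~\ref{lem:dmf} to control the inner pairing in the $x$-variable). Each $\rho_\eps$ is an $L^1$ function, and for $L^1$ densities the double pairing $\int\int \phi_1(x-y)\rho_\eps(x)\rho_\eps(y)\,dx\,dy$ is non-negative precisely because $\phi_1$ positive semi-definite is equivalent (by Bochner-type reasoning, or by a Riemann-sum limit of the finite-sum inequality) to $\widehat{\phi_1}\geq 0$, hence $\int\int \phi_1(x-y) g(x)g(y)\,dx\,dy = \int |\widehat g(\xi)|^2 \widehat{\phi_1}(\xi)\,d\xi \geq 0$ for all $g\in L^1$. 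Taking $\eps\to 0^+$ gives \ref{eq:M12}.

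For \ref{eq:M1}, I would simply invoke Remark~\ref{rem:monot}: since $F_1$ satisfies \ref{eq:F} and \ref{eq:F2} (already noted in the surrounding text) and \ref{eq:M12} holds, taking $\rho = m'-m \in \mathcal{M}(\mR^d) \hookrightarrow C^{-k-1-\sigma}_b(\mR^d)$ together with the fundamental theorem of calculus of Lemma~\ref{lem:fund} yields
\begin{align*}
\int_{\mR^d}(F_1(x,m')-F_1(x,m))(m'-m)(dx) = \int_0^1 \bigg\langle\big\langle\dm{F_1}(\cdot,\lambda m'+(1-\lambda)m,\cdot),m'-m\big\rangle_y, m'-m\bigg\rangle_x\,d\lambda \geq 0,
\end{align*}
and the integrand is non-negative by \ref{eq:M12}. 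Alternatively, \ref{eq:M1} for $F_1$ follows even more directly: $\int (F_1(x,m')-F_1(x,m))(m'-m)(dx) = \int\int \phi_1(x-y)(m'-m)(dy)(m'-m)(dx)$, which is the $n\to\infty$ (weak) limit of finite sums $\sum \phi_1(x_i-x_j)\alpha_i\alpha_j$ and hence $\geq 0$.

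The main obstacle I anticipate is the rigorous justification of the approximation argument in \ref{eq:M12} for a general functional $\rho \in C^{-k-1-\sigma}_b(\mR^d)$, which need not be measure representable — it could, for instance, involve Banach-limit–type behaviour at infinity (Definition~\ref{ex:Banach}). The saving grace is that $\phi_1$ is compactly supported, so $\phi_1(x-y)$ decays in $|x-y|$; one must check that the double pairing is still well-defined and continuous under mollification despite $\rho$ potentially ``living at infinity'', and that the non-negativity is genuinely inherited in the limit. Here one should be careful that $\langle \phi_1(\cdot-\cdot),\rho\rangle_y$, as a function of $x$, lies in $C^{k+1+\sigma}_b(\mR^d)$ with norm controlled by $\|\rho\|_{C^{-k-1-\sigma}_b}\|\phi_1\|_{C^{k+1+\sigma}_b(\mR^d,C^{k+1+\sigma}_b(\mR^d))}$ (this is exactly Lemma~\ref{lem:dmf}), so the outer pairing with $\rho$ makes sense; and the Fourier/Bochner characterization of positive semi-definiteness must be applied only to the smooth $L^1$ approximants $\rho_\eps$, where it is unambiguous, before passing to the limit.
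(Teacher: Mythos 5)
Your treatment of \ref{eq:M1} is fine (either route works; the paper's is your second, direct, one). However there is a genuine gap in your \ref{eq:M12} argument: the assertion that ``each $\rho_\eps = \rho\ast\eta_\eps$ is an $L^1$ function'' is false for a general $\rho \in C^{-k-1-\sigma}_b(\mR^d)$. By Lemma~\ref{lem:cnegapprox} one only gets $\rho\ast\eta_\eps \in C^{-0}_b(\mR^d)$, which is the space of \emph{finitely} additive set functions; as Remark~\ref{rem:Banach} explicitly points out, if $L$ is a Banach limit then $L\ast\eta_\eps$ is again a Banach limit, hence not represented by any $L^1$ function or even any countably additive measure. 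The conclusion $\rho\ast\eta_\eps \in L^1(\mR^d)$ holds only under the extra hypothesis that $\rho$ is measure representable (that is Lemma~\ref{lem:cnegconv}), which you cannot assume for the verification of \ref{eq:M12}. You do flag this as the ``main obstacle'' at the end, and correctly intuit that the compact support of $\phi_1$ is the saving grace, but you never supply the mechanism that makes the argument close.

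The missing step, which the paper supplies, is this: although $\rho\ast\eta_\eps \in C^{-0}_b(\mR^d)$ need not be a measure, its \emph{restriction to $C_0(\mR^d)$} is one. By the Riesz representation theorem there is a finite signed $\mu_\eps \in \mathcal{M}(\mR^d)$ with $\langle f, \rho\ast\eta_\eps\rangle = \int f\,d\mu_\eps$ for all $f\in C_0(\mR^d)$. Now the compact support of $\phi_1$ enters twice: first, $\phi_1(x-\cdot)\in C_0(\mR^d)$ for each fixed $x$, so the inner pairing is $\langle\phi_1(x-\cdot),\rho\ast\eta_\eps\rangle_y = \int\phi_1(x-y)\,\mu_\eps(dy) =: \Phi(x)$; second, $\Phi = \phi_1\ast\mu_\eps$ again lies in $C_0(\mR^d)$ (convolution of a compactly supported continuous function with a finite measure vanishes at infinity), so the outer pairing also sees only $\mu_\eps$, giving
\begin{align*}
\bigl\langle\langle \phi_1(x-y),\rho\ast\eta_\eps\rangle_y,\rho\ast\eta_\eps\bigr\rangle_x = \int_{\mR^d}\int_{\mR^d}\phi_1(x-y)\,\mu_\eps(dy)\,\mu_\eps(dx) \geq 0,
\end{align*}
where non-negativity follows from positive semi-definiteness (via your Fourier argument applied to a further mollification of $\mu_\eps$, or the characterization of \cite[p.~93]{MR0107124}). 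Your passage to the limit $\eps\to 0^+$ via Lemmas~\ref{lem:cnegapprox} and \ref{lem:dmf} is then correct. In short, the pathological ``at infinity'' part of $\rho\ast\eta_\eps$ never enters because both pairings are taken against $C_0$-functions, and it is the Riesz representation of the $C_0$-restriction, not an $L^1$ representation of the full functional, that you need.
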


\begin{proof}
Since $\phi_1 \in C^\infty_c(\mR^d)$,  by the well known characterization of positive-definiteness \cite[p.~93]{MR0107124} for every $f\in L^1(\mR^d)$,
\begin{align*}
    \int_{\mR^d}\int_{\mR^d}  \phi_1(x-y) f(x)f(y)\, dy\, dx \geq 0.
\end{align*}
Let $m_1,m_2\in \mP$ and let $m_\eps = (m_1 - m_2)\ast \eta_\eps$ for $\eps>0$. Then $m_\eps \in L^1(\mR^d)$ and by smoothness of $\cm\phi_1$ we have
\begin{align*}
    \int_{\mR^d} (F(x,m_1) - F(x,m_2))\, (m_1(dx) - m_2(dx)) &= \int_{\mR^d}\int_{\mR^d} \cm\phi_1(x-y)\,(m_1(dy) - m_2(dy)) (m_1(dx) - m_2(dx))\\
    &=\lim\limits_{\eps\to 0^+} \int_{\mR^d}\int_{\mR^d} \cm\phi_1(x-y)\, m_\eps(dx) m_\eps(dy)\geq 0.
\end{align*}
This proves that $F$ satisfies \ref{eq:M1}. In order to get \ref{eq:M12} we first note that by Lemmas~\ref{lem:cnegapprox} and \ref{lem:dmf}, and the fact that $\phi$ is smooth, we get that for any $\gamma \geq 0$ and $\rho \in C^{-\gamma}_b(\mR^d)$,
\begin{align*}
    \langle\langle \cm\phi_1(x-y),\rho\rangle_y,\rho\rangle_x = \lim\limits_{\eps\to 0^+}  \langle\langle \cm\phi_1(x-y),\rho\ast\eta_\eps\rangle_y,\rho\ast\eta_\eps\rangle_x.
\end{align*}
Also by Lemma~\ref{lem:cnegapprox}, we have $\rho\ast\eta_\eps\in C^{-0}_b(\mR^d)$, so there exists $\mu_\eps\in \mathcal{M}(\mR^d)$ such that $\langle f,\rho\ast\eta_\eps\rangle = \int f\mu_\eps$ for every $f\in C_0(\mR^d)$. In particular, since $\cm\phi_1(x-\cdot)\in C_0(\mR^d)$ for every $x\in \mR^d$, we have
\begin{align*}
    \langle \cm\phi_1(x-y),\rho\ast\eta_\eps\rangle_y = \int_{\mR^d} \cm\phi_1(x-y)\, \mu_\eps(dy) =: \Phi(x)\in C_0(\mR^d).
\end{align*}
This implies that
\begin{align*}
    \langle\langle \cm\phi_1(x-y),\rho\ast\eta_\eps\rangle_y,\rho\ast\eta_\eps\rangle_x = \langle \Phi,\rho\ast\eta_\eps\rangle = \int_{\mR^d} \Phi(x)\, \mu_\eps(dx) = \int_{\mR^d}\int_{\mR^d} \cm\phi_1(x-y)\, \mu_\eps(dy) \mu_\eps(dx) \geq 0,
\end{align*}
which proves \ref{eq:M12}.
\end{proof}

    We now discuss $F_2$ defined in \eqref{eq:Fexample2}. \cb
    It was shown in \cite{MR3967062} that it satisfies \ref{eq:M1}.
     \begin{lemma}
        The function $F_2$ defined in \eqref{eq:Fexample2} has a version of $\dm{F_2}$ which satisfies \ref{eq:M12}.
    \end{lemma}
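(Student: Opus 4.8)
The plan is to mirror the proof of Lemma~\ref{lem:Fexample}, once the right version of $\dm{F_2}$ is identified. Differentiating $h\mapsto F_2(x,(1-h)m+hm')$ at $h=0^+$, passing the limit inside the $z$-integral by dominated convergence (the integration runs over a fixed ball centred at $x$ because $\phi_2$ has compact support, $\phi_2\ast m$ is bounded by $\|\phi_2\|_\infty$ uniformly in $m$, and $\partial_s\Phi$ is bounded in $z$ locally uniformly in $s$), and then applying Fubini, one obtains the candidate
\begin{equation*}
\dm{F_2}(x,m,y)=\int_{\mR^d}\partial_s\Phi\big(z,(\phi_2\ast m)(z)\big)\,\phi_2(x-z)\,\phi_2(y-z)\,dz ,
\end{equation*}
using that $\phi_2$ is even. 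I would check that this verifies Definition~\ref{def:UC1}. The features driving the rest of the argument are: it is symmetric in $x$ and $y$; the weight $a_m(z):=\partial_s\Phi(z,(\phi_2\ast m)(z))$ is continuous, nonnegative, and bounded uniformly in $m$; and, since $\phi_2\in C_c^\infty(\mR^d)$, $(x,y)\mapsto\dm{F_2}(x,m,y)$ belongs to $C^N_b(\mR^d,C^N_b(\mR^d))$ for every $N$, with norm depending only on $N$, $\phi_2$, and $\|a_m\|_\infty$. Assumption \ref{eq:F2} follows routinely from this, together with the $d_0$-Lipschitz dependence of $\phi_2\ast m$ on $m$ and the smoothness of $\Phi$, as in \cite{MR3967062}.

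\textbf{Step 2: nonnegativity of the mollified form.} Fix $\rho\in C^{-k-1-\sigma}_b(\mR^d)$ and set $\rho_\eps:=\rho\ast\eta_\eps$. By Lemma~\ref{lem:cnegapprox}, $\rho_\eps\in C^{-0}_b(\mR^d)$ and $\rho_\eps\to\rho$ in $C^{-k-1-\sigma-\delta}_b(\mR^d)$ for every $\delta>0$, and clearly $\sup_\eps\|\rho_\eps\|_{C^{-k-1-\sigma}_b(\mR^d)}\le\|\rho\|_{C^{-k-1-\sigma}_b(\mR^d)}$. The restriction of $\rho_\eps$ to $C_0(\mR^d)$ is represented, by the Riesz representation theorem, by a finite signed measure $\mu_\eps\in\mathcal M(\mR^d)$ with $\|\mu_\eps\|_{TV}\le\|\rho_\eps\|_{C^{-0}_b(\mR^d)}$. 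Since $\dm{F_2}(x,m,\cdot)\in C_c^\infty(\mR^d)\subset C_0(\mR^d)$, Fubini (the $z$-integral is over a compact set and $\mu_\eps$ is finite) and evenness of $\phi_2$ give
\begin{equation*}
\big\langle\dm{F_2}(x,m,\cdot),\rho_\eps\big\rangle_y=\int_{\mR^d}a_m(z)\,\phi_2(x-z)\,(\phi_2\ast\mu_\eps)(z)\,dz=:\Psi_\eps(x),
\end{equation*}
and $\Psi_\eps\in C_0(\mR^d)\cap C^\infty_b(\mR^d)$, being the convolution of $\phi_2$ with the $L^1$ function $a_m(\phi_2\ast\mu_\eps)$. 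Pairing with $\rho_\eps$ in $x$, using Fubini once more and $a_m\ge0$,
\begin{equation*}
\big\langle\big\langle\dm{F_2}(\cdot,m,\cdot),\rho_\eps\big\rangle_y,\rho_\eps\big\rangle_x=\int_{\mR^d}\Psi_\eps(x)\,\mu_\eps(dx)=\int_{\mR^d}a_m(z)\,(\phi_2\ast\mu_\eps)(z)^2\,dz\ge0 .
\end{equation*}

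\textbf{Step 3: passage to the limit.} It remains to show $\langle\langle\dm{F_2}(\cdot,m,\cdot),\rho\rangle_y,\rho\rangle_x=\lim_{\eps\to0^+}\langle\langle\dm{F_2}(\cdot,m,\cdot),\rho_\eps\rangle_y,\rho_\eps\rangle_x$, which then yields \ref{eq:M12}. Writing $\Psi(x):=\langle\dm{F_2}(x,m,\cdot),\rho\rangle_y$ and splitting $\langle\Psi_\eps,\rho_\eps\rangle_x-\langle\Psi,\rho\rangle_x=\langle\Psi_\eps-\Psi,\rho_\eps\rangle_x+\langle\Psi,\rho_\eps-\rho\rangle_x$, the second term goes to $0$ since $\Psi\in C^{k+1+\sigma+\delta}_b(\mR^d)$ by Lemma~\ref{lem:dmf} (using that $\dm{F_2}(\cdot,m,\cdot)$ is smoother than needed in $x$) and $\rho_\eps-\rho\to0$ in $C^{-k-1-\sigma-\delta}_b(\mR^d)$; the first term goes to $0$ because, differentiating under $\rho_\eps-\rho$ in $x$ via Lemma~\ref{lem:dmf}, one has $\|\Psi_\eps-\Psi\|_{C^{k+1+\sigma}_b(\mR^d)}\le C\|\rho_\eps-\rho\|_{C^{-k-1-\sigma-\delta}_b(\mR^d)}\to0$ while $\|\rho_\eps\|_{C^{-k-1-\sigma}_b(\mR^d)}$ stays bounded. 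The argument for the $G$-analogue of \eqref{eq:Fexample2} is identical up to shifting the H\"older exponents by one derivative.

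\textbf{Expected main obstacle.} The algebraic identity $\langle\langle\dm{F_2},\rho_\eps\rangle_y,\rho_\eps\rangle_x=\int a_m(\phi_2\ast\mu_\eps)^2$ is routine; the delicate point is the limit in Step~3, since on the whole space $\rho$ (and even $\rho_\eps$) need not be representable by a countably additive measure — recall the Banach limits of Definition~\ref{ex:Banach} — and only the restriction of $\rho_\eps$ to $C_0(\mR^d)$ is a measure. One must therefore run the whole approximation inside the $C^{\gamma}_b$--$C^{-\gamma}_b$ dualities, relying on the compact support and smoothness of $\phi_2$ (hence of $\dm{F_2}$ in both variables) to produce the uniform $C^{k+1+\sigma}_b$-convergence, exactly as in the proof of Lemma~\ref{lem:Fexample}.
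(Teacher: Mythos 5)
Your proof is correct and is essentially the paper's: the paper computes the same explicit version of $\dm{F_2}$, and then simply invokes ``by the proof of Lemma~\ref{lem:Fexample} it suffices to verify \ref{eq:M12} only for $\mu\in\mathcal M(\mR^d)$,'' i.e.\ the very mollification–Riesz–Fubini argument that you spell out in Steps~2 and~3. Your write-up is just a more detailed unpacking of that reference.
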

    \begin{proof}
    We first note that for $m,m'\in \mP$
    \begin{align*}
        &\lim\limits_{h\to 0^+} \frac{F_2(x,m + h(m'-m)) - F_2(x,m)}{h} \\
        &= \int_{\mR^d} \partial_{s}\Phi(z,(\phi_2\ast m)(z)) \lim\limits_{h\to 0^+} \frac{\phi_2\cb\ast(m + h(m'-m))(z) - \phi_2\cb\ast m (z)}{h}\phi_2\cb(x-z)\, dz\\
        &=\int_{\mR^d} \partial_{s}\Phi(z,(\phi_2\cb\ast m)(z)) \phi_2\cb(x-z)\int_{\mR^d} \phi_2\cb(z-y)\, (m'-m)(dy)\, dz\\
        &=\int_{\mR^d} \int_{\mR^d}\partial_{s}\Phi(z,(\phi_2\cb\ast m)(z)) \phi_2\cb(x-z) \phi_2\cb(z-y)\, dz\, (m'-m)(dy),
    \end{align*}
    so we can take 
    \begin{align*}
        \dm{F_2}(x,m,y) = \int_{\mR^d}\partial_{s}\Phi(z,(\phi_2\cb\ast m)(z)) \phi_2\cb(x-z) \phi_2\cb(z-y)\, dz.
    \end{align*}
    By the proof of Lemma~\ref{lem:Fexample} it suffices to verify \ref{eq:M12} only for $\mu\in \mathcal{M}(\mR^d)$. By Fubini's theorem we have
    \begin{align*}
    \int_{\mR^d}\int_{\mR^d} \dm{F_2}(x,m,y)\, \mu(dy)\, \mu(dx) &= \int_{\mR^d} \int_{\mR^d}\int_{\mR^d}\partial_{s}\Phi(z,(\phi_2\cb\ast m)(z)) \phi_2\cb(x-z) \phi_2\cb(z-y)\, dz\, \mu(dy)\, \mu(dx)\\
    &=\int_{\mR^d}\partial_{s}\Phi(z,(\phi_2\cb\ast m)(z)) (\phi_2\cb\ast \mu(z))^2\, dz \geq 0.
    \end{align*}
    \end{proof}
    \arr\subsubsection{The Hamiltonian $H$} One prominent example satisfying all our assumptions is the quadratic Hamiltonian $H(x,u,p) = H(p) = |p|^2$. We can also consider \begin{align*}
        H(x,p) = f_1(x) + f_2(x) g_1(p) + g_2(p),
    \end{align*}
    where $f_1\in C^4_b(\mR^d)$, $g_2$ is smooth and satisfies \ref{eq:M2}, $f_2\in C^4_b(\mR^d)$ is sufficiently small, $g_1$ is smooth with at most linear growth. Such $H$ satisfies assumptions \ref{eq:H}--\ref{eq:H3} and \ref{eq:M2} as well.

    If we let
    \begin{align*}
        H(x,u,p) = H_1(x,p) + H_2(x,u),
    \end{align*}
    with $H_1$ satisfying the assumptions above, we can consider e.g. $H_2(x,u) = f(x)g(u)$, where $f\in C^4_b(\mR^d)$ is nonnegative (e.g. $f\equiv 1$) and $g\in C^4(\mR^d)$ with $g'\geq\gamma$ for some $\gamma\in \mathbb{R}$.
    Then \ref{eq:H}--\ref{eq:H3} are satisfied. Some examples are $g(u) = \beta u$ for 
    $\beta\in\mR$, or when $u\geq0$, $g(u) = (1 + u^2)^{
    q/2}$ 
    for 
    $q\in\mR$ 
    \cb. In order to have \ref{eq:M3} needed for uniqueness of solutions to the MFG system, it suffices to have $0\leq g'\leq c_2$, which is the case e.g. for $g(u) = \beta u$ with $\beta>0$ and for non-negative $u$, $g(u) = (1 + u^2)^{q/2}$ with $q\in(0,1)$.

\subsection{Modifications needed in   existing proofs/assumptions}\label{sec:err}

\subsubsection{Approximations in H\"older spaces}\label{sec:approx} Let $\gamma\geq 0$ and $K\in\{\mR^d,\mathbb{T}^d,\Omega\}$, where $\Omega\subseteq \mR^d$ is a smooth domain.  We first note  that
$C^\infty_b(K)\cap C^{\pm \gamma}_b(K)$ is in general not dense in $C^{\pm \gamma}_b(K)$.  
There are some exceptions for which the density holds, e.g., $C_b^{\gamma}(\mathbb{T}^d)$ with $\gamma\in \{0,1,2,\ldots\}$, but in  most  
cases it does not hold -- here  are some  counterexamples.
\begin{itemize}
\item For $C^{-0}_b(K)$ we consider $\delta_0$: 
 any $\rho \in C_b(K)$ can be identified with an element in $C^{-0}_b(K)$ via  $\langle \rho,\phi\rangle = \int \rho(x) \phi(x)\, dx$,  and  then by taking test functions $\phi_\eps\in C_b(K)$ such that $0\leq \phi_\eps\leq 1$, $\phi_\eps(0)=1$, and $\int \phi_\eps(x)\, dx < \eps$, we get $|\langle \rho,\phi_\eps\rangle| \leq \eps\|\rho\|_{\infty}$. Therefore,
\begin{align*}
    \|\rho - \delta_0\|_{C^{-0}_b(K)}\geq \frac{|\langle \delta_0 - \rho,\phi_\eps\rangle|}{\|\phi_\eps\|_{\infty}} =\langle \delta_0 - \rho,\phi_\eps\rangle \geq 1 - \delta,
\end{align*}
where $\delta$ can be arbitrarily small.
\item \ar{A counterexample for $C^\gamma_b(K)$ with $\gamma \in (0,1)$ is the function $\mathbb{R}\ni x\mapsto (x\vee 0)^{\gamma}$, see also \cite{29869}.}
\item For $C^{-\gamma}_b((-1,1))$ with $\gamma\in (0,1)$ consider the functional $\rho$ defined as 
\begin{align*}
    \langle \rho,\phi\rangle = \lim\limits_{x\to 0^+} \frac{\phi(x)}{x^{\gamma}}
\end{align*}
for $\phi$ in the subspace of functions in $C^{\gamma}_b((-1,1))$ for which the above limit exists and is finite. Then extend $\rho$ to the whole $C^{\gamma}_b((-1,1))$ by the Hahn--Banach theorem. If $\phi_\eps(x) = x^{\gamma}\textbf{1}_{[0,\eps)}(x) + \eps^\gamma\textbf{1}_{(\eps,1)}(x)$ for $\eps\in (0,1)$ and $x\in (-1,1)$, then $\|\phi_\eps\|_{C^{\gamma}_b((-1,1))} \in [1,2]$ and $\langle \rho,\phi_\eps\rangle = 1$. Therefore, for any $\widetilde{\rho}\in C_b((-1,1))$,
\begin{align*}
    \|\widetilde{\rho} - \rho\|_{C^{-\gamma}_b((-1,1))}\geq \frac{|\langle \widetilde{\rho} - \rho,\phi_\eps\rangle|}{\|\phi_\eps\|_{C^{\gamma}_b(\mR^d)}} \geq \frac 12 \bigg|1 - \int \phi_\eps(x)\widetilde{\rho}(x)\, dx\bigg|,
\end{align*}
which is greater than $\frac 14$ for $\eps$ sufficiently small.
\item The above counterexamples can also be used to show lack of density for $C^{-n}$, $C^{\gamma}$, and $C^{-\gamma}$ spaces for $n=1,2,\ldots$ and non-integer $\gamma>0$.
\end{itemize}
The lack of density has the following consequences: 
\begin{enumerate}[label=(\alph*)]
    \item (Time continuity in Schauder estimates) The solutions of $(\partial_t - \Delta)u = 0$ with the initial condition in $u_0\in C^{2+\sigma}_b(K)$ ($\sigma\in (0,1)$) are in general not of class $C([0,T],C^{2+\sigma}_b(K))$\footnote{The parabolic regularity $C^{1+\sigma/2,2+\sigma}$ given by Schauder estimates does not imply that the $C^{2+\sigma}_b$ norms are continuous up to 0.}\footnote{But they are in $C_b((0,T],C^{2+\sigma}_b(K))$ and $C([0,T],C^{2+\sigma-\eps}_b(K))$ for arbitrarily small $\eps>0$.}. If that was the case, then the instantaneous smoothing effect\footnote{For example, on $\mR^d$ the solution is given by the convolution with the smooth heat kernel.} would imply that any $u_0\in C^{2+\sigma}_b(K)$ can be approximated by smooth functions $u(t,\cdot)$ in $C^{2+\sigma}_b(K)$, which contradicts the lack of density. 
     Due to this, some changes are needed in formulations of e.g. \cite{2020arXiv200110406C,MR3967062,MR4420941,2022arXiv221106514Z}.  This also means that the H\"older regularity in time in \cite[Lemma~3.2.2]{MR3967062}\footnote{The last inequality on page 59 is not true.}  does not hold,
    similar for \cite[Corollary~5.21]{2021arXiv211107020J}.
    \item (Approximation of data in negative H\"older spaces) In the linear system result (see Theorem~\ref{th:linsyst} below) the coefficients in negative order H\"older spaces are approximated by smooth functions. This works fine as long as the coefficient has a slightly better regularity than the norm it is approximated in, see Lemma~\ref{lem:cnegapprox},  but the approximation of initial conditions of class $C^{-k-1-\sigma}_b(K)$ by smooth functions in the $C^{-k-1-\sigma}_b(K)$ norm 
     in  \cite[Lemma~3.3.1]{MR3967062} (see also \cite{2020arXiv200110406C,2022arXiv220315583D,2021arXiv211107020J,MR4420941,2022arXiv221106514Z}) is not possible.  We avoid this problem by considering initial conditions with $C^{-k-1}_b$ regularity, which is sufficient for all the cases where Theorem~\ref{th:linsyst} is used. Another solution would be to drop norm convergence and use weak-$\ast$ and the Banach--Alaoglu theorem. However, making this argument work seems cumbersome because of the lack of time equicontinuity of the approximate solutions in $C^{-k-1-\sigma}_b(K)$.
    \item (Pathological functionals in negative H\"older spaces) Working with the negative order H\"older spaces on non-compact sets poses yet another problem with density: the convolution of a functional with a $C_c^\infty$ function need not even be a measure. This is the case for functionals supported at infinity like the Banach limit, see Remark~\ref{rem:Banach}. Therefore, the approximation arguments in \cite[Theorem~5.28]{2021arXiv211107020J} and \cite[Proposition~5.5]{2020arXiv200110406C} need to be changed. 
    We solve the issue of pathological functionals by requiring the data to have representations in terms of integrals, see Definition~\ref{def:MR}. This requires some work, because whenever we apply Theorem~\ref{th:linsyst}, we need to ensure that the term $c$ has this property.
    \item (Compactness arguments for the linearized system) The discussion of data approximation and obtaining a compact Leray--Schauder map in \cite[Proposition~5.5]{2020arXiv200110406C}  needs 
     modification, since the whole space requires different compactness arguments than the torus.  For $K$ of infinite Lebesgue measure,  all of $C^{\gamma}_b(K)$ 
    cannot be embedded into 
    $C^{-\gamma}_b(K)$ because 
    $\langle f,\phi\rangle=\int_K f\phi$  does not converge for all  $f,\phi\in C^{\gamma}_b(K)$,  
    and compactness argumets using the Arzel\`a--Ascoli theorem are no longer appropriate.  Instead,  we use the $L^1$ setting and the Kolmogorov--Riesz theorem, see the discussion following \eqref{eq:represent}. 
    The $C^\gamma_b$ norm works on the torus only because on sets of finite measure the supremum norm dominates the $L^1$ norm. 
    The above discussion is also relevant for \cite[Theorem~5.28]{2021arXiv211107020J}.  We also note that  the proof of \cite[Lemma~B.1]{2021arXiv211107020J} states that the parabolic space $C^{2+\alpha,1+\alpha/2}([0,T]\times (0,\infty))$ is compactly embedded in $C^{2,1}([0,T]\times (0,\infty))$, which is not true.
\end{enumerate}
\subsubsection{Insufficient regularity assumptions}
\begin{enumerate}[label=(\alph*)]
    \item The proof of the existence of $\dm{U}$ in \cite[Proposition~3.4.3]{MR3967062} requires $n\geq 1$ instead of $n\geq 0$, because the estimate $\|m - \widehat{m}\|_{-n-\alpha} \leq d_1(m,\widehat{m})$ at the end of the proof does not hold with $n=0$.
    \item The main ``bottleneck'' in the study of the well-posedness of the master equation is the continuity in $m$ (or in $t$) of $D^2_y\dm{U}$. In \cite{MR3967062} the proof is skipped, whereas in \cite{MR4420941} this property is used without mention in the density argument in the proof of Theorem~2.5. In fact a stronger Lipschitz continuity result is proved in \cite{MR3967062} in connection with the convergence problem, after the proof of well-posedness for the master equation (see Proposition~3.6.1). It requires $n\geq 2$ in their setting, which enforces quite strong assumptions. We adapt and modify the idea of Ricciardi \cite{MR4420941} 
    to allow blow-up of norms at time $t_0$, and prove
    Lipschitz continuity in $m$ with less assumptions -- corresponding roughly to $n\geq 1$ in \cite{MR3967062}.

    One could expect that the excessive assumptions used in \cite[Proposition~3.6.1]{MR3967062} appear because Lipschitz continuity is 
    stronger 
    than mere continuity, but 
    the main obstacle here seems to be the structure of the generic linear system result. The expression $D^2_y\dm{U}(\cdot,\cdot,m_0^1,\cdot) - D^2_y\dm{U}(\cdot,\cdot,m_0^2,\cdot)$ satisfies the linear system \eqref{eq:linsyst} with $c\in B([t_0,T], C^{-2}_b(\mR^d))$ (see $c_1$ in \eqref{eq:coefficients}, where $\rho^2$ satisfies the initial condition $\rho^2(t_0) = \partial^2_{ij} \delta_y$). In the framework of \cite[Lemma~3.3.1]{MR3967062} they therefore need to take $n=2$, whether to prove just continuity or Lipschitz continuity. Hence the proof of \cite[Theorem 2.4.2]{MR3967062} 
    really requires $n\geq 2$.

     For \cite{MR4420941} (see also \cite{2022arXiv220315583D,2022arXiv221106514Z})  this has further consequences,  as the linear system result in \cite[Proposition~5.8]{MR4420941} is insufficient to prove the main results 
     as the estimates depend on $\|c\|_{L^1([t_0,T]\times\Omega)}$. However for the $c$ mentioned above, we can only expect to have $\|c(t)\|_{L^1} \leq C(t-t_0)^{-1}$, which is not integrable in time. We fix the approach of Ricciardi \cite{MR4420941} by using more integrability in time at the expense of less regularity in space:
in Theorem~\ref{th:linsyst} we allow $c\in L^1([t_0,T],C^{-k-\sigma}_b(\mR^d))$. Then by using Lemma~\ref{lem:l1cneg} we ensure that every $c$ we encounter (including above) is in $L^1([t_0,T],C^{-1-\sigma}_b(\mR^d))$, so that we only need $k=n=1$ for well-posedness.\footnote{In fact Lemma~\ref{lem:l1cneg} gives even better regularity than $C^{-1}$, but for various technical reasons we were unable to work with $n=0$.}


\end{enumerate}
\subsubsection{Monotonicity and normalization}
The cross-multiplication estimate in the proof of the linear system result  \cite[Lemma~3.3.1]{MR3967062} (see also \cite{2022arXiv220315583D,MR4420941,2022arXiv221106514Z})  tacitly uses the property (as above, $K\in \{\Omega, \mathbb{T}^d,\mR^d\})$ 
\begin{align}\label{eq:monotonicity1}
   \bigg\langle \big\langle \dm{F(\cdot,m,\cdot)}, \rho\big\rangle_{y},\, \rho\bigg\rangle_x \geq 0,\quad \rho\in C^{-k-1-\sigma}_b(K),
\end{align}
with the claim that it follows from the standard monotonicity condition 
\begin{align}\label{eq:monotonicity2}
    \int_{\mR^d} (F(x,m_1) - F(x,m_2))\, (m_1 - m_2)(dx) \geq 0,\quad m_1,m_2\in \mathcal{P}(K).
\end{align}
As we argue in Example~\ref{ex:monot}, this is not the case, therefore \eqref{eq:monotonicity1} became a separate assumption in our paper. It turns out quite tedious to verify; in  Section~\ref{sec:examples} we show that both conditions are satisfied for standard examples in the literature, but to this end we need to drop the usual normalization condition \eqref{eq:normalization}, see Remark~\ref{rem:normalization}. 

\subsubsection{Stability in $t_0$ for the MFG system}
In order to prove continuity in $t_0$ for $U$ and its derivatives in Proposition~\ref{prop:Jdef} we consider a linearized system with the terms corresponding to solutions of the MFG system with the same initial measure on different time intervals $[t_1,T]$ and $[t_2,T]$. This requires an appropriate stability result with respect to $t_1$ and $t_2$, which we give in Lemma~\ref{lem:timestab}. This is not a great issue and it does not impose any additional assumptions, but it seems to be overlooked in the literature.


\section{Well-posedness and regularity for single equations}\label{sec:single}
In this section we establish well-posedness and regularity for several linear equations. We also recall results of \cite{MR4309434} on Fokker--Planck and Hamilton--Jacobi equations and prove new and refined variants of these results: better (almost optimal) regularising effect, (much) weaker assumptions on the data, and mixed local-nonlocal operators. In \cite{MR4309434} purely nonlocal diffusions are treated, but the results which are required for our present purposes are valid also for local and mixed local-nonlocal diffusions -- see Theorem \ref{th:FPOEERJ}, Remark \ref{rem:FP}, and Theorem \ref{th:HJ} with proof. 
\begin{remark}\label{rem:t0}
The results of this section are given for equations on the time interval $(0,T)$, but since $T$ is arbitrary, they are true also on the intervals of the form $(t_0,T)$ with any $t_0\in (0,T)$. The constants in the estimates may depend on $T$, but when $T$ is fixed, the constants for equations on $(t_0,T)$ do not depend on $t_0$.
\end{remark}
Almost all the results of this section are obtained with the help of Duhamel's formula and Banach's fixed point theorem. Using these tools we obtain mild solutions and then we argue that they in fact solve the equation in the desired sense. We say that $u$ is a mild solution to the problem \begin{align*}
    \begin{cases}
    \partial_t u - \mL u +\mathcal{H}(t,x,u,Du) = 0,\quad &(t,x)\in (0,T]\times \mR^d,\\
    u(0) = u_0,\quad &x\in \mR^d,
    \end{cases}
\end{align*}
if it satisfies
\begin{align*}
    u(t,x) = K_t \ast u_0(x) + \int_0^t K(t-s,\cdot)\ast \mathcal{H}(s,\cdot,u(s,\cdot),Du(s,\cdot))\, dy\, ds.
\end{align*}
We will differentiate the above formula to get bounds for derivatives of $u$. Formally,
\begin{align*}
    D_xu(t,x) = K_t \ast D_xu_0(x) + \int_0^t (D_xK)(t-s,\cdot)\ast \mathcal{H}(s,\cdot,u(s,\cdot),Du(s,\cdot))\, dy\, ds.
\end{align*}
By \ref{eq:K} we have $\|D_x K(t-s)\|_{L^1(\mR^d)}\leq \mathcal{K}(t-s)^{-\frac 1{\alow}}$. This motivates the need for the following Gr\"{o}nwall-type inequality.
\begin{lemma}[{\cite[Corollary~2]{MR2290034}}]\label{lem:Gronwall}
Let $a,b\geq 0$, $\gamma>0$ and assume that $f\colon [0,T]\to \mR$ satisfies
\begin{align*}
    f(t) \leq a + b\int_0^t (t-s)^{\gamma-1} f(s)\, ds,\quad t\in [0,T].
\end{align*}
Then $f(t) \leq c(T,b)\cdot a$ for all $t\in [0,T]$, with $c$ locally bounded.
\end{lemma}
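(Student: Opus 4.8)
The plan is to iterate the integral inequality in the spirit of the Picard scheme. Write it as $f \le a + \mathcal{B} f$ pointwise on $[0,T]$, where $\mathcal{B}$ is the positivity-preserving linear operator $\mathcal{B}\phi(t) = b\int_0^t (t-s)^{\gamma-1}\phi(s)\,ds$; since $\gamma>0$ the kernel $s\mapsto s^{\gamma-1}\mathbf{1}_{(0,\infty)}(s)$ is locally integrable, so $\mathcal{B}$ is well defined and bounded on $B([0,T],\mR)$ (and, by Young's convolution inequality, on $L^1([0,T])$ as well). In the applications in this section $f$ will be bounded, so I assume $f\in B([0,T],\mR)$; the $L^1$ case is handled in exactly the same way. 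Since $\mathcal{B}$ maps non-negative functions to non-negative functions and is order-preserving, inserting $f \le a + \mathcal{B}f$ into itself $n$ times yields
\[
  f(t) \;\le\; a\sum_{k=0}^{n-1} (\mathcal{B}^k \mathbf{1})(t) \;+\; (\mathcal{B}^n f)(t), \qquad t\in[0,T].
\]

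Next I would identify the iterated kernel. Using the Beta integral $\int_0^t (t-s)^{\gamma-1}s^{\mu-1}\,ds = t^{\gamma+\mu-1}\,\Gamma(\gamma)\Gamma(\mu)/\Gamma(\gamma+\mu)$, a straightforward induction gives
\[
  (\mathcal{B}^k \mathbf{1})(t) \;=\; \frac{\big(b\,\Gamma(\gamma)\big)^k}{\Gamma(k\gamma+1)}\, t^{k\gamma}, \qquad k=0,1,2,\ldots,
\]
and consequently $(\mathcal{B}^n\phi)(t) \le \|\phi\|_{B([0,T],\mR)}\,(\mathcal{B}^n \mathbf{1})(t)$ for any non-negative $\phi$. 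Then I let $n\to\infty$: by Stirling's formula $\Gamma(k\gamma+1)$ grows faster than any geometric sequence, so the series $\sum_{k\ge0}(b\Gamma(\gamma))^k t^{k\gamma}/\Gamma(k\gamma+1)$ converges for every $t$ — it is the Mittag--Leffler function $E_\gamma\big(b\,\Gamma(\gamma)\,t^{\gamma}\big)$ — and in particular the remainder satisfies $(\mathcal{B}^n f)(t)\le \|f\|_{B([0,T],\mR)}\,(b\Gamma(\gamma))^n T^{n\gamma}/\Gamma(n\gamma+1)\to 0$ uniformly on $[0,T]$. Passing to the limit gives
\[
  f(t)\;\le\; a\,E_\gamma\big(b\,\Gamma(\gamma)\,t^{\gamma}\big)\;\le\; a\,E_\gamma\big(b\,\Gamma(\gamma)\,T^{\gamma}\big)\;=:\;c(T,b)\,a,
\]
and $c(T,b)$ is finite and locally bounded in $(T,b)$ because $E_\gamma$ is entire and $\Gamma(\gamma)<\infty$.

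The only delicate point is the legitimacy of the iteration and of passing to the limit inside the inequality: this is precisely where boundedness (or $L^1$-integrability) of $f$ on $[0,T]$ is used, to guarantee that each $\mathcal{B}^k f$ is finite and that the remainder genuinely vanishes — without some a priori integrability hypothesis on $f$ the conclusion can fail. Everything else is the elementary Beta-function induction together with the factorial-type growth of $\Gamma(k\gamma+1)$ that makes the Mittag--Leffler series entire, so I do not expect any real obstacle beyond bookkeeping with these asymptotics. Alternatively one could bypass the explicit computation and argue abstractly: $\mathcal{B}$ is a Volterra operator with weakly singular kernel, hence quasinilpotent on $B([0,T],\mR)$, so $I-\mathcal{B}$ is invertible with non-negative inverse and $f \le (I-\mathcal{B})^{-1}a = a\sum_{k\ge0}\mathcal{B}^k\mathbf{1}$, which is the same bound.
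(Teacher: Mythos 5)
The paper does not give a proof of this lemma: it is cited verbatim from Ye--Gao--Ding (your reference MR2290034, Corollary~2), so there is no in-paper argument to compare against. Your Volterra-iteration proof — iterate $f\le a+\mathcal B f$, identify $\mathcal B^k\mathbf 1$ by the Beta identity, sum the resulting Mittag--Leffler series and kill the remainder using the factorial growth of $\Gamma(k\gamma+1)$ — is correct and is in fact the standard argument, essentially the one used in that reference, so this is a faithful reconstruction. Two small remarks: (i) you are right to flag that the statement as written carries an implicit hypothesis — the iteration needs $\mathcal B^n f\to 0$, which holds if $f$ is bounded (or in $L^1$), and in every use in this paper $f$ is a sup-norm of a mild solution and hence bounded, so nothing is lost; (ii) the claim that $c(T,b)=E_\gamma(b\,\Gamma(\gamma)T^\gamma)$ is locally bounded is exactly the entireness of $E_\gamma$, which your Stirling estimate also establishes directly, so the final conclusion is fully justified.
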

\subsection{Schauder theory 
for linear equations}
Various forms of regularity estimates for parabolic equations with nonlocal L\'evy-type operators have been known for quite a long time, but we include the proof for completeness and in order to demonstrate that \ref{eq:K} is tailor-made for this result. 
\begin{lemma}\label{lem:cd}
Assume that \ref{eq:K} holds, let $\sigma\in (0,\alow-1)$, $b\in (C([0,T],C^{k-1}_b(\mR^d)))^d$, $z_0\in C^{k+\sigma}_b(\mR^d)$ for some natural number $k\geq 2$, and $f\in C([0,T],C^{k-1}_b(\mR^d))$. Then,
\begin{enumerate}[label=(\alph*)]
\item the problem
\begin{equation}\label{eq:cd}
    \begin{cases}
        \partial_t z - \mL z - b(t,x) Dz = f(t,x)\quad &{\rm in}\ (0,T)\times \mR^d,\\
        z(0) = z_0 &{\rm in}\ \mR^d.
    \end{cases}
\end{equation}
has a unique classical solution $z\in  B([0,T], C^{k+\sigma}_b(\mR^d))\cap C([0,T],C^{k+\sigma-\eps}_b(\mR^d))$ (for arbitrarily small $\eps>0$), which satisfies
\begin{align}\label{eq:c3bound} \|\partial_tz\|_{\infty} + \sup\limits_{t\in [0,T]}\|z(t,\cdot)\|_{C^{k+\sigma}_b(\mR^d)} \leq C( \|z_0\|_{C^{k+\sigma}_b(\mR^d)} + \sup_{t\in [0,T]}\|f(t,\cdot)\|_{C^{k-1}_b(\mR^d)}),\end{align}
where $C$ is independent of $\|z_0\|_{C^k_b(\mR^d)}$ and $\sup_{t\in (0,T)}\|f(t,\cdot)\|_{C^{k-1}_b(\mR^d)}$.
\item For every $\eps>0$ there exists a modulus of continuity $\omega$ independent of $f$ and $z_0$ such that
\begin{align*}
    \|z(s,\cdot) - z(t,\cdot)\|_{C^{k+\sigma-\eps}_b(\mR^d)}&\leq ( \|z_0\|_{C^{k+\sigma}_b(\mR^d)} + \sup_{t\in [0,T]}\|f(t,\cdot)\|_{C^{k-1}_b(\mR^d)})\omega(t-s),\quad s,t\in [0,T],\\
    \|\partial_t z(s,\cdot) - \partial_t z(t,\cdot)\|_{\infty}&\leq ( \|z_0\|_{C^{k+\sigma}_b(\mR^d)} + \sup_{t\in [0,T]}\|f(t,\cdot)\|_{C^{k-1}_b(\mR^d)})\omega(t-s),\quad s,t\in [0,T].
\end{align*}
\item There exists $C>0$ which depends only on $T$, $\mathcal{K}$ from \ref{eq:K} and $\|b\|_{\infty}$, such that
\begin{equation}\label{eq:gradest}\sup\limits_{t\in [0,T]}\|z(t,\cdot)\|_{C^1_b(\mR^d)} \leq C(\|z_0\|_{C^1_b(\mR^d)} + \|f\|_{\infty}).\end{equation}
\end{enumerate}
\end{lemma}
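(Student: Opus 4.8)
The plan is to realise the solution through Duhamel's formula
$$z(t) = K_t\ast z_0 + \int_0^t K(t-s)\ast\big(b(s,\cdot)Dz(s,\cdot) + f(s,\cdot)\big)\,ds,$$
and to combine Banach's fixed point theorem with the Gr\"onwall-type inequality of Lemma~\ref{lem:Gronwall}. Since the only feedback of the unknown on the right-hand side is through $Dz$, I would first establish part (c). On $B([0,T],C^1_b(\mR^d))$ equipped with the weighted norm $\sup_t e^{-\lambda t}\|z(t)\|_{C^1_b(\mR^d)}$, the map $\Phi$ sending $z$ to the right-hand side above is a contraction for $\lambda$ large: one differentiates once in $x$, moves the derivative onto $K$ in the Duhamel integral, and uses $\|D_xK(t-s)\|_{L^1(\mR^d)}\le\mathcal K(t-s)^{-1/\alow}$ from \ref{eq:K}, which is integrable because $\alow>1$. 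The unique fixed point $z$ then satisfies, by \ref{eq:K} and Lemma~\ref{lem:Gronwall}, the bound $\sup_t\|z(t)\|_{C^1_b(\mR^d)}\le C(\|z_0\|_{C^1_b(\mR^d)}+\|f\|_\infty)$ with $C$ depending only on $T$, $\mathcal K$ and $\|b\|_\infty$, which is \eqref{eq:gradest}.

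Next, for part (a) I would bootstrap regularity on this mild solution. Writing $g:=bDz+f$, note that $g\in C([0,T],C^{k-1}_b(\mR^d))$ because $C^{k-1}_b(\mR^d)$ is an algebra, $Dz\in C^{k-1+\sigma}_b(\mR^d)\subset C^{k-1}_b(\mR^d)$ once $z\in C^{k+\sigma}_b(\mR^d)$, and $b\in C^{k-1}_b(\mR^d)$. For spatial derivatives of order $j\le k-1$ one distributes all derivatives onto $z_0$ and $g$ and uses $\|K(t-s)\|_{L^1(\mR^d)}=1$; for the derivative of order exactly $k$ one writes $D^k_x\big(K(t-s)\ast g\big)=D_xK(t-s)\ast D^{k-1}_x g$ and integrates $\|D_xK(t-s)\|_{L^1(\mR^d)}\le\mathcal K(t-s)^{-1/\alow}$ in time; and for the $C^\sigma$-seminorm of $D^k_x z$ one uses the interpolated kernel bound of Remark~\ref{rem:K}, namely $\sup_{h\ne0}|h|^{-\sigma}\|(D_xK(t-s))(\cdot+h)-D_xK(t-s)\|_{L^1(\mR^d)}\le C(t-s)^{-(1+\sigma)/\alow}$, which is integrable over $(0,T)$ precisely because $\sigma<\alow-1$. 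Since $b$ is only $C^{k-1}_b(\mR^d)$, the term $D^{k-1}_x(bDz)$ contains $b\,D^k_x z$, so the top-order estimate closes only after a further application of Lemma~\ref{lem:Gronwall}; this gives $\sup_t\|z(t)\|_{C^{k+\sigma}_b(\mR^d)}\le C(\|z_0\|_{C^{k+\sigma}_b(\mR^d)}+\sup_t\|f(t)\|_{C^{k-1}_b(\mR^d)})$, with the constant not seeing the lower-order norms of the data, since every lower-order contribution carries a positive power of $t$. The bound on $\|\partial_t z\|_\infty$ then follows from the equation itself, as $\partial_t z=\mL z+bDz+f$ and $\|\mL z\|_\infty\le C(\mL)\|z\|_{C^2_b(\mR^d)}$ by Remark~\ref{rem:K}; here $k\ge2$ is used.

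For part (b) the loss of $\eps$ is unavoidable (the semigroup $K_t$ is not strongly continuous on $C^{k+\sigma}_b(\mR^d)$), so I would interpolate in time. For $s<t$ use the semigroup identity $z(t)=K_{t-s}\ast z(s)+\int_s^t K(t-r)\ast g(r)\,dr$: the integral term has $C^{k+\sigma-\eps}_b(\mR^d)$-norm bounded by a positive power $(t-s)^{1-(1+\sigma-\eps)/\alow}$, while for $(K_{t-s}-I)\ast z(s)$ one splits the convolution into the regions $|y|\le\delta$ and $|y|>\delta$, combines the uniform bound on $\|z(s)\|_{C^{k+\sigma}_b(\mR^d)}$ from part (a) with $K_{t-s}(\{|y|>\delta\})\to0$ (weak convergence $K_\tau\to\delta_0$, valid without any moment assumption), and interpolates between the $C^{k+\sigma}_b$ and $L^\infty$ bounds to control the $C^{\sigma-\eps}$-seminorm of the top derivative; optimising over $\delta$ produces a modulus $\omega$ independent of $f$ and $z_0$. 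The estimate for $\partial_t z$ follows by writing $\partial_t z=\mL z+bDz+f$ and using the modulus just obtained for $z$ together with the continuity of $b$ and $f$ and Remark~\ref{rem:K}. Uniqueness and the passage from mild to classical are then routine: the difference of two solutions solves the homogeneous problem with zero initial datum, and Duhamel plus Lemma~\ref{lem:Gronwall} force it to vanish; and once $z\in B([0,T],C^{k+\sigma}_b(\mR^d))$ with $k\ge2$, differentiating the Duhamel formula in $t$ via $\partial_tK(t-s)=\mL K(t-s)$ shows $\partial_t z=\mL z+bDz+f$ pointwise on $(0,T)\times\mR^d$. The main obstacle is the top-order spatial estimate in part (a): this is where \ref{eq:K} is tailor-made, since one needs both the scaling $\|D_xK(t-s)\|_{L^1(\mR^d)}\le\mathcal K(t-s)^{-1/\alow}$ and its interpolated H\"older version, the time integrals $\int_0^T(t-s)^{-1/\alow}\,ds$ and $\int_0^T(t-s)^{-(1+\sigma)/\alow}\,ds$ must converge (the latter forcing $\sigma<\alow-1$), and the feedback of $D^k_x z$ through the only-$C^{k-1}_b(\mR^d)$ drift $b$ must be absorbed by Gr\"onwall; the subsidiary nuisance is continuity at $t=0$ in part (b) without moments on the heat kernel, handled by the weak-convergence splitting above.
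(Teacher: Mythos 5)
Your plan agrees with the paper in the essentials: Duhamel's formula, a Banach fixed-point argument, the kernel bounds from \ref{eq:K} together with the interpolated Hölder version from Remark~\ref{rem:K}(c), and the fractional Grönwall inequality. Parts (b) and (c) of your argument are essentially identical in spirit to the paper's; in particular, the splitting of $(K_{t-s}-I)\ast z(s)$ into small and large radii and the use of $K_\tau\to\delta_0$ is exactly what the paper's Lemma~\ref{lem:contsmooth} does, and deriving the $C^1_b$-bound (c) from the Duhamel formula and Grönwall is also what the paper does (only the paper gets it as a by-product of the higher-order fixed point, while you do the $C^1_b$ fixed point first with a weighted norm --- both are fine, and the weighted norm nicely gives global-in-time existence without the short-time extension step).

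The genuine gap is the bootstrap step for part (a). You construct a mild solution $z\in B([0,T],C^1_b(\mR^d))$ and then want to show it lies in $B([0,T],C^{k+\sigma}_b(\mR^d))$ by distributing up to $k-1$ derivatives onto $g=bDz+f$. But $g\in C^{k-1}_b(\mR^d)$ requires $Dz\in C^{k-1}_b(\mR^d)$, i.e.\ $z\in C^k_b(\mR^d)$ --- which is what you are trying to prove. You acknowledge this ("once $z\in C^{k+\sigma}_b$"), but never explain how to get from the $C^1_b$ solution to that stage. A direct incremental bootstrap from $C^1_b$ does not run smoothly here either: one gains roughly $\alow-1-\eps<1$ derivatives per application of $D_xK$, and the next step would require placing a second derivative on $K$, where $\|D^2_xK(t-s)\|_{L^1}\lesssim(t-s)^{-2/\alow}$ is not integrable and assumption \ref{eq:K} does not provide the moment bounds on $D^2_xK$ needed to exploit the Hölder regularity of $g$ via a cancellation argument. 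The paper avoids the issue entirely by running the fixed point directly in $X = B([0,T],C^{k+\sigma}_b(\mR^d))\cap C([0,T],C^{k+\sigma-\eps}_b(\mR^d))$: there $\widetilde z\in X$ already carries $k+\sigma$ derivatives, so $D^{l-1}(b\,D\widetilde z)$ is well-defined for $l\le k$ and the Duhamel estimate closes without circularity. Either rephrase part (a) as a fixed point in the high-regularity space (at which point the separate $C^1_b$ fixed point becomes redundant, and (c) follows by Grönwall as in the paper), or supply a genuine regularisation argument (mollify the data, invoke smooth-data solvability, pass to the limit under the a priori bound) --- but as written, the step is not justified.

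One minor note on the closing remark in your part (a): the claim that "$C$ is independent of $\|z_0\|_{C^k_b}$ and $\sup_t\|f(t)\|_{C^{k-1}_b}$" is simply linearity of the estimate --- the constant depends on $T$, $\mathcal K$ and $\|b\|_{k-1}$ but not on the size of the data --- rather than on "lower-order contributions carrying positive powers of $t$"; there is no interior regularisation in time being exploited in \eqref{eq:c3bound}.
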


\begin{remark}\label{rem:minuseps}
It should be possible to prove this result with $\sigma = \alow -1$, but it would require more work and more precise assumptions on the heat kernel. One needs to have stronger scaling properties in order to isolate and treat singularities separately from the integrable part of the integrands, see Priola \cite[Theorem~3.4]{MR2945756} for stable diffusions, or the book of Gilbarg and Trudinger \cite[Lemma 4.4]{MR0473443} for local elliptic equations.
\end{remark}
\begin{proof}
(a)\quad The proof is based on Duhamel's formula and uses Banach fixed point arguments similar to those in \cite[Sections~5 and 6]{MR4309434}. We let $\sigma \in (0,\alow -1)$, $\eps\in (0,\sigma)$ and we define
\begin{align*}
    X = B([0,T],C^{k+\sigma}_b(\mR^d))\cap C([0,T],C^{k+\sigma-\eps}_b(\mR^d))
\end{align*}
and for $\widetilde{z}\in X$ we say that $S(\widetilde{z}) = z$, if for every $x\in \mR^d$ and  $t\in [0,T]$,
\begin{align}\label{eq:duhc0}
    z(t,x) = K_t\ast z_0(x) + \int_0^t \int_{\mR^d} K(t-s,x-y)b(s,y)D\widetilde{z}(s,y)\, dy\, ds + \int_0^t\int_{\mR^d} K(t-s,x-y) f(s,y)\, dy\, ds.
\end{align}
We will now estimate spatial $C^{k+\sigma}$ norms of $z$. First, using integration by parts, the triangle inequality, and \ref{eq:K} we get
\begin{align*}
    |z(t,x)| &\leq \|z_0\|_{\infty} + \int_0^t \int_{\mR^d} |D_y\big(K(t-s,x-y)b(s,y)\big)||\widetilde{z}(s,y)|\, dy\, ds + T\sup\limits_{t\in[0,T]} \|f(t,\cdot)\|_{\infty}\\
    &\leq \|z_0\|_{\infty} + \int_0^t\|\widetilde{z}(s,\cdot)\|_{\infty} \int_{\mR^d} |D_y\big(K(t-s,x-y)b(s,y)\big)|\, dy\, ds + T\sup\limits_{t\in[0,T]} \|f(t,\cdot)\|_{\infty}\\
    &\leq \|z_0\|_{\infty}+T\sup\limits_{t\in[0,T]} \|f(t,\cdot)\|_{\infty} + C\sup\limits_{t\in[0,T]} \|b(t,\cdot)\|_{C^1_b(\mR^d)}\sup\limits_{t\in[0,T]}\|\widetilde{z}(t,\cdot)\|_{\infty},
\end{align*}
where $C=C(K,T)$ and $C(T)\to 0$ as $T\to 0^+$. It follows that 
\begin{align*}
    \sup\limits_{t\in[0,T]}\|z(t,\cdot)\|_{\infty} \leq \|z_0\|_{\infty}+T\sup\limits_{t\in[0,T]} \|f(t,\cdot)\|_{\infty} + C\sup\limits_{t\in[0,T]} \|b(t,\cdot)\|_{C^1_b(\mR^d)}\sup\limits_{t\in[0,T]}\|\widetilde{z}(t,\cdot)\|_{\infty}.
\end{align*}
 By differentiating \eqref{eq:duhc0} with respect to $x$ we get
 \begin{align}\label{eq:duhc1}Dz(t,x) = K_t\ast Dz_0(x) + \int_0^t \int_{\mR^d} D_xK(t-s,x-y)b(s,y)D\widetilde{z}(s,y)\, dy\, ds + \int_0^t\int_{\mR^d} D_xK(t-s,x-y) f(s,y)\, dy\, ds.\end{align}
 The interchange of the derivative and the double integrals above is justified as follows: we use the fundamental theorem of calculus on the difference quotients, the Fubini--Tonelli theorem, and we change the variables, getting expressions of the type:
 \begin{align*}
     \int_0^t\int_{\mR^d}\int_0^1 D_xK(t-s,x+\lambda h-y) f(s,y)\, d\lambda \, dy\, ds = \int_0^1\int_0^t\int_{\mR^d} D_xK(t-s,x-y) f(s,y+\lambda h) \, dy\, ds \, d\lambda,
 \end{align*}
 where $h\in \mR^d$. Then, $|D_x K| \|f\|_{\infty}$ is a majorizing function (it is integrable by \ref{eq:K}) and so \eqref{eq:duhc1} follows from the dominated convergence theorem.
 
 By using \ref{eq:K} in \eqref{eq:duhc1} we find that \begin{align*}
     \sup\limits_{t\in [0,T]} \|Dz(t,\cdot)\|_{\infty} \leq \|z_0\|_{C^1_b(\mR^d)} + C(T)\sup\limits_{t\in [0,T]} \|f(t,\cdot)\|_{\infty} + C(T)\sup\limits_{t\in [0,T]} \|b(t,\cdot)\|_{\infty}\sup\limits_{t\in [0,T]}\|\widetilde{z}(t,\cdot)\|_{C^1_b(\mR^d)},
 \end{align*}
 where $C(T)\to 0$ as $T\to 0^+$. The procedure for the second order derivatives of $z$ is as follows: we use integration by parts in both integrals in \eqref{eq:duhc1} and then we differentiate once more. If we repeat that for the next derivatives, we get that for $1\leq l\leq k$,
 \begin{align} D^lz(t,x)\label{eq:duhcl} = K_t\ast D^lz_0(x) &+ \int_0^t \int_{\mR^d} D_xK(t-s,x-y)D^{l-1}\big(b(s,y)D\widetilde{z}(s,y)\big)\, dy\, ds\\\nonumber &+ \int_0^t\int_{\mR^d} D_xK(t-s,x-y) D^{l-1}f(s,y)\, dy\, ds.
 \end{align}
 It follows that 
 \begin{align*}
      \sup\limits_{t\in [0,T]} \|D^lz(t,\cdot)\|_{\infty} \leq \|z_0\|_{C^l_b(\mR^d)} + C(T)\sup\limits_{t\in [0,T]} \|f(t,\cdot)\|_{C^{l-1}_b(\mR^d)} + C(T)\sup\limits_{t\in [0,T]} \|b(t,\cdot)\|_{C^{l-1}_b(\mR^d)}\sup\limits_{t\in [0,T]}\|\widetilde{z}(t,\cdot)\|_{C^l_b(\mR^d)},
 \end{align*}
 where $C(T)\to 0$ as $T\to 0^+$.
 We now proceed to the H\"older norms. For any $h\in \mR^d\setminus\{0\}$ we have
 \begin{align}
     &\frac{|D^kz(t,x+h) - D^kz(t,x)|}{|h|^{\sigma}}\nonumber\\ &\leq \|z_0\|_{C^{k+\sigma}_b(\mR^d)}  + \int_0^t \int_{\mR^d} \frac{|D_xK(t-s,x+h-y) -D_xK(t-s,x-y)|}{|h|^{\sigma}}|D^{k-1}\big(b(s,y)D\widetilde{z}(s,y)\big)|\, dy\, ds\nonumber\\ &+ \int_0^t\int_{\mR^d} \frac{|D_xK(t-s,x+h-y) -D_xK(t-s,x-y)|}{|h|^{\sigma}} |D^{k-1}f(s,y)|\, dy\, ds.\label{eq:interp}
 \end{align}
 Therefore, by Remark~\ref{rem:K} (c) and \ref{eq:K} we get
 \begin{align*}
     &\sup\limits_{t\in[0,T]} \|D^kz(t,\cdot)\|_{C^\sigma_b(\mR^d)} \leq \|z_0\|_{C^{k+\sigma}_b(\mR^d)}\\  &+ C\big(\sup\limits_{t\in [0,T]} \|f(t,\cdot)\|_{C^{k-1}_b(\mR^d)} +\sup\limits_{t\in [0,T]} \|b(t,\cdot)\|_{C^{k-1}_b(\mR^d)}\sup\limits_{t\in [0,T]}\|\widetilde{z}(t,\cdot)\|_{C^k_b(\mR^d)}\big)\int_0^T \|D^2_x K(s)\|_{L^1(\mR^d)}^{\sigma}\|D_xK(s)\|_{L^1(\mR^d)}^{1-\sigma}\, ds\\
     &\leq \|z_0\|_{C^{k+\sigma}_b(\mR^d)}+ C\big(\sup\limits_{t\in [0,T]} \|f(t,\cdot)\|_{C^{k-1}_b(\mR^d)} +\sup\limits_{t\in [0,T]} \|b(t,\cdot)\|_{C^{k-1}_b(\mR^d)}\sup\limits_{t\in [0,T]}\|\widetilde{z}(t,\cdot)\|_{C^k_b(\mR^d)}\big)\int_0^T s^{-\frac{1+\sigma}{\alow}}\, ds\\
     &\leq \|z_0\|_{C^{k+\sigma}_b(\mR^d)}+ C(T)\big(\sup\limits_{t\in [0,T]} \|f(t,\cdot)\|_{C^{k-1}_b(\mR^d)} +\sup\limits_{t\in [0,T]} \|b(t,\cdot)\|_{C^{k-1}_b(\mR^d)}\sup\limits_{t\in [0,T]}\|\widetilde{z}(t,\cdot)\|_{C^k_b(\mR^d)}\big),
 \end{align*}
 where $C(T)\to 0$ as $T\to 0^+$, because $1+\sigma < \alow$. Note that $C(T)$ blows up as $\sigma\to (\alow-1)^-$, but stays bounded as long as $\sigma$ is separated from $\alow - 1$. 
 
By summing up the above estimates we get
\begin{align*}
\sup\limits_{t\in [0,T]} \|z(t,\cdot)\|_{C^{k+\sigma}_b(\mR^d)} \leq \|z_0\|_{C^{k+\sigma}_b(\mR^d)}+ C(T)\big(\sup\limits_{t\in [0,T]} \|f(t,\cdot)\|_{C^{k-1}_b(\mR^d)} +\sup\limits_{t\in [0,T]} \|b(t,\cdot)\|_{C^{k-1}_b(\mR^d)}\sup\limits_{t\in [0,T]}\|\widetilde{z}(t,\cdot)\|_{C^k_b(\mR^d)}\big),
\end{align*}
where $C(T)\to 0$ as $T\to 0^+$. The proof that $\|z(t,\cdot)\|_{C^{k+\sigma-\eps}_b(\mR^d)}$ is continuous in $t$ is postponed to the appendix, see Lemma~\ref{lem:contsmooth}. We get that $z = S(\widetilde{z}) \in X$. Furthermore, since $S$ is an affine map, we get that it is a contraction if $T$ is small enough (note that $z_0$ does not affect how small $T$ needs to be). Therefore, by Banach's fixed point theorem we get that $S$ has a unique fixed point $z$. By \cite[Lemma~5.12 (a)]{MR4309434} $z$ is a classical solution of \eqref{eq:cd}. We can extend the solution further in time by considering the equation with $z(T)$ as the initial condition. The fact that we can do it indefinitely follows from \eqref{eq:c3bound}, which we now prove. We use the Duhamel formula \eqref{eq:duhc0} in a similar manner as above. For $t\in (0,T)$, $1\leq l\leq k$, and $\sigma\in (0,\alow-1)$ we have (by \ref{eq:K} and Remark~\ref{rem:K} (c))
\begin{align}
    &\|z(t,\cdot)\|_{\infty} \leq \|z_0\|_{\infty} + C\sup\limits_{t\in[0,T]}\|f(t,\cdot)\|_{\infty} + C\sup\limits_{t\in[0,T]}\|b(t,\cdot)\|_{C^{1}_b(\mR^d)}\int_0^t (t-s)^{-\frac 1\alow} \|z(s,\cdot)\|_{\infty}\, ds,\nonumber\\
    &\|D^l z(t,\cdot)\|_{\infty} \leq \|D^lz_0\|_{\infty} + C\sup\limits_{t\in[0,T]}\|f(t,\cdot)\|_{C^{l-1}_b(\mR^d)} + C\sup\limits_{t\in[0,T]}\|b(t,\cdot)\|_{C^{l-1}_b(\mR^d)}\int_0^t (t-s)^{-\frac 1\alow} \|z(s,\cdot)\|_{C^l_b(\mR^d)}\, ds,\label{eq:c1c1}\\
    &\|D^k z(t,\cdot)\|_{C^{\sigma}_b(\mR^d)}\nonumber \\ &\leq \|D^kz_0\|_{C^{\sigma}_b(\mR^d)} + \widetilde{C}\sup\limits_{t\in[0,T]}\|f(t,\cdot)\|_{C^{k-1}_b(\mR^d)} + \widetilde{C}\sup\limits_{t\in[0,T]}\|b(t,\cdot)\|_{C^{k-1}_b(\mR^d)}\int_0^t (t-s)^{-\frac {1+\sigma}\alow} \|z(s,\cdot)\|_{C^k_b(\mR^d)}\, ds.\nonumber
\end{align}
By summing up these estimates and using the Gr\"onwall inequality of Lemma~\ref{lem:Gronwall}, we obtain \eqref{eq:c3bound}.\medskip

(b)\quad The part concerning spatial derivatives is a direct consequence of Lemma~\ref{lem:contsmooth}. Once we have it, continuity for time derivatives follows by using the equation. \medskip

(c)\quad This part follows by taking $l=1$ in \eqref{eq:c1c1} (note that for $l=1$ the estimate also works with $\|Dz(s,\cdot)\|_{\infty}$ in place of $\|z(s,\cdot)\|_{C^1_b(\mR^d)}$) and using Lemma~\ref{lem:Gronwall}.
\end{proof}

\subsection{Integrable solutions of linear equations with divergence form drifts and sources}
Later we will consider equations and systems with data in the negative order H\"older spaces, which we approximate by $L^1$ functions. The following result gives well-posedness and a basis for compactness arguments for linear equations in the $L^1$ setting.
\begin{lemma}\label{lem:FPL1}
Assume that \ref{eq:K} holds and let $V_1\in (C_b([0,T]\times \mR^d))^d$, $V_2\in (L^\infty([0,T],L^1(\mR^d)))^d$, and $\rho_0\in  L^1(\mR^d)$. Then
\begin{enumerate}[label=(\alph*)]
\item There exists a unique $\rho\in C([0,T],L^1(\mR^d))$ satisfying
\begin{align}\label{eq:Duhamel}
        \rho(t,x) = K_t\ast \rho_0(x) + \int_0^t \int_{\mR^d} D_y K(t-s,x-y)(V_1(s,y)\rho(s,y) + V_2(s,y))\, dy\, ds.
\end{align}
Furthermore, $\rho$ is a distributional solution of the equation
\begin{align}\label{eq:distl1}
    \begin{cases}
        \partial_t \rho - \mL \rho - \Div(V_1\rho) - \Div(V_2) = 0,\quad &{\rm in}\ (0,T)\times \mR^d,\\
        \rho(0) = \rho_0, &{\rm in}\ \mR^d,
    \end{cases}
\end{align}
when tested with functions $\phi\in C([0,T],C^2_b(\mR^d))$ such that $\partial_t \phi\in C_b([0,T]\times \mR^d)$, see Lemma~\ref{lem:mildweak}.
\item The following estimate holds true:
\begin{align}\label{eq:L1estimates}
    \sup\limits_{t\in [0,T]} \|\rho(t,\cdot)\|_{L^1(\mR^d)} \leq C(T,\|V_1\|_{\infty},\alow)\bigg(\|\rho_0\|_{L^1(\mR^d)} + \frac{\alow}{\alow - 1}T^{1-1/\alow} \esssup\limits_{t\in[0,T]}\|V_2(t,\cdot)\|_{L^1(\mR^d)}\bigg)
\end{align}
\item There exists $c>0$ depending on $\sup\limits_{t\in[0,T]}\|\rho(t,\cdot)\|_{L^1(\mR^d)}, \|V_1\|_{\infty},$ and  $\esssup\limits_{t\in[0,T]}\|V_2(t,\cdot)\|_{L^1(\mR^d)},$ such that
\begin{align}\label{eq:RK}
    \sup\limits_{t\in [0,T]}\|\rho(t,\cdot + z) - \rho(t,\cdot)\|_{L^1(\mR^d)} \leq \|\rho_0(\cdot + z) - \rho_0\|_{L^1(\mR^d)} + c|z|^{\alow -1}.
\end{align}
\item There exists $C>0$ depending on $\sup\limits_{t\in[0,T]}\|\rho(t,\cdot)\|_{L^1(\mR^d)}, \|V_1\|_{\infty}, \esssup\limits_{t\in[0,T]}\|V_2(t,\cdot)\|_{L^1(\mR^d)},$ and $\rho_0$, and a modulus of continuity $\omega$ independent of these quantities, such that for any $z\in \mR^d$,
\begin{align}\label{eq:L1uc}
    \|\rho(t) - \rho(s)\|_{L^1(\mR^d)} \leq C\omega(|t-s|),\quad t,s\in [0,T].
\end{align}
Furthermore,
\begin{align}\label{eq:cm2uc}
   \|\rho(t) - \rho(s)\|_{C^{-2}_b(\mR^d)} \leq \big(\sup\limits_{t\in[0,T]}\|\rho(t,\cdot)\|_{L^1(\mR^d)}(1+ \|V_1\|_{\infty}) + \esssup\limits_{t\in[0,T]}\|V_2(t,\cdot)\|_{L^1(\mR^d)}\big)|t-s|,\quad t,s\in [0,T].
\end{align}
\item Let $\psi$ be a tightness measuring function for $\{\rho_0,\nu\textbf{1}_{B(0,1)^c}\}$. Then, for all $t\in [0,T]$,
\begin{align*}
    &\int_{\mR^d} \psi(x)\, |\rho(t,x)|\, dx \leq \int_{\mR^d} \psi(x) \, |\rho_0(x)|\, dx + \|D\psi\|_{\infty}\esssup_{t\in [0,T]} \|V_2(t,\cdot)\|_{L^1(\mR^d)}
    \\
    &\qquad +\|D\psi\|_{\infty}\sup\limits_{t\in [0,T]}\|\rho(t,\cdot)\|_{L^1(\mR^d)}\Big(2 + cT\big(\|A\|_{2}+ \|B\|_{\infty}+\|b\|_{\infty} + \int_{|z|< 1}|z|^2\, \nu(dz)\big)+ T\int_{|z|\geq 1}\psi(z)\, \nu(dz)\Big).
\end{align*}
\end{enumerate}
\end{lemma}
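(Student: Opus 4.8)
I would prove the bound by a weighted energy estimate for $|\rho|$, tested against a smooth bounded truncation of the weight $\psi$, after reducing to smooth data. The mechanism behind the three structural constants is the elementary pointwise inequality $\mL^\ast\psi\le C_\psi$, where $C_\psi$ collects exactly the quantities $\|B\|_\infty\|D\psi\|_\infty$, $\|A\|_2\|D^2\psi\|_\infty$, $\|D^2\psi\|_\infty\int_{|z|<1}|z|^2\,\nu(dz)$ and $\int_{|z|\ge1}\psi(z)\,\nu(dz)$ appearing on the right-hand side; the term with $V_1$ (written $b$ in the statement) and the term with $V_2$ come from the divergence-form drift and source.

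\textbf{Reduction to smooth data.} By parts (a)--(d) it is enough to treat $\rho_0\in C_c^\infty(\mR^d)$, $V_1\in (C_c^\infty([0,T]\times\mR^d))^d$ and $V_2\in C([0,T],(C_c^\infty(\mR^d))^d)$, for which the mild solution of \eqref{eq:Duhamel} is classical and all integrations by parts below are legitimate. The general case follows by approximating $\rho_0$ in $L^1$ with $\int\psi|\rho_0^n|\to\int\psi|\rho_0|$ (possible because a non-negative subadditive function with bounded gradient has at most linear growth, so a truncation-plus-mollification argument as in Lemma~\ref{lem:moll} applies to the finite measure $\psi|\rho_0|\,dx$), $V_1$ locally uniformly, and $V_2$ in $L^\infty([0,T],L^1)$ with $\|V_2^n(t)\|_{L^1}\to\|V_2(t)\|_{L^1}$; the solutions converge in $C([0,T],L^1)$ by the Duhamel formula and Gr\"onwall's lemma~\ref{lem:Gronwall}, and one passes to the limit in the estimate via Fatou's lemma along a subsequence with $\rho^n(t)\to\rho(t)$ a.e.

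\textbf{Truncation of the weight and the energy estimate.} Let $\phi_R\in C^2([0,\infty))$ be nondecreasing and concave with $\phi_R(s)=s$ for $s\le R$, $\phi_R\equiv R+1$ for $s\ge R+2$, $0\le\phi_R'\le1$, $|\phi_R''|\le2$, and set $\psi_R=\phi_R\circ\psi\in C_b^2(\mR^d)$. Then $\psi_R\uparrow\psi$ pointwise, $\psi_R$ is non-negative and subadditive (as $\phi_R$ is non-negative, nondecreasing and subadditive on $[0,\infty)$ and $\psi$ is subadditive), $\|D\psi_R\|_\infty\le\|D\psi\|_\infty$ and $\|D^2\psi_R\|_\infty\le 2\|D\psi\|_\infty^2+\|D^2\psi\|_\infty$. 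Using subadditivity of $\psi_R$ on jumps $|z|\ge1$, a second-order Taylor expansion on jumps $|z|<1$ and on the term $\Div(A D\,\cdot\,)$, and $\|D\psi_R\|_\infty<\infty$ on the drift, one gets $\mL^\ast\psi_R\le C_{\psi_R}$ with $C_{\psi_R}\le\|B\|_\infty\|D\psi\|_\infty+(2\|D\psi\|_\infty^2+\|D^2\psi\|_\infty)\big(\|A\|_2+\tfrac12\int_{|z|<1}|z|^2\nu(dz)\big)+\int_{|z|\ge1}\psi(z)\,\nu(dz)$, finite and bounded uniformly in $R$. Next, for a convex even approximation $B_\delta\uparrow|\cdot|$ of the absolute value with $\beta_\delta:=B_\delta'$, $|\beta_\delta|\le1$, $\beta_\delta'\ge0$, I would multiply \eqref{eq:distl1} by $\psi_R\beta_\delta(\rho)$ and integrate: the Kato-type inequality $\beta_\delta(\rho)\mL\rho\le\mL(B_\delta(\rho))$ (from convexity of $B_\delta$ for the nonlocal and second-order parts, and exactness for the drift) together with the integration-by-parts formula of Subsection~\ref{sec:hk} gives $\int\psi_R\beta_\delta(\rho)\mL\rho\le\int(\mL^\ast\psi_R)B_\delta(\rho)\le C_{\psi_R}\|\rho\|_{L^1}$; the term $\Div(V_1\rho)$ contributes $\le\|D\psi\|_\infty\|V_1\|_\infty\|\rho\|_{L^1}$ plus a term carrying the factor $\Gamma_\delta(\rho)$, $\Gamma_\delta(s):=\int_0^s t\beta_\delta'(t)\,dt\to0$, which vanishes as $\delta\to0$ by dominated convergence; and $\Div V_2$ contributes $-\int\beta_\delta(\rho)D\psi_R\cdot V_2\le\|D\psi\|_\infty\|V_2\|_{L^1}$ plus a cross term discussed below. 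Integrating in time, letting $\delta\to0$ (dominated convergence, $B_\delta(\rho)\le|\rho|\in L^1$) and then $R\to\infty$ (monotone convergence on the left; dominated convergence on the right using $\mL^\ast\psi_R\le C_{\psi_R}\le C_\psi$ uniformly) yields
\[
\int_{\mR^d}\psi\,|\rho(t)|\le\int_{\mR^d}\psi\,|\rho_0|+\|D\psi\|_\infty\int_0^t\|V_2(s)\|_{L^1}\,ds+(C_\psi+\|D\psi\|_\infty\|V_1\|_\infty)\int_0^t\|\rho(s)\|_{L^1}\,ds,
\]
and bounding the time integrals by $T$ times the suprema gives the claimed inequality (with $c$ absorbing $\|D^2\psi\|_\infty/\|D\psi\|_\infty$ and numerical factors).

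\textbf{Main obstacle.} The delicate point is the cross term $\int\psi_R\,\beta_\delta'(\rho)\,D\rho\cdot V_2$ coming from the low-regularity source $\Div V_2$ with $V_2$ only in $L^1$: it is not controlled by $\|\rho\|_{L^1}$ and $\|V_2\|_{L^1}$ alone, and the sign-regularization does not make it disappear. I would handle this by splitting $\rho=\rho_1+\rho_2$, with $\rho_1$ solving \eqref{eq:distl1} for data $(\rho_0,V_1,0)$ and $\rho_2$ for data $(0,V_1,V_2)$. For $\rho_1$ no sign-regularization is needed: writing $\rho_0=\rho_0^+-\rho_0^-$ and using that the solution with non-negative initial data stays non-negative (Fokker--Planck positivity), one tests directly with $\psi_R\ge0$, uses mass conservation and $\mL^\ast\psi_R\le C_{\psi_R}$, and obtains the $\int\psi|\rho_0|$ and $\|D\psi\|_\infty\sup_t\|\rho(t)\|_{L^1}(2+cT(\|A\|_2+\|B\|_\infty+\|V_1\|_\infty+\int_{|z|<1}|z|^2\nu))+T\int_{|z|\ge1}\psi\nu$ parts of the bound. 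For $\rho_2$, which starts from $0$ and conserves zero mass, the remaining $\|D\psi\|_\infty\esssup_t\|V_2(t)\|_{L^1}$ contribution has to be extracted by a separate argument — duality against the backward equation $-\partial_t w-\mL^\ast w+V_1\cdot Dw=0$, using that $\int\phi\,\rho_2(t)=-\int_0^t\!\int Dw\cdot V_2$ and the gradient estimate \eqref{eq:gradest} of Lemma~\ref{lem:cd}(c) — and this is where the bulk of the technical work lies.
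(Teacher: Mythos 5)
Your high-level strategy agrees with the paper's: test the equation with a truncation $\psi_R$ of the weight, pass $R\to\infty$, and use that $\mL^\ast\psi_R$ is bounded by the very constants appearing in the estimate. The paper's own proof of (e) is a one-line citation to \cite{MR4309434}, where the result is proved for non-negative probability densities with no divergence source, so you are right that the extension to signed $\rho$ with $V_2\in L^\infty([0,T],L^1)$ needs to be argued. Your handling of the $\rho_1$ piece (Jordan decomposition of $\rho_0$, positivity preservation, direct testing of each $\rho_1^\pm\ge0$ with $\psi_R$) is sound, and your identification of the cross term $\int\psi_R\,\beta_\delta'(\rho)\,D\rho\cdot V_2$ coming from the source as the genuine obstacle in the Kato approach is also correct.

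The gap is in the $\rho_2$ piece, and the duality argument you sketch cannot close it. Testing the equation for $\rho_2$ against the backward dual equation gives a bound on $\int\phi\,\rho_2(t)$ for $\phi\in C^1_b(\mR^d)$ (equivalently, on $\|\rho_2(t)\|_{C^{-1}_b(\mR^d)}$, or, taking $\phi=\psi_R$, on $\int\psi_R\,\rho_2(t)$), but it does \emph{not} control $\int\psi\,|\rho_2(t)|$: for a signed $\rho_2$ the latter can be arbitrarily larger than the former, and it is precisely $\int\psi|\rho_2|$ that is needed for tightness. The cross term does not vanish as $\delta\to0$ and is not dominated by $\|\rho_2\|_{L^1}$ and $\|V_2\|_{L^1}$ — integrating it by parts produces $\|\Div V_2\|_{L^1}$, which is neither assumed nor controlled under your mollification step — so the source contribution to the weighted $L^1$ norm of $|\rho_2|$ is not extracted by the regularized sign-testing or by duality as written, and a different idea is needed here. (Two small points: the backward equation should carry drift $+V_1\cdot Dw$, not $-V_1\cdot Dw$, so that the $\rho_2$-terms cancel in the weak formulation; and the reduction to smooth data must produce a bound depending only on $\|\rho_0\|_{L^1}$, $\int\psi|\rho_0|$, $\|V_1\|_\infty$, $\esssup_t\|V_2(t)\|_{L^1}$ and the structural constants, since mollification does not control $\|\Div V_2^n\|_{L^1}$ or the $\psi$-moment of $V_2^n$.)
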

\begin{proof}
   (a)\quad We will establish the existence with the use of Banach's fixed point theorem in the space $X = L^\infty([0,T],L^1(\mR^d))$, continuity in time will be shown in d).
   For $\widetilde{\rho}$ we let $S(\widetilde{\rho}) = \rho$ if for every\footnote{By Fubini's theorem $\rho$ is measurable in $(t,x)$ and in $x$ for every fixed $t$.} $t\in[0,T]$ and $x\in \mR^d$,
   \begin{align}
       \rho(t,x)
       &=K_t\ast \rho_0(x) + \int_0^t\int_{\mR^d} D_yK(t-s,x-y)
       (V_1(s,y)\widetilde{\rho}(s,y) + V_2(s,y))\, dy\, ds.\label{eq:mild2}
   \end{align}
    By \eqref{eq:mild2}, Tonelli's theorem, and \ref{eq:K} we get that for every $t\in [0,T]$,
   \begin{align}
       \int_{\mR^d} |\rho(t,x)|\, dx &\leq \|\rho_0\|_{L^1(\mR^d)} + \int_0^t\int_{\mR^d}\int_{\mR^d}|D_y K(t-s,x-y)||V_1(s,y)\widetilde{\rho}(s,y) + V_2(s,y)|\, dx\, dy\, ds\label{eq:rhol1}\\
       &\leq \|\rho_0\|_{L^1(\mR^d)} + \mathcal{K} \int_0^t (t-s)^{-\frac 1{\alow}}\int_{\mR^d}|V_1(s,y)\widetilde{\rho}(s,y) + V_2(s,y)|\, dy\, ds\nonumber\\
       &\leq \|\rho_0\|_{L^1(\mR^d)} + \mathcal{K} T^{1-\frac 1{\alow}}\frac{\alow}{\alow - 1}\big(\|V_1\|_{\infty}\sup\limits_{t\in [0,T]}\|\widetilde{\rho}(t,\cdot)\|_{L^1(\mR^d)} + \esssup\limits_{t\in [0,T]}\|V_2(t,\cdot)\|_{L^1(\mR^d)}\big).\nonumber
   \end{align}
   Therefore we get that $\rho\in L^\infty([0,T],L^1(\mR^d))$ (and $B([0,T],L^1(\mR^d))$). Furthermore, since the map $S$ is affine, a similar calculation leads to the fact that for $T$ small enough $S$ is a contraction on $L^\infty([0,T],L^1(\mR^d))$.
    More explicitly, if
    \begin{align*}
        T\leq \bigg(\frac {\alow - 1}{2\mathcal{K}\alow \|V_1\|_{\infty}}\bigg)^{\frac \alow{\alow - 1}},
    \end{align*}
    then $S$ is a contraction on $X$. By Banach's fixed point theorem we get that $S$ has a unique fixed point in $X$. Furthermore, since the above choice of $T$ does not depend on $\rho_0$, we may extend the solution indefinitely, therefore we get the long time existence as well. By Lemma~\ref{lem:mildweak} we find that $\rho$ is a distributional solution of \eqref{eq:distl1}. \smallskip
    
    \noindent (b)\quad In order to get \eqref{eq:L1estimates} 
    we integrate \eqref{eq:Duhamel} with respect to $x$ and we estimate using triangle inequality, Tonelli's theorem, and \ref{eq:K}:
    \begin{align*}
        \int_{\mR^d}|\rho(t,x)|\, dx &\leq \|\rho_0\|_{L^1(\mR^d)} +\int_0^t \int_{\mR^d}\int_{\mR^d} |D_y K(t-s,x-y)| |V_1(s,y)\rho(s,y) + V_2(s,y)| \, dx\, dy\, ds\\
        &\leq \|\rho_0\|_{L^1(\mR^d)} +\|V_1\|_{\infty}\int_0^t(t-s)^{-\frac 1{\alow}}\int_{\mR^d}|\rho(s,y)|\, dy\, ds + \esssup\limits_{t\in [0,T]}\|V_2(t,\cdot)\|_{L^1(\mR^d)}\int_0^t (t-s)^{-\frac 1{\alow}}\, ds\\
        &= \|\rho_0\|_{L^1(\mR^d)} + \esssup\limits_{t\in [0,T]}\|V_2(t,\cdot)\|_{L^1(\mR^d)}T^{1-\frac 1{\alow}}\frac {\alow}{\alow - 1} +\|V_1\|_{\infty}\int_0^t(t-s)^{-\frac 1{\alow}}\int_{\mR^d}|\rho(s,y)|\, dy\, ds.
    \end{align*}
    By using Lemma~\ref{lem:Gronwall}, we get \eqref{eq:L1estimates}.\smallskip
    
    \noindent (c)\quad Fix $z \in \mR^d$ and $t\in [0,T]$. By Duhamel's formula \eqref{eq:Duhamel},
    \begin{align*}
        \|\rho(t,\cdot + z) - \rho(t,\cdot)\|_{L^1(\mR^d)} &\leq \|\rho_0\ast K_t(\cdot + z) - \rho_0\ast K_t\|_{L^1(\mR^d)}\\ &+ \int_{\mR^d}\int_0^t\int_{\mR^d}|D_y K(t-s,x+z-y) - D_yK(t-s,x-y)| |(V_1\rho + V_2)(s,y)|\, dy\, ds\, dx.
    \end{align*}
    By Tonelli's theorem we can estimate the first term as follows:
    \begin{align}\label{eq:RK1}
        \|\rho_0\ast K_t(\cdot + z) - \rho_0\ast K_t\|_{L^1(\mR^d)} \leq \|\rho_0(\cdot + z) - \rho_0\|_{L^1(\mR^d)}.
    \end{align}
    In the second term we split the time integral into $\int_0^{t-\eps} + \int_{t-\eps}^t$. For the first part we use the fundamental theorem of calculus, Tonelli's theorem and \ref{eq:K}:
    \begin{align*}
        &\int_{0}^{t-\eps}\int_{\mR^d}\int_{\mR^d}|D_y K(t-s,x+z-y) - D_yK(t-s,x-y)| |(V_1\rho + V_2)(s,y)|\, dy\, dx\, ds\\
        \leq &|z|\int_0^1\int_{0}^{t-\eps}\int_{\mR^d}\int_{\mR^d}|D^2_{yy} K(t-s,x+\lambda z-y)| |(V_1\rho + V_2)(s,y)|\, dx\, dy\, ds\, d\lambda\\
        \leq &|z|\int_{0}^{t-\eps}\int_{\mR^d}\|D^2_{yy} K(t-s,\cdot)\|_{L^1(\mR^d)} |(V_1\rho + V_2)(s,y)|\, dy\, ds\\
        \leq &|z|\mathcal{K}(\|V_1\|_{\infty}\sup\limits_{t\in[0,T]}\|\rho(t,\cdot)\|_{L^1(\mR^d)} + \esssup\limits_{t\in[0,T]}\|V_2(t,\cdot)\|_{L^1(\mR^d)})\int_0^{t-\eps}(t-s)^{-\frac{2}{\alow}}\, ds\\
        = &|z|\eps^{1-\frac 2{\alow}}\cdot C(\alow,T)\mathcal{K}(\|V_1\|_{\infty}\sup\limits_{t\in[0,T]}\|\rho(t,\cdot)\|_{L^1(\mR^d)} + \esssup\limits_{t\in[0,T]}\|V_2(t,\cdot)\|_{L^1(\mR^d)}).
    \end{align*}
    The second part is estimated as follows:
    \begin{align*}
        &\int_{t-\eps}^t\int_{\mR^d}\int_{\mR^d}|D_y K(t-s,x+z-y) - D_yK(t-s,x-y)| |(V_1\rho + V_2)(s,y)|\, dy\, dx\, ds\\
        \leq &2\int_{t-\eps}^t\int_{\mR^d} \|D_y K(t-s,\cdot)\|_{L^1(\mR^d)} |(V_1\rho + V_2)(s,y)|\, dy\, ds\\
        \leq &2\mathcal{K}(\|V_1\|_{\infty}\sup\limits_{t\in[0,T]}\|\rho(t,\cdot)\|_{L^1(\mR^d)} + \esssup\limits_{t\in[0,T]}\|V_2(t,\cdot)\|_{L^1(\mR^d)}) \int_{t-\eps}^t (t-s)^{-\frac 1{\alow}}\, ds\\
        \leq &\eps^{1- \frac 1{\alow}}\cdot 2C(\alow,T)\mathcal{K}(\|V_1\|_{\infty}\sup\limits_{t\in[0,T]}\|\rho(t,\cdot)\|_{L^1(\mR^d)} + \esssup\limits_{t\in[0,T]}\|V_2(t,\cdot)\|_{L^1(\mR^d)}).
    \end{align*}
    By summing the two estimates, we get
    \begin{align}
    \int_{\mR^d}\int_0^t\int_{\mR^d}|D_y K(t-s,x+z-y) - D_yK(t-s,x-y)| |(V_1\rho + V_2)(s,y)|\, dy\, ds\, dx &\leq \widetilde{C} (|z|\eps^{1 - \frac 2{\alow}} + \eps^{1-\frac 1{\alow}})\nonumber\\
    &\leq C |z|^{\alow -1},\label{eq:RK2}
    \end{align}
    where the last inequality was obtained by minimization in $\eps$. Now, \eqref{eq:RK} follows by adding \eqref{eq:RK1} to \eqref{eq:RK2}.\smallskip

    \noindent (d)\quad Let $\varphi\in C_b^{2}(\mR^d)$ and let $0\leq s<t\leq T$. Using the definition of distributional solution and the fact that $\rho$ satisfying \eqref{eq:Duhamel} is in $B([0,T],L^1(\mR^d))$, it is easy to verify that
    \begin{align*}
        \bigg|\int_{\mR^d} (\rho(t,x) - \rho(s,x))\varphi(x)\, dx\bigg| \leq C\|\varphi\|_{C^2_b(\mR^d)}|t-s|\big(\sup\limits_{t\in[0,T]}\|\rho(t,\cdot)\|_{L^1(\mR^d)}(1+ \|V_1\|_{\infty}) + \esssup\limits_{t\in[0,T]}\|V_2(t,\cdot)\|_{L^1(\mR^d)}\big),
    \end{align*}
    where $C=C(\mL^*)$.
    Note that this gives the second part of the statement. We denote \begin{align*}
        C_1 = C\sup\limits_{t\in[0,T]}\|\rho(t,\cdot)\|_{L^1(\mR^d)}(1+ \|V_1\|_{\infty}) + \esssup\limits_{t\in[0,T]}\|V_2(t,\cdot)\|_{L^1(\mR^d)}.
    \end{align*}
    Now let $\phi\in L^{\infty}(\mR^d)$ with $\|\phi\|_{\infty} = 1$ and let $\eta_\eps$ be a standard mollifier. We have $\phi_{\eps} :=\phi\ast\eta_\eps\in C_b^2(\mR^d)$ and $\|\phi_{\eps}\|_{C^2_b(\mR^d)} \leq C \eps^{-2}$. Therefore,
    \begin{align*}
        \bigg|\int_{\mR^d} (\rho(t,x) - \rho(s,x))\phi_{\eps}(x)\, dx\bigg| \leq C_1|t-s|\eps^{-2}.
    \end{align*}
    Furthermore, if we let $\rho(t,x) - \rho(s,x) = g(x)$, then using Fubini's theorem and a change of variables we get
    \begin{align*}
        \bigg|\int_{\mR^d} g(x)\phi_{\eps}(x)\, dx- \int_{\mR^d} g(x)\phi(x)\, dx\bigg| =  \bigg|\int_{\mR^d} (g_{\eps}(x)- g(x))\phi(x)\, dx\bigg| \leq \|\phi\|_{\infty}\sup\limits_{|z|\leq \eps}\|g(\cdot+z) - g(\cdot)\|_{L^1(\mR^d)}.
    \end{align*}
    Thus, by using (c) and the fact that $\|\phi\|_{\infty} = 1$, we get
    \begin{align*}
    \bigg|\int_{\mR^d} (\rho(t,x) - \rho(s,x))\phi_{\eps}(x)\, dx- \int_{\mR^d} (\rho(t,x) - \rho(s,x))\phi(x)\, dx\bigg| \leq 2\big(\sup\limits_{|z|<\eps}\|\rho_0(\cdot + z) - \rho_0\|_{L^1(\mR^d)} + c\eps^{\alow -1}\big).
    \end{align*}
    It follows that 
    \begin{align*}
        \bigg|\int_{\mR^d} (\rho(t,x) - \rho(s,x))\phi(x)\, dx\bigg| \leq C_1|t-s|\eps^{-2} + 2\big(\sup\limits_{|z|<\eps}\|\rho_0(\cdot + z) - \rho_0\|_{L^1(\mR^d)} + c\eps^{\alow - 1}\big).
    \end{align*}
    By minimizing in $\eps$ (or taking, e.g., $\eps = |t-s|^{\frac 14}$) and using the fact that $L^{\infty}$ is the dual of $L^1$, we get \eqref{eq:L1uc}.\smallskip
    
    \noindent (e)\quad The proof consists in testing the equation with $\psi$, but since it is unbounded, an appropriate cut-off argument is needed. We refer to \cite[Proposition~6.6 (c)]{MR4309434} for the details.\smallskip

\end{proof}
\subsection{Measure valued solutions of Fokker--Planck equations}\label{sec:FP}
\begin{theorem}[{\cite[Section~6]{MR4309434}}]\label{th:FPOEERJ} Assume that \ref{eq:K} holds, $b\in (C_b((0,T),C^2_b(\mR^d)))^d$, and that $m_0\in C^2_b(\mR^d)$ is the density of a probability measure. Then the following assertions hold.
\begin{enumerate}[label=(\alph*)]
\item The equation
\begin{equation}\label{eq:FP}
    \begin{cases}
    \partial_t m - \mL^* m -\Div(bm) = 0\quad {\rm in }\ (0,T)\times \mR^d,\\
    m(0) = m_0\quad {\rm in}\ \mR^d, \end{cases}
\end{equation}
has a unique classical solution $m$. The function $m(t,\cdot)$ is the density of a probability measure for every $t\in[0,T]$ and
$$\|m\|_{\infty} + \|\partial_t m\|_{\infty} + \|Dm\|_{\infty} + \|D^2m\|_{\infty} \leq C,$$
with $C$ depending on $d,\sup_{t\in [0,T]}\|b(t,\cdot)\|_{C^2_b(\mR^d)}$, $\|m_0\|_{C^2_b(\mR^d)}$ and the constant in \ref{eq:K}. 
\item There exists $c_0>0$ independent of $m_0$ such that 
\begin{equation}\label{eq:FPHolder0}
    d_0(m(t),m(s)) \leq c_0 (1+\|b\|_{\infty})|t-s|^{\frac 12}.
\end{equation}
\end{enumerate}
\end{theorem}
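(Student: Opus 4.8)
The statement coincides with \cite[Section~6]{MR4309434}, where it is proved for purely nonlocal $\mL$; since the argument there only invokes \ref{eq:K} (for both $K$ and $K^*$) together with the maximum principle for L\'evy generators, the plan is to rerun it and check that nothing is special about the purely nonlocal case. For part~(a) I would follow the Duhamel/Banach-fixed-point scheme of Lemma~\ref{lem:cd} and Lemma~\ref{lem:FPL1}: in $X=B([0,T],C^2_b(\mR^d))$ (with a continuity-in-time component as in Lemma~\ref{lem:cd}) define $S(\widetilde m)=m$ by
\[
m(t,x)=K^*_t\ast m_0(x)+\int_0^t\int_{\mR^d}D_yK^*(t-s,x-y)\cdot b(s,y)\,\widetilde m(s,y)\,dy\,ds,
\]
where $K^*$ is the heat kernel of $\mL^*$; since $\alow>1$, \ref{eq:K} gives $\|DK^*(\tau)\|_{L^1(\mR^d)}\le\mathcal{K}\tau^{-1/\alow}$ with the singularity integrable, so $S$ is a contraction on a short interval whose length does not depend on $m_0$. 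Then I would bootstrap the spatial regularity exactly as in Lemma~\ref{lem:cd} --- integrating by parts in $y$, differentiating in $x$, and using $b\in C_b((0,T),C^2_b(\mR^d))$ to distribute up to two derivatives between $b$ and the kernel --- closing the $C^2_b$-estimate on all of $[0,T]$ with the Gr\"onwall inequality of Lemma~\ref{lem:Gronwall}; the bound on $\partial_t m$ then follows from the equation $\partial_t m=\mL^* m+\Div(bm)$. One cannot upgrade to a uniform $C^{2+\sigma}_b$ bound here, as $\Div(bm)$ already spends one derivative and $m_0$ is only $C^2_b$. Uniqueness and the passage from mild to classical solution go as in Lemma~\ref{lem:cd}.

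Next I would verify that $m(t,\cdot)$ is a probability density. Mass conservation is immediate from the Duhamel formula: $\int_{\mR^d}K^*_t\ast m_0\,dx=1$ because $K^*_t$ is a probability density, and $\int_{\mR^d}D_yK^*(t-s,x-y)\,dx=0$ because $K^*(t-s,\cdot)\in W^{1,1}(\mR^d)$, so $\int_{\mR^d}m(t,x)\,dx=1$ for every $t$. For nonnegativity I would argue by duality: given $0\le\phi\in C^\infty_c(\mR^d)$, solve the backward linear problem $-\partial_\tau\psi-\mL\psi-b\cdot D\psi=0$ on $(0,t)$ with $\psi(t)=\phi$ (a time-reversal of Lemma~\ref{lem:cd}), deduce $\psi\ge0$ from the maximum principle for $\mL$ (which has no zeroth-order term), then differentiate $\tau\mapsto\int_{\mR^d}\psi(\tau,x)m(\tau,x)\,dx$ and use the $\mL$--$\mL^*$ duality to see it is constant, so $\int_{\mR^d}\phi\,m(t)=\int_{\mR^d}\psi(0)\,m_0\ge0$; as $\phi\ge0$ is arbitrary and $m(t,\cdot)$ is continuous, $m(t,\cdot)\ge0$.

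For part~(b) I would test against Lipschitz functions by mollification, which also keeps the constant independent of $m_0$. Fix $\phi\in\Lip$ and $0\le s<t\le T$, put $\phi_\eps=\phi\ast\eta_\eps$; then $\|\phi_\eps\|_{C^1_b(\mR^d)}\le2$, $\|D^2\phi_\eps\|_\infty\le\eps^{-1}$ and $\|\phi-\phi_\eps\|_\infty\le\eps$, so $\|\mL\phi_\eps\|_\infty\le C(\mL)(1+\eps^{-1})$ (the $\eps^{-1}$ coming only from the $A$- and $\int_{|z|<1}|z|^2\,\nu(dz)$-parts). Since $m(t,\cdot)$ and $m(s,\cdot)$ are probability measures, $\big|\int_{\mR^d}(\phi-\phi_\eps)(m(t)-m(s))\big|\le2\eps$, while testing the classical equation gives $\int_{\mR^d}\phi_\eps(m(t)-m(s))=\int_s^t\int_{\mR^d}(\mL\phi_\eps-b\cdot D\phi_\eps)\,m\,dx\,d\tau$, hence $\big|\int_{\mR^d}\phi_\eps(m(t)-m(s))\big|\le(t-s)\big(C(\mL)(1+\eps^{-1})+\|b\|_\infty\big)$. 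Adding these and choosing $\eps=|t-s|^{1/2}$ yields $\big|\int_{\mR^d}\phi\,(m(t)-m(s))\big|\le c_0(1+\|b\|_\infty)|t-s|^{1/2}$ with $c_0$ depending only on $C(\mL)$ and $T$, and taking the supremum over $\phi\in\Lip$ gives \eqref{eq:FPHolder0}.

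I expect the main obstacle to be the nonnegativity of $m$: the duality step needs enough regularity and decay of both $m$ and the dual solution $\psi$ to justify integrating by parts against the (possibly mixed local-nonlocal) operator $\mL$, and the maximum principle for $\mL$ with merely bounded smooth drift must be invoked with some care for the nonlocal tail (e.g.\ via a barrier at infinity, as in \cite{MR4309434}); the remaining ingredients --- the Duhamel contraction, the $C^2_b$ bootstrap, and the mollification argument for part~(b) --- are routine and parallel Lemma~\ref{lem:cd} and Lemma~\ref{lem:FPL1}.
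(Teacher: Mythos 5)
Your reconstruction is correct and matches in outline the Duhamel/fixed-point argument of \cite[Section~6]{MR4309434}, which the paper cites rather than reproves: Banach contraction with the Gr\"onwall bootstrap for the $C^2_b$ bound, mass conservation from the kernel, nonnegativity by duality with the backward equation, and a mollified-test-function estimate for the $|t-s|^{1/2}$ modulus. The paper's own Remark~\ref{rem:FP} makes precisely your two side observations --- parts (c) and (b) note, respectively, that the argument carries over unchanged to $\alow=2$ (local and mixed local-nonlocal operators) and that $\tfrac12$-H\"older in time is what survives once the extra kernel assumption \cite[(L2)(i)]{MR4309434} is dropped.
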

\begin{remark}\ \label{rem:FP}

\begin{enumerate}[label=(\alph*)]
\item We want to consider general probability measures as the initial conditions in the Fokker--Planck equations. To this end we rework the above result in Theorem~\ref{th:FP} below. We note that $b\in (C_b((0,T),C^1_b(\mR^d)))^d$ is sufficient for getting mild and distributional solutions, thus the difference between the assumptions in Theorem~\ref{th:FPOEERJ} and Theorem~\ref{th:FP}. 
\item The H\"older regularity in time is stronger in \cite{MR4309434} than in \eqref{eq:FPHolder0}, because of additional assumption \cite[(L2)(i)]{MR4309434}. By inspecting the proof of \cite[Proposition~6.6 (b)]{MR4309434} it is easy to see that without this assumption we can always get $\frac 12$ regularity, cf. Remark~\ref{rem:K} (a).
\item The well-posedness and spatial regularity in Theorem~\ref{th:FPOEERJ} are proved in \cite{MR4309434} using Duhamel's principle, much like Lemma~\ref{lem:cd} above. In \cite{MR4309434} the authors assume \ref{eq:K} allowing $\alow\in(1,2)$, but in the results that we use here, $\alow=2$ can be added without any change in the arguments.
\end{enumerate}
\end{remark}
\begin{definition}
We say that $m$ is a distributional solution of \eqref{eq:FP} if for every $\phi\in C_b^{1,2}([0,T]\times \mR^d)$ and $0\leq t\leq T$,
\begin{equation}\label{eq:distFP}
    \int_{\mR^d} \phi(t,x)\, m(t,dx) - \int_{\mR^d} \phi(0,x)\, m_0(dx) = \int_0^t\int_{\mR^d} \big(\partial_t \phi(s,x) + \mL \phi(s,x) - D_x\phi(s,x)b(s,x)\big)\, m(s,dx)\, ds.
\end{equation}
\end{definition}
\begin{theorem}\label{th:FP}
 Assume that \ref{eq:K} holds, $b\in (C_b((0,T),C^1_b(\mR^d)))^d$ and $m_0\in \mP$. Then the following statements are true.
 \begin{enumerate}[label=(\alph*)]
\item The problem \eqref{eq:distFP} has a distributional solution $m\in C([0,T],\mP)$.
\item There exists a constant $c_0>0$ independent of $m_0$ such that
\begin{equation}\label{eq:FPHolder}
    d_0(m(s),m(t)) \leq c_0 (1+\|b\|_{\infty})|s-t|^{\frac 12},\quad s,t\in[0,T].
\end{equation}
\item The family $\{m(t):t\in[0,T]\}$ is tight and if we let $\psi$ be the tightness measuring function from Lemma~\ref{lem:tight} for $m_0$ and $\nu\textbf{1}_{B_1^c}$, then there exists $c>0$ such that for $t\in [0,T]$,
\begin{equation}\label{eq:FPtight}\begin{split}
    &\int_{\mR^d} \psi(x)\, m(t,dx)\\ \leq &\int_{\mR^d} \psi(x) \, m_0(dx) + \|D\psi\|_{\infty}\bigg(2 + cT\big(\|A\|_{2}+ \|B\|_{\infty}+\|b\|_{\infty} + \int_{|z|< 1}|z|^2\, \nu(dz)\big)\bigg) + T\int_{|z|\geq 1}\psi(z)\, \nu(dz).\end{split}
\end{equation}
\item The distributional solution is unique. 
\item
If the sequence $(b_n) \subset (C_b((0,T),C^1_b(\mR^d)))^d$ converges to $b$ uniformly (i.e., $\|b_n-b\|_{\infty}\to 0$), then the corresponding solutions $m_n$ converge in $C([0,T],\mP)$ to $m$ -- the solution corresponding to $b$.
\end{enumerate}
\end{theorem}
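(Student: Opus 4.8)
The plan is to obtain $(a)$–$(c)$ by approximation from the smooth-data result Theorem~\ref{th:FPOEERJ}, to prove uniqueness $(d)$ by a duality argument based on the Schauder theory of Lemma~\ref{lem:cd}, and to deduce the stability statement $(e)$ from $(b)$, $(c)$, $(d)$ via a compactness argument in $C([0,T],\mP)$. For general $m_0\in\mP$ and $b\in(C_b((0,T),C^1_b(\mR^d)))^d$ I would first regularise the data: set $m_0^\eps:=m_0\ast\eta_\eps$, a $C^\infty_b(\mR^d)$ density of a probability measure with $d_0(m_0^\eps,m_0)\to0$ by Lemma~\ref{lem:moll}, and let $b^\delta$ be a spatial mollification of $b$, so that $b^\delta\in(C_b((0,T),C^2_b(\mR^d)))^d$ with $\|b^\delta-b\|_\infty\to0$. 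Theorem~\ref{th:FPOEERJ} then applies to the data $(m_0^\eps,b^\delta)$ and produces a classical solution $m^{\eps,\delta}$ which is a probability density for every $t$; deriving Duhamel's formula from the equation shows it is also the mild solution, hence it obeys the estimates of Lemma~\ref{lem:FPL1} applied to the operator $\mL^\ast$ (which satisfies \ref{eq:K}) with $V_1=b^\delta$ and $V_2=0$.

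Next I would extract uniform estimates. From Theorem~\ref{th:FPOEERJ}$(b)$ and Remark~\ref{rem:FP}$(b)$ the curves $t\mapsto m^{\eps,\delta}(t)$ are $\tfrac12$-Hölder in $(\mP,d_0)$ with constant $c_0(1+\|b^\delta\|_\infty)\le c_0(2+\|b\|_\infty)$, uniform in $\eps,\delta$. Taking $\psi$ to be an even tightness-measuring function from Lemma~\ref{lem:tight} for $m_0$ and $\nu\textbf{1}_{B(0,1)^c}$ (evenness makes it serve equally well for $\mL^\ast$, whose Lévy measure is the reflection of $\nu$), the estimate Lemma~\ref{lem:FPL1}$(e)$ together with $\sup_t\|m^{\eps,\delta}(t)\|_{L^1}=1$ and the elementary bound $\int_{\mR^d}\psi\,m_0^\eps\le\int_{\mR^d}\psi\,m_0+C$ for $\eps\le1$ (subadditivity of $\psi$ and $\|D\psi\|_\infty<\infty$) yields a bound on $\int_{\mR^d}\psi\,m^{\eps,\delta}(t)$ uniform in $\eps,\delta,t$. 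By Lemma~\ref{lem:tight} the set $\{m^{\eps,\delta}(t):t\in[0,T],\ \eps,\delta\in(0,1]\}$ is tight, hence relatively compact in $\mP$ by Remark~\ref{rem:d0weak}$(a)$; combined with the uniform time-equicontinuity and the completeness of $(\mP,d_0)$ and $C([0,T],\mP)$ (Remark~\ref{rem:d0weak}$(b)$), the Arzelà--Ascoli theorem makes $\{m^{\eps,\delta}\}$ relatively compact in $C([0,T],\mP)$. Letting $\delta\to0$ and then $\eps\to0$ I obtain a limit point $m\in C([0,T],\mP)$ — tightness excludes loss of mass at infinity, so $m(t)$ is genuinely a probability measure — and passing to the limit in \eqref{eq:distFP} (the drift term converges since $b^\delta\to b$ uniformly, $D_x\phi$ is bounded, and $m^{\eps,\delta}(s)\to m(s)$ uniformly on the tight set; the initial term uses $m_0^\eps\to m_0$) shows that $m$ solves \eqref{eq:distFP} with data $m_0$, which proves $(a)$. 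The inequality $(b)$ passes to the limit by continuity of $d_0$, and $(c)$ follows from Fatou's lemma (lower semicontinuity of $m\mapsto\int\psi\,dm$ for $\psi\ge0$ continuous).

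For uniqueness $(d)$ I would use duality: given two distributional solutions $m^1,m^2$, fix $t\in(0,T]$ and $\psi\in C^{2+\sigma}_b(\mR^d)$, solve by Lemma~\ref{lem:cd} (with $k=2$, $f=0$) the forward problem $\partial_\tau\varphi-\mL\varphi-b(t-\tau,\cdot)\cdot D\varphi=0$, $\varphi(0)=\psi$, and note that $\phi(s,x):=\varphi(t-s,x)$ is an admissible test function on $[0,t]$ satisfying $\partial_s\phi+\mL\phi-D_x\phi\cdot b=0$; inserting $\phi$ into \eqref{eq:distFP} forces $\int_{\mR^d}\psi\,(m^1(t)-m^2(t))=0$, and since $C^\infty_c(\mR^d)\subset C^{2+\sigma}_b(\mR^d)$, Lemma~\ref{lem:d0test} gives $m^1(t)=m^2(t)$. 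Finally, for $(e)$, if $b_n\to b$ uniformly then by $(b)$ and $(c)$ the solutions $m_n$ from $(a)$ are equicontinuous in time and uniformly tight (the constants depend only on $\sup_n\|b_n\|_\infty<\infty$, on $m_0$, and on $\nu$), so $\{m_n\}$ is relatively compact in $C([0,T],\mP)$; any limit point solves \eqref{eq:distFP} with drift $b$ and data $m_0$ by the same passage to the limit as above, hence equals $m$ by $(d)$, and therefore $m_n\to m$ in $C([0,T],\mP)$.

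The main difficulty is the tightness bookkeeping underlying the passage to the limit: one must obtain the generalised-moment bound of Lemma~\ref{lem:FPL1}$(e)$ uniformly in the approximation parameters, which hinges on controlling $\int\psi\,m_0^\eps$ under mollification and on the compatibility of $\psi$ with the tail of $\nu$ at infinity. This uniform bound is precisely what powers the Arzelà--Ascoli compactness in $C([0,T],\mP)$ and, equally importantly, guarantees that the limiting curves remain probability-measure-valued rather than leaking mass at infinity; the duality argument for $(d)$ and the compactness argument for $(e)$ are comparatively routine once Lemma~\ref{lem:cd} and $(d)$ are available.
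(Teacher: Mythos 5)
Your proposal is correct but takes a genuinely different route from the paper in parts $(a)$ and $(e)$.

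For $(a)$, the paper mollifies only $m_0$ and, relying on Remark~\ref{rem:FP}$(a)$ (the underlying proof of \cite[Proposition~6.8]{MR4309434} requires only $b,Db\in C_b$), avoids mollifying the drift entirely. It then shows that $(m^{\eps_n})$ is \emph{Cauchy} in $C([0,T],\mP)$ by a Holmgren-type duality: testing $m^\eps-m^{\eps'}$ against the solution $\phi$ of the backward equation \eqref{eq:Holmgren} and using the gradient bound of Lemma~\ref{lem:cd}$(c)$ gives $d_0(m^\eps(t),m^{\eps'}(t))\le C\,d_0(m_0^\eps,m_0^{\eps'})$, whence Lemma~\ref{lem:moll} and completeness of $C([0,T],\mP)$ finish. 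You instead mollify both $m_0$ and $b$ to invoke Theorem~\ref{th:FPOEERJ} as stated, and you extract a limit by Arzel\`a--Ascoli using the uniform tightness and H\"older-in-time estimates, with uniqueness pinning down the full-family limit afterwards. Both are valid; the paper's Cauchy argument is shorter and needs no compactness or double mollification, while your route is more self-contained in that it does not look inside the proof of \cite[Proposition~6.8]{MR4309434}, but it makes the tightness bookkeeping (the uniform bound on $\int\psi\,m^{\eps,\delta}(t)$) the load-bearing step. One technical point you raise, that $\psi$ should be taken even so that it simultaneously controls $\nu\textbf{1}_{B_1^c}$ and its reflection $\widetilde\nu\textbf{1}_{B_1^c}$ (the L\'evy measure of $\mL^\ast$), is a worthwhile clarification that the paper leaves implicit.

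For $(e)$, the paper again uses the quantitative duality estimate: subtracting the equations for $\wtm_n=m-m_n$, testing with $\phi$ solving \eqref{eq:Holmgren}, and applying Lemma~\ref{lem:cd}$(c)$ gives $d_0(m_n(t),m(t))\le CT\|b_n-b\|_\infty$ directly. You deduce convergence by the softer compactness-plus-uniqueness argument (uniform tightness and equicontinuity make $(m_n)$ relatively compact, and any limit solves the limiting equation and so equals $m$ by $(d)$). Both work; the paper's estimate is explicit and gives a modulus of convergence, which your argument does not.

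Your proofs of $(b)$, $(c)$, and $(d)$ match the paper's in spirit: $(b)$ by passing the H\"older estimate through the limit, $(c)$ by passing the generalized-moment estimate through the limit (Fatou), and $(d)$ by the same Holmgren-type duality with Lemma~\ref{lem:cd} and Lemma~\ref{lem:d0test}.
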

\begin{proof}
(a) The idea is to approximate the solutions for general $m_0\in\mP$ by solutions for regular initial measures obtained in \cite{MR4309434}. Let $\eta$ be a standard mollifier and let $m_0^{\eps} = m_0\ast\eta_\eps$. We have $m_0^\eps \in  C^2_b(\mR^d)$, therefore by the proof of \cite[Proposition~6.8 (a)]{MR4309434} (note that only $b,Db\in C_b(\mR^d)$ is used there) there exists $m^\eps$ which satisfies the Duhamel formula:
\begin{equation}\label{eq:Duh}
    m^\eps(t,x) = \int_0^t\int_{\mR^d} D_xK^*(t-s,x-y) b(s,y)m^\eps(s,y)\, dy\, ds +\int_{\mR^d} K^*(t,x-y)\, m_0^\eps (dy),\quad t\in [0,T],\, x\in \mR^d.
\end{equation}
Furthermore, $m^\eps(t)\in C_b(\mR^d)$ for $t\in [0,T]$ and $m^\eps \in C([0,T],L^1(\mR^d))$. By Lemma~\ref{lem:mildweak} $m^\eps$ is also a distributional solution, therefore by the proof of \cite[Proposition~6.6 (a)]{MR4309434} we get that $\int m^\eps(t,dx) = 1$ for each $t\in [0,T]$. Furthermore, if we test the equation with $\phi$ satisfying 
\begin{equation}\label{eq:Holmgren}
    \begin{cases}
        \partial_t \phi + \mL \phi - b D_x\phi = 0,\quad {\rm in}\ [0,t)\times \mR^d,\\
        \phi(t) = \xi,
    \end{cases}
\end{equation}
with nonnegative $\xi\in C^{2+\sigma}_b(\mR^d)$ (see Lemma~\ref{lem:cd}), then by the \ar{comparison principle\footnote{This is standard, see e.g. \cite[Lemma~6.2]{MR4309434} for a similar result. The proof there can easily be adapted to the current setting.}} we have $\phi(0)\geq  0$, so that
\begin{align*}
    \int_{\mR^d} \xi(x) m^\eps(t,x)\, dx = \int_{\mR^d} \phi(0,x) m^\eps_0(x)\, dx \geq 0.
\end{align*}
Therefore, $m^\eps\geq 0$. Overall, we get $m^\eps\in C([0,T],\mP)$.

We will show that any sequence $(m^{\eps_n})$ with $\eps_n\to 0^+$ is Cauchy in $C([0,T],\mP)$.
Let $\eps,\eps'>0$, $t\in (0,T]$, $\phi\in \Lip\cap C^{2+\sigma}_b(\mR^d)$, and let $\phi$ satisfy \eqref{eq:Holmgren}. By testing $m^\eps - m^{\eps'}$ with $\phi$ we get
\begin{align*}
    \int_{\mR^d} \xi(x) (m^\eps(t,x) - m^{\eps'}(t,x))\, dx = \int_{\mR^d} \phi(0,x) (m^\eps_0(x) - m^{\eps'}_0(x))\, dx &\leq \|\phi(0)\|_{C^1_b(\mR^d)}d_0(m^\eps_0,m^{\eps'}_0)\\ &\leq C\|\xi\|_{C^1_b(\mR^d)}d_0(m^\eps_0,m^{\eps'}_0),
\end{align*}
with $C$ independent of  $t,\eps,\eps'$ and $\xi$, see Lemma~\ref{lem:cd} (c). Since by Lemma~\ref{lem:moll} any sequence $(m_0^{\eps_n})$ is Cauchy, Lemma~\ref{lem:testapprox} gives that any $(m^{\eps_n})$ is a Cauchy sequence in $C([0,T],\mP)$. Thus, by completeness, $m^\eps$ converge in $C([0,T],\mP)$ to some $m$. By passing with $\eps$ to $0$ in \eqref{eq:distFP} for $m^\eps$ we conclude that $m$ is a distributional solution.

(b) The estimate \eqref{eq:FPHolder} is obtained easily from the analogue for $m^\eps$ in Theorem~\ref{th:FPOEERJ} and the triangle inequality:
$$d_0(m(s),m(t))\leq d_0(m(s),m^\eps(s)) + d_0 (m^\eps(s),m^\eps(t)) + d_0(m^\eps(t),m(t)) \leq C|s-t|^{\frac 12} + c(\eps),$$ where $c(\eps)\to 0$ as $\eps\to 0^+$.

(c) The proof of this part is exactly the same as in \cite[Proposition~6.6 (c)]{MR4309434}, therefore we skip it (note that only the distributional formulation is used there).

(d) To show the uniqueness, we use a Holmgren-type duality argument. If $m$ and $m'$ are distributional solutions of \eqref{eq:FP}, then their difference $\wtm = m-m'$ is a distributional solution of \eqref{eq:FP} with $m_0 = 0$. Let $t\in[0,T]$ and assume that $\xi\in C_c^\infty(\mR^d)$. Then, by Lemma~\ref{lem:cd} there exists a classical solution to \eqref{eq:Holmgren}.
By using the above $\phi$ in the distributional formulation \eqref{eq:distFP} we find that $\int_{\mR^d} \xi \wtm (t)=0$. Since $t\in [0,T]$ and $\xi\in C_c^\infty(\mR^d)$ are arbitrary, it follows by Lemma~\ref{lem:d0test} that $m=m'$.

(e) Let $\wtm_n = m - m_n$. Then by subtracting the equations we find that $\wtm_n$ satisfies the following problem in the distributional sense:
\begin{equation*}
    \begin{cases}
    \partial_t \wtm_n - \mL^* \wtm_n -\Div(b\wtm_n) = \Div((b_n - b)m_n),\quad {\rm in }\ (0,T)\times \mR^d,\\
    \wtm_n(0) = 0. \end{cases}
\end{equation*}
We take the test function $\phi$ satisfying \eqref{eq:Holmgren} with \ar{$\xi\in C^{2+\sigma}_b(\mR^d)$} such that $\|\xi\|_{C^1_b(\mR^d)} \leq 1$. Then we get
$$\bigg|\int_{\mR^d} \xi(x)\, \wtm_n(t,dx)\bigg| = \bigg|\int_0^t \int_{\mR^d} D\phi(s,x)(b(s,x)-b_n(s,x))\, m_n(s,dx)\, ds\bigg| \leq T\|D\phi\|_{\infty} \|b_n - b\|_{\infty}.$$
By Lemma~\ref{lem:cd} (c) $\|D\phi\|_{\infty}\leq C\|\xi\|_{C^1_b(\mR^d)} \leq C$ with $C$ independent of $t$ and $\xi$ which, together with Lemma~\ref{lem:testapprox}, proves that $m_n$ converges to $m$ in $C([0,T],\mP)$.
\end{proof}
\subsection{Smooth solutions of viscous Hamilton--Jacobi equations}\label{sec:HJ}
The following result shows well-posedness and regularity for viscous Hamilton--Jacobi equations such as the one appearing in the MFG system \eqref{eq:MFG}. The existence and uniqueness part comes directly from \cite{MR4309434}, but the regularity estimates are slightly different. The result here is closer to being optimal, as it uses in full the regularity given by the heat kernel, i.e., we gain $\alow-\eps$ derivatives compared to the right-hand side.
\begin{theorem}\label{th:HJ}
Assume that \ref{eq:H}, \ref{eq:H2}, \ref{eq:H3}, and \ref{eq:K} hold. Let $\sigma\in (0,\alow-1)$, $f\in C([0,T],C^2_b(\mR^d))$, $u_0 \in C^{3+\sigma}_b(\mR^d)$. Then the Hamilton--Jacobi equation
$$\begin{cases}
\partial_t u - \mL u + H(x,u,Du) = f(t,x)\quad &{\rm in}\ (0,T)\times \mR^d,\\
u(0) = u_0\quad &{\rm in}\ \mR^d,
\end{cases}$$
has a unique classical solution $u\in B([0,T],C^{3+\sigma}_b(\mR^d))\cap C([0,T],C^{3+\sigma-\eps}_b(\mR^d))$ (for arbitrarily small $\eps>0$). Furthermore, there exists $c$, depending only on $d,T$, $\sup_{t\in[0,T]}\|f(t,\cdot)\|_{C^2_b(\mR^d)},$ $\|u_0\|_{C^{3+\sigma}_b(\mR^d)}$, and the quantities in the assumptions \ref{eq:H}, \ref{eq:H2}, \ref{eq:H3}, and \ref{eq:K}, such that
\begin{equation}\label{eq:ubounds}\|\partial_t u\|_{\infty} + \sup\limits_{t\in[0,T]} \|u(t,\cdot)\|_{C^{3+\sigma}_b(\mR^d)} \leq c.\end{equation}
Furthermore, for every $\eps>0$ there exists a modulus of continuity independent of $f$ and $u_0$ such that
\begin{align}\label{eq:HJmodcont}
    \|\partial_t u(t,\cdot) - \partial_t u(s,\cdot)\|_{\infty} + \|u(t,\cdot) - u(s,\cdot)\|_{C^{3+\sigma-\eps}_b(\mR^d)} \leq C\sup\limits_{t\in[0,T]} \|u(t,\cdot)\|_{C^{3+\sigma}_b(\mR^d)}\omega(t-s),\quad s,t\in [0,T].
\end{align}
\end{theorem}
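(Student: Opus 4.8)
The plan is to freeze the nonlinearity so that $u$ solves a linear equation covered by Lemma~\ref{lem:cd}, and to combine the a priori estimates coming from the maximum principle and a gradient bound with that linear Schauder theory. If $u$ is a classical solution, write $H(x,u,Du)=H(x,u,0)+b(t,x)\cdot Du$ with $b(t,x)=\int_0^1 D_pH(x,u(t,x),\lambda Du(t,x))\,d\lambda$, so that $u$ solves $\partial_t u-\mL u-(-b)\cdot Du=\tilde f$ with source $\tilde f:=f-H(\cdot,u(\cdot,\cdot),0)$. The point of this rewriting is that $\tilde f$ involves only $u$ (not $Du$) and is therefore as spatially regular as $u$ up to the ceiling imposed by $f\in C([0,T],C^2_b(\mR^d))$, while $b$ is exactly one derivative less regular than $u$; by \ref{eq:H}, once $u$ and $Du$ are bounded, all spatial derivatives of $b$ and $\tilde f$ are controlled by the corresponding derivatives of $u$ with factors depending only on $\|u\|_\infty$ and $\|Du\|_\infty$, linearly in the top-order derivative.

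Existence and uniqueness of a classical solution are obtained as in \cite{MR4309434} (Duhamel formula plus a Banach fixed point argument on a short time interval whose length is independent of $u_0$, continued to $[0,T]$ by the a priori bounds; uniqueness because the difference of two solutions satisfies a linear equation with bounded coefficients and zero initial data, using \ref{eq:H3} to control the $u$-dependence, and hence vanishes by Grönwall). The a priori bounds are built up in order of regularity. First, $\|u\|_\infty\le R_0$ by the comparison principle with spatially constant super/subsolutions, with $R_0$ depending only on $\|u_0\|_\infty$, $\sup_t\|f(t)\|_\infty$, $\sup_x|H(x,0,0)|$ and the constant $\gamma$ in \ref{eq:H3}. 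Next, $\|Du\|_\infty\le R_1$: differentiating the equation, $|Du|$ is (after a standard approximation) a subsolution of $\partial_t v-\mL v+D_pH\cdot Dv\le|Df|+|D_xH|+|D_uH|\,v$, and by \ref{eq:H2} one has $|D_xH|\le C_{R_0}(1+|Du|)$; since $\mL$ has order $\alow>1$ and hence controls this linear-in-gradient right-hand side, Lemma~\ref{lem:Gronwall} applied to the Duhamel representation (using $\|D_xK(t-s)\|_{L^1}\le\mathcal K(t-s)^{-1/\alow}$) gives $R_1$.

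With $u,Du$ bounded, $b$ is a bounded drift and $\tilde f$ a bounded source, and the rest of the argument re-runs the proof of Lemma~\ref{lem:cd}. Writing the Duhamel formula for $D^lu$ as in \eqref{eq:duhcl}, the top-order contributions are $b\cdot D^lu$ and (through $D^{l-1}b$, which carries one factor $D^lu$ with $D^2_{pp}H$-type coefficients) $D^{l-1}b\cdot Du$, both linear in $\|D^lu(s)\|$ with coefficients already controlled; integrating $\|D_xK(t-s)\|_{L^1}\le\mathcal K(t-s)^{-1/\alow}$ and applying Lemma~\ref{lem:Gronwall} successively yields $\|D^2u\|_\infty\le R_2$ and $\|D^3u\|_\infty\le R_3$, and then — using the interpolation inequality of Remark~\ref{rem:K}(d), whose time-integral converges precisely because $1+\sigma<\alow$ — the bound $\sup_t\|u(t)\|_{C^{3+\sigma}_b(\mR^d)}\le c$ with $c$ as in the statement; the bound on $\|\partial_t u\|_\infty$ follows from the equation. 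In particular $b\in C([0,T],C^2_b(\mR^d))$ and $\tilde f\in C([0,T],C^2_b(\mR^d))$, so $u$ satisfies the Duhamel formula for the linear problem \eqref{eq:cd} with $k=3$ and, by uniqueness, coincides with the solution provided by Lemma~\ref{lem:cd}; thus $u\in B([0,T],C^{3+\sigma}_b(\mR^d))\cap C([0,T],C^{3+\sigma-\eps}_b(\mR^d))$, estimate \eqref{eq:ubounds} holds by Lemma~\ref{lem:cd}(a), and the time modulus of continuity \eqref{eq:HJmodcont} holds by Lemma~\ref{lem:cd}(b).

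The main obstacle is the Lipschitz (gradient) a priori bound with genuine $u$-dependence in the Hamiltonian: one must keep $|D_xH|$ and $|D_uH|\,|Du|$ under control along the solution, which is exactly where the linear-in-$p$ structure of \ref{eq:H2}, the lower bound in \ref{eq:H3}, and the super-unit order $\alow>1$ of $\mL$ are used; once a Lipschitz bound is in hand, everything downstream is a mechanical bootstrap through the Duhamel formula. A secondary subtlety is the order of operations — one needs the a priori $C^3$-bound before the linearized drift $b$ can be placed in $C([0,T],C^2_b(\mR^d))$ and Lemma~\ref{lem:cd} invoked with $k=3$ — which is why the higher-order bounds must be produced directly rather than by a single application of the linear theory.
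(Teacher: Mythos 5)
Your proof is correct, but it takes a genuinely different route from the paper's. The paper runs a Banach fixed point \emph{directly} in the space $B([0,T],C^{3+\sigma}_b(\mR^d))\cap C([0,T],C^{3+\sigma-\eps}_b(\mR^d))$ using the full nonlinear Duhamel map $S(\widetilde u)=K_t\ast u_0+\int_0^t K_{t-s}\ast\big(H(\cdot,\widetilde u,D\widetilde u)-f\big)\,ds$, so that the $C^{3+\sigma}$ bound is a by-product of the contraction/self-map estimate and the time continuity comes from Lemma~\ref{lem:contsmooth}. You instead use a classical ``a priori estimates plus frozen coefficients'' scheme: obtain existence and uniqueness at the level of \cite{MR4309434}, then climb through $L^\infty$, Lipschitz, $C^2_b$, $C^3_b$ and $C^{3+\sigma}_b$ bounds by comparison and Duhamel--Gr\"onwall, and finally rewrite the equation as a linear drift--diffusion problem with $b=\int_0^1 D_pH(\cdot,u,\lambda Du)\,d\lambda\in C([0,T],C^2_b)$ and source $\widetilde f=f-H(\cdot,u,0)\in C([0,T],C^2_b)$ so that Lemma~\ref{lem:cd} with $k=3$ delivers the spatial and temporal regularity all at once. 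What the freezing trick buys you: the time moduli \eqref{eq:HJmodcont} come straight out of Lemma~\ref{lem:cd}(b) rather than via Lemma~\ref{lem:contsmooth} applied to a nonlinear Duhamel formula, and the nonlinear estimates are confined to the a priori step. What it costs: you must produce the full $C^3_b$ bound (including time continuity of $D^3u$) before Lemma~\ref{lem:cd} can even be invoked with $k=3$, whereas the paper gets the $C^{3+\sigma}$ bound and continuity simultaneously from the fixed-point structure.

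Two small points you should make explicit. First, when you identify $u$ with the solution supplied by Lemma~\ref{lem:cd}, the uniqueness asserted there is within the class $B([0,T],C^{3+\sigma}_b)\cap C([0,T],C^{3+\sigma-\eps}_b)$, which the pre-bootstrap $u$ is not yet known to inhabit; you therefore need the (elementary, via Duhamel or comparison) uniqueness of classical $C([0,T],C^2_b)$ solutions of the linear problem before concluding $u=z$ and reading off the improved regularity. Second, your gradient-bound sketch leaves the nonlocal Bernstein/Kato technicalities to ``a standard approximation''; the paper does the same by pushing this to Remark~\ref{rem:Lipschitz} (Imbert's lemma or the Bernstein method), so this is on par with the paper's level of rigor, but it is exactly where the mixed local--nonlocal structure of $\mL$ enters and deserves at least a pointer to the literature in a full write-up.
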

\begin{proof}[Sketch of the proof]
  The arguments are almost identical to the ones in \cite{MR4309434}, but we present them in order to see that including the case $\alow=2$ in \ref{eq:K} (cf. \cite[(L2)]{MR4309434}) does not change anything in the proof, i.e., the operator can also be local or mixed local-nonlocal. Furthermore, we want $C^{3+\sigma}$ regularity for the solution $u$ instead of $C^3$ as in \cite{MR4309434}, which makes the estimates slightly different. Let 
  \begin{align*}X = \{u\in B([0,T],C^{3+\sigma}_b(\mR^d)):  \sup\limits_{t\in[0,T]}\|u(t,\cdot)\|_{C^{3+\sigma}_b(\mR^d)} \leq R_1, \quad u\in C([0,T],C^{3+\sigma-\eps}_b(\mR^d))\},
  \end{align*}
  where $R_1$ is an explicit constant depending on the constant $\mathcal{K}$ in \ref{eq:K}, $\sup\limits_{t\in[0,T]} \|f(t,\cdot)\|_{C^{2}_b(\mR^d)}$, and $\|u_0\|_{C^{3+\sigma}_b(\mR^d)}$. Cf.  Proposition~5.8 in \cite{MR4309434} (the $f$-dependence of $R_1$ was overlooked there). We only show how to bound the derivatives for the fixed point map to get a self-map, and we show its contractivity. All the other arguments are the same. Let $\widetilde{u}\in X$ and let $u = S(\widetilde{u})$ be defined as
  \begin{align*}
      u(t,x) = K_t\ast u_0(x) + \int_0^t\int_{\mR^d} K(t-s,x-y)\big(H(y,\widetilde{u}(s,y),D\widetilde{u}(s,y)) - f(s,y)\big)\, dy\, ds.
  \end{align*}
  Since $\widetilde{u}\in X$, we can use the dominated convergence theorem (cf. the arguments following \eqref{eq:duhc1}) to find that for $k=1,2,3$ we have
  \begin{align}\label{eq:Dk}
      D^k_xu(t,x) = K_t\ast D^k_x u_0(x) + \int_0^t\int_{\mR^d} D_y K(t-s,x-y)D^{k-1}_y\big(H(y,\widetilde{u}(s,y),D\widetilde{u}(s,y)) - f(s,y)\big)\, dy\, ds.
  \end{align}
  Using the regularity of $H$ given by \ref{eq:H}\footnote{Note that the local bounds given there are sufficient, because on $[0,T]$ we control $\|u\|_{C^1_b(\mR^d)}$ with $u_0$ and $f$.}  and \ref{eq:K} we therefore get that
  \begin{align*}
      \|D^ku(t,\cdot)\|_{\infty} &\leq  \|D^k_x u_0\|_{\infty} + \big(C(H) \sup\limits_{t\in[0,T]}\|\widetilde{u}(t,\cdot)\|_{C^{k}_b(\mR^d)} + \sup\limits_{t\in[0,T]}\|f(t,\cdot)\|_{C^{k-1}_b(\mR^d)}\big) \int_0^T \int_{\mR^d}|D_yK(t-s,x-y)|\, dy\, ds\\
       &\leq  \|D^k_x u_0\|_{\infty} + C(K,T)\big(C(H) \sup\limits_{t\in[0,T]}\|\widetilde{u}(t,\cdot)\|_{C^{k}_b(\mR^d)} + \sup\limits_{t\in[0,T]}\|f(t,\cdot)\|_{C^{k-1}_b(\mR^d)}\big),
  \end{align*}
  with $C(K,T)=C(K)T^{1-\frac{1}\alow}\to 0$ as $T\to 0^+$.
Similarly (note that by \ref{eq:H} the second derivatives of $H$ are Lipschitz) we get that for any $\widetilde{u_1},\widetilde{u_2}\in X$,
\begin{align*}
    \|S(\widetilde{u_1})(t) -S(\widetilde{u_2})(t)\|_{C^3_b(\mR^d)} \leq C(K,T,H,R_1)\sup\limits_{t\in[0,T]}\|\widetilde{u_1}(t,\cdot) - \widetilde{u_2}(t,\cdot)\|_{C^3_b(\mR^d)}.
\end{align*}
Here, $C(K,T,H,R_1)$ is also small for small $T$. Furthermore, since $\sigma\in (0,\alow-1)$, by using \eqref{eq:Dk} and Remark~\ref{rem:K} (cf. the argument following \eqref{eq:interp}) we obtain
\begin{align*}
      &\frac{|D^3u(t,x+h) - D^3 u(t,x)|}{|h|^{\sigma}}\\ &\leq  \| u_0\|_{C^{3+\sigma}_b(\mR^d)} + \big(C(H) \sup\limits_{t\in[0,T]}\|\widetilde{u}(t,\cdot)\|_{C^{3}_b(\mR^d)} + \sup\limits_{t\in[0,T]}\|f(t,\cdot)\|_{C^{2}_b(\mR^d)}\big)\\
      &\qquad\qquad\qquad\qquad\qquad \times \int_0^T \int_{\mR^d}\frac{|D_yK(t-s,x+h-y) - D_yK(t-s,x-y)|}{|h|^{\sigma}}\, dy\, ds\\
        &\leq \| u_0\|_{C^{3+\sigma}_b(\mR^d)} + \widetilde{C}(K,T)\big(C(H) \sup\limits_{t\in[0,T]}\|\widetilde{u}(t,\cdot)\|_{C^{3}_b(\mR^d)} + \sup\limits_{t\in[0,T]}\|f(t,\cdot)\|_{C^{2}_b(\mR^d)}\big),
  \end{align*}
  for any $h\in\mR^d\setminus \{0\}$. Here, as before $\widetilde{C}(K,T)=C(K)T^{1-\frac{1+\sigma}{\alow}}\to 0$ as $T\to 0^+$. Analogously, we get
  \begin{align*}
    \|S(\widetilde{u_1})(t) -S(\widetilde{u_2})(t)\|_{C^{3+\sigma}_b(\mR^d)} \leq \widetilde{C}(K,T,H,R_1)\sup\limits_{t\in[0,T]}\|\widetilde{u_1}(t,\cdot) - \widetilde{u_2}(t,\cdot)\|_{C^3_b(\mR^d)},
\end{align*}
with $\widetilde{C}(K,T,H,R_1)$ small for $T$ small. This proves that $S$ is a contraction if $T$ is small enough.
Furthermore, by Lemma~\ref{lem:contsmooth} we get $u\in C([0,T],C^{3+\sigma-\eps}_b(\mR^d))$. Thus, by taking $T=T_0$ sufficiently small we get that $S$ is a self-map and a contraction. Hence, Banach's fixed point theorem gives the short-time existence and uniqueness of a mild solution $u$ on $[0,T_0]$.

Then we prove that $u$ is a classical solution which is done exactly as in \cite[Section~5]{MR4309434}. Going from the short-time existence to existence on any time interval $[0,T]$ is a bit delicate, because we only assume local boundedness for $H$ and its derivatives in $u$ and $p$, so the constants $c(H)$ above actually depend also on $\sup_{t\in[0,T]}\|u(t,\cdot)\|_{C^1_b(\mR^d)}$. It suffices to control this quantity in order to get a lower bound for the length of the short-time interval. This can be done using a result similar to \cite[Theorem~5.3 (b), (c)]{MR4309434} and proceeding as in \cite[Section~5.2]{MR4309434}. The procedure is rather standard so we skip the details, but the fact that an analogue of \cite[Theorem~5.3]{MR4309434} holds in our case is discussed in Remark~\ref{rem:Lipschitz} below.

Once we obtain the solution, \eqref{eq:ubounds} follows from the definition of $X$. Then, to get \eqref{eq:HJmodcont} we use Lemma~\ref{lem:contsmooth} and the original mild formulation of the problem.
\end{proof}
\begin{remark}\label{rem:Lipschitz}
The global $C^1_b$ bound mentioned in the last lines of the proof above is based on the result of Imbert \cite[Lemma~2]{MR2121115}. Although \cite{MR2121115} assumes the diffusion to be strictly nonlocal, the proof of \cite[Lemma~2]{MR2121115} works for any (linear and translation invariant) operator of the form \eqref{eq:operator}. Indeed, since our solutions are classical, the extension of the comparison principle \cite[Theorem~2]{MR2121115} used in this proof is straightforward.
Then, in order to extend  \cite[Lemma~2]{MR2121115} to mixed local-nonlocal operators it suffices to use an appropriate viscosity formulation, see e.g. Jakobsen and Karlsen \cite{MR2129093}. Alternatively, since our solutions are very regular, one can also use the Bernstein method, see e.g. Achdou et al. \cite[p. 31]{MR4214773} \ar{or \cite[Theorem~3.3]{2024arXiv240303884J} for an argument tailored to our case.}
\end{remark}
\section{Well-posedness of the mean field game system} \label{sec:mfg}

In this section we prove well-posedness results for the MFG system \eqref{eq:MFG}. These results refine and extend the results in \cite{MR4309434}. For the purpose of the master equation we need to allow probability measures as solutions in the Fokker-Planck equation. We will therefore work in the setting of classical-distributional solutions of \eqref{eq:MFG}. This will allow us to assume less regular data and coefficients than in \cite{MR4309434}. We also show a stronger regularizing effect on $u$ than \cite{MR4309434}, without adding any assumptions. Finally we mention that the results of this section seem to be the first ones to consider MFG systems with mixed local-nonlocal diffusion operators.

\begin{definition}\label{def:MFGsol} 
We say that the pair $(u,m)$ is a solution to the MFG system \eqref{eq:MFG} if $u\in C^{1,2}_b((t_0,T)\times \mR^d)\cap C([t_0,T]\times\mR^d)$ satisfies the Hamilton--Jacobi equation pointwise and $m\in C([t_0,T],\mP)$ satisfies the Fokker--Planck equation in the distributional sense.
\end{definition}
Before we proceed to the well-posedness for the MFG system, we give a variant of the Lasry--Lions monotonicity lemma for Hamiltonians $H$ which can depend on $u$. For $a\in \mR$ let $a^+ = a\vee 0$ and $a^- = -(a\wedge 0)$ and for $m\in \mathcal M(\mR^d)$ let $m= m^+ - m^-$ be its unique Jordan decomposition.
\begin{lemma}\label{lem:LL}
Assume that \ref{eq:M1} and \ref{eq:M3} hold. Let $(u_1,m_1)$ and $(u_2,m_2)$ be solutions to the MFG system \eqref{eq:MFG} on $(t_0,T)$ with initial measures $m_0^1, m_0^2$. Then there exists $C>0$ such that the following inequality holds:
\begin{align*}
    &\int_{t_0}^T \int_{\mR^d} (H(x,u_2,Du_1) - H(x,u_2,Du_2) - D_pH(x,u_2,Du_2)(Du_1 - Du_2))\, m_2(t)\, dt\\
    &+\int_{t_0}^T \int_{\mR^d} (H(x,u_1,Du_2) - H(x,u_1,Du_1) - D_pH(x,u_1,Du_1)(Du_2 -Du_1))\, m_1(t)\, dt\\
    \leq\, &C\bigg(\int_{\mR^d} (u_1(t_0,x)- u_2(t_0,x))^+(m_0^1(dx) - m_0^2(dx))^+ + \int_{\mR^d} (u_1(t_0,x)- u_2(t_0,x))^-(m_0^1(dx) - m_0^2(dx))^-\bigg).
\end{align*}
 Here and in the proof $u_i$ and $Du_i$ are evaluated at $(t,x)$ unless specified otherwise.
\end{lemma}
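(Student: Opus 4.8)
The plan is to run a version of the Lasry--Lions monotonicity computation, carefully tracking the extra terms produced by the $u$-dependence of $H$. Write $w = u_1 - u_2$ and $\mu(t) = m_1(t) - m_2(t)$. First I would compute, for $i = 1,2$, the derivative $\tfrac{d}{dt}\int_{\mR^d} w(t,x)\, m_i(t,dx)$: substitute $\partial_t u_j$ from the two Hamilton--Jacobi equations and $\partial_t m_i$ from the two Fokker--Planck equations, then integrate by parts using the adjoint identity $\int u\,\mL^\ast m = \int(\mL u)\,m$ from Subsection~\ref{sec:hk} together with $\int u\,\Div(V) = -\int Du\cdot V$. This is rigorous: by Definition~\ref{def:MFGsol} and Theorem~\ref{th:HJ} the $u_j$ (hence $w$) are admissible test functions in the distributional formulation \eqref{eq:distFP}, and $m_i\in C([t_0,T],\mP)$, so the computation is justified up to $t=t_0$ by continuity.

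Next I reorganize algebraically. For the integrand multiplying $m_1$ one adds and subtracts $H(x,u_1,Du_2)$ and uses the pointwise identity
\begin{align*}
H(x,u_1,Du_1) - D_pH(x,u_1,Du_1)\cdot(Du_1-Du_2) = H(x,u_1,Du_2) - E_1,
\end{align*}
where $E_1 := H(x,u_1,Du_2) - H(x,u_1,Du_1) - D_pH(x,u_1,Du_1)\cdot(Du_2-Du_1)$ is exactly the second integrand in the statement; the analogue with indices swapped produces $E_2$ for the term multiplying $m_2$. Since under \ref{eq:M3} the drift $D_pH = D_pH_1$ is independent of $u$ and $D^2_{pp}H = D^2_{pp}H_1\ge\tfrac{1}{c_1}I_d$, both $E_1,E_2\ge 0$, and combining the $i=1,2$ identities gives
\begin{align*}
\frac{d}{dt}\int_{\mR^d} w\, d\mu = \int_{\mR^d}\delta H_2\, d\mu - \int_{\mR^d} E_1\, dm_1 - \int_{\mR^d} E_2\, dm_2 - \int_{\mR^d}\big(F(x,m_1)-F(x,m_2)\big)\, d\mu,
\end{align*}
with $\delta H_2(t,x) := H_2(x,u_1(t,x)) - H_2(x,u_2(t,x))$, i.e. all the $u$-dependence collapses into $\delta H_2$. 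Integrating over $[t_0,T]$ and rearranging, the left-hand side of the claimed inequality equals
\begin{align*}
\int_{\mR^d} w(t_0)\, d(m_0^1-m_0^2) - \int_{\mR^d} w(T)\, d\mu(T) + \int_{t_0}^T\!\!\int_{\mR^d}\delta H_2\, d\mu\, dt - \int_{t_0}^T\!\!\int_{\mR^d}\big(F(x,m_1)-F(x,m_2)\big)\, d\mu\, dt.
\end{align*}
Since $w(T) = G(\cdot,m_1(T)) - G(\cdot,m_2(T))$, the monotonicity \ref{eq:M1} makes the second and fourth terms nonpositive, while the first term is at most $\int_{\mR^d} w(t_0)^+\, d(m_0^1-m_0^2)^+ + \int_{\mR^d} w(t_0)^-\, d(m_0^1-m_0^2)^-$ because $\int f\, d\nu\le\int f^+\, d\nu^+ + \int f^-\, d\nu^-$ for bounded $f$ and finite signed $\nu$.

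The main obstacle is the term $\int_{t_0}^T\int_{\mR^d}\delta H_2\, d\mu\, dt$, which has no sign. Using $0\le D_uH_2\le c_2$ from \ref{eq:M3}, write $\delta H_2(t,\cdot) = c(t,\cdot)\, w(t,\cdot)$ with $0\le c\le c_2$, so that
\begin{align*}
\int_{\mR^d}\delta H_2\, d\mu(t) \le c_2\Big(\int_{\mR^d} w(t)^+\, d\mu(t)^+ + \int_{\mR^d} w(t)^-\, d\mu(t)^-\Big) =: c_2\,\Psi(t),
\end{align*}
and note that $\Psi(t_0)$ is precisely the quantity in parentheses on the right-hand side of the statement. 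I would then show a Gronwall-type inequality $\Psi(t)\le\Psi(t_0) + c_2\int_{t_0}^t\Psi(s)\, ds$ on $[t_0,T]$ by testing the Fokker--Planck equations against smooth convex approximations of $w^+$ and of $w^-$, inserting the Hamilton--Jacobi equations, and using the Jensen-type inequality $\mL(\phi(w))\ge\phi'(w)\,\mL w$ for convex $\phi$, the positivity of $D^2_{pp}H_1$, and again \ref{eq:M1}. Gronwall gives $\int_{t_0}^T\Psi(t)\, dt\le Te^{c_2T}\Psi(t_0)$, so the left-hand side of the statement is bounded by $(1 + c_2Te^{c_2T})\Psi(t_0)$, which is the claim with $C = 1 + c_2Te^{c_2T}$. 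In the stronger regime \ref{eq:M2} one has $H_2\equiv 0$, hence $\delta H_2\equiv 0$ and $\Psi$ plays no role, recovering the classical Lasry--Lions estimate with $C=1$; this is also why the lemma immediately yields uniqueness (take $m_0^1 = m_0^2$, so the right-hand side is $0$, forcing $E_1 = E_2 = 0$ along $m_1,m_2$, hence $Du_1 = Du_2$ on their supports and then $m_1 = m_2$, $u_1 = u_2$).
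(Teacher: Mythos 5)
Your algebraic reduction is correct and essentially reproduces the paper's derivation: you obtain the same identity, with the $u$-dependence of $H$ isolated in $\delta H_2 = H_2(\cdot,u_1)-H_2(\cdot,u_2)$, and the same use of \ref{eq:M1} for the terminal and coupling terms, the same bound $\int f\,d\nu \le \int f^+\,d\nu^+ + \int f^-\,d\nu^-$ on the initial term, and the same pointwise bound $\delta H_2^\pm \le c_2\, w^\pm$ from \ref{eq:M3}. Up to the point where the problem reduces to controlling $\int_{t_0}^T \Psi(t)\,dt$ in terms of $\Psi(t_0)$, your proposal and the paper's proof (via \eqref{eq:monot1}--\eqref{eq:monot3}) coincide.

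The gap is in the Gr\"onwall step. You propose to prove $\Psi(t)\le \Psi(t_0)+c_2\int_{t_0}^t\Psi$ by testing the Fokker--Planck equation for $\mu=m_1-m_2$ against smooth convex approximations $\phi_\eps$ of $(\cdot)^{\pm}$ applied to $w$. But that test pairing produces the signed quantity $\int \phi_\eps(w)\,d\mu = \int \phi_\eps(w)\,d\mu^+ - \int\phi_\eps(w)\,d\mu^-$, which converges to $\int w^+\,d\mu$, not to the term $\int w^+\,d\mu^+$ that appears in $\Psi$. The Jordan decomposition $\mu^{\pm}$ of the signed measure $\mu$ is a nonlinear, non-variational operation on $\mu$ and cannot be isolated by pairing $\mu$ against any family of test functions; nor do $\mu^{\pm}$ individually solve Fokker--Planck equations. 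Consequently the Kato-type inequality $\mL\phi_\eps(w)\ge\phi_\eps'(w)\mL w$ and convexity of $H_1$, while true, cannot be assembled into a differential inequality for $\Psi(t)$ along the route you indicate, and the step ``Gr\"onwall gives $\int_{t_0}^T\Psi\le Te^{c_2T}\Psi(t_0)$'' is unsupported. Note also that the paper itself invokes Gr\"onwall at the analogous point without supplying a mechanism for bounding $\int_{t_0}^T\Phi$ by $\Phi(t_0)$, so what is needed here is a genuinely new argument rather than a routine appeal to Gr\"onwall; your proposal does not supply one. Your final remark on uniqueness (taking $m_0^1=m_0^2$ and concluding $Du_1=Du_2$ on the supports) is fine and matches the paper's use of the lemma in Theorem~\ref{th:MFGwp}(b).
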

\begin{proof}
For brevity, let $u=u_1-u_2$, $m=m_1-m_2$, and $H_{(i)}=H_i(x,u_i(t,x),Du_i(t,x))$ for $i=1,2$. The first part of the proof is standard \cite{MR3967062,Lions} (a detailed computation is given e.g. in \cite[Appendix~A]{MR4309434}) and consists in using the equations satisfied by $u_i$ and $m_i$ and the monotonicity assumption for $F$ in \ref{eq:M1} to prove that for $t\in (t_0,T)$,
\begin{align}\label{eq:monot1}
    -\frac{d}{dt} \int_{\mR^d}u(t,x)\, m(t,dx) \geq \int_{\mR^d} (H_{(1)} - H_{(2)} - D_pH_{(2)}\cdot Du)\, m_2 +\int_{\mR^d} (H_{(2)} - H_{(1)} - D_pH_{(1)}\cdot(-Du))\, m_1.
\end{align}
We remark here that the approach in \cite[Lemma 3.1.2]{MR3967062} of approximating $m_0^i$ by measures with smooth densities, leads to a circular reasoning because it requires the stability of the MFG system. Instead, one should compute the above derivative using difference quotients and the distributional formulation for the Fokker--Planck equation.

By integrating \eqref{eq:monot1} with respect to $t$ and by using the monotonicity of $G$ in \ref{eq:M1} we further get that for $s\in [t_0,T]$,
\begin{equation}\label{eq:monot2}
\begin{split}
 \int_{\mR^d}u(s,x)\, m(s,dx) \geq &\int_{s}^T\int_{\mR^d} (H_{(1)} - H_{(2)} - D_pH_{(2)}\cdot Du)\, m_2(t)\, dt \\ &+\int_{s}^T\int_{\mR^d} (H_{(2)} - H_{(1)} - D_pH_{(1)}\cdot (-Du))\, m_1(t)\, dt.
 \end{split}
\end{equation}
We add the following term to both sides of \eqref{eq:monot2}:
\begin{align*}
    I(s):=-\int_{s}^T \int_{\mR^d} (H_{(1)} - H(x,u_2,Du_1))\, m_2(t)\, dt - \int_{s}^T \int_{\mR^d} (H_{(2)} - H(x,u_1,Du_2))\, m_1(t)\, dt.
\end{align*}
The right-hand side of \eqref{eq:monot2} then becomes 
\begin{align*}
    J(s):=&\int_{s}^T \int_{\mR^d} (H(x,u_2,Du_1) - H(x,u_2,Du_2) - D_pH(x,u_2,Du_2)(Du_1 - Du_2))\, m_2(t)\, dt\\
    &+\int_{s}^T \int_{\mR^d} (H(x,u_1,Du_2) - H(x,u_1,Du_1) - D_pH(x,u_1,Du_1)(Du_2 -Du_1))\, m_1(t)\, dt,
\end{align*}
so \eqref{eq:monot2} can be rewritten as
\begin{equation}\label{eq:monot3} \int_{\mR^d}u(s,x)\, m(s,dx) + I(s) \geq J(s), \quad s\in [t_0,T].
\end{equation}
By \ref{eq:M3} we have $H(x,u,p) = H_1(x,p) + H_2(x,u)$, thus
\begin{align*}
    I(s) = &-\int_s^T\int_{\mR^d} (H_2(x,u_1) - H_2(x,u_2))\, m_2(t,dx)\, dt
    - \int_s^T \int_{\mR^d} (H_2(x,u_2) - H_2(x,u_1))\, m_1(t,dx)\, dt\\
    =&\int_s^T\int_{\mR^d} (H_2(x,u_1) - H_2(x,u_2))\, m(t,dx)\, dt.
\end{align*}
Note that 
\begin{align*}
    I(s) = \int_s^T\int_{\mR^d} (H_2(x,u_1) - H_2(x,u_2))\, m(t,dx)\, dt &\leq \int_s^T\int_{\mR^d} (H_2(x,u_1) - H_2(x,u_2))^+\, m(t,dx)^+\, dt\\ &+ \int_s^T\int_{\mR^d} (H_2(x,u_1) - H_2(x,u_2))^-\, m(t,dx)^-\, dt,
\end{align*}
By the mean value theorem and \ref{eq:M3} we have 
\begin{align*}(H_2(x,u_1) - H_2(x,u_2))^{\pm} = (D_uH_2(x,\xi)(u_1-u_2))^{\pm} = D_uH_2(x,\xi)((u_1-u_2))^{\pm} \leq c_2(u_1-u_2)^{\pm} = c_2 u^{\pm}.
\end{align*}
Therefore,
\begin{align*}
    I(s) \leq c_2\bigg(\int_s^T\int_{\mR^d} u(t,x)^+\, m(t,dx)^+\, dt + \int_s^T\int_{\mR^d} u(t,x)^-\, m(t,dx)^-\, dt\bigg).
\end{align*}
Furthermore,
\begin{equation*}
\int_{\mR^d}u(s,x)\, m(s,dx) \leq \int_{\mR^d}u(s,x)^+\, m(s,dx)^+ + \int_{\mR^d}u(s,x)^-\, m(s,dx)^-,
\end{equation*}
therefore \eqref{eq:monot3} yields
\begin{equation*}
    \int_{\mR^d}u^+m^+ + \int_{\mR^d} u^-m^- + c_2\int_s^T\bigg(\int_{\mR^d}u^+m^+ + \int_{\mR^d} u^-m^-\bigg) \geq J(s),\quad s\in[t_0,T].
\end{equation*}
By Gr\"{o}nwall's inequality there exists $C>0$ such that 
\begin{equation*}
    C\bigg(\int_{\mR^d}u(t_0,x)^+\, m(t_0,x)^+ + \int_{\mR^d} u(t_0,x)^-\, m(t_0,dx)^-\bigg)  \geq J(t_0),
\end{equation*}
which ends the proof.
\end{proof}
\begin{theorem}\label{th:MFGwp}
Let $t_0\in[0,T)$, $m_0\in \mP$, $\sigma \in (0,\alow-1)$, $\eps\in (0,\sigma)$, and assume that \ref{eq:H},\ref{eq:H2},\ref{eq:H3},\ref{eq:K},\ref{eq:F}, and \ref{eq:G} hold. Then,
\begin{enumerate}[label=(\alph*)]
    \item the system \eqref{eq:MFG} has a solution $(u,m)$ such that $u\in B([t_0,T],C^{3+\sigma}_b(\mR^d))\cap C([t_0,T],C^{3+\sigma-\eps}_b(\mR^d))$ and $m\in C^{\frac 12}([t_0,T],\mP)$. Furthermore, the following estimates hold:
    \begin{align*}
        &\|\partial_t u\|_{L^{\infty}(\mR^d)} +\sup\limits_{t\in [t_0,T]} \|u(t,\cdot)\|_{C^{3+\sigma}_b(\mR^d)} \leq C(d,T,F,G,H,\mL,\sigma),\\
        &d_0(m(t),m(s))\leq C(d,T,F,G,H,\mL)|t-s|^{\frac 12},\quad t,s\in[t_0,T].
    \end{align*}
    In particular, the constants do not depend on $t_0$ and $m_0$.
    \item If in addition \ref{eq:M1} and \ref{eq:M3} are true, then the solution is unique.
    \end{enumerate}
\end{theorem}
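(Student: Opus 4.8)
The plan is to obtain existence by a Schauder--Tychonoff fixed point argument on a set of measure flows, solving the Hamilton--Jacobi equation with Theorem~\ref{th:HJ} and the Fokker--Planck equation with Theorem~\ref{th:FP}, and to obtain uniqueness from the modified Lasry--Lions inequality of Lemma~\ref{lem:LL}. For \textbf{(a)}, fix $\mu\in C([t_0,T],\mP)$. Reversing time $t\mapsto t_0+T-t$ turns the backward Hamilton--Jacobi equation in \eqref{eq:MFG} with coupling $F(\cdot,\mu(t))$ and terminal datum $G(\cdot,\mu(T))$ into a forward problem covered by Theorem~\ref{th:HJ}: by \ref{eq:G} the (reversed) initial datum lies in $C^{3+\sigma}_b(\mR^d)$ with norm bounded uniformly in $\mu$, and by \ref{eq:F} the source is bounded in $C^2_b(\mR^d)$ uniformly in $\mu$ and continuous in time (interpolating the uniform higher-order bound with $d_0$-continuity of $\mu$). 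Hence there is a unique classical solution $u=u^{\mu}\in B([t_0,T],C^{3+\sigma}_b(\mR^d))\cap C([t_0,T],C^{3+\sigma-\eps}_b(\mR^d))$ satisfying \eqref{eq:ubounds} with a constant independent of $\mu,t_0,m_0$, since the data bounds in \ref{eq:F}--\ref{eq:G} are. Then $b^{\mu}(t,x):=D_pH(x,u^{\mu}(t,x),Du^{\mu}(t,x))$ lies in $(C_b((t_0,T),C^1_b(\mR^d)))^d$ with a bound uniform in $\mu$ (using \ref{eq:H} and the uniform $C^1$-bound on $u^{\mu}$), so Theorem~\ref{th:FP} gives a unique distributional solution $m^{\mu}\in C([t_0,T],\mP)$ of the Fokker--Planck equation with $m^{\mu}(t_0)=m_0$, satisfying the $\tfrac12$-Hölder bound \eqref{eq:FPHolder} and the generalized-moment bound \eqref{eq:FPtight} with constants uniform in $\mu$. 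Set $\Phi(\mu)=m^{\mu}$.

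Let $\psi$ be the tightness-measuring function of Lemma~\ref{lem:tight} for $\{m_0,\nu\textbf{1}_{B(0,1)^c}\}$ and put
\[
\mathcal K=\Big\{m\in C([t_0,T],\mP):\ m(t_0)=m_0,\ d_0(m(s),m(t))\le C_0|s-t|^{1/2},\ \textstyle\int_{\mR^d}\psi\,dm(t)\le C_\psi\Big\},
\]
with $C_0,C_\psi$ the uniform constants above. Then $\mathcal K$ is convex and $\Phi(C([t_0,T],\mP))\subseteq\mathcal K$; by \eqref{eq:FPtight} and Lemma~\ref{lem:tight} the set $\{m(t):m\in\mathcal K,\ t\in[t_0,T]\}$ is tight, hence relatively compact in $(\mP,d_0)$ by Prokhorov (Remark~\ref{rem:d0weak}), and the uniform $\tfrac12$-Hölder bound provides time-equicontinuity, so $\mathcal K$ is compact in $C([t_0,T],\mP)$ by Arzelà--Ascoli; viewing $\mathcal K$ inside the normed space $C([t_0,T],(\mathcal M(\mR^d),d_0))$, the Schauder--Tychonoff theorem applies. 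For continuity of $\Phi$: if $\mu_n\to\mu$ in $C([t_0,T],\mP)$, then by \ref{eq:F}, \ref{eq:G} and interpolation the Hamilton--Jacobi data converge in $C([t_0,T],C^2_b(\mR^d))$ and $C([t_0,T],C^{3+\sigma-\eps}_b(\mR^d))$, the Duhamel/Grönwall estimates behind Theorem~\ref{th:HJ} give $u^{\mu_n}\to u^{\mu}$ and $Du^{\mu_n}\to Du^{\mu}$ uniformly, hence $\|b^{\mu_n}-b^{\mu}\|_\infty\to0$ by \ref{eq:H}, and then Theorem~\ref{th:FP}(e) gives $m^{\mu_n}\to m^{\mu}$ in $C([t_0,T],\mP)$. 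A fixed point $\mu^*=\Phi(\mu^*)$ yields the solution $(u,m)=(u^{\mu^*},\mu^*)$, and the asserted estimates follow from \eqref{eq:ubounds} together with \eqref{eq:FPHolder} and the uniform bound on $\|b^{\mu^*}\|_\infty$.

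For \textbf{(b)}, let $(u_1,m_1)$ and $(u_2,m_2)$ be solutions on $(t_0,T)$ with the same initial measure $m_0$ (each $u_i$ being the classical solution of the Hamilton--Jacobi equation associated with $m_i$, so of the regularity of Theorem~\ref{th:HJ}). Since $m_0^1-m_0^2=0$, the right-hand side of Lemma~\ref{lem:LL} vanishes; writing $H=H_1(x,p)+H_2(x,u)$ as in \ref{eq:M3}, the $H_2$-terms cancel in each integrand and we get
\begin{align*}
&\int_{t_0}^T\!\int_{\mR^d}\!\big(H_1(x,Du_1)-H_1(x,Du_2)-D_pH_1(x,Du_2)\cdot(Du_1-Du_2)\big)\,m_2(t)\,dt\\
&\quad+\int_{t_0}^T\!\int_{\mR^d}\!\big(H_1(x,Du_2)-H_1(x,Du_1)-D_pH_1(x,Du_1)\cdot(Du_2-Du_1)\big)\,m_1(t)\,dt\le0.
\end{align*}
By the uniform convexity $D^2_{pp}H_1\ge c_1^{-1}I_d$ from \ref{eq:M3}, Taylor's theorem bounds each integrand below by $(2c_1)^{-1}|Du_1-Du_2|^2$, so $Du_1(t,\cdot)=Du_2(t,\cdot)$ holds $m_i(t)$-a.e.\ for a.e.\ $t$ and $i=1,2$. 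Consequently $D_pH_1(x,Du_1(t,x))=D_pH_1(x,Du_2(t,x))$ $m_1(t)$-a.e., so $m_1$ is a distributional solution of the Fokker--Planck equation with drift $D_pH_1(\cdot,Du_2)\in(C_b((t_0,T),C^1_b(\mR^d)))^d$ and datum $m_0$ -- and so is $m_2$ -- whence $m_1=m_2$ by Theorem~\ref{th:FP}(d). With $m_1=m_2$, the Hamilton--Jacobi equations for $u_1$ and $u_2$ have identical sources and terminal data, so $u_1=u_2$ by the uniqueness in Theorem~\ref{th:HJ}.

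The main difficulties are: (i) obtaining the compactness of $\mathcal K$ and the continuity of $\Phi$ \emph{without any moment assumption} on $m_0$ or $\nu$, which is precisely what the generalized-moment/tightness tools (Lemma~\ref{lem:tight}, Lemma~\ref{lem:molltight}, Theorem~\ref{th:FP}(c)) are for, together with careful bookkeeping that every constant coming from \ref{eq:F}--\ref{eq:G} and \eqref{eq:ubounds} is independent of $\mu$, $t_0$ and $m_0$; and (ii) in the uniqueness proof, upgrading ``$Du_1=Du_2$ on the supports of $m_1,m_2$'' to genuine equality of the pairs, which I would do via uniqueness of the \emph{linear} Fokker--Planck equation rather than a comparison argument for the Hamilton--Jacobi equation -- the latter being unavailable since $H$ may depend on $u$.
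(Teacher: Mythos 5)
Your proposal is correct and follows essentially the same route as the paper: a Schauder--Tychonoff fixed-point argument using Theorems~\ref{th:HJ} and \ref{th:FP} on a convex, compact set cut out by the $\tfrac12$-Hölder-in-time bound and the uniform generalized moment from Lemma~\ref{lem:tight}, followed by uniqueness via Lemma~\ref{lem:LL} (with the $H_2$-terms cancelling under \ref{eq:M3}, yielding $Du_1=Du_2$ on $\operatorname{supp}m_i$), then uniqueness of the linear Fokker--Planck problem to get $m_1=m_2$, and finally uniqueness for the viscous Hamilton--Jacobi equation to conclude $u_1=u_2$. The only cosmetic differences are that you explicitly mention time-reversal (which the paper leaves implicit) and observe that $\Phi$ already maps all of $C([t_0,T],\mP)$ into $\mathcal K$ -- both equivalent to what the paper does.
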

\begin{proof}
(a) The proof uses a Schauder--Tychonoff fixed point argument very similar to the one in \cite[Theorem 3.2]{MR4309434} (see also \cite{MR4214773,Lions}), so we will be brief. Let $\psi$ be the tightness measuring function of Lemma~\ref{lem:tight} for $m_0$ and $\nu\textbf{1}_{B(0,1)^c}$. We define
$$X = \bigg\{\mu\in C([t_0,T],\mP): \mu(t_0) = m_0,\, \sup\limits_{t\in [t_0,T]} \int_{\mR^d} \psi(x)\, \mu(t,dx) \leq C_1,\, \sup\limits_{s\neq t} \frac{d_0(\mu(s),\mu(t))}{|s-t|^{1/2}} \leq C_2\bigg\},$$
where $C_1,C_2>0$ will be specified later. By standard arguments including the Prokhorov and Arzel\`{a}--Ascoli theorems we get that $X$ is convex and compact in $C([t_0,T],\mP)$.

For $\mu\in X$ we let $u\in B([t_0,T],C^{3+\sigma}_b(\mR^d))\cap C([t_0,T],C^{3+\sigma-\eps}_b(\mR^d))$ be the unique classical solution to the Hamilton--Jacobi equation of \eqref{eq:MFG} with $\mu$ in place of $m$. The solution exists by virtue of Theorem~\ref{th:HJ}, which we can use because of \ref{eq:H},\ref{eq:H2},\ref{eq:H3},\ref{eq:K}, \ref{eq:F}, \ref{eq:G}, and the fact that $\mu\in C([t_0,T],\mP)$. We also get that for $b = D_pH(x,u,Du)$ we have $\|b\|_{\infty} \leq \widetilde{C}$, with $\widetilde{C}$ independent of $\mu$. We then let $m$ be the unique distributional solution of the Fokker--Planck equation of \eqref{eq:MFG} with the function $u$ obtained above, see Theorem~\ref{th:FP}. Next we define the mapping $S\colon X \to C([0,T],\mP)$ as $T(\mu) = m$. By taking $C_1$ and $C_2$ to be the right-hand sides of estimates \eqref{eq:FPHolder} and \eqref{eq:FPtight} respectively, with $\|b\|_{\infty}$ replaced by $\widetilde{C}$ (as was done in \cite{MR4309434}), we ensure that $S\colon X\to X$.

In order to get the continuity of $S$, let $\mu_n\to \mu$ in $C([t_0,T],\mP)$. Using \ref{eq:F} and \ref{eq:G}, as in \cite{MR4309434} we get that the corresponding solutions of the Hamilton--Jacobi equation $u_n$, with $Du_n, D^2u_n,\partial_tu_n,$ and $\mL u_n$ converge uniformly to $u,Du,D^2u,\partial_t u$, and $\mL u$ respectively, and $u$ solves the H--J equation with $\mu$ in place of $m$. Now, let $m_n$ and $m$ be the solutions of the Fokker--Planck equation with $u_n$ and $u$ respectively. Then the convergence $m_n\to m$ in $C([0,T],\mP)$ follows by taking $b_n(t,x) = D_pH(x,u_n(t,x),Du_n(t,x))$ and $b(t,x) = D_pH(x,u(t,x),Du(t,x))$ in Theorem~\ref{th:FP} (e).

By the Schauder--Tychonoff fixed point theorem $S$ has a fixed point in $X$, from which we conclude that the MFG system \eqref{eq:MFG} has a solution. The regularity properties of $m$ and $u$ follow from respectively, the construction of the solution (the definition of $X$) and Theorem~\ref{th:HJ}, see also Remark~\ref{rem:t0}.

(b) Assume that $(u_1,m_1)$ and $(u_2,m_2)$ solve the MFG system with the same data. We use Lemma~\ref{lem:LL}: by the strict convexity in $p$ for $\widetilde{H}$ in \ref{eq:M3} we get that
\begin{equation*}
    \int_{t_0}^T\int_{\mR^d} |Du_1(t,x) - Du_2(t,x)|^2 (m_1(t,dx) + m_2(t,dx))=0,
\end{equation*}
therefore for almost every $t$ we have $Du_1(t) = Du_2(t)$ both $m_1(t)$- and $m_2(t)$-almost everywhere. Since $D_pH$ depends only on $x$ and $Du$, we find that $m_1$ and $m_2$ solve exactly the same Fokker--Planck equation, therefore by the uniqueness in Theorem~\ref{th:FP} we obtain $m_1 = m_2$. From this we get that $u_1$ and $u_2$ solve the same Hamilton--Jacobi equation, therefore by Theorem~\ref{th:HJ} they are equal as well, which proves the uniqueness for the whole system.
\end{proof}
\section{Further regularity for the MFG system}\label{sec:further}
The proof of the well-posedness for the master equation requires further regularity and stability results for the solutions of the MFG system. In particular we need to establish that the dependence of $u$ on the initial measure $m_0$ is not only continuous, but also differentiable. This section is devoted to proving these properties. The structure of the results and some of the proofs follow \cite[Chapter 3]{MR3967062}.
The results obtained below will imply differentiability properties for the following function, which turns out to be a solution to the master equation.
\begin{definition}\label{def:Udef}Let $(u,m)$ solve \eqref{eq:MFG} with $t_0\in[0,T]$ and $m_0\in \mP$ given. We define
\begin{equation}\label{eq:Udef}
U(t_0,x,m_0) = u(t_0,x),\quad x\in\mR^d.
\end{equation}
\end{definition}
\begin{remark}
We have $\mL_x U(t_0,x,m_0) = \mL_x u(t_0,x)$ and $\partial^\alpha_xU(t_0,x,m_0) = \partial^\alpha_x u(t_0,x)$ for any multi-index $\alpha$ for which the right-hand side makes sense.
\end{remark}
\subsection{Lipschitz continuity in the measure variable}
\begin{theorem}\label{th:Lip}
Let $\sigma\in(0,\alow-1)$ and assume \ref{eq:H}, \ref{eq:H2}, \ref{eq:H3}, \ref{eq:K}, \ref{eq:F}, \hyperref[eq:F2]{(F2(1,$\sigma$))}, \ref{eq:G}, \hyperref[eq:G2]{(G2(1,$\sigma$))}, \ref{eq:M1}, and \ref{eq:M2}. Let $(u_1,m_1)$ and $(u_2,m_2)$ be the solutions to the MFG system \eqref{eq:MFG} on $(t_0,T)$ with initial measures $m_0^1$ and $m_0^2$ respectively. Then there exists $C>0$ independent of $m_0^1,m_0^2$ such that
\begin{equation*}
    \sup\limits_{t\in[t_0,T]} \bigg(d_0(m_1(t),m_2(t)) + \|u_1(t,\cdot) - u_2(t,\cdot)\|_{C^{3+\sigma}_b(\mR^d)}\bigg) \leq Cd_0(m_0^1,m_0^2).
\end{equation*}
As a consequence,
\begin{equation*}
\|U(t,\cdot,m) - U(t,\cdot,m')\|_{C^{3+\sigma}_b(\mR^d)} \leq Cd_0(m,m').
\end{equation*}
\end{theorem}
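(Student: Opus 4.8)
The plan is to subtract the two MFG systems and estimate the difference $(u,m) := (u_1-u_2,\,m_1-m_2)$ via a forward-backward fixed point argument on a short time interval, then iterate to cover $[t_0,T]$ with a constant independent of $t_0$. Write $v = u_1 - u_2$ and $\mu = m_1 - m_2 \in \mathcal M(\mR^d)$. Subtracting the Hamilton--Jacobi equations, $v$ solves a \emph{linear} backward equation $-\partial_t v - \mL v + b\cdot Dv + c\,v = \tilde F$ where $b = \int_0^1 D_pH(x,\lambda u_1 + (1-\lambda)u_2, \lambda Du_1 + (1-\lambda) Du_2)\,d\lambda$, $c = \int_0^1 D_uH(\cdots)\,d\lambda$, and $\tilde F(t,x) = F(x,m_1(t)) - F(x,m_2(t))$, with terminal data $v(T,x) = G(x,m_1(T)) - G(x,m_2(T))$. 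Likewise, subtracting the Fokker--Planck equations, $\mu$ solves a linear forward equation $\partial_t \mu - \mL^* \mu - \Div(\mu\, D_pH(x,u_1,Du_1)) = \Div\big(m_2(\,D_pH(x,u_1,Du_1) - D_pH(x,u_2,Du_2))\big)$ with $\mu(t_0) = m_0^1 - m_0^2$. One then sets up the map $\mu \mapsto v \mapsto \mu$: given a candidate difference for $m$, solve the linearized H--J for $v$, then feed the resulting drift perturbation into the linearized F--P for $\mu$; a fixed point of this map is the actual difference.

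The key estimates are: (i) from Theorem~\ref{th:HJ} (applied to the linear H--J equation, or directly via Duhamel and \ref{eq:K}) and the Lipschitz bounds \ref{eq:F}, \ref{eq:G}, one gets $\sup_t \|v(t,\cdot)\|_{C^{3+\sigma}_b} \le C\big(\sup_t d_0(m_1(t),m_2(t)) + d_0(m_0^1,m_0^2)\big)$ --- here the $C^{3+\sigma}$ regularity of the solution to \eqref{eq:MFG} from Theorem~\ref{th:MFGwp} is used to control the coefficients $b,c$ and the source; (ii) from Theorem~\ref{th:FP}(e) (or the $d_0$-duality/Holmgren argument in its proof) applied to the linearized F--P equation, $\sup_t d_0(m_1(t),m_2(t)) \le C\big(d_0(m_0^1,m_0^2) + T^{\theta}\sup_t\|v(t,\cdot)\|_{C^1_b}\big)$ for some $\theta>0$, using that the drift perturbation is bounded by $\|Du_1 - Du_2\|_\infty \le \|v\|_{C^1_b}$ and that $m_2$ is a probability measure. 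Combining (i) and (ii) and choosing $T_0$ small, the $T_0^\theta$ factor makes this a contraction, yielding $\sup_{t\in[t_0,t_0+T_0]}(d_0(m_1,m_2) + \|v\|_{C^{3+\sigma}_b}) \le C\, d_0(m_0^1,m_0^2)$ with $C$ independent of $t_0$. Iterating over $\lceil (T-t_0)/T_0\rceil$ subintervals (the length $T_0$ depends only on the data, not on $t_0$, by Remark~\ref{rem:t0}) propagates the bound to all of $[t_0,T]$ at the cost of a constant depending on $T$ but not on $m_0^1, m_0^2$. The final statement $\|U(t,\cdot,m) - U(t,\cdot,m')\|_{C^{3+\sigma}_b} \le C\,d_0(m,m')$ is then immediate from Definition~\ref{def:Udef}, taking $t_0 = t$ and evaluating the $u$-estimate at the initial time.

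The main obstacle is the forward-backward coupling: the H--J estimate needs control of $\sup_t d_0(m_1(t),m_2(t))$ while the F--P estimate needs $\sup_t\|v\|_{C^1_b}$, so neither can be closed in isolation. This is precisely why one must either run a short-time contraction (exploiting that the ``cross'' terms carry a small factor $T_0^\theta$ from the time-integral of the integrable kernel singularity $(t-s)^{-1/\alow}$ in \ref{eq:K}) or, alternatively, invoke a Lasry--Lions-type monotonicity computation. Here the monotonicity assumptions \ref{eq:M1} and \ref{eq:M2} enter: testing the linearized F--P equation against $v$ and using the convexity lower bound $D^2_{pp}H \ge c_1^{-1}I$ from \ref{eq:M2} gives a one-sided $L^2$-in-$(t,x)$ bound $\int_{t_0}^T\!\int |Dv|^2\,d(m_1+m_2)\,dt \lesssim \langle\langle \frac{\delta F}{\delta m},\mu\rangle,\mu\rangle + \cdots$, and the monotonicity inequality in \ref{eq:M2} keeps the coupling terms with a favorable sign; this controls $\sup_t\|v\|$ globally in time without the small-$T_0$ restriction, removing the need to track how constants compound under iteration. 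A secondary technical point is verifying that the coefficient $c = \int_0^1 D_uH\,d\lambda$ and drift $b$ inherit enough regularity (bounded in $C^2_b$, say) from the $C^{3+\sigma}_b$-bound on $u_1,u_2$ so that Theorem~\ref{th:HJ} and the Schauder estimates of Lemma~\ref{lem:cd} apply to the linearized equations --- this is routine given \ref{eq:H} and Theorem~\ref{th:MFGwp}, but must be checked.
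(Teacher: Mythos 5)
Your primary strategy---a short-time contraction in $T_0$, then iteration over $\lceil(T-t_0)/T_0\rceil$ subintervals---does not work here, and the paper's proof goes a genuinely different route. The difficulty is exactly the forward--backward coupling you flag, but it is fatal for the iteration, not merely inconvenient. To see this, write $a_j = \sup_{[t_j,t_{j+1}]}\|v\|_{C^{3+\sigma}_b}$ and $b_j = \sup_{[t_j,t_{j+1}]}d_0(m_1,m_2)$ on a partition $t_0<t_1<\cdots<t_N=T$. The localized H--J (backward) estimate gives $a_j \le C(a_{j+1}+b_j)$, because the terminal data on $[t_j,t_{j+1}]$ is $v(t_{j+1})$, determined by the \emph{future}; the localized F--P (forward) estimate gives $b_j \le C(b_{j-1}+T_0\,a_j)$, driven by the \emph{past}. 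Thus $a_j$ is linked to $a_{j+1}$ and $b_j$, while $b_j$ is linked to $b_{j-1}$ and $a_j$: the recursion reaches both forward and backward and never terminates in a boundary condition you can evaluate first. There is no order in which to evaluate the subintervals that decouples the system, so the small-$T_0$ contraction does not propagate.

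What the paper actually does is close the estimate globally in one pass using the Lasry--Lions \emph{structure estimate}: under \ref{eq:M1} and \ref{eq:M2}, one has
\begin{align*}
\int_{t_0}^T\!\!\int_{\mR^d}|Du_1-Du_2|^2\,(m_1+m_2)\,dt \;\le\; C\,\|u_1(t_0,\cdot)-u_2(t_0,\cdot)\|_{C^1_b(\mR^d)}\,d_0(m_0^1,m_0^2).
\end{align*}
You mention this bound, but you do not identify the key feature that makes it close: the $d_0(m_0^1,m_0^2)$ on the right appears to the \emph{first} power while the unknown $\|u_1-u_2\|$ appears to the first power as well, so after using Cauchy--Schwarz/Jensen on the duality (Holmgren) estimate for $d_0(m_1(t),m_2(t))$ one obtains
$\sup_t d_0(m_1,m_2) \le C\big(d_0(m_0^1,m_0^2)+\sup_t\|u_1-u_2\|_{C^{3+\sigma}_b}^{1/2}\,d_0(m_0^1,m_0^2)^{1/2}\big)$, and the Schauder estimate gives $\sup_t\|u_1-u_2\|_{C^{3+\sigma}_b}\le C\sup_t d_0(m_1,m_2)$. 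Substituting and applying Young's inequality $ab\le\tfrac12(\tfrac1C a^2+Cb^2)$ absorbs the cross term with no smallness of $T$ required. This is qualitatively different from a perturbative/iteration argument: the monotonicity supplies a \emph{sublinear} ($d_0^{1/2}$) dependence on the unknown quantity, so the estimate is self-improving. Your description of the monotonicity route as ``controlling $\sup_t\|v\|$ globally'' gets the conclusion right but skips the mechanism---without the half-power structure the estimate would not close.

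Two secondary points. First, under \ref{eq:M2} the Hamiltonian is $H=H(x,p)$ (the paper drops $u$-dependence explicitly in Remark~\ref{rem:LL}), so your zeroth-order coefficient $c=\int_0^1 D_uH\,d\lambda$ is vacuous; this simplifies the linearized backward equation and is needed because with $u$-dependence only the weaker Lemma~\ref{lem:LL} is available, which is insufficient for a $d_0$-Lipschitz estimate (see Remark~\ref{rem:LL}). Second, your estimate (i) should have no $d_0(m_0^1,m_0^2)$ term: both the source and terminal data of the $v$-equation are controlled purely by $\sup_t d_0(m_1(t),m_2(t))$ via \hyperref[eq:F2]{(F2)} and \hyperref[eq:G2]{(G2)}.
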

\begin{remark}\label{rem:LL}
Assumptions \ref{eq:M1}, \ref{eq:M2} yield the standard Lasry--Lions monotonicity estimate (see \cite[Appendix~A]{MR4309434} and \cite[Lemma 3.1.2]{MR3967062}):
\begin{align*}
    &\int_{t_0}^T \int_{\mR^d} (H(x,Du_1) - H(x,Du_2) - D_pH(x,Du_2)(Du_1 - Du_2))\, m_2(t)\, dt\\
    &+\int_{t_0}^T \int_{\mR^d} (H(x,Du_2) - H(x,Du_1) - D_pH(x,Du_1)(Du_2 -Du_1))\, m_1(t)\, dt\\
    \leq\, &C\int_{\mR^d} (u_1(t_0,x)- u_2(t_0,x))(m_0^1(dx) - m_0^2(dx)).
\end{align*}
The monotonicity condition in Lemma~\ref{lem:LL} which allows $u$-depending Hamiltonians $H$,  despite being sufficient for uniqueness arguments for the MFG system \eqref{eq:MFG}, does not seem strong enough to give  
stability results like Theorem~\ref{th:Lip} that are needed for the analysis of the master equation \eqref{eq:master}. The reason is that 
an estimate in terms of $d_0((m_0^1- m_0^2)^+,0)$ and $d_0((m_0^1- m_0^2)^-,0)$ given by Lemma~\ref{lem:LL}, is in general not enough to get the estimate in terms of $d_0(m_0^1- m_0^2,0)=d_0(m_0^1,m_0^2)$ in Theorem~\ref{th:Lip}.
Therefore we drop the $u$-dependence in $H$ and assume \ref{eq:M2} in place of \ref{eq:M3}.  Hence in what follows we have $H = H(x,Du)$. 
\end{remark}
\begin{proof}[Proof of Theorem~\ref{th:Lip}] 
1)\quad  We first establish a structure estimate. By using the Lasry--Lions condition of Remark~\ref{rem:LL}, \ref{eq:M2}, and the definition of $d_0$, we find that
\begin{equation}\label{eq:structure}\begin{split}
    \int_{t_0}^T\int_{\mR^d} |Du_1(t,x) - Du_2(t,x)|^2 (m_1(t,dx) + m_2(t,dx))\, dt
    \leq\, &C\int_{\mR^d} (u_1(t_0,x)- u_2(t_0,x))(m_0^1(dx) - m_0^2(dx))\\
    \leq\, &C\|u_1(t_0,\cdot) - u_2(t_0,\cdot)\|_{C^1_b(\mR^d)} d_0(m_0^1,m_0^2).
\end{split}\end{equation}
\smallskip

\noindent 2)\quad We will now estimate $d_0(m_1,m_2)$. By assumption, $m_1$ and $m_2$ satisfy the Fokker--Planck equation in the distributional sense \eqref{eq:distFP}, thus for every $\phi\in C^{1,2}_b([t_0,T]\times \mR^d)$ and $t\in [t_0,T]$,
\begin{align*}
    \int_{\mR^d}\phi(t,x)(m_1(t,dx) - m_2(t,dx)) &= \int_{\mR^d}\phi(t_0,x)(m_0^1(dx) - m_0^2(dx))\\ &+ \int_{t_0}^t \int_{\mR^d} \big(\partial_t \phi(s,x)+ \mL \phi(s,x) - D_x\phi(s,x)D_pH(x,Du_1)\big) (m_1(s,dx)-m_2(s,dx))\, ds\\
    &+\int_{t_0}^t\int_{\mR^d}D_x\phi(s,x)(D_pH(x,Du_2) - D_pH(x,Du_1))\, m_2(s,dx)\, ds.
\end{align*}
Fix an arbitrary $\phi_0\in \Lip{}\cap C^4_b(\mR^d)$. Let $\phi$ be the solution of the linear equation \eqref{eq:cd} on $(t_0,t)$ with $\phi(t) = \phi_0$, $b(t,x) = D_pH(x,Du_1)$, and $f=0$, which exists by Lemma~\ref{lem:cd} and the fact that $Du_1$ has two continuous and bounded spatial derivatives (see Theorem~\ref{th:MFGwp}). Then the middle integral above vanishes, and by \ref{eq:H},
\begin{align}
     &\bigg|\int_{\mR^d}\phi_0(x)(m_1(t,dx) - m_2(t,dx))\bigg|\nonumber\\ \leq\, &\bigg|\int_{\mR^d}\phi(t_0,x)(m_0^1(dx) - m_0^2(dx))\bigg|+\bigg|\int_{t_0}^t\int_{\mR^d}D_x\phi(s,x)(D_pH(x,Du_2) - D_pH(x,Du_1))\, m_2(s,dx)\, ds\bigg|\nonumber\\
    \leq\, &C\bigg(d_0(m_0^1,m_0^2) + \int_{t_0}^t\int_{\mR^d}|Du_1 - Du_2|\, m_2(s,dx)\, ds\bigg).\label{eq:d0midstep}
\end{align}
Note that by \eqref{eq:gradest} there is $C>0$ independent of $\phi_0$ such that $\sup_{s\in [t_0,t]} \|\phi(s,\cdot)\|_{C^1_b(\mR^d)} \leq C\|\phi_0\|_{C^1_b(\mR^d)} \leq 2C$.
By Jensen's inequality and \eqref{eq:structure} we can estimate the last term as follows:
\begin{align*}
    \int_{t_0}^t\int_{\mR^d}|Du_1 - Du_2|\, m_2(s,dx)\, ds &= \bigg(\int_{t_0}^t\int_{\mR^d}|Du_1 - Du_2|\, m_2(s,dx)\, ds\bigg)^{2\cdot \frac 12} \\
    &\leq (T-t_0)\bigg(\int_{t_0}^t\int_{\mR^d}|Du_1 - Du_2|^2(m_1(s,dx) + m_2(s,dx))\, ds\bigg)^{\frac 12}\\
    &\leq C\|u_1(t_0,\cdot) - u_2(t_0,\cdot)\|_{C^1_b(\mR^d)}^{\frac 12} d_0(m_0^1,m_0^2)^{\frac 12}\\
    &\leq C\sup\limits_{t\in [t_0,T]}\|u_1(t,\cdot) - u_2(t,\cdot)\|_{C^{3+\sigma}_b(\mR^d)}^{\frac 12} d_0(m_0^1,m_0^2)^{\frac 12}.
\end{align*}
By this and \eqref{eq:d0midstep} it follows that
\begin{equation*}
    \bigg|\int_{\mR^d}\phi_0(x)(m_1(t,dx) - m_2(t,dx))\bigg| \leq C\bigg(d_0(m_0^1,m_0^2) + \sup\limits_{t\in [t_0,T]}\|u_1(t,\cdot) - u_2(t,\cdot)\|_{C^{3+\sigma}_b(\mR^d)}^{\frac 12} d_0(m_0^1,m_0^2)^{\frac 12}\bigg),
\end{equation*}
so by taking the supremum over $\phi_0\in \Lip{}\cap C^4_b(\mR^d)$ (see Lemma~\ref{lem:testapprox}) and $t$, we obtain
\begin{equation}\label{eq:d0estimate}
    \sup\limits_{t\in[t_0,T]}d_0(m_1(t),m_2(t))\leq C\bigg(d_0(m_0^1,m_0^2) + \sup\limits_{t\in [t_0,T]}\|u_1(t,\cdot) - u_2(t,\cdot)\|_{C^{3+\sigma}_b(\mR^d)}^{\frac 12} d_0(m_0^1,m_0^2)^{\frac 12}\bigg).
\end{equation}
\smallskip

\noindent 3)\quad Now consider $w = u_1 - u_2$. By the Hamilton--Jacobi equations for $u_1$ and $u_2$,
\begin{equation*}
\begin{cases}
\partial_t w + \mL w + b(t,x)\cdot Dw = f(t,x)\quad &{\rm in}\ (t_0,T)\times \mR^d,\\
w(T) = w_T\quad &{\rm in}\ \mR^d,
\end{cases}
\end{equation*}
where
\begin{align*}
&b(t,x) = \int_0^1 D_pH(x,\lambda Du_2 + (1-\lambda) Du_1)\, d\lambda,\\
&f(t,x) = F(x,m_1(t)) - F(x,m_2(t)) = \int_0^1\int_{\mR^d} \dm{F}(x,\lambda m_1(t) + (1-\lambda)m_2(t),y)(m_2(t,dy) - m_1(t,dy))\, d\lambda,\ {\rm and}\\
 &w_T(x) = \int_0^1\int_{\mR^d} \dm{G}(x,\lambda m_1(T) + (1-\lambda)m_2(T),y)(m_2(T,dy) - m_1(T,dy))\, d\lambda.
\end{align*}
By the bounds on $\|u\|_{C^3_b(\mR^d)}$ in Theorem~\ref{th:MFGwp} and by \ref{eq:H} there exists $C = C(d,T,F,G,H,\mL)$ such that
\begin{equation*}\label{eq:bbound}
    \sup\limits_{t\in[t_0,T]}\|b(t,x)\|_{C^2_b(\mR^d)} \leq C.
\end{equation*}
Furthermore, by \hyperref[eq:F2]{(F2(1,$\sigma$))} and \hyperref[eq:G2]{(G2(1,$\sigma$))} respectively, we get
\begin{align*}
    &\sup\limits_{t\in[t_0,T]} \|f(t,\cdot)\|_{C^2_b(\mR^d)} \leq  C(F)\sup\limits_{t\in[t_0,T]} d_0(m_1(t),m_2(t)), \ \text{and}\\
    &\|w_T\|_{C^{3+\sigma}_b(\mR^d)} \leq C(G)d_0(m_1(T),m_2(T)) \leq C\sup\limits_{t\in[t_0,T]} d_0(m_1(t),m_2(t)). 
\end{align*}

Therefore, by Lemma~\ref{lem:cd} we obtain
\begin{equation*}\label{eq:ubound}
    \sup\limits_{t\in [t_0,T]}\|u_1(t,\cdot) - u_2(t,\cdot)\|_{C^{3+\sigma}_b(\mR^d)} = \sup\limits_{t\in [t_0,T]}\|w(t,\cdot)\|_{C^{3+\sigma}_b(\mR^d)}\leq C\sup\limits_{t\in[t_0,T]} d_0(m_1(t),m_2(t)),
\end{equation*}
where $C = C(d,T,F,G,H,\mL)$. Thus, by \eqref{eq:d0estimate},
\begin{equation}\label{eq:uimpl}
    \sup\limits_{t\in [t_0,T]}\|u_1(t,\cdot) - u_2(t,\cdot)\|_{C^{3+\sigma}_b(\mR^d)} \leq C\bigg(d_0(m_0^1,m_0^2) + \sup\limits_{t\in [t_0,T]}\|u_1(t,\cdot) - u_2(t,\cdot)\|_{C^{3+\sigma}_b(\mR^d)}^{\frac 12} d_0(m_0^1,m_0^2)^{\frac 12}\bigg).
\end{equation}
By using $ab \leq \frac 12(\frac 1C a^2 + Cb^2)$ and rearranging we get
\begin{equation*}
    \sup\limits_{t\in [t_0,T]}\|u_1(t,\cdot) - u_2(t,\cdot)\|_{C^{3+\sigma}_b(\mR^d)} \leq Cd_0(m_0^1,m_0^2).
\end{equation*}
By inserting this estimate into \eqref{eq:d0estimate} we get 
\begin{equation*}
    \sup\limits_{t\in [t_0,T]}d_0(m_1(t),m_2(t)) \leq Cd_0(m_0^1,m_0^2),
\end{equation*}
which ends the proof.
\end{proof}
\subsection{Estimates on an auxiliary forward-backward linear system}
In order to obtain existence of $\dm{U}$ and its regularity in $m_0$ and $t_0$, we will study several variants of linearized systems coming from the MFG system \eqref{eq:MFG}. To cover all cases we consider a quite general linear forward-backward system in \eqref{eq:linsyst} below.

Let $V\colon (t_0,T)\times\mR^d\to \mR^d$, $b\colon (t_0,T)\times\mR^d\to \mR$, $z_T\colon \mR^d\to \mR$, $m\in C([t_0,T],\mP)$, $\rho_0\in C^{-k-1-\sigma}_b(\mR^d)$, and $c\in (L^1([t_0,T],C^{-k-\sigma}_b(\mR^d)))^d$ for some $k\in \{1,2,\ldots\}$ and $\sigma\in (0,\alow-1)$. Assume also that $\Gamma\colon (t_0,T)\times\mR^d\to \mR^{d\times d}$ and $\Gamma(t,x)$ is a symmetric matrix for every $(t,x)$. Furthermore, we stipulate that there exists $c_{\Gamma}>0$ such that for every $(t,x)$,
\begin{align}\label{eq:Gamma}
    c_{\Gamma}^{-1} Id \leq \Gamma(t,x) \leq c_{\Gamma} Id,
\end{align}
the inequality $A\leq B$ meaning that $B-A$ is non-negative definite, i.e., $((B-A)x,x) \geq 0$.
Consider the following forward-backward linear system: 
\begin{equation}\label{eq:linsyst}
\begin{cases}-\partial_t z - \mL z + V(t,x)\cdot Dz = \langle\dm{F}(x,m(t)),\rho(t)\rangle + b(t,x)\quad &{\rm in}\ (t_0,T)\times \mR^d,\\
\partial_t \rho - \mL^* \rho - \Div(\rho V) - \Div(m\Gamma Dz + c) = 0\quad &{\rm in}\ (t_0,T)\times \mR^d,\\
z(T,x) = \langle \dm{G}(x,m(T)),\rho(T)\rangle + z_T(x),\quad \rho(t_0) = \rho_0.
\end{cases}
\end{equation}
Here and below we use the expression $\langle \phi,\rho\rangle$ to denote the functional $\rho\in C^{-\gamma}_b(\mR^d)$ acting on $\phi \in C^{\gamma}_b(\mR^d)$ for some $\gamma>0$. We say that the pair $(z,\rho)$ is a classical solution of \eqref{eq:linsyst} if $z\in C([t_0,T],C^2_b(\mR^d))$ solves the first equation pointwise and $\rho \in C_b((t_0,T), C^{-k-1-\sigma}_b(\mR^d))$ solves the second equation in very weak sense, that is, $\rho(t_0) = \rho_0$ and for every $t\in (t_0,T)$ and $\phi$ such that $\phi(t_0),\phi(t)\in C^{k+1+\sigma}_b(\mR^d)$ and $\phi$, $\partial_t \phi + \mL \phi - V D\phi\in L^\infty((t_0,t),C^{k+1+\sigma}_b(\mR^d))$, it satisfies
\begin{equation}
\begin{split}\label{eq:distcm2}
    &\langle \phi(t),\rho(t)\rangle - \langle\phi(t_0),\rho_0\rangle\\ = &\int_{t_0}^t\langle(\partial_t \phi + \mL \phi - V D\phi)(s),\rho(s)\rangle\, ds -\int_{t_0}^t\int_{\mR^d} D\phi\Gamma Dz(s,x)\, m(s,dx)\, ds - \int_{t_0}^t \langle D\phi(s),c(s)\rangle\, ds.
\end{split}
\end{equation}

With appropriate data, the first equation of \eqref{eq:linsyst} corresponds to $\langle\dm{U},\rho\rangle$, as we will show in Subsection~\ref{sec:dm}. 

\begin{theorem}\label{th:linsyst}
    Let $V,\Gamma,b,z_T,m,c$, and $\rho_0$ be defined as above, let $k\in \{1,2,\ldots\}$, $\sigma\in (0,\alow-1)$, and assume that \ref{eq:K}, \erj{\ref{eq:M12}}, \ref{eq:F2}, and \ref{eq:G2} hold. Suppose in addition that $\Gamma\in (C([t_0,T],C^1_b(\mR^d)))^{d\times d}$ satisfies \eqref{eq:Gamma}, $V\in (L^\infty([t_0,T],C^{k+1+\sigma}_b(\mR^d)))^d\cap (C([t_0,T],C^{k+1}_b(\mR^d)))^d $,  $c\in (L^1([t_0,T],C^{-k-\sigma+\eps}_b(\mR^d)))^d\cap (C([t_0,T],\mathfrak{C}^{-n}_b(\mR^d)))^d$\footnote{Recall that $\mathfrak{C}^{-n}_b(\mR^d)$ is the set of measure representable functionals in $C^{-n}_b(\mR^d)$.} for some $n\in \{0,1,\ldots\}$ and small $\eps>0$, $b\in L^{\infty}([t_0,T],C^{k+1+\sigma}_b(\mR^d))\cap C([t_0,T],C^{k+1}_b(\mR^d))$, $\rho_0\in \mathfrak{C}^{-k-1}_b(\mR^d)$, and $z_T\in C^{k+2+\sigma}_b(\mR^d)$. Then \eqref{eq:linsyst} has a unique classical solution $(z,\rho)$ such that for every $\delta\in (0,\sigma)$ we have $z\in B([t_0,T],C^{k+2+\sigma}_b(\mR^d))\cap C([t_0,T],C^{k+2+\sigma-\delta}_b(\mR^d))$ and $\rho\in C([t_0,T],C^{-k-1-\sigma}_b(\mR^d))$. Furthermore, if we let 
\begin{align*}
    M = \|z_T\|_{C^{k+2+\sigma}_b(\mR^d)} + \|\rho_0\|_{C^{-k-1-\sigma}_b(\mR^d)} + \|b\|_{k+1+\sigma} + \|c\|_{L^1(C^{-k-\sigma}_b(\mR^d))},
\end{align*}
where $\|b\|_{k+1+\sigma} = \sup_{t\in [0,T]} \|b(t,\cdot)\|_{C^{k+1+\sigma}_b(\mR^d)}$, then the following estimate holds true:
\begin{align}\label{eq:linestimates}
    \sup\limits_{t\in[t_0,T]}\|z(t,\cdot)\|_{C^{k+2+\sigma}_b(\mR^d)} +  \sup\limits_{t\in[t_0,T]}\|\rho(t)\|_{C^{-k-1-\sigma}_b(\mR^d)} \leq CM,
\end{align}
where $C= C(d,T,\alow,V,c_{\Gamma},F,G)$.
\end{theorem}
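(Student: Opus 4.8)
The plan is to solve \eqref{eq:linsyst} by a fixed point argument on the forward--backward coupling applied to a regularised problem, and then to pass to the limit using a priori estimates uniform in the regularisation. Given $\rho\in C([t_0,T],C^{-k-1-\sigma}_b(\mR^d))$, the backward $z$-equation is solved by Lemma~\ref{lem:cd} (after reversing time): by Lemma~\ref{lem:dmf} its source $x\mapsto\langle\dm{F}(x,m(t)),\rho(t)\rangle+b(t,x)$ lies in $C([t_0,T],C^{k+1}_b(\mR^d))$ with a $B([t_0,T],C^{k+1+\sigma}_b(\mR^d))$ bound controlled by $\sup_{t}\|\rho(t)\|_{C^{-k-1-\sigma}_b(\mR^d)}$ and $\|b\|_{k+1+\sigma}$, and its terminal value $x\mapsto\langle\dm{G}(x,m(T)),\rho(T)\rangle+z_T(x)$ lies in $C^{k+2+\sigma}_b(\mR^d)$; this yields $z\in B([t_0,T],C^{k+2+\sigma}_b(\mR^d))\cap C([t_0,T],C^{k+2+\sigma-\delta}_b(\mR^d))$. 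Given $z$, the forward $\rho$-equation is handled through its Duhamel representation, where the vector measure $m\Gamma Dz$ and the functional $c$ are smoothed by $D_yK^*$ into integrable functions, so that the $L^1$-theory of Lemma~\ref{lem:FPL1} and the negative-order Fokker--Planck estimates of Lemma~\ref{lem:l1cneg} apply. A fixed point of the composition $\rho\mapsto z\mapsto\rho'$ solves \eqref{eq:linsyst}.

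To run the fixed point I first regularise the negative-order data. Since $\rho_0\in\mathfrak{C}^{-k-1}_b(\mR^d)$ and $c\in C([t_0,T],\mathfrak{C}^{-n}_b(\mR^d))^d$ are measure representable, Lemma~\ref{lem:cnegconv} shows that $\rho_0^\eps:=\rho_0\ast\eta_\eps$ and $c^\eps:=c\ast\eta_\eps$ are genuine integrable functions --- not functionals supported at infinity such as Banach limits, cf. Remark~\ref{rem:Banach} --- with $\|\rho_0^\eps\|_{C^{-k-1}_b(\mR^d)}\le\|\rho_0\|_{C^{-k-1}_b(\mR^d)}$ and $\|c^\eps\|_{L^1(C^{-k-\sigma}_b(\mR^d))}\le\|c\|_{L^1(C^{-k-\sigma}_b(\mR^d))}$; moreover $\rho_0^\eps\to\rho_0$ in $C^{-k-1-\delta}_b(\mR^d)$ for every $\delta>0$ (Lemma~\ref{lem:cnegapprox}) and, thanks to the small spatial-regularity gain in the hypothesis on $c$, $c^\eps\to c$ in $L^1([t_0,T],C^{-k-\sigma}_b(\mR^d))$. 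For fixed $\eps$ the composed map $\Phi_\eps$ acts on $C([t_0,T],L^1(\mR^d))$, and by the estimates \eqref{eq:L1estimates}, \eqref{eq:RK}, \eqref{eq:L1uc} and part~(e) of Lemma~\ref{lem:FPL1} it maps a bounded, convex, closed subset --- defined by an $L^1$-bound, a fixed spatial modulus of continuity of the shape $z\mapsto\|\rho_0^\eps(\cdot+z)-\rho_0^\eps\|_{L^1(\mR^d)}+c|z|^{\alow-1}$, a time modulus, and a uniform tightness bound through the function $\psi$ --- into itself. Since on $\mR^d$ the H\"older spaces are not embedded in their duals, the Arzel\`a--Ascoli approach of earlier works is unavailable, and compactness of this set in $C([t_0,T],L^1(\mR^d))$ is obtained instead from the Kolmogorov--Riesz theorem (as after \eqref{eq:represent} and in Lemma~\ref{lem:molltight}). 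Continuity of $\Phi_\eps$ being routine, Schauder--Tychonoff produces a fixed point $(z_\eps,\rho_\eps)$.

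The a priori estimate, uniform in $\eps$, is where \ref{eq:M12} enters. Pairing the $z_\eps$-equation with $\rho_\eps$ and the $\rho_\eps$-equation with $z_\eps$, integrating over $(t_0,T)$ via \eqref{eq:distcm2}, the internal $\langle\cdot,\cdot\rangle$-terms telescope and one is left with the two monotone brackets $\langle\langle\dm{F}(\cdot,m(t)),\rho_\eps\rangle,\rho_\eps\rangle\ge0$ and $\langle\langle\dm{G}(\cdot,m(T)),\rho_\eps(T)\rangle,\rho_\eps(T)\rangle\ge0$, the coercive term bounded below by $c_\Gamma^{-1}\int_{t_0}^T\int_{\mR^d}|Dz_\eps|^2\,m(t,dx)\,dt$, and boundary/source contributions controlled by $\|z_\eps(t_0)\|_{C^{k+1}_b(\mR^d)}\|\rho_0^\eps\|_{C^{-k-1}_b(\mR^d)}$, $\|z_T\|_{C^{k+1}_b(\mR^d)}\|\rho_\eps(T)\|_{C^{-k-1-\sigma}_b(\mR^d)}$ and $\|Dz_\eps\|_{L^\infty(C^{k+\sigma}_b(\mR^d))}\|c^\eps\|_{L^1(C^{-k-\sigma}_b(\mR^d))}$. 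This yields the structure estimate
\begin{equation*}
    \int_{t_0}^T\int_{\mR^d}|Dz_\eps|^2\,m(t,dx)\,dt \le C\big(M+\sup\nolimits_{t}\|\rho_\eps(t)\|_{C^{-k-1-\sigma}_b(\mR^d)}\big)\,\sup\nolimits_{t}\|z_\eps(t,\cdot)\|_{C^{k+2+\sigma}_b(\mR^d)}.
\end{equation*}
Next, Lemma~\ref{lem:cd} gives $\sup_{t}\|z_\eps(t,\cdot)\|_{C^{k+2+\sigma}_b(\mR^d)}\le C(M+\sup_{t}\|\rho_\eps(t)\|_{C^{-k-1-\sigma}_b(\mR^d)})$; and testing the $\rho_\eps$-equation against the solution $\phi$ of $\partial_t\phi+\mL\phi-V\cdot D\phi=0$ with terminal datum $\phi_0$, $\|\phi_0\|_{C^{k+1+\sigma}_b(\mR^d)}\le1$ --- using $\|\phi\|_{L^\infty(C^{k+1+\sigma}_b(\mR^d))}+\|D\phi\|_\infty\le C$ from Lemma~\ref{lem:cd}~(c) and Cauchy--Schwarz against $m(t,dx)$ --- gives $\sup_{t}\|\rho_\eps(t)\|_{C^{-k-1-\sigma}_b(\mR^d)}\le C\big(M+(\int_{t_0}^T\int_{\mR^d}|Dz_\eps|^2\,m(t,dx)\,dt)^{1/2}\big)$. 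Combining the three estimates and absorbing via $ab\le\tfrac12(\tfrac1Ca^2+Cb^2)$ yields \eqref{eq:linestimates} with $C$ independent of $\eps$.

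Finally, the uniform bound makes $z_\eps$ bounded in $B([t_0,T],C^{k+2+\sigma}_b(\mR^d))$ with a uniform time modulus in $C^{k+2+\sigma-\delta}_b(\mR^d)$ (Lemma~\ref{lem:cd}~(b)) and $\rho_\eps$ bounded in $C([t_0,T],C^{-k-1-\sigma}_b(\mR^d))$; applying the Holmgren-type argument above to the difference $(z_\eps-z_{\eps'},\rho_\eps-\rho_{\eps'})$, which solves \eqref{eq:linsyst} with $b=z_T=0$ and data $c^\eps-c^{\eps'}$, $\rho_0^\eps-\rho_0^{\eps'}$, and choosing $\delta<\sigma$ so that convergence of $\rho_0^\eps$ in $C^{-k-1-\delta}_b(\mR^d)$ suffices (the restriction maps between negative-order spaces being norm non-increasing), shows $(z_\eps,\rho_\eps)$ is Cauchy and hence converges to a limit $(z,\rho)$ in $C([t_0,T],C^{k+2+\sigma-\delta}_b(\mR^d))\times C([t_0,T],C^{-k-1-\sigma}_b(\mR^d))$; passing to the limit in the mild formulation of the $z$-equation and in \eqref{eq:distcm2} identifies $(z,\rho)$ as a classical solution with the stated regularity and \eqref{eq:linestimates}. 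Uniqueness follows by applying the a priori estimate (legitimate for solutions after one further mollification) to the difference of two solutions, for which $M=0$. The main difficulties are the whole-space compactness --- which forces the passage from Arzel\`a--Ascoli to Kolmogorov--Riesz and the attendant equicontinuity/tightness bounds of Lemma~\ref{lem:FPL1}, together with routing the negative-order data through measure-representable mollifications so the regularised problems are honest $L^1$-problems --- and closing the monotonicity-based structure estimate against the Schauder and duality bounds with exactly the available regularity indices ($C^{k+1+\sigma}$ for $V,b,\dm{F},\dm{G}$ versus $C^{-k-1-\sigma}$ for $\rho$, but only $C^{-k-1}$ for $\rho_0$).
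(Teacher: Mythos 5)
Your overall ingredients --- a composed forward--backward map, the $L^1$-smoothing via Lemmas~\ref{lem:FPL1} and \ref{lem:l1cneg}, the measure-representable mollification of $\rho_0$ and $c$ to avoid pathological functionals, Kolmogorov--Riesz in place of Arzel\`a--Ascoli for whole-space compactness, and the monotonicity-plus-duality a priori estimate driven by \ref{eq:M12} --- are exactly those of the paper's proof. The gap is in the choice of fixed-point theorem. You propose Schauder--Tychonoff on a bounded, convex, closed subset of $C([t_0,T],L^1(\mR^d))$ cut out by an $L^1$-bound, a spatial modulus, a time modulus and a tightness bound, and you claim invariance from Lemma~\ref{lem:FPL1}. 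But trace the constants: for $\widetilde\rho$ with $\sup_t\|\widetilde\rho(t)\|_{L^1}\le R$, Lemma~\ref{lem:cd} gives $\|Dz\|_\infty\le C(\|z_T\|+\|b\|+\sup_t\|\widetilde\rho(t)\|_{C^{-k-1-\sigma}_b(\mR^d)})$, so the divergence-form source $V_2=m\Gamma Dz+c^\eps$ satisfies $\|V_2\|_{L^1}\le C_0+C_1R$, and \eqref{eq:L1estimates} then yields $\sup_t\|\Phi_\eps(\widetilde\rho)(t)\|_{L^1}\le C\big(\|\rho_0^\eps\|_{L^1}+T^{1-1/\alow}(C_0+C_1R)\big)$. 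Solving this for an invariant $R$ requires $T$ small, and because the system is genuinely forward--backward ($\rho$ is prescribed at $t_0$, $z$ at $T$) you cannot patch small time intervals into $[t_0,T]$. The same circularity infects the modulus-of-continuity and tightness constraints, whose constants in \eqref{eq:RK}, \eqref{eq:L1uc} and Lemma~\ref{lem:FPL1}(e) all carry $\sup_t\|\rho(t)\|_{L^1}$. The paper avoids this by using Leray--Schauder, which does not need an invariant ball but only boundedness of $\{\rho:\rho=\lambda T(\rho),\ \lambda\in[0,1]\}$; that boundedness is precisely what the monotonicity-based a priori estimate for the interpolated system \eqref{eq:sigmasyst} supplies. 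In your ordering the a priori estimate arrives only after the fixed-point theorem has been invoked, so it cannot do this job.

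A secondary point: the structure estimate as you write it, $\int|Dz_\eps|^2\,m\le C(M+\sup_t\|\rho_\eps\|)\sup_t\|z_\eps\|$, pairs $\sup_t\|\rho_\eps\|$ with $\sup_t\|z_\eps\|$; combined with the Schauder bound $\sup_t\|z_\eps\|\le C(M+\sup_t\|\rho_\eps\|)$ it leaves a quadratic $\sup_t\|\rho_\eps\|^2$ on the right of the duality step, which cannot be absorbed. The boundary contributions you yourself list --- $\sup_t\|z_\eps\|(\|\rho_0\|+\|c\|)$ and $\sup_t\|\rho_\eps\|(\|z_T\|+\|b\|)$ --- give the sharper split $M\sup_t\|z_\eps\|+M\sup_t\|\rho_\eps\|$, as in \eqref{eq:structurelin}; with that split Young's inequality does close the loop.
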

\begin{remark}\label{rem:cnegapprox}

 It is very likely that the result can be done for $k=0$. The limitation $k\geq 1$ appears because we use the classical solution setting of Lemma~\ref{lem:cd} for the test function $w$ in \eqref{eq:xi} below. 
\end{remark}
\begin{proof}[Proof of Theorem~\ref{th:linsyst}]
  We first prove the result for $\rho_0 \in C^2_b(\mR^d)\cap L^1(\mR^d)$, $m\in C([t_0,T],L^1(\mR^d))$, and $c\in (C([t_0,T],L^1(\mR^d)))^d$, using the Leray--Schauder theorem \cite[Theorem 10.3]{MR0473443}. To this end we establish a compact fixed point map and we obtain appropriate a priori estimates for the system. After solving the system for regular data, we handle the general data by convolving it with $\eta_\eps$ and we show that the solutions for the regularized problem converge to the solution for general data. Note that by Lemmas~\ref{lem:cnegconv} and \ref{lem:molltight}, the convolved data has the regularity listed above (here we use the assumption $c\in (C([t_0,T],\mathfrak{C}^{-n}_b(\mR^d)))^d$). We also recall from Section~\ref{sec:Holder} that $L^1(\mR^d)$ is embedded in $C^{-\gamma}_b(\mR^d)$ for every $\gamma>0$ with the convention $\langle f,\varphi\rangle = \int f\varphi$ for $\varphi\in L^1(\mR^d)$ and we have $\|\cdot\|_{C^{-\gamma}_b(\mR^d)} \leq \|\cdot\|_{L^1(\mR^d)}$. \medskip
  
  \noindent 1) {\em The fixed point map $T$.}\quad Let  \begin{align*}X =  C([t_0,T],C^{-k-1-\sigma}_b(\mR^d)).\end{align*} For every $\widetilde{\rho}\in X$, we want to define $\rho=T(\tilde\rho)\in X$ in the way that the fixed point of $T$ solves the linear system~\eqref{eq:linsyst}. In order to do this, we first solve the equation for $z$ with $\widetilde{\rho}$ in place of $\rho$. 
  Recall that $V\in (C([t_0,T],C^{k+1}_b(\mR^d)))^d$ and $b\in C([t_0,T],C^{k+1}_b(\mR^d))$. Furthermore, by \ref{eq:F2}, \ref{eq:G2} and Lemma~\ref{lem:dmf}, we get that
  \begin{align}\label{eq:d2f}
  \sup\limits_{t\in[t_0,T]}&\bigg\|\bigg\langle\dm{F}(\cdot,m(t)),\widetilde{\rho}(t)\bigg\rangle\bigg\|_{C^{k+1}_b(\mR^d)} \leq \sup\limits_{m\in \mP}\bigg\|\dm{F}(\cdot,m,\cdot)\bigg\|_{C^{k+1}_b(\mR^d,C^{k+1+\sigma}_b(\mR^d))}\sup\limits_{t\in [t_0,T]}\|\widetilde{\rho}(t)\|_{C^{-k-1-\sigma}_b(\mR^d)} \leq  C\|\widetilde{\rho}\|_{X},\\\label{eq:d3g}
  &\bigg\|\bigg\langle \dm{G}(\cdot,m(T)),\widetilde{\rho}(T)\bigg\rangle\bigg\|_{C^{k+2+\sigma}_b(\mR^d)} \leq \sup\limits_{m\in \mP}\bigg\|\dm{G}(\cdot,m,\cdot)\bigg\|_{C^{k+2+\sigma}_b(\mR^d,C^{k+1+\sigma}_b(\mR^d))}\|\widetilde{\rho}(T)\|_{C^{-k-1-\sigma}_b(\mR^d)} \leq C\|\widetilde{\rho}\|_{X}.
  \end{align}
  Finally, since $m\in C([t_0,T],\mP)$, we find that $\langle\dm{F}(\cdot,m(t)),\widetilde{\rho}(t)\rangle\in C([t_0,T],C^{k+1}_b(\mR^d))$. 
  Thus, by Lemma~\ref{lem:cd} we get that there exists a classical solution $z$ to the first equation in \eqref{eq:linsyst} with $\widetilde{\rho}$ in place of $\rho$, and we have \begin{align*}\sup\limits_{t\in[t_0,T]}\|z(t,\cdot)\|_{C^{k+2+\sigma}_b(\mR^d)} \leq C(\|z_T\|_{C^{k+2+\sigma}_b(\mR^d)} + \|\widetilde{\rho}\|_{X}+\|b\|_{k+1}).\end{align*} We now set $V_1 = V$ and $V_2 = m\Gamma Dz + c$, and we note that $V_1\in (C_b([t_0,T]\times\mR^d))^d$ and $V_2\in (C([t_0,T],L^1(\mR^d)))^d$ with
  \begin{align}\label{eq:V2L1}
      \sup\limits_{t\in[t_0,T]} \|V_2(t,\cdot)\|_{L^1(\mR^d)} &\leq \|\Gamma\|_{\infty}\|Dz\|_{\infty}+ \sup\limits_{t\in[t_0,T]}\|c(t,\cdot)\|_{L^1(\mR^d)}\\
      &\leq C(\|z_T\|_{C^{k+2+\sigma}_b(\mR^d)} + \|\widetilde{\rho}\|_{X} + \|b\|_{k+1}) + \sup\limits_{t\in[t_0,T]}\|c(t,\cdot)\|_{L^1(\mR^d)}.
  \end{align}
Therefore, by Lemma~\ref{lem:FPL1} (see also Lemma~\ref{lem:mildweak}), we get the existence of $\rho\in C([t_0,T],L^1(\mR^d))\subset X$ solving in the very weak sense the second equation of \eqref{eq:linsyst} with $z$ obtained above. We choose the only such $\rho$ which satisfies the Duhamel formula \eqref{eq:Duhamel} and define $T(\widetilde{\rho}) = \rho$. \medskip


\noindent 2) {\em Continuity and compactness of $T$.}\quad Since the equations are linear, it is easy to check that $T$ is continuous: consider a sequence $\widetilde{\rho}_n$ converging to $\widetilde{\rho}$ in $X$. By \eqref{eq:d2f}, \eqref{eq:d3g} and \eqref{eq:c3bound} we get that for the corresponding solutions $z_n$ and $z$ we have $\|Dz_n - Dz\|_{\infty} \to 0$ as $n\to\infty$. Then, by \eqref{eq:V2L1} and \eqref{eq:L1estimates} we find that the corresponding $\rho_n$ converge to $\rho$ in $C([t_0,T],L^1(\mR^d))$, hence also in $X$.

In order to get that $T$ maps any bounded set $B\subseteq X$ to a pre-compact subset of $X$, we use the Arzel\`a--Ascoli theorem. By Lemma~\ref{lem:FPL1} (d) we get equicontinuity in $t$ for $T(B)$. To get pre-compactness in $L^1(\mR^d)$ (and so in $C^{-k-1-\sigma}_b(\mR^d)$) for every fixed $t$, we use the Kolmogorov--Riesz theorem \cite[Theorem~5]{MR2734454}: Lemma~\ref{lem:FPL1} (b),(c), and (e) give boundedness, continuity of translations, and tightness of  $\{T(\widetilde{\rho})(t): \widetilde{\rho}\in B\}$ respectively, so all the assumptions of the Kolmogorov--Riesz theorem are satisfied. It yields that $\{T(\widetilde{\rho})(t): \widetilde{\rho}\in B\}$ is pre-compact in $L^1(\mR^d)$ for all $t\in [t_0,T]$. Thus we are in a position to use the Arzel\`{a}--Ascoli theorem, which yields that $T$ is a compact map. \medskip

\noindent 3) {\em The set $\mathcal{FP}_\lambda:=\{\rho\in X: \exists \lambda\in[0,1]\ \rho=\lambda T(\rho)\}$ appearing in the Leray--Schauder theorem is bounded and \eqref{eq:linestimates} holds.} 
\quad First note that $0\in\mathcal{FP}_\lambda$ so the set is nonempty. Next, if 
$\rho_\lambda\in X$ satisfies $\rho_\lambda = \lambda T(\rho_\lambda)$ for some $\lambda\in [0,1]$, then there is $z_\lambda$ such that $(z_\lambda,\rho_\lambda)$ solves the following system:
\begin{equation}\label{eq:sigmasyst}
\begin{cases}-\partial_t z_\lambda - \mL z_\lambda + V(t,x)\cdot Dz_\lambda = \langle\dm{F}(x,m(t)),\rho_\lambda(t)\rangle + b(t,x)\quad &{\rm in}\ (t_0,T)\times \mR^d,\\
\partial_t \rho_\lambda - \mL^* \rho_\lambda - \Div(\rho_\lambda V) - \lambda\Div(m\Gamma Dz_\lambda + c) = 0\quad &{\rm in}\ (t_0,T)\times \mR^d,\\
z_\lambda(T,x) = \langle\dm{G}(x,m(T)),\rho_\lambda(T)\rangle + z_T(x),\quad \rho_\lambda(t_0) = \lambda\rho_0.
\end{cases}
\end{equation}
We will prove that $\mathcal{FP}_\lambda$ is bounded in $X$ by showing that the a priori estimate \eqref{eq:linestimates} holds for any solution of \eqref{eq:sigmasyst}, independently of $\lambda$. We emphasize that the arguments below do not use the smoothness of $\rho_0,m,$ and $c$.

We start by establishing a structure estimate for solutions of \eqref{eq:sigmasyst}. By using $z_\lambda$ as a test function in $\rho_\lambda$-equation and then using the $z_\lambda$-equation, we get
\begin{align*}
    &\langle z_\lambda(T),\rho_\lambda(T)\rangle - \lambda\langle z_\lambda(t_0),\rho_0\rangle \\
    = &\int_{t_0}^T\langle(\partial_t z_\lambda + \mL z_\lambda - V Dz_\lambda)(s),\rho_\lambda(s)\rangle\, ds - \lambda\int_{t_0}^T\int_{\mR^d} Dz_\lambda\Gamma Dz_\lambda(s,x)\, m(s,dx)\, ds - \lambda\int_{t_0}^T \langle Dz_\lambda(s),c(s)\rangle\, ds\\
    = &-\int_{t_0}^T\bigg\langle\big\langle\dm{F}(m(s)),\rho_\lambda(s)\big\rangle_y + b(s),\, \rho_\lambda(s)\bigg\rangle_x\, ds - \lambda\int_{t_0}^T\int_{\mR^d} Dz_\lambda\Gamma Dz_\lambda(s,x)\, m(s,dx)\, ds -\lambda\int_{t_0}^T \langle Dz_\lambda(s),c(s)\rangle\, ds.
\end{align*}
By rearranging and using the terminal condition for $z_\lambda$ we get
\begin{align}\lambda\int_{t_0}^T\int_{\mR^d} Dz_\lambda\Gamma Dz_\lambda(s,x)\, m(s,dx)\, ds &=\lambda\langle z_\lambda(t_0),\rho_0\rangle  - \langle z_T,\rho_\lambda(T)\rangle -\int_{t_0}^T\langle  b(s),\rho_\lambda(s)\rangle\, ds-\lambda\int_{t_0}^T \langle Dz_\lambda(s),c(s)\rangle\, ds\\ \label{eq:M2orM1}
    &-\int_{t_0}^T\bigg\langle\big\langle\dm{F}(m(s)),\rho_\lambda(s)\big\rangle_y,\, \rho_\lambda(s)\bigg\rangle_x\, ds - \bigg\langle\big\langle\dm{G}(m(T)),\rho_\lambda(T)\big\rangle_y,\,\rho_\lambda(T)\bigg\rangle_x.
\end{align}
By \ref{eq:M12} the last two terms are negative, therefore we can estimate the left-hand side as follows:
\begin{align}
        \lambda\int_{t_0}^T\int_{\mR^d} Dz_\lambda\Gamma Dz_\lambda(s,x)\, m(s,dx)\, ds \leq&\,\lambda|\langle z_\lambda(t_0),\rho_0\rangle|  + |\langle z_T,\rho_\lambda(T)\rangle| +\int_{t_0}^T|\langle  b(s),\rho_\lambda(s)\rangle|\, ds+\lambda\int_{t_0}^T |\langle Dz_\lambda(s),c(s)\rangle|\, ds\nonumber\\
        \leq &\, \lambda \sup\limits_{t\in[t_0,T]} \|z_\lambda(t,\cdot)\|_{C^{k+2+\sigma}_b(\mR^d)}(\|\rho_0\|_{C^{-k-1-\sigma}_b(\mR^d)} + \|c\|_{L^1(C^{-k-\sigma}_b(\mR^d))})\label{eq:structurelin}\\ + &\sup\limits_{t\in[t_0,T]} \|\rho_\lambda(t)\|_{C^{-k-1-\sigma}_b(\mR^d)}(\|z_T\|_{C^{k+2+\sigma}_b(\mR^d)} + T\|b\|_{k+1+\sigma}).\nonumber
\end{align}
Now we will estimate $\rho_\lambda$ by duality. Let $\xi\in C^{k+1+\sigma}_b(\mR^d)$, $\tau\in (t_0,T]$, and let $w$ be the unique solution to the problem
\begin{align}\label{eq:xi}
        \partial_t w(t,x) + \mL w(t,x) - V(t,x) Dw(t,x) = 0,\ {\rm in}\ (t_0,\tau)\times \mR^d,\quad w(\tau) = \xi.
\end{align}
Existence and uniqueness of $w$ is given by Lemma~\ref{lem:cd}. We also get that \begin{align*}\sup\limits_{t\in[t_0,T]}\|w(t,\cdot)\|_{C^{k+1+\sigma}_b(\mR^d)} \leq C\|\xi\|_{C^{k+1+\sigma}_b(\mR^d)},\end{align*} with $C$ depending on $\sup_{t\in[t_0,T]}\|V(t,\cdot)\|_{C^k_b(\mR^d)}$, but not on $\xi$. Therefore, by testing the $\rho_\lambda$-equation in \eqref{eq:sigmasyst} with $w$ and using the triangle inequality we get 
\begin{align*}
    |\langle \xi,\rho_\lambda(\tau)\rangle| &\leq \lambda|\langle w(t_0),\rho_0\rangle| + \lambda\bigg|\int_{t_0}^\tau\int_{\mR^d} Dw\Gamma Dz_\lambda(s,x)\, m(s,dx)\, ds\bigg| + \lambda\bigg|\int_{t_0}^\tau \langle Dw(s),c(s)\rangle\, ds\bigg|\\
    &\leq C\lambda\|\xi\|_{C^{k+1+\sigma}_b(\mR^d)}(\|\rho_0\|_{C^{-k-1-\sigma}_b(\mR^d)}+\|c\|_{L^1(C^{-k-\sigma}_b(\mR^d))}) + \lambda\int_{t_0}^\tau\int_{\mR^d} |Dw\Gamma Dz_\lambda(s,x)|\, m(s,dx)\, ds. 
\end{align*}
Since $\Gamma$ is symmetric and non-negative definite, we have 
\begin{align*}
    |Dw\cdot \Gamma Dz_\lambda| = |\Gamma^{\frac 12}Dw\cdot  \Gamma^{\frac 12} Dz_\lambda|\leq |\Gamma^{\frac 12} Dw|^{\frac 12}|\Gamma^{\frac 12} Dz_\lambda|^{\frac 12} \leq (\Gamma Dw\cdot Dw)^{\frac 12}(\Gamma Dz_\lambda\cdot Dz_\lambda)^{\frac 12}.
\end{align*}
Hence, by the Cauchy--Schwarz inequality, the fact that $\Gamma \leq CId$, and the bounds on $Dw$ we get
\begin{align*}
|\langle \xi,\rho_\lambda(\tau)\rangle| &\leq C\lambda\|\xi\|_{C^{k+1+\sigma}_b(\mR^d)}(\|\rho_0\|_{C^{-k-1-\sigma}_b(\mR^d)}+\|c\|_{L^1(C^{-k-\sigma}_b(\mR^d))})\\
&+\lambda\bigg(\int_{t_0}^\tau\int_{\mR^d} Dz_\lambda\Gamma Dz_\lambda(s,x)\, m(s,dx)\, ds\bigg)^{\frac 12}\bigg(\int_{t_0}^\tau\int_{\mR^d} Dw\Gamma Dw(s,x)\, m(s,dx)\, ds\bigg)^{\frac 12}\\
&\leq C\|\xi\|_{C^{k+1+\sigma}_b(\mR^d)}\bigg(\lambda\|\rho_0\|_{C^{-k-1-\sigma}_b(\mR^d)} + \lambda \|c\|_{L^1(C^{-k-\sigma}_b(\mR^d))} + \lambda \bigg(\int_{t_0}^T\int_{\mR^d} Dz_\lambda\Gamma Dz_\lambda(s,x)\, m(s,dx)\, ds\bigg)^{\frac 12}\bigg).
\end{align*}
Now, by estimate \eqref{eq:structurelin} and the subadditivity of the square root we get
\begin{align*}
|\langle \xi,\rho_\lambda(\tau)\rangle| \leq C\|\xi\|_{C^{k+1+\sigma}_b(\mR^d)}\big[&\lambda\|\rho_0\|_{C^{-k-1-\sigma}_b(\mR^d)} + \lambda \|c\|_{L^1(C^{-k-\sigma}_b(\mR^d))}\\ &+ \lambda \sup\limits_{t\in[t_0,T]} \|z_\lambda(t,\cdot)\|_{C^{k+2+\sigma}_b(\mR^d)}^{\frac 12}(\|\rho_0\|_{C^{-k-1-\sigma}_b(\mR^d)}^{\frac 12} + \|c\|_{L^1(C^{-k-\sigma}_b(\mR^d))}^{\frac 12})\\ &+ \lambda^{\frac 12}\sup\limits_{t\in[t_0,T]} \|\rho_\lambda(t)\|_{C^{-k-1-\sigma}_b(\mR^d)}^{\frac 12}(\|z_T\|_{C^{k+2+\sigma}_b(\mR^d)}^{\frac 12} + T\|b\|_{k+1+\sigma}^{\frac 12})\big].
\end{align*}
If we divide by $\|\xi\|_{C^{k+1+\sigma}_b(\mR^d)}$ and take the supremum over $\tau\in [t_0,T]$ and $\|\xi\|_{C^{k+1+\sigma}_b(\mR^d)}\leq 1$, we find that
\begin{align*}
    \sup\limits_{t\in[t_0,T]} \|\rho_\lambda(t)\|_{C^{-k-1-\sigma}_b(\mR^d)}\leq C\big[&\lambda\|\rho_0\|_{C^{-k-1-\sigma}_b(\mR^d)} + \lambda \|c\|_{L^1(C^{-k-\sigma}_b(\mR^d))}\\ &+ \lambda \sup\limits_{t\in[t_0,T]} \|z_\lambda(t,\cdot)\|_{C^{k+2+\sigma}_b(\mR^d)}^{\frac 12}(\|\rho_0\|_{C^{-k-1-\sigma}_b(\mR^d)}^{\frac 12} + \|c\|_{L^1(C^{-k-\sigma}_b(\mR^d))}^{\frac 12})\\ &+ \lambda^{\frac 12}\sup\limits_{t\in[t_0,T]} \|\rho_\lambda(t)\|_{C^{-k-1-\sigma}_b(\mR^d)}^{\frac 12}(\|z_T\|_{C^{k+2+\sigma}_b(\mR^d)}^{\frac 12} + T\|b\|_{k+1+\sigma}^{\frac 12})\big].
\end{align*}
We use $ab \leq \frac 12(c a^2 + \frac 1c b^2)$ in the last two terms, with $c = \frac 1C$ for the $\rho_\lambda$ term and small $c$ to be determined later for the $z_\lambda$ term. After rearranging and using the definition of $M$ we obtain
\begin{align}\label{eq:rhoest}
    \sup\limits_{t\in[t_0,T]} \|\rho_\lambda(t)\|_{C^{-k-1-\sigma}_b(\mR^d)}\leq C\lambda\big[M + c`\sup\limits_{t\in[t_0,T]} \|z_\lambda(t,\cdot)\|_{C^{k+2+\sigma}_b(\mR^d)}\big].
\end{align}
Note that $\sup\limits_{t\in[t_0,T]} \|z_\lambda(t,\cdot)\|_{C^{k+2+\sigma}_b(\mR^d)}$ can be estimated in the exact same way as in step 1), by using $\rho_\lambda$ instead of $\widetilde{\rho}$ in \eqref{eq:d2f} and \eqref{eq:d3g}. This means that we have
\begin{align}\label{eq:zestimate}
    \sup\limits_{t\in[t_0,T]} \|z_\lambda(t,\cdot)\|_{C^{k+2+\sigma}_b(\mR^d)} \leq C(\|z_T\|_{C^{k+2+\sigma}_b(\mR^d)} + \sup\limits_{t\in [t_0,T]} \|\rho_\lambda(t)\|_{C^{-k-1-\sigma}_b(\mR^d)} + \|b\|_{k+1+\sigma}).
\end{align}
From this and \eqref{eq:rhoest}, by taking $c$ sufficiently small and rearranging we obtain
\begin{align}\label{eq:sigmaest}
     \sup\limits_{t\in[t_0,T]} \|\rho_\lambda(t)\|_{C^{-k-1-\sigma}_b(\mR^d)} \leq C\lambda M\leq CM.
\end{align}
Then, \eqref{eq:zestimate} yields an analogous estimate for $z_\lambda$:
\begin{align*}
    \sup\limits_{t\in[t_0,T]} \|z_\lambda(t,\cdot)\|_{C^{k+2+\sigma}_b(\mR^d)} \leq  C(1+\lambda)M\leq 2C M.
\end{align*}
In particular, we get \eqref{eq:linestimates} for $(z_\lambda,\rho_\lambda)$ independently of $\lambda$, and hence also the set $\mathcal{FP_\lambda}$ is bounded in $X$.  \medskip

\noindent 4) {\em Existence for regular data by Leray-Schauder fixed point theorem.}\quad By step 2) $T$ is a continuous and compact map on $X$, and by step 3) the set $\mathcal{FP}_\lambda=\{\rho\in X: \exists \lambda\in[0,1]\ \rho=  \lambda T(\rho)\}$ is bounded in $X$. Therefore, by the Leray--Schauder theorem we get the existence of a fixed point of $T$. \medskip

\noindent 5) {\em Existence for general data by approximation.} \quad We will now resolve the case of general $m$, $c$, and $\rho_0$ satisfying the assumptions of the theorem. Let $m_n= m\ast \eta_{\frac 1n}$ and $c_n = c\ast \eta_{\frac 1n}$, $\rho_0^n = \rho_0 \ast \eta_{\frac 1n}$ be their approximating sequences. By Lemmas~\ref{lem:cnegapprox} and \ref{lem:moll} they converge to $m,c$, and $\rho_0$ in $C([t_0,T],\mP)$, $(L^1([t_0,T],C^{-k-\sigma}_b(\mR^d)))^d$, and $C^{-k-1-\sigma}_b(\mR^d)$ norms respectively as $n\to\infty$. For natural numbers $n\neq n'$ the pair $(z_{n,n'},\rho_{n,n'}):= (z_n - z_{n'},\rho_n - \rho_{n'})$ solves the system \eqref{eq:linsyst} with $m=m_n$ and 
\begin{equation}\label{eq:kkprime}
\begin{split}
    b_{n,n'}(t,x) &= \bigg\langle \dm{F}(x,m_n(t)),\rho_{n'}(t)\bigg\rangle - \bigg\langle \dm{F}(x,m_{n'}(t)),\rho_{n'}(t)\bigg\rangle,\\
    z_T^{n,n'}(x) &= \bigg\langle\dm{G}(x,m_n(T)),\rho_{n'}(T)\bigg\rangle - \bigg\langle \dm{G}(x,m_{n'}(T)),\rho_{n'}(T)\bigg\rangle,\\
    c_{n,n'}(t,x)&= (m_{n'} - m_n)\Gamma(t,x)Dz_{n'}(t,x) + c_n - c_{n'},\\
    \rho_0^{n,n'} &= \rho_0^{n} - \rho_0^{n'}.
\end{split}
\end{equation}
By \ref{eq:F2} and \ref{eq:G2} we get that 
\begin{align*}
\|b_{n,n'}\|_{k+1+\sigma} + \|z^{n,n'}_T\|_{C^{k+2+\sigma}_b(\mR^d)} \leq (C(F)+C(G))\sup\limits_{t\in[t_0,T]} d_0(m_{n}(t),m_{n'}(t))\sup\limits_{t\in[t_0,T]}\|\rho_{n'}(t,\cdot)\|_{C^{-k-1-\sigma}_b(\mR^d)}.
\end{align*}
Furthermore, the first term in $c_{n,n'}$ can be estimated as follows:
\begin{align*}
    \|(m_{n'} - m_n)\Gamma Dz_{n'}\|_{-k-\sigma}&\leq  \|(m_{n'} - m_n)\Gamma Dz_{n'}\|_{-1}\quad \\
    &\leq \sup\limits_{\substack{t\in[t_0,T]\\ \|\phi\|_{C^1_b(\mR^d)}=1}} \bigg|\int_{\mR^d} \Gamma(t,x)Dz_{n'}(t,x)\phi(x)(m_{n'}(dx) - m_{n}(dx))\bigg|\\
    &\leq C\sup\limits_{t\in[t_0,T]}\|\Gamma(t,\cdot)\|_{C^1_b(\mR^d)}\sup\limits_{t\in[t_0,T]}\|z_{n'}(t,\cdot)\|_{C^{2}_b(\mR^d)}\sup\limits_{t\in[t_0,T]}d_0(m_{n'}(t),m_{n}(t)). 
\end{align*}
Therefore, by \eqref{eq:linestimates} we find that
\begin{align*}
    &\sup\limits_{t\in[t_0,T]} \big(\|z_{n,n'}(t,\cdot)\|_{C^{k+2+\sigma}_b(\mR^d)} + \|\rho_{n,n'}(t,\cdot)\|_{C^{-k-1-\sigma}_b(\mR^d)}\big)\\ &\leq C\big[(C(F)+C(G))\sup\limits_{t\in[t_0,T]} d_0(m_{n}(t),m_{n'}(t))\sup\limits_{t\in[t_0,T]}\|\rho_{n'}(t,\cdot)\|_{C^{-k-1-\sigma}_b(\mR^d)} \\ &+ \sup\limits_{t\in[t_0,T]}\|\Gamma(t,\cdot)\|_{C^1_b(\mR^d)}\sup\limits_{t\in[t_0,T]}\|z_{n'}(t,\cdot)\|_{C^{2}_b(\mR^d)}\sup\limits_{t\in[t_0,T]}d_0(m_{n'}(t),m_{n}(t))\\
    &+\|c_{n} - c_{n'}\|_{L^1(C^{-k-\sigma}_b(\mR^d))} + \|\rho_0^{n} - \rho_0^{n'}\|_{C^{-k-1-\sigma}_b(\mR^d)}\big], 
\end{align*}
where $C$ does not depend on $n,n'$, and $\eps$. Furthermore, by \eqref{eq:linestimates} the bounds on $\rho_{n'}$ and $z_{n'}$ are uniform in $n'$, therefore we get that $(z_n,\rho_n)$ is a Cauchy sequence in $C([t_0,T],C^{k+2+\sigma}_b(\mR^d))\times C([t_0,T],C^{-k-1-\sigma}_b(\mR^d))$, and so, it converges to some $(z,\rho)$. It is easy to see that it solves the system \eqref{eq:linsyst} and satisfies estimates \eqref{eq:linestimates}. \medskip

\noindent 6)\quad The uniqueness follows by noting that if pairs $(z_1,\rho_1)$ and $(z_2,\rho_2)$ solve \eqref{eq:linsyst}, then their difference solves \eqref{eq:linsyst} with $b,c,z_T$, and $\rho_0$ equal to $0$. Thus, by estimate \eqref{eq:linestimates} and the definition of $M$, we get that they are equal.
\end{proof}
We can get improved space regularity for $\rho$ in the linear system if we allow the norms to blow up at the initial time.
\begin{lemma}\label{lem:l1cneg}
Assume that \ref{eq:K} holds, $V_1\in (C([0,T], C^n_b(\mR^d)))^d$, $V_2\in (C([0,T],(\mathcal{M}(\mR^d),d_0))\cap B([0,T],\mathcal{M}(\mR^d)))^d$, and $\rho_0\in C^{-n}_b(\mR^d)$ for some $n\in \{2,3,\ldots\}$. Then the problem
\begin{align}
    \begin{cases}\label{eq:FPcneg}
    \partial_t \rho - \mL \rho -\Div(\rho V_1) - \Div(V_2) = 0,\quad {\rm on}\ (0,T)\times \mR^d,\\
    \rho(0) = \rho_0.
    \end{cases}
\end{align}
has a distributional solution $\rho$ (in the sense of Lemma~\ref{lem:milddistrcneg}) such that $\rho \in C((0,T],C^{\gamma-n}_b(\mR^d))\cap B([0,T],C^{-n}_b(\mR^d))$ for every $\gamma\in (0,\alow)$ and
\begin{align}\label{eq:cnegexplode}
    \sup\limits_{t\in (0,T]}\|t^{\frac{\gamma}{\alow}}\rho(t)\|_{C^{\gamma-n}_b(\mR^d)} \leq C(V_1)(\sup\limits_{t\in [0,T]}\|V_2(t)\|_{TV} + \|\rho_0\|_{C^{-n}_b(\mR^d)}).
\end{align}
If we assume that $\rho_0\in \mathfrak{C}^{-n}_b(\mR^d)$, i.e., it is measure representable (see Definition~\ref{def:MR}), then $\rho(t)\in \mathfrak{C}^{-n}_b(\mR^d)$ for $t\in [0,T]$.
\end{lemma}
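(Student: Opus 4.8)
## Proof proposal for Lemma~\ref{lem:l1cneg}

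\textbf{Overall strategy.} The plan is to use Duhamel's formula and Banach's fixed point theorem, exactly as in the proof of Lemma~\ref{lem:FPL1}, but now carrying weighted-in-time norms so as to absorb the singularity of $\|D_yK(t-s,\cdot)\|_{L^1}\le\mathcal K(t-s)^{-1/\alow}$ against the gain of regularity from the heat kernel. Concretely, for $\gamma\in(0,\alow)$ I would work in the space
\begin{align*}
  X_\gamma=\Big\{\rho\in C((0,T],C^{\gamma-n}_b(\mR^d))\cap B([0,T],C^{-n}_b(\mR^d)):\ \sup_{t\in(0,T]}t^{\gamma/\alow}\|\rho(t)\|_{C^{\gamma-n}_b(\mR^d)}<\infty\Big\},
\end{align*}
and define the map $S$ by
\begin{align*}
  S(\widetilde\rho)(t)=K_t\ast\rho_0+\int_0^t D_yK(t-s,\cdot)\ast\big(V_1(s)\widetilde\rho(s)+V_2(s)\big)\,ds,
\end{align*}
interpreted through the pairing with $C^{n}_b$ and $C^{n-\gamma}_b$ test functions. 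The key point is the mapping estimate: using $\|\rho\ast D^\beta K_t\|_{C^{-\gamma'}_b}\le C t^{-|\beta|/\alow - (\gamma-\gamma')/\alow}\|\rho\|_{C^{-\gamma}_b}$-type bounds (obtained by pairing, moving derivatives onto $K$, and using \ref{eq:K} together with Remark~\ref{rem:K}(d) for the fractional gain), one gets that the convolution with $D_yK(t-s)$ raises a $C^{-n}_b$ (or $C^{\gamma-n}_b$) object while costing a factor $(t-s)^{-(1+\,\cdot\,)/\alow}$, and this is integrable near $s=t$ precisely because $\gamma<\alow$. The smoothing from $K_t$ itself gives $\|K_t\ast\rho_0\|_{C^{\gamma-n}_b}\le C t^{-\gamma/\alow}\|\rho_0\|_{C^{-n}_b}$, which is exactly the weight in \eqref{eq:cnegexplode}.

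\textbf{Steps.} First I would record the basic heat-kernel estimates in negative H\"older norms: for $\rho\in C^{-\gamma_1}_b(\mR^d)$ and a multi-index $\beta$, $\|\rho\ast D^\beta K_t\|_{C^{-\gamma_2}_b(\mR^d)}\le C\,t^{-(|\beta|+\gamma_1-\gamma_2)/\alow}\|\rho\|_{C^{-\gamma_1}_b(\mR^d)}$ whenever $0\le\gamma_2\le\gamma_1$ and the exponent is $\ge 0$; for $\gamma_2>\gamma_1$ one interpolates using Remark~\ref{rem:K}(d). Also $\|\mu\ast D^\beta K_t\|_{C^{-\gamma}_b(\mR^d)}\le C t^{-(|\beta|-\gamma)/\alow}\|\mu\|_{TV}$ for finite measures $\mu$. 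Second, using these, I would show $S$ maps $X_\gamma$ to itself and is a contraction for $T$ small (the $V_1$-term costs $(t-s)^{-1/\alow}s^{-\gamma/\alow}$, integrable; the $V_2$-term costs $(t-s)^{-(1-\delta)/\alow}$ for a suitable $\delta>0$, also integrable), with the bound \eqref{eq:cnegexplode}; then extend globally in time since the smallness threshold does not depend on $\rho_0$ or the data. Third, I would verify $\rho\in B([0,T],C^{-n}_b(\mR^d))$ by a Gr\"onwall argument on $\|\rho(t)\|_{C^{-n}_b}$ (here the $D_yK_t$ factor only costs $(t-s)^{-1/\alow}$, integrable, so Lemma~\ref{lem:Gronwall} applies), and continuity $\rho\in C((0,T],C^{\gamma-n}_b(\mR^d))$ by a standard splitting of the time integral as in Lemma~\ref{lem:FPL1}(d). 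Fourth, I would check that the mild solution is a distributional solution in the sense of Lemma~\ref{lem:milddistrcneg}, paralleling Lemma~\ref{lem:mildweak}.

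\textbf{The measure-representability part.} If $\rho_0\in\mathfrak C^{-n}_b(\mR^d)$, write $\rho_0$ via Lemma~\ref{lem:intrep} as $\langle\rho_0,\phi\rangle=\sum_{|\alpha|\le n}\int\partial^\alpha\phi\,d\mu_\alpha$. Then $K_t\ast\rho_0$ is measure representable with densities obtained by differentiating $K_t$ (cf. the computation in the proof of Lemma~\ref{lem:cnegconv}), hence lies in $\mathfrak C^{-n}_b(\mR^d)$, and similarly each Duhamel increment $D_yK(t-s)\ast(V_1(s)\widetilde\rho(s)+V_2(s))$ is measure representable provided $\widetilde\rho(s)\in\mathfrak C^{-n}_b(\mR^d)$ (since $V_1(s)\in C^n_b$ preserves the class up to the product rule, and $V_2(s)$ is already a measure). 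Because $\mathfrak C^{-n}_b(\mR^d)$ is norm-closed (Lemma~\ref{lem:intrep}) and the fixed point is obtained as a uniform limit of Picard iterates starting from $\rho_0$, the fixed point $\rho(t)$ stays in $\mathfrak C^{-n}_b(\mR^d)$ for all $t$; the time integral of measure-representable, norm-continuous integrands is again in the closed class. I expect the \emph{main obstacle} to be the bookkeeping in the weighted mapping estimate: ensuring the time exponents $(t-s)^{-(1+\gamma-(\gamma-n)+\cdots)/\alow}$ stay strictly below $1$ for all the relevant norm pairs simultaneously (raising by $n$ derivatives from $C^{-n}_b$ to $C^{\gamma-n}_b$ plus the extra $D_y$), and doing the interpolation via Remark~\ref{rem:K}(d) cleanly when the target order exceeds the source order; the uniqueness and distributional-solution verifications are routine adaptations of Lemma~\ref{lem:FPL1}.
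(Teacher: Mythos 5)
Your proposal takes essentially the same route as the paper's proof: a weighted-in-time Duhamel/Banach fixed-point argument in a space controlling both $\sup_t \|\rho(t)\|_{C^{-n}_b}$ and $\sup_t t^{\gamma/\alow}\|\rho(t)\|_{C^{\gamma-n}_b}$, with the key smoothing estimate $\|K_t\ast\rho_0\|_{C^{\gamma-n}_b}\lesssim t^{-\gamma/\alow}\|\rho_0\|_{C^{-n}_b}$ proved by interpolation for fractional $\gamma$ (the paper uses the Hölder interpolation inequality; you gesture at Remark~\ref{rem:K}(d), which does the same job), Grönwall for long-time extension, and preservation of $\mathfrak{C}^{-n}_b$ by showing the map keeps each Duhamel term measure representable. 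Your Picard-iterate phrasing of the measure-representability step is just the fixed-point version of the paper's direct observation that $S$ maps the closed class into itself, and the paper supplies the Bochner-integration details your sketch elides (strong measurability of $s\mapsto K_{t-s}\ast\Div(V_1\widetilde\rho)$), but there is no substantive difference in approach.
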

\begin{proof}   We will use Duhamel's formula and Banach's fixed point theorem. We only discuss the case $\rho_0\in \mathfrak{C}^{-n}_b(\mR^d)$, which is slightly more subtle, because aside from the estimates it also requires to verify if being measure representable is preserved under the fixed point map. Fix $\gamma\in (0,\alow)$, let $M=2C_1(\|\rho_0\|_{C^{-n}_b(\mR^d)}+\sup\limits_{t\in [0,T]}\|V_2(t)\|_{TV})$, where $C_1$ is the constant in \eqref{eq:aconstant} below, and define\footnote{For $\rho_0\in C^{-n}_b(\mR^d)$ simply replace $C_b((0,T],\mathfrak{C}^{-n}_b(\mR^d))$ by  $C_b((0,T],C^{-n}_b(\mR^d))$ in the definition of $X$ and follow the proof.}
  \begin{align*}
      X = \big\{\rho \in C_b((0,T],\mathfrak{C}^{-n}_b(\mR^d))\cap C((0,T],C^{-n+\gamma}_b(\mR^d)): \sup\limits_{t\in(0,T]}\|t^{\frac{\gamma}{\alow}}\rho(t)\|_{C^{-n+\gamma}_b(\mR^d)} \leq M\big\},
  \end{align*}
  with metric induced by the norm $\|\rho\|_X = \sup\limits_{t\in [
  (0,T]} \|\rho(t)\|_{C^{-n}_b(\mR^d)} + \sup\limits_{t\in (0,T]} \|t^{\frac{\gamma}{\alow}}\rho(t)\|_{C^{-n+\gamma}_b(\mR^d)}$.
  For $\widetilde{\rho}\in X$ we say that $S(\widetilde{\rho}) = \rho$ if $\rho(0) = \rho_0$ and for every $\xi\in C^{n-\gamma}_b(\mR^d)$ and $t\in (0,T]$ we have
  \begin{align}\label{eq:rhomild}
      \langle\rho(t),\xi\rangle &= \langle \rho_0,K_t^\ast \ast \xi\rangle + \int_0^t \bigg\langle \widetilde{\rho}(s),V_1(s,\cdot) (D_y K_{t-s}^\ast \ast \xi)(\cdot)\bigg\rangle\, ds\\
      &+ \int_0^t \int_{\mR^d} \int_{\mR^d}D_yK(t-s,x-y)\, V_2(s,dy) \xi(x)\, dx\, ds.\nonumber
  \end{align}
  We will show that $S$ maps $X$ to $X$. Using the triangle inequality we get
  \begin{align*}
      |\langle\rho(t),\xi\rangle| &\leq |\langle \rho_0,K_t^\ast\ast\xi\rangle| + \int_0^t \bigg|\bigg\langle \widetilde{\rho}(s),V_1(s,\cdot) D_y K_{t-s}^\ast \ast \xi(\cdot)\bigg\rangle\bigg|\, ds\\
      &+ \int_0^t \int_{\mR^d} \int_{\mR^d}|D_yK(t-s,x-y)||\xi(x)|\, |V_2(s,dy)| \, dx\, ds =: I_1(t) + I_2(t) + I_3(t).
  \end{align*}
  We estimate $I_1$ as follows:
  \begin{align*}
      I_1(t) \leq \|\rho_0\|_{C^{-n}_b(\mR^d)} \|K_t^\ast\ast \xi\|_{C^n_b(\mR^d)}.
  \end{align*}
  Let $\delta>0$ be small and assume that $\gamma\in (1,\alow)$ (for $\gamma=1$ and $\gamma\in (0,1)$ the argument for $I_1$ needs to be modified in a rather apparent way). By the interpolation inequality for H\"older norms (see, e.g., \cite[Corollary~1.7]{MR3753604} and \cite[Corollary~6.2]{MR920166}) we get
  \begin{align}\label{eq:interpolation}
      \|K_t^\ast\ast \xi\|_{C^n_b(\mR^d)} \leq \|K_t^\ast\ast \xi\|_{C^{n+\delta}_b(\mR^d)} \leq \|K_t^\ast\ast \xi\|_{C^{n+1-\gamma}_b(\mR^d)}^{2-\gamma-\delta}\|K_t^\ast\ast \xi\|_{C^{n+2-\gamma}_b(\mR^d)}^{\gamma + \delta -1}.
  \end{align}
  We estimate the terms using \ref{eq:K}:
  \begin{align*}
      \|K_t^\ast\ast \xi\|_{C^{n+1-\gamma}_b(\mR^d)} &\leq c\|\xi\|_{C^{n-\gamma}_b(\mR^d)}(\|K_t^\ast\|_{L^1(\mR^d)} + \|D_y K_t^\ast\|_{L^1(\mR^d)}) \leq Ct^{-\frac{1}{\alow}}\|\xi\|_{C^{n-\gamma}_b(\mR^d)},\\
      \|K_t^\ast\ast \xi\|_{C^{n+2-\gamma}_b(\mR^d)} &\leq c\|\xi\|_{C^{n-\gamma}_b(\mR^d)}(\|K_t^\ast\|_{L^1(\mR^d)} + \|D_y K_t^\ast\|_{L^1(\mR^d)} + \|D^2_{y} K_t^\ast\|_{L^1(\mR^d)}) \leq Ct^{-\frac{2}{\alow}}\|\xi\|_{C^{n-\gamma}_b(\mR^d)}.
  \end{align*}
  Using these bounds in \eqref{eq:interpolation} we find that
  \begin{align*}
      I_1(t)\leq C\|\rho_0\|_{C^{-n}_b(\mR^d)}\|\xi\|_{C^{n-\gamma}_b(\mR^d)}t^{-(2-\gamma -\delta +2\gamma + 2\delta - 2)/\alow} = C\|\rho_0\|_{C^{-n}_b(\mR^d)}\|\xi\|_{C^{n-\gamma}_b(\mR^d)}t^{-\frac{\gamma+\delta}{\alow}}. 
  \end{align*}
  Since $\delta$ is arbitrarily small we get that there exists $C_1\geq 1$ independent of $\rho_0,V_1,V_2,T$, such that
  \begin{align}\label{eq:aconstant}
      I_1(t) \leq C_1\|\rho_0\|_{C^{-n}_b(\mR^d)}\|\xi\|_{C^{n-\gamma}_b(\mR^d)}t^{-\frac{\gamma}{\alow}}.
  \end{align}
  We now estimate the second term using the definition of $X$:
   \begin{align*}
       I_2(t) &= \int_0^t \bigg|\bigg\langle \widetilde{\rho}(s),V_1(s,\cdot) D_y K_{t-s}^\ast \ast \xi(\cdot)\bigg\rangle\bigg|\, ds\\ &\leq c\int_0^t \|\widetilde{\rho}(s)\|_{C^{-n+\gamma}_b(\mR^d)} \|V_1\|_{C^{n-\gamma}_b(\mR^d)}\|D_yK^\ast_{t-s}\ast\xi\|_{C^{n-\gamma}_b(\mR^d)}\, ds \nonumber \\
       &\hspace{-11pt}\mathop{\leq}^{\ref{eq:K}, \widetilde{\rho} \in X} 2c(V_1)C_1(\|\rho_0\|_{C^{-n}_b(\mR^d)} + \sup\limits_{t\in [0,T]}\|V_2(t)\|_{TV})\|\xi\|_{C^{n-\gamma}_b(\mR^d)}\int_0^t s^{-\frac{\gamma}{\alow}}(t-s)^{-\frac{1}{\alow}}\, ds\nonumber \\ &= 2c(V_1)C_1(\|\rho_0\|_{C^{-n}_b(\mR^d)} + \sup\limits_{t\in [0,T]}\|V_2(t)\|_{TV})\|\xi\|_{C^{n-\gamma}_b(\mR^d)}t^{1-\frac {\gamma}{\alow} - \frac{1}{\alow}}\int_0^1v^{-\frac{\gamma}{\alow}}(1-v)^{-\frac 1{\alow}}\, dv\\ &\leq C_2(V_1)(\|\rho_0\|_{C^{-n}_b(\mR^d)} + \sup\limits_{t\in [0,T]}\|V_2(t)\|_{TV})T^{1-\frac 1{\alow}}\|\xi\|_{C^{n-\gamma}_b(\mR^d)}t^{-\frac {\gamma}{\alow}}.\nonumber
 \end{align*}
 The last term is straightforward, we use \ref{eq:K} to get
 \begin{align*}
     I_3(t) \leq \sup\limits_{t\in[0,T]}\|V_2(t)\|_{TV} \|\xi\|_{\infty}
     \mathcal K\int_0^t (t-s)^{-\frac 1{\alow}}\, ds \leq C_3 T^{1-\frac 1{\alow}}\sup\limits_{t\in [0,T]}\|V_2(t)\|_{TV}\|\xi\|_{C^{n-\gamma}_b(\mR^d)}.
 \end{align*}
 By summing up the estimates we get
 \begin{align*}
      |\langle\rho(t),\xi\rangle| &\leq (C_1 +C_2(V_1)T^{1-\frac 1{\alow}} + C_3T^{1-\frac 1{\alow} +\frac{\gamma}{\alow}})(\|\rho_0\|_{C^{-n}_b(\mR^d)} + \sup\limits_{t\in [0,T]}\|V_2(t)\|_{TV})\|\xi\|_{C^{n-\gamma}_b(\mR^d)}t^{-\frac{\gamma}{\alow}}.
 \end{align*}
Therefore, if we take $T$ sufficiently small, we get $\sup\limits_{t\in(0,T]}\|t^{\frac{\gamma}{\alow}}\rho(t)\|_{C^{-n+\gamma}_b(\mR^d)} \leq M$. A similar (but easier) argument shows that $\sup\limits_{t\in[0,T]}\|\rho(t)\|_{C^{-n}_b(\mR^d)} <\infty$. In order to get that $\rho$ is continuous in time in respective norms we first make the following observation: if we repeat the above estimates with $\xi\in C^{n-\alow+\delta}_b(\mR^d)$, for $\delta>0$ small enough (in $I_2$ we have to use interpolation), then we get that for every $0<\eps<T$ we have $\rho(\eps)\in C^{-n+\alow-\delta}_b(\mR^d)$. By using Lemma~\ref{lem:rhotimeDuh} on $(\eps,t)$ (see also \eqref{eq:weakDuhamelst}) we therefore get that $\rho\in C((\eps,T],C^{-n+\alow-2\delta}_b(\mR^d))$. Since $\delta$ and $\eps$ can be arbitrarily small, we get that $\rho\in C((0,T],C^{-n+\gamma}_b(\mR^d))$.

The last thing to verify is that $\rho(t)$ defined in \eqref{eq:rhomild} is measure representable, i.e., $\rho(t) \in \mathfrak{C}^{-n}_b(\mR^d)$ for $t\in (0,T]$. In the terms with $\rho_0$ and $V_2$ a simple use of Fubini's theorem suffices (we use the fact that $\rho_0$ is measure representable). For the term with $V_1$ we note that by the definition of $X$ and the fact that $V_1\in (C_b((0,T),C^n_b(\mR^d)))^d$ we have $\Xi := K_{t-\cdot}\ast \Div(V_1(\cdot)\widetilde{\rho}(\cdot))\in C_b((0,T),\mathfrak{C}^{-n}_b(\mR^d))$ (note that $\langle \Xi(s), \xi\rangle = \langle \widetilde{\rho}(s),V_1(s)D_yK^{\ast}_{t-s}\ast \xi\rangle$). Therefore, $\Xi\colon (0,T)\to \mathfrak{C}^{-n}_b(\mR^d)$ is measurable and separably valued (any continuous function on an interval is separably valued), hence strongly measurable, see \cite[Corollary~1.1.10]{MR3617205}. By \cite[Proposition~1.2.3]{MR3617205} and \ar{the completeness of $\mathfrak{C}^{-n}_b(\mR^d)$ we find that $\int_0^t\Xi(s)\, ds\in \mathfrak{C}^{-n}_b(\mR^d)$, and by \cite[(1.2)]{MR3617205}, for every $\xi\in C^n_b(\mR^d)$,}
\begin{align*}
\int_0^t\langle \Xi(s),\xi\rangle\, ds = \langle \int_0^t\Xi(s)\, ds,\xi\rangle
\end{align*}
 This proves that $\rho(t)\in \mathfrak{C}^{-n}_b(\mR^d)$, therefore we can conclude that $S$ maps $X$ to itself.

 Thanks to linearity, verification that $S$ is a contraction for small $T$ is very similar. From Banach's fixed point theorem we infer that $S$ has a fixed point in $X$. By a Gr\"onwall inequality argument  similar to the one in the proof of \eqref{eq:c3bound}, we get
 \begin{align}\label{eq:cnegGronwall}
     \|\rho(T)\|_{C^{-n}_b(\mR^d)} \leq C(V_1,T)(\|\rho_0\|_{C^{-n}_b(\mR^d)} + \sup\limits_{t\in [0,T]}\|V_2(t)\|_{TV}),
 \end{align}
 with $C(V_1,T)$ locally bounded in $T\in [0,\infty)$.
 Therefore the solution can be extended in time indefinitely. We call the so-obtained $\rho$ a mild solution and by Lemma~\ref{lem:milddistrcneg} we get that it is a very weak solution as well.
 
 Note that by the above arguments we get \eqref{eq:cnegexplode} for small times, say on $[0,T_1]$, with $T_1$ independent of $\rho_0$. In order to get the estimates for larger times we note that $\|t^{-\frac{\gamma}{\alow}}\rho(t)\|_{C^{\gamma-n}_b(\mR^d)}\approx C(T)\|\rho(t)\|_{C^{\gamma-n}_b(\mR^d)}$ for $t\in [T_1/2,T]$. Therefore the estimate for any $t\geq T_1$ can be obtained by using short time estimates on the interval $[t-T_1/2,t+T_1/2]$ and \eqref{eq:cnegGronwall}.
\end{proof}
\begin{corollary}\label{cor:l1cneg}
Let $(z,\rho)$ be the solution of the linear system \eqref{eq:linsyst} with $b,c,z_T=0$ and $\rho_0\in \mathfrak{C}^{-k-1}_b(\mR^d)$, $k\geq 1$. Then for every $\gamma\in(0,\alow)$ and $t\in (t_0,T)$, $\rho(t)$ can be extended to a functional $\widetilde{\rho}(t)\in C^{\gamma-k-1}_b(\mR^d)$ in the way that $\widetilde{\rho}\in C((t_0,T],C^{\gamma-k-1}_b(\mR^d))$ and \begin{align*}\|t^{\frac{\gamma}{\alow}}\widetilde{\rho}(t)\|_{C^{\gamma-k-1}_b(\mR^d)}\leq C(V)\|\rho_0\|_{C^{-k-1}_b(\mR^d)}.\end{align*}
In particular,
\begin{align*}\|\widetilde{\rho}\|_{L^1(C^{\gamma-k-1}_b(\mR^d))}\leq C(V)\|\rho_0\|_{C^{-k-1}_b(\mR^d)}.\end{align*}
Furthermore, for every $t\in [t_0,T]$ we have $\widetilde{\rho}(t)|_{C^{-k-1}_b(\mR^d)} \in \mathfrak{C}^{-k-1}_b(\mR^d)$.
\end{corollary}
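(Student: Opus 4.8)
The plan is to deduce this corollary by feeding the solution $(z,\rho)$ produced by Theorem~\ref{th:linsyst} into Lemma~\ref{lem:l1cneg}, and then identifying the two solutions by duality. First, since $b,c,z_T=0$ and $\rho_0\in\mathfrak{C}^{-k-1}_b(\mR^d)$, Theorem~\ref{th:linsyst} applies (with $M=\|\rho_0\|_{C^{-k-1-\sigma}_b(\mR^d)}\leq\|\rho_0\|_{C^{-k-1}_b(\mR^d)}$) and gives a unique solution $(z,\rho)$ of \eqref{eq:linsyst} with $z\in B([t_0,T],C^{k+2+\sigma}_b(\mR^d))\cap C([t_0,T],C^{k+2+\sigma-\delta}_b(\mR^d))$, $\rho\in C([t_0,T],C^{-k-1-\sigma}_b(\mR^d))$ and
\begin{align*}
\sup_{t\in[t_0,T]}\|z(t,\cdot)\|_{C^{k+2+\sigma}_b(\mR^d)}+\sup_{t\in[t_0,T]}\|\rho(t)\|_{C^{-k-1-\sigma}_b(\mR^d)}\leq C(V)\|\rho_0\|_{C^{-k-1}_b(\mR^d)}.
\end{align*}
The key observation is that if we now \emph{freeze} $z$, the second equation of \eqref{eq:linsyst} reads
\begin{align*}
\partial_t\rho-\mL^*\rho-\Div(\rho V)-\Div(V_2)=0\quad\text{in}\ (t_0,T)\times\mR^d,\qquad\rho(t_0)=\rho_0,
\end{align*}
with $V_2:=m\,\Gamma Dz$, which is precisely of the form \eqref{eq:FPcneg} with $\mL$ replaced by $\mL^*$ (whose heat kernel $K^*$ also satisfies \ref{eq:K}), with $n=k+1\geq2$ since $k\geq1$, and, after the time shift of Remark~\ref{rem:t0}, on the interval $(0,T-t_0)$.

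Next I would verify the hypotheses of Lemma~\ref{lem:l1cneg}. One has $V_1=V\in(C([t_0,T],C^{k+1}_b(\mR^d)))^d$ by the standing assumptions of Theorem~\ref{th:linsyst}. For each $t$, $V_2(t)=m(t)\,\Gamma(t,\cdot)Dz(t,\cdot)$ is a finite $\mR^d$-valued measure with $\|V_2(t)\|_{TV}\leq\|\Gamma\|_\infty\|Dz(t,\cdot)\|_\infty\leq C\sup_t\|z(t,\cdot)\|_{C^{k+2+\sigma}_b(\mR^d)}\leq C(V)\|\rho_0\|_{C^{-k-1}_b(\mR^d)}$, so $V_2\in B([t_0,T],\mathcal M(\mR^d))$; and $V_2\in C([t_0,T],(\mathcal M(\mR^d),d_0))$ follows from $m\in C([t_0,T],\mP)$ together with the continuity of $\Gamma$ (in $C^1_b$) and of $Dz$ (in $C^1_b$, which holds because $z\in C([t_0,T],C^{k+2+\sigma-\delta}_b(\mR^d))$ and $k\geq1$), by writing $m(t)\Gamma(t)Dz(t)-m(s)\Gamma(s)Dz(s)=m(t)[\Gamma(t)Dz(t)-\Gamma(s)Dz(s)]+[m(t)-m(s)]\Gamma(s)Dz(s)$ and estimating the first term by $\|\Gamma(t)Dz(t)-\Gamma(s)Dz(s)\|_\infty$ and the second by $d_0(m(t),m(s))$ times a bound on the $C^{0,1}$-norm of $\Gamma(s)Dz(s)$. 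Finally $\rho_0\in\mathfrak{C}^{-k-1}_b(\mR^d)$, so we are in the measure representable case. Lemma~\ref{lem:l1cneg} then yields, for every $\gamma\in(0,\alow)$, a distributional solution $\widetilde\rho\in C((t_0,T],C^{\gamma-k-1}_b(\mR^d))\cap B([t_0,T],C^{-k-1}_b(\mR^d))$ with $\widetilde\rho(t)\in\mathfrak{C}^{-k-1}_b(\mR^d)$ for every $t$, and by \eqref{eq:cnegexplode} combined with the above bound on $\|V_2(t)\|_{TV}$,
\begin{align*}
\sup_{t\in(t_0,T]}\|(t-t_0)^{\gamma/\alow}\widetilde\rho(t)\|_{C^{\gamma-k-1}_b(\mR^d)}\leq C(V)\big(\sup_t\|V_2(t)\|_{TV}+\|\rho_0\|_{C^{-k-1}_b(\mR^d)}\big)\leq C(V)\|\rho_0\|_{C^{-k-1}_b(\mR^d)}.
\end{align*}
The $L^1$-bound is then obtained by integrating $(t-t_0)^{-\gamma/\alow}$ over $(t_0,T)$, which is finite precisely because $\gamma<\alow$.

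It remains to show that $\widetilde\rho$ is an extension of $\rho$, i.e.\ that the restriction of $\widetilde\rho(t)$ to $C^{k+1+\sigma}_b(\mR^d)$ equals $\rho(t)$ (note $C^{k+1+\sigma}_b(\mR^d)\subset C^{k+1-\gamma}_b(\mR^d)$, so the restriction makes sense). Both $\rho$ and $\widetilde\rho$ solve the same scalar linear equation with the same initial datum $\rho_0$ and the same source $\Div(V_2)$; testing the difference against solutions $w$ of the dual backward problem $\partial_t w+\mL w-V\,Dw=0$ with terminal data $\xi\in C^{k+1+\sigma}_b(\mR^d)$ — which exist and are controlled by Lemma~\ref{lem:cd} and are admissible test functions in both the very weak formulation \eqref{eq:distcm2} and the distributional formulation of Lemma~\ref{lem:milddistrcneg} — gives $\langle\xi,\rho(\tau)\rangle=\langle\xi,\widetilde\rho(\tau)\rangle$ for all such $\xi$ and all $\tau\in(t_0,T]$, exactly as in the uniqueness arguments of Theorem~\ref{th:FP}(d) and step~6 of the proof of Theorem~\ref{th:linsyst}; an approximation/density step then upgrades this to equality of the two functionals on $C^{k+1+\sigma}_b(\mR^d)$. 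Hence $\widetilde\rho$ extends $\rho$, has the stated time continuity and blow-up estimate, and $\widetilde\rho(t)|_{C^{-k-1}_b(\mR^d)}\in\mathfrak{C}^{-k-1}_b(\mR^d)$ by the final assertion of Lemma~\ref{lem:l1cneg}. I expect the only genuinely delicate point to be this identification step: reconciling the very weak formulation used for $\rho$ with the (a priori different) distributional notion underlying Lemma~\ref{lem:l1cneg}, and checking that the test functions in the Holmgren duality live in the intersection of the H\"older spaces on which both $\rho(\tau)$ and $\widetilde\rho(\tau)$ act — everything else amounts to routine bookkeeping of H\"older exponents.
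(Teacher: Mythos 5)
Your proposal is correct and follows essentially the same path as the paper's proof: invoke Lemma~\ref{lem:l1cneg} with $V_1=V$, $V_2=m\Gamma Dz$ and $n=k+1$, then identify the resulting $\widetilde\rho$ with $\rho$ by testing the difference against solutions $w$ of the dual backward problem from Lemma~\ref{lem:cd} (via Lemma~\ref{lem:milddistrcneg}). The only difference is that you spell out the verification of the hypotheses of Lemma~\ref{lem:l1cneg} (the $\mL$-versus-$\mL^*$ symmetry of \ref{eq:K}, the $TV$ and $d_0$-continuity bounds on $V_2$), which the paper simply asserts are satisfied.
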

\begin{proof}
  We use Lemma~\ref{lem:l1cneg} with $V_1 = V$ and $V_2 = m\Gamma Dz$ and $n = k+1$ (note that the assumptions are satisfied). Let $\widetilde{\rho}$ be the solution obtained in Lemma~\ref{lem:l1cneg} and let $w$ solve \eqref{eq:xi}. Then $w$ is an admissible test function for $\rho-\widetilde{\rho}$ in the sense of Lemma~\ref{lem:milddistrcneg} and we find that $\langle (\rho - \widetilde{\rho})(\tau),\xi\rangle = 0$. Since $\xi\in C^{k+1+\sigma}_b(\mR^d)$ is arbitrary, we find that $\rho(\tau)$ and $\widetilde{\rho}(\tau)$ are identical as elements of $C^{-k-1-\sigma}_b(\mR^d)$, which means that $\widetilde{\rho}$ is an extension of $\rho$. Note that by Theorem~\ref{th:linsyst} we have $\|m\Gamma Dz\|_{TV} \leq C\|Dz\|_{\infty}\leq C\|\rho_0\|_{C^{-k-1}_b(\mR^d)}$, therefore the estimates indeed depend only on $\|\rho_0\|_{C^{-k-1}_b(\mR^d)}$.
\end{proof}
\subsection{Existence and properties of $\dm{U}$}\label{sec:dm} 
As announced earlier, we will obtain $\dm{U}$ as a solution to a certain linear system of the form \eqref{eq:linsyst}. We first define a candidate for $\dm{U}$ in Proposition~\ref{prop:Jdef} and prove its regularity. Then, in Proposition~\ref{prop:dmU} we show that the candidate function from Proposition~\ref{prop:Jdef} indeed satisfies the definition of $\dm{U}$.

We start with an elementary technical lemma, which we use to reduce the assumptions on $H$.
\begin{lemma}\label{lem:Holdereps}
Let $\sigma\in (0,1)$ and $\eps\in (0,1-\sigma)$ and assume that $f\colon \mR^{m+n}\to\mR$ and $y_1,y_2\colon \mR^m \to \mR^n$. Then
\begin{align*}
    \|f(\cdot,y_1(\cdot)) - f(\cdot,y_2(\cdot))\|_{C^\sigma_b(\mR^d)}\leq 2\|f\|_{C^{\sigma+\eps}_b(\mR^d)}(2+\|y_1 - y_2\|_{C^1_b(\mR^d)}^\sigma)\|y_1 - y_2\|_{\infty}^\eps.
\end{align*}
\end{lemma}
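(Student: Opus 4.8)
The plan is to prove the estimate by a direct decomposition of the difference $f(x,y_1(x)) - f(x,y_2(x))$ into an increment in the first variable and an increment in the second variable, and then to interpolate the second-variable increment between the $L^\infty$ bound coming from $\|f\|_\infty$ and the H\"older bound coming from $|f|_{C^{\sigma+\eps}}$. The key point is that one never needs Lipschitz regularity of $f$; only $C^{\sigma+\eps}_b$ is available, but a small amount of extra H\"older exponent $\eps$ can be ``traded'' for a power of $\|y_1-y_2\|_\infty$.

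\textbf{Step 1: reduce to the H\"older seminorm.} Write $g(x) = f(x,y_1(x)) - f(x,y_2(x))$. The $C^\sigma_b$ norm is $\|g\|_\infty + |g|_{C^\sigma}$. The sup-norm term is immediately bounded by $\|f\|_{C^{\sigma+\eps}_b(\mR^d)}\|y_1-y_2\|_\infty^\eps$ using the $(\sigma+\eps)$-H\"older continuity of $f$ in the second variable (and the fact that $\|y_1-y_2\|_\infty\le$ some constant, or simply keeping $\eps$-power form since $\eps<1$). So the substance is in estimating $|g|_{C^\sigma}$, i.e. bounding $|g(x)-g(x')|/|x-x'|^\sigma$.

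\textbf{Step 2: split the double increment.} For $x\neq x'$, write
\begin{align*}
g(x)-g(x') &= \big[f(x,y_1(x)) - f(x',y_1(x'))\big] - \big[f(x,y_2(x)) - f(x',y_2(x'))\big]\\
&= \big[f(x,y_1(x)) - f(x',y_1(x)) - f(x,y_2(x)) + f(x',y_2(x))\big]\\
&\quad + \big[f(x',y_1(x)) - f(x',y_1(x')) - f(x',y_2(x)) + f(x',y_2(x'))\big].
\end{align*}
For the first bracket, I would use the ``mixed'' H\"older/modulus estimate: it is a first-variable increment of the function $z\mapsto f(z,y_1(x)) - f(z,y_2(x))$. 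Bound it in two ways — by $2|f|_{C^{\sigma+\eps}}|x-x'|^{\sigma+\eps}$ directly (using $C^{\sigma+\eps}$ in the first variable), and by $|f|_{C^{\sigma+\eps}}|x-x'|^\sigma\cdot 2\|y_1-y_2\|_\infty^\eps$ after further splitting, then combine them so as to extract exactly a factor $|x-x'|^\sigma$ times $\|y_1-y_2\|_\infty^\eps$. For the second bracket, write $y_i(x)-y_i(x') = \int_0^1 Dy_i(x'+t(x-x'))\cdot(x-x')\,dt$, and note the bracket is the difference of two second-variable increments of $f(x',\cdot)$ over the arguments $y_1(x'),y_1(x)$ and $y_2(x'),y_2(x)$; using that both arguments $y_i(x)$ lie within $O(|x-x'|)$ of $y_i(x')$ and that the two \emph{pairs} of arguments are within $\|y_1-y_2\|_\infty$ of each other, one again gets a bound of the form $|f|_{C^{\sigma+\eps}}\,|x-x'|^\sigma\,(\|y_1-y_2\|_{C^1_b}^\sigma + 1)\,\|y_1-y_2\|_\infty^\eps$ by interpolating between the crude bound (replace each increment by $2\|y_1-y_2\|_\infty^{\sigma+\eps}$) and the Lipschitz-type bound using $Dy_i$.

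\textbf{Step 3: collect terms.} Dividing by $|x-x'|^\sigma$, taking the supremum over $x\neq x'$, and adding the $\|g\|_\infty$ term from Step 1 yields the claimed inequality, with the constant $2$ and the factor $(2+\|y_1-y_2\|_{C^1_b}^\sigma)$ absorbing the various numerical constants from the two brackets. \textbf{The main obstacle} is organizing the interpolation in Step 2 cleanly: each bracket must be estimated by \emph{two} competing bounds (a ``crude'' one using only sup-norms of $f$ and differences, and a ``fine'' one using the $C^{\sigma+\eps}$ seminorm together with $Dy_i$), and then one takes the geometric-mean/minimum to land precisely on the exponents $|x-x'|^\sigma$ and $\|y_1-y_2\|_\infty^\eps$; getting the bookkeeping of which term carries $\|y_1-y_2\|_{C^1_b}^\sigma$ versus a constant right is the only delicate part, and it is entirely elementary once the splitting in Step 2 is fixed.
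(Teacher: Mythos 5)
Your strategy --- a pointwise decomposition of the double increment, followed by taking the better of a ``crude'' and a ``fine'' bound for each piece --- is a valid and more hands-on alternative to the paper's argument, which instead applies the norm-level interpolation $\|\varphi\|_{C^\sigma_b}\le 2\|\varphi\|_{C^{\sigma+\eps}_b}^{\sigma/(\sigma+\eps)}\|\varphi\|_\infty^{\eps/(\sigma+\eps)}$ to $\varphi=f(\cdot,y_1(\cdot))-f(\cdot,y_2(\cdot))$ and then estimates the two factors separately. Your treatment of the first bracket is fine: taking the minimum of the two available bounds gives $2|f|_{C^{\sigma+\eps}}|x-x'|^{\sigma}\|y_1-y_2\|_\infty^{\eps}$, as you say.

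There is, however, a genuine gap in your Step 2 for the second bracket, and it cannot be patched without changing the stated constant. Write $B=[f(x',y_1(x))-f(x',y_1(x'))]-[f(x',y_2(x))-f(x',y_2(x'))]$. Your ``crude'' bound comes from regrouping as $[f(x',y_1(x))-f(x',y_2(x))]-[f(x',y_1(x'))-f(x',y_2(x'))]$ and gives $2|f|_{C^{\sigma+\eps}}\|y_1-y_2\|_\infty^{\sigma+\eps}$; your ``Lipschitz-type'' bound keeps the original grouping and gives $|f|_{C^{\sigma+\eps}}\big(|y_1(x)-y_1(x')|^{\sigma+\eps}+|y_2(x)-y_2(x')|^{\sigma+\eps}\big)$, i.e. a factor $\|Dy_1\|_\infty^{\sigma+\eps}+\|Dy_2\|_\infty^{\sigma+\eps}$ times $|x-x'|^{\sigma+\eps}$. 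Interpolating therefore produces $\|Dy_1\|_\infty^{\sigma}+\|Dy_2\|_\infty^{\sigma}$, not the claimed $1+\|y_1-y_2\|_{C^1_b}^{\sigma}$. This is not a bookkeeping issue: with $f(x,y)=y^2$ and $y_2=y_1+c$ for a constant $c$, one computes $B=2c\,(y_1(x)-y_1(x'))$, which is controlled by $\|Dy_1\|_\infty$ and is completely insensitive to $\|D(y_1-y_2)\|_\infty=0$. Taking $f(x,y)=\sin y$, $y_1(x)=M\sin x$, $y_2=y_1+c$ gives a left-hand side of order $|c|M^{\sigma}$ while the right-hand side of the Lemma is $O(|c|^{\eps})$, so the inequality as stated fails once $M$ is large relative to $|c|^{(\eps-1)/\sigma}$.

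Interestingly, the paper's own proof has exactly the same gap: it writes $\phi(x)=f(x,y_1(x))$ and carries $\|\phi\|_{C^{\sigma+\eps}_b}$ through the computation, but silently identifies this composite norm with $\|f\|_{C^{\sigma+\eps}_b}$ in the last line, whereas in fact $\|\phi\|_{C^{\sigma+\eps}_b}\le\|f\|_{C^{\sigma+\eps}_b}(1+\|Dy_1\|_\infty)^{\sigma+\eps}$. The corrected conclusion should carry an extra factor of order $(1+\|Dy_1\|_\infty)^\sigma+(1+\|Dy_2\|_\infty)^\sigma$, and the hypothesis should include $y_1,y_2\in C^1_b$ (otherwise the left-hand side may well be infinite). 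This is harmless where the Lemma is actually used, since there $y_i=Du^i$ have uniformly bounded $C^1_b$-norms, but you should make the additional assumption explicit and record the correct constant rather than reproduce the slip.
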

\begin{proof}
Let $\phi(x) = f(x,y_1(x))$ and $h(x) = (0,y_2(x) - y_1(x))$. Then we have
\begin{align*}
   \|f(\cdot,y_1(\cdot)) - f(\cdot,y_2(\cdot))\|_{C^\sigma_b(\mR^d)}=  \|\phi(\cdot + h(\cdot)) - \phi(\cdot)\|_{C^{\sigma}_b(\mR^d)}.
\end{align*}
By an elementary calculation we get $\|\varphi\|_{C^\sigma_b(\mR^{n})}\leq 2\|\varphi\|_{C^{\sigma+\varepsilon}_b(\mR^{n})}^{\frac{\sigma}{\sigma+\varepsilon}} \|\varphi\|_{\infty}^{\frac{\varepsilon}{\sigma+\varepsilon}}$. Therefore,
\begin{align}\label{eq:eqn1}
    \|\phi(\cdot + h(\cdot)) - \phi(\cdot)\|_{C^{\sigma}_b(\mR^d)} \leq 2\|\phi(\cdot + h(\cdot)) - \phi(\cdot)\|_{C^{\sigma+\eps}_b(\mR^d)}^{\frac \sigma{\sigma + \eps}}\|\phi(\cdot + h(\cdot)) - \phi(\cdot)\|_{\infty}^{\frac {\eps}{\sigma + \eps}}.
\end{align}
We will estimate the terms separately. We have
\begin{align*}
    &\|\phi(\cdot + h(\cdot)) - \phi(\cdot)\|_{C^{\sigma+\eps}_b(\mR^d)} \leq \|\phi(\cdot + h(\cdot))\|_{C^{\sigma+\eps}_b(\mR^d)}  +\|\phi\|_{C^{\sigma+\eps}_b(\mR^d)} \leq \|\phi\|_{C^{\sigma+\eps}_b(\mR^d)}(1 + \|h\|_{C^1_b(\mR^d)}^{\sigma + \eps}) + \|\phi\|_{C^{\sigma+\eps}_b(\mR^d)},\\
    &\|\phi(\cdot + h(\cdot)) - \phi(\cdot)\|_{\infty} \leq \|h\|_{\infty}^{\sigma + \eps}\|\phi\|_{C^{\sigma+\eps}_b(\mR^d)}.
\end{align*}
Going back to \eqref{eq:eqn1} we obtain
\begin{align*}
    \|\phi(\cdot + h(\cdot)) - \phi(\cdot)\|_{C^{\sigma}_b(\mR^d)} &\leq 2\big(\|\phi\|_{C^{\sigma+\eps}_b(\mR^d)}(2 + \|h\|_{C^1_b(\mR^d)}^{\sigma + \eps})\big)^{\frac \sigma{\sigma + \eps}}(\|h\|_{\infty}^{\sigma + \eps}\|\phi\|_{C^{\sigma+\eps}_b(\mR^d)})^{\frac \eps{\sigma + \eps}} \\ &\leq 2\|\phi\|_{C^{\sigma+\eps}_b(\mR^d)}(2 + \|h\|_{C^1_b(\mR^d)}^{\sigma})\|h\|_{\infty}^{\eps},
\end{align*}
which ends the proof.
\end{proof}
\begin{proposition}\label{prop:Jdef}
Let $\sigma\in(0,\alow-1)$, assume that \ref{eq:H}, \ref{eq:H2}, \ref{eq:H3}, \ref{eq:K}, \ref{eq:F}, \hyperref[eq:F2]{(F2(1,$\sigma$))}, \ref{eq:G}, \hyperref[eq:G2]{(G2(1,$\sigma$))}, \ref{eq:M1}, \ref{eq:M12}, and \ref{eq:M2} hold, $\rho_0\in \mathfrak{C}^{-2}_b(\mR^d)$, and let $(u,m)$ be the unique solution of the MFG system \eqref{eq:MFG} with starting time $t_0$ and initial measure $m_0$, obtained in Theorem~\ref{th:MFGwp}. Then, the system
\begin{equation}\label{eq:dmusyst}
\begin{cases}-\partial_t z - \mL z + D_pH(x,Du)\cdot Dz = \langle\dm{F}(x,m(t)),\rho(t)\rangle\quad &{\rm in}\ (t_0,T)\times \mR^d,\\
\partial_t \rho - \mL^* \rho - \Div(\rho D_pH(x,Du)) - \Div(mD^2_{pp}H(x,Du)Dz) = 0\quad &{\rm in}\ (t_0,T)\times \mR^d,\\
z(T,x) = \langle \dm{G}(x,m(T)),\rho(T)\rangle,\quad \rho(t_0) = \rho_0.
\end{cases}
\end{equation}
has a unique classical solution $(z,\rho)$, such that $z\in C_b([t_0,T),C^{3+\sigma}_b(\mR^d))$ and $\rho\in C_b((t_0,T],C^{-2}_b(\mR^d))$. Furthermore, if we let $(z_y,\rho_y)$ be the solutions of \eqref{eq:dmusyst} with $\rho_0 = \delta_y$ and define $J\colon [0,T]\times \mR^d\times \mP\times \mR^d\to \mR$ as
\begin{align}\label{eq:Jdef}
    J(t_0,x,m_0,y) = z_y(t_0,x),
\end{align}
then $J(t_0,\cdot,m_0,\cdot)\in C^{3+\sigma}_b(\mR^d, C^{2+\sigma}_b(\mR^d))$, the derivatives up to the order $D^{3,2}_{x,y}$ are uniformly continuous with respect to $t_0$ and $m_0$, and there exists $C$ independent of $(t_0,m_0)$ such that
\begin{align}\label{eq:Jbounds}
    \|J(t_0,\cdot,m_0,\cdot)\|_{C^{3+\sigma}_b(\mR^d, C^{2+\sigma}_b(\mR^d))} \leq C.
\end{align}
Moreover, for any finite signed Borel measure $\mu$, we have $\langle J(t_0,x,m_0,\cdot),\mu\rangle = z(t_0,x)$, where $(z,\rho)$ is the solution of \eqref{eq:dmusyst} with $\rho_0  = \mu$.
\end{proposition}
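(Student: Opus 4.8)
\textbf{Proof proposal for Proposition~\ref{prop:Jdef}.}
The plan is to apply Theorem~\ref{th:linsyst} to obtain the solution $(z,\rho)$, verify all hypotheses, and then carefully analyze the dependence of $z_y(t_0,x)$ on the ambient variable $y$. First I would note that \eqref{eq:dmusyst} is exactly \eqref{eq:linsyst} with $V = D_pH(\cdot,Du)$, $\Gamma = D^2_{pp}H(\cdot,Du)$, and $b = c = z_T = 0$. Assumptions \ref{eq:H}, \ref{eq:H2}, \ref{eq:H3} together with the bounds on $\|u\|_{C^{3+\sigma}_b}$ from Theorem~\ref{th:MFGwp} ensure that $V\in (L^\infty([t_0,T],C^{2+\sigma}_b(\mR^d)))^d\cap (C([t_0,T],C^2_b(\mR^d)))^d$ (here I need $k=1$, so $C^{k+1+\sigma}=C^{2+\sigma}$, which is why the regularity $u\in C^{3+\sigma}_b$ is used in full), and \ref{eq:M2} gives \eqref{eq:Gamma} for $\Gamma$ with $c_\Gamma = c_1$. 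Since $\rho_0\in \mathfrak{C}^{-2}_b(\mR^d) = \mathfrak{C}^{-k-1}_b(\mR^d)$, all hypotheses of Theorem~\ref{th:linsyst} with $k=1$ are met (with $c\equiv 0\in (C([t_0,T],\mathfrak{C}^{-0}_b(\mR^d)))^d$), yielding existence, uniqueness, and the estimate $\sup_t\|z(t)\|_{C^{3+\sigma}_b(\mR^d)} + \sup_t\|\rho(t)\|_{C^{-2-\sigma}_b(\mR^d)}\leq C\|\rho_0\|_{C^{-2-\sigma}_b(\mR^d)}$. For $\rho_0 = \delta_y$ one has $\|\delta_y\|_{C^{-2-\sigma}_b(\mR^d)}\leq 1$ uniformly in $y$, so $\|z_y\|$ is bounded uniformly in $y$; this already gives the $C^{0}_b(\mR^d_y)$-bound on $J(t_0,\cdot,m_0,\cdot)$.

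The heart of the argument is the regularity of $y\mapsto z_y$. The key observation is that $y\mapsto \delta_y$ is itself smooth in $y$ as a map into negative-order H\"older spaces: $\partial_{y_i}\delta_y = -\partial_i\delta_y$ in $C^{-3-\sigma}_b(\mR^d)$, and $\partial^\alpha_y\delta_y = (-1)^{|\alpha|}\partial^\alpha\delta_y\in C^{-2-|\alpha|}_b(\mR^d)$, with all these derivatives in fact measure representable, i.e. in $\mathfrak{C}^{-2-|\alpha|}_b(\mR^d)$. By linearity of the system \eqref{eq:dmusyst} in the data $\rho_0$, the difference quotients $(z_{y+he_i}-z_y)/h$ solve \eqref{eq:dmusyst} with initial datum $(\delta_{y+he_i}-\delta_y)/h$, which converges in $C^{-3-\sigma}_b(\mR^d)$ to $-\partial_i\delta_y$; applying the linear-system estimates at one derivative less of regularity (i.e. running Theorem~\ref{th:linsyst} with $k$ replaced so that the datum sits in $C^{-k-1}_b$ with $k+1 = 3$, combined with Corollary~\ref{cor:l1cneg} to handle the loss) shows the difference quotients form a Cauchy family, so $D_{y_i}z_y$ exists and equals the solution with datum $-\partial_i\delta_y$. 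Iterating gives $D^2_{yy}z_y$ with datum $\partial^2_{yy}\delta_y\in \mathfrak{C}^{-3}_b(\mR^d)$; this is precisely where the choice $\rho_0\in\mathfrak{C}^{-2}_b$ and $k=1$ is tight, since two $y$-derivatives cost two orders. The bound $\|J(t_0,\cdot,m_0,\cdot)\|_{C^{3+\sigma}_b(\mR^d,C^{2+\sigma}_b(\mR^d))}\leq C$ then follows by combining: the $C^{3+\sigma}_b$-regularity in $x$ from Theorem~\ref{th:linsyst}/Lemma~\ref{lem:cd} applied uniformly to each $z_{\partial^\alpha_y\delta_y}$ for $|\alpha|\leq 2$, and Lemma~\ref{lem:dmf} to package the joint $(x,y)$-regularity. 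The uniform continuity in $(t_0,m_0)$ is obtained by linearizing once more — comparing solutions of \eqref{eq:dmusyst} for $(t_0^1,m_0^1)$ and $(t_0^2,m_0^2)$, which yields an inhomogeneous linear system whose data are controlled by $d_0(m_0^1,m_0^2)$ and $|t_0^1-t_0^2|$ via Theorem~\ref{th:Lip} and the stability-in-$t_0$ result (Lemma~\ref{lem:timestab}); here one must also invoke Lemma~\ref{lem:l1cneg} to ensure the source term $c$ in the linearized system lies in $L^1([t_0,T],C^{-k-\sigma+\eps}_b(\mR^d))$ as required.

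For the final claim, that $\langle J(t_0,x,m_0,\cdot),\mu\rangle = z(t_0,x)$ for the solution $(z,\rho)$ of \eqref{eq:dmusyst} with $\rho_0 = \mu$: the idea is again linearity plus an approximation/integration argument. Since $J(t_0,x,m_0,\cdot)\in C^{2+\sigma}_b(\mR^d)$ (hence in $C_0$ after the estimate, or at least bounded and continuous) and $\mu$ is a finite signed measure, the pairing $\langle J(t_0,x,m_0,\cdot),\mu\rangle = \int_{\mR^d} z_y(t_0,x)\,\mu(dy)$ is well-defined. One checks that $y\mapsto (z_y,\rho_y)$ is continuous into the solution space (indeed Lipschitz, by the uniform linear estimates and continuity of $y\mapsto\delta_y$ in $C^{-2-\sigma}_b$), so one may form $\widehat z(t,x) := \int z_y(t,x)\,\mu(dy)$ and $\widehat\rho(t) := \int \rho_y(t)\,\mu(dy)$ as Bochner integrals; by linearity of the equations and of the very-weak formulation \eqref{eq:distcm2}, together with Fubini (justified because all integrands are jointly measurable and dominated, using the uniform bounds), the pair $(\widehat z,\widehat\rho)$ solves \eqref{eq:dmusyst} with $\rho_0 = \int\delta_y\,\mu(dy) = \mu$. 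By the uniqueness part of Theorem~\ref{th:linsyst}, $(\widehat z,\widehat\rho) = (z,\rho)$, so evaluating at $(t_0,x)$ gives $\int z_y(t_0,x)\,\mu(dy) = z(t_0,x)$, which is the assertion. The main obstacle throughout is the bookkeeping of regularity orders: each $y$-derivative of $\delta_y$ drops one H\"older order, so to get $D^2_{yy}J$ with the claimed $C^{2+\sigma}$ regularity in $y$ one is working at the very edge of what Theorem~\ref{th:linsyst} with $k=1$ allows, and one must lean on Corollary~\ref{cor:l1cneg} and Lemma~\ref{lem:l1cneg} (allowing norm blow-up at $t_0$ while retaining $L^1$-in-time integrability) rather than on the cruder uniform-in-time estimates.
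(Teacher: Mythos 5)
Your outline matches the paper's three-part structure — (i) apply Theorem~\ref{th:linsyst} with $k=1$ and use linearity of the system in $\rho_0$ to get $y$-derivatives of $J$, (ii) linearize again to compare solutions for different $(t_0,m_0)$ and invoke Theorem~\ref{th:Lip}, Lemma~\ref{lem:timestab}, Corollary~\ref{cor:l1cneg} and Lemma~\ref{lem:l1cneg} for the source term, (iii) Fubini-type identification of $\int z_y\,\mu(dy)$ with the solution for $\rho_0=\mu$. That much is correct. However, your index bookkeeping in step (i) is wrong in a way that would break the argument if taken literally. You claim the difference quotient $(\delta_{y+he_i}-\delta_y)/h$ converges to the derivative of $\delta_y$ only in $C^{-3-\sigma}_b(\mR^d)$, that $\partial^2_{yy}\delta_y\in\mathfrak{C}^{-3}_b(\mR^d)$, and that one must therefore re-run Theorem~\ref{th:linsyst} with $k+1=3$ (i.e. $k=2$) and lean on Corollary~\ref{cor:l1cneg} ``to handle the loss.'' But Theorem~\ref{th:linsyst} with $k=2$ requires \hyperref[eq:F2]{(F2(2,$\sigma$))} and \hyperref[eq:G2]{(G2(2,$\sigma$))}, which are not assumed; the whole point of the proposition's hypotheses is that $k=1$ suffices.

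The correct accounting, and the one the paper uses, is that no change of $k$ is needed: $\delta_y$, $\partial_i\delta_y$, and $\partial^2_{ij}\delta_y$ all belong to $\mathfrak{C}^{-2}_b(\mR^d)$ (e.g. $\langle\partial^2_{ij}\delta_y,\phi\rangle=\partial^2_{ij}\phi(y)=\int\partial^2_{ij}\phi\,\delta_y(dx)$ is a measure representation of order $2$), and for $\phi\in C^{2+\sigma}_b$ one has $|\frac{\phi(y+he_i)-\phi(y)}{h}-\partial_i\phi(y)|\leq C|h|\,\|\phi\|_{C^{2+\sigma}_b}$, so the difference quotients converge to $\partial_i\delta_y$ in $C^{-2-\sigma}_b(\mR^d)$, which is exactly the norm controlling $M$ in \eqref{eq:linestimates} when $b=c=z_T=0$. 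Consequently Theorem~\ref{th:linsyst} with $k=1$ applies directly to every datum that arises in step (i), Corollary~\ref{cor:l1cneg} plays no role there (it is only needed in step (ii) to control the coefficient $c_1=(D_pH(x,Du^1)-D_pH(x,Du^2))\rho^2$), and the $C^{2+\sigma}$-regularity in $y$ comes from applying \eqref{eq:linestimates} to the datum $(\partial^2_{ij}\delta_y-\partial^2_{ij}\delta_{y'})/|y-y'|^\sigma\in C^{-2}_b(\mR^d)$, whose $C^{-2-\sigma}_b$-norm is bounded uniformly in $y,y'$. Your ``two $y$-derivatives cost two orders'' intuition is right, but it lands you in $\mathfrak{C}^{-2}_b$, not $\mathfrak{C}^{-3}_b$; under your reading the proposition would need stronger hypotheses than it has.
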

\begin{proof}
  1) ({\em Existence of the derivatives}) Existence and uniqueness of $J$ and its regularity in $x$ follows from Theorem~\ref{th:linsyst} and the regularity for the MFG system given in Theorem~\ref{th:MFGwp}. Let $e_i$ be a  unit coordinate vector and let $h\neq 0$. Since the system \eqref{eq:dmusyst} is linear, we get 
  \begin{align*}
      \frac{J(t_0,x,m_0,y+he_i) - J(t_0,x,m_0,y)}{h} = w_h(t_0,x),
    \end{align*} where $w_h$ is the solution to the first equation of \eqref{eq:dmusyst} corresponding to the initial condition $\frac 1h(\delta_{y+he_i} - \delta_y)$. Furthermore, $\frac 1h(\delta_{y+he_i} - \delta_y)$ converges to $\partial_i\delta_y$ in $C^{-2-\sigma}_b(\mR^d)$. Therefore if we let $w$ be the solution corresponding to $\rho_0 = \partial_i\delta_y$, by using estimate \eqref{eq:linestimates} (note that the constant $M$ depends only on $\rho_0$ in our case) we obtain that
    \begin{align*}
        &\bigg\|\frac{J(t_0,\cdot,m_0,y+he_i) - J(t_0,\cdot,m_0,y)}{h} - w(t_0,\cdot)\bigg\|_{C^{3+\sigma}_b(\mR^d)}\\ &= \|w_h(t_0,\cdot) - w(t_0,\cdot)\|_{C^{3+\sigma}_b(\mR^d)} \leq C\bigg\|\frac 1h(\delta_{y+he_i} - \delta_y) - \partial_i\delta_y\bigg\|_{C^{-2-\sigma}_b(\mR^d)} \mathop{\longrightarrow}^{h\to 0}\  0.
    \end{align*}
    In particular, $\partial_{y_i} J(t_0,x,m_0,y) = w(t_0,x)$. We can do a similar procedure to get the second derivative in $y$ for $J$. By considering the system \eqref{eq:dmusyst} with $\rho_0 = \frac{\partial^2_{ij}\delta_{y} - \partial^2_{ij}\delta_{y'}}{|y-y'|^{\sigma}}\in C^{-2}_b(\mR^d)$ and using estimates \eqref{eq:linestimates} we find that
    \begin{align*}
        \bigg\|\frac{\partial^2_{y_iy_j} J(t_0,\cdot,m_0,y) - \partial^2_{y_iy_j} J(t_0,\cdot,m_0,y')}{|y - y'|^{\sigma}}\bigg\|_{C^{3+\sigma}_b(\mR^d)} \leq C\bigg\|\frac{\partial^2_{ij}\delta_{y} - \partial^2_{ij}\delta_{y'}}{|y-y'|^{\sigma}}\bigg\|_{C^{-2-\sigma}_b(\mR^d)}. 
    \end{align*}
    The last expression is bounded with respect to $y$ and $y'$, because for every $\phi\in C^{2+\sigma}_b(\mR^d)$ we have
    \begin{align*}
       |\langle\partial^2_{ij}\delta_{y} - \partial^2_{ij}\delta_{y'},\phi\rangle| \leq |D^2\phi(y) - D^2\phi(y')|\leq \|\phi\|_{C^{2+\sigma}_b(\mR^d)}|y-y'|^{\sigma}.
    \end{align*}
    This proves \eqref{eq:Jbounds}.\medskip
    
    2) ({\em Regularity in $m_0$ and $t_0$})\ To show the continuity in $m_0$, we first consider $m_0^1,m_0^2\in \mP$, and we let $(u^1,m^1)$, $(u^2,m^2)$ be the corresponding solutions to the MFG system \eqref{eq:MFG} and $(z^1,\rho^1)$, $(z^2,\rho^2)$ the corresponding solutions to the linear system \eqref{eq:dmusyst} with $\rho_0=\partial^2_{ij}\delta_y$ (or $\delta_y$, $\partial_i \delta_y$,  for $J$ and $\partial_{i} J$ respectively). Then $\frac{\partial^2_{ij}}{\partial y^2}J(t_0,x,m_0^1,y) - \frac{\partial^2_{ij}}{\partial y^2}J(t_0,x,m_0^2,y) = z^1(t_0,y) - z^2(t_0,y)$ and the difference $(z,\rho) := (z^1 - z^2, \rho^1 - \rho^2)$ satisfies the linear system \eqref{eq:linsyst} with the following coefficients:
    \begin{equation}
    \begin{split}\label{eq:coefficients}
        \rho_0 &= 0,\quad V(t,x) = D_p H(x,Du^1),\quad \Gamma(t,x) = D^2_{pp}H(x,Du^1), \\
        b(t,x) &= (D_pH(x,Du^1) - D_pH(x,Du^2))Dz^2 + \bigg\langle\dm{F}(x,m^1(t)) - \dm{F}(x,m^2(t)),\rho^2(t)\bigg\rangle = b_1 + b_2,\\
        c &= (D_pH(x,Du^1) - D_pH(x,Du^2))\rho^2 + (m^1 D^2_{pp}H(x,Du^1) - m^2D^2_{pp}H(x,Du^2))Dz^2 =: c_1 + c_2,\\
        z_T(x) &= \bigg\langle\dm{G}(x,m^1(T)) - \dm{G}(x,m^2(T)),\rho^2(T)\bigg\rangle.
    \end{split}
    \end{equation}
    In order to estimate the terms we repeatedly use the results of Theorems~\ref{th:MFGwp}, \ref{th:Lip}, and \ref{th:linsyst}, which are independent of $m_0$. By using in addition \hyperref[eq:F2]{(F2(1,$\sigma$))} and \hyperref[eq:G2]{(G2(1,$\sigma$))} we get
    \begin{align*}
        \|z_T\|_{C^{3+\sigma}_b(\mR^d)} + \sup\limits_{t\in[t_0,T]}\|b_2(t,\cdot)\|_{C^{2+\sigma}_b(\mR^d)} \leq C\sup\limits_{t\in[t_0,T]}\|\rho_2(t)\|_{C^{-2-\sigma}_b(\mR^d)} d_0(m_0^1,m_0^2) \leq C_1d_0(m_0^1,m_0^2).
    \end{align*}
By Lemma~\ref{lem:Holdereps},\footnote{Note that a rough estimate using the fundamental theorem of calculus on $D_pH$ would require $4+\sigma$ derivatives from $H$. It would however yield a stronger result with $d_0(m_0^1,m_0^2)$ on the right-hand side.}
    \begin{align*}
        \sup\limits_{t\in[t_0,T]} \|b_1(t,\cdot)\|_{C^{2+\sigma}_b(\mR^d)}&\leq C\sup\limits_{t\in[t_0,T]} \|D_pH(x,Du^1) - D_pH(x,Du^2)\|_{C^{2+\sigma}_b(\mR^d)}\|z^2\|_{C^{3+\sigma}_b(\mR^d)}\\
        &\leq \widetilde{C} \sup\limits_{t\in[t_0,T]}\|Du^1 - Du^2\|_{\infty}^\eps \leq C_2 d_0^\eps(m_0^1,m_0^2).
    \end{align*}
    For $c_2$ we write
    \begin{align*}
        \sup\limits_{t\in [t_0,T]}\|c_2(t)\|_{C^{-1}_b(\mR^d)} &\leq  \sup\limits_{t\in [t_0,T]}\|z^2(t)\|_{C^2_b(\mR^d)}\big(\|m^1 (D^2_{pp}H(x,Du^1) - D^2_{pp}H(x,Du^2))\|_{C^{-0}_b(\mR^d)}\\
        &\qquad\qquad\qquad+\|(m^1-m^2) D^2_{pp}H(x,Du^2)\|_{C^{-1}_b(\mR^d)}\big)\\
        &\leq C\sup\limits_{t\in [t_0,T]}\|z^2(t)\|_{C^2_b(\mR^d)}\big(\|D^2_{pp}H(x,Du^1) - D^2_{pp}H(x,Du^2)\|_{\infty} + d_0(m^1(t),m^2(t))\big)\\
        &\leq C_3 d_0(m_0^1,m_0^2).
    \end{align*}
   In order to estimate $c_1$, we use Corollary~\ref{cor:l1cneg}
   with $\gamma=1+\sigma$ and $k=1$:
    \begin{align*}
        \|c_1(t)\|_{C^{\sigma-1}_b(\mR^d)} &\leq \|\rho^2(t)\|_{C^{\sigma-1}_b(\mR^d)}\sup\limits_{t\in [t_0,T]}\|(D_pH(x,Du^1) - D_pH(x,Du^2))\|_{C^{1-\sigma}_b(\mR^d)}\\ &\leq Ct^{-\frac{1+\sigma}{\alow}}\sup\limits_{t\in [t_0,T]}\|u^1 - u^2\|_{C^2_b(\mR^d)}\leq \widetilde{C} t^{-\frac{1+\sigma}{\alow}} d_0(m_0^1,m_0^2).
    \end{align*}
    It follows that
    \begin{align*}
        \|c_1\|_{L^1(C^{\sigma-1}_b(\mR^d))} \leq C_4 d_0(m_0^1,m_0^2).
    \end{align*}
    Furthermore, by Corollary~\ref{cor:l1cneg} we have $\rho^2(t)\in \mathfrak{C}^{-2}_b(\mR^d)$ for $t\in[t_0,T]$, and by Theorem~\ref{th:linsyst}, $\rho^2\in C([t_0,T],C^{-3}_b(\mR^d))$. Note that constants $C_1,C_2,C_3,C_4$ above are independent of $m_0$.
    Therefore, we are in a position to use Theorem~\ref{th:linsyst}, which gives
    \begin{align*}
        \sup\limits_{[t_0,T]}\|z(t,\cdot)\|_{C^{3+\sigma}_b(\mR^d)} \leq Cd_0^\eps(m_0^1,m_0^2),
    \end{align*}
    with $C$ independent of $m_0^1,m_0^2$. This proves the uniform continuity with respect to $m_0$ of derivatives up to order $D^{3,2}_{x,y}$ of $J$.
    
    For continuity in $t_0$ we let $0\leq t_0^1<t_0^2\leq T$ and we consider $(u^1,m^1)$ and $(u^2,m^2)$ which solve the MFG system \eqref{eq:MFG} on the interval $[t_0^1,T]$, $[t_0^2,T]$ respectively, both with initial measure $m_0$. We let $(z^1,\rho^1)$, $(z^2,\rho^2)$ the corresponding solutions to the linear system \eqref{eq:dmusyst} with $\rho_0=\delta_y$ (or $\partial_i \delta_y$, $\partial^2_{ij}\delta_y$ to handle the derivatives in $y$). Then
    \begin{align*}
    J(t_0^1,x,m_0,y) - J(t_0^2,x,m_0,y) = z^1(t_0^1,x) - z^2(t_0^2,x) = z^1(t_0^1,x) - z^1(t_0^2,x) + z^1(t_0^2,x) - z^2(t_0^2,x).
    \end{align*}
    The term $z^1(t_0^1,x) - z^1(t_0^2,x)$ converges to 0 (along with $x$ derivatives up to order $3$) uniformly in $x,y,m_0$ because of the regularity of the solutions to \eqref{eq:dmusyst}. For the remaining term we first note that $(z^1,\rho^1)$ satisfies the system \eqref{eq:dmusyst} on $[t_0^2,T]$ with $\rho_0 = \rho^1(t_0^2)$. Therefore $(z,\rho) := (z^1 - z^2,\rho^1 - \rho^2)$ satisfies \eqref{eq:linsyst} on $[t_0^2,T]$ with coefficients same as in \eqref{eq:coefficients} excluding $\rho_0$, which here is equal to $\rho^1(t_0^2) - \delta_y$.
    The rest of the arguments is similar as in the case of continuity in $m_0$, but here we also use the $t_0$ stability of the MFG system given in Lemma~\ref{lem:timestab}.

    3) ({\em Pairing with measures}) Let $\mu\in \mathcal{M}(\mR^d)$. We let $z(t,x) = \int_{\mR^d} z_y(t,x)\,\mu(dy)$ and $\rho(t) = \int_{\mR^d} \rho_y(t)\,\mu(dy)$, with the latter integral understood in the distributional sense, i.e., $\langle \rho(t),\phi\rangle = \int_{\mR^d} \langle \rho_y(t),\phi\rangle\, \mu(dy)$ for $\phi\in C^{2+\sigma}_b(\mR^d)$. By uniform bounds for derivatives of $z_y$ and the fact that $\mu$ is a finite measure we can use the dominated convergence theorem to see that $z$ satisfies the first equation in \eqref{eq:dmusyst} -- note that we get $\langle\dm{F}(x,m(t)),\rho(t)\rangle$ on the right-hand side immediately from the definition of $\rho$. The derivatives of $\rho_y$ in the second equation also become derivatives of $\rho$ after the integration. Furthermore, by uniform bounds on $Dz_y$ and Fubini's theorem we find that for every $\phi\in C([t_0,T],C^{2+\sigma}_b(\mR^d))$,
    \begin{align*}
        \int_{\mR^d}\int_{t_0}^t\int_{\mR^d} D_{pp}^2 H(x,Du)Dz_y(s,x)D\phi(s,x)\, m(s,dx)\, ds\, \mu(dy) = \int_{t_0}^t\int_{\mR^d} D_{pp}^2 H(x,Du)Dz(s,x)D\phi(s,x)\, m(s,dx)\, ds.
    \end{align*}
    Therefore $\rho$ satisfies the second equation. It is also easy to see that the integrated terminal conditions for $z_y$ become $z(T,x) = \langle \dm{G}(x,m(T)),\rho(T)\rangle$ and $\rho(t_0) = \int_{\mR^d} \delta_y \, \mu(dy) = \mu$ in the distributional sense.
\end{proof}
\begin{proposition}\label{prop:dmU}
Let $U$ be the function defined in Definition~\ref{def:Udef} and let $J$ be the function obtained in Proposition~\ref{prop:Jdef}. Then there exist $C,\eps>0$ such that for all $m_0^1,m_0^2\in \mP$ we have
\begin{align*}
    \|U(t_0,\cdot,m_0^2) - U(t_0,\cdot,m_0^1) - \langle J(t_0,\cdot,m_0^1,\cdot),m_0^2-m_0^1\rangle_y\|_{C^{3+\sigma}_b(\mR^d)} \leq Cd_0^{1+\eps}(m_0^1,m_0^2).
\end{align*}
In particular, $U$ is $C^1$ in the sense of Definition~\ref{def:UC1} and
\begin{equation*}
    \dm{U}(t_0,x,m_0,y) = J(t_0,x,m_0,y).
\end{equation*}
\end{proposition}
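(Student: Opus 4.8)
The plan is to establish the first-order Taylor expansion of $U$ around $m_0^1$ with the candidate derivative $J$, and then deduce that $J=\dm{U}$ directly from Definition~\ref{def:UC1}. The main device is the same linearization-by-subtraction trick used throughout Section~\ref{sec:further}: writing $(u^i,m^i)$ for the MFG solutions with initial data $m_0^i$ and $(z,\rho)$ for the solution of \eqref{eq:dmusyst} with $\rho_0 = m_0^2 - m_0^1$, we have by Proposition~\ref{prop:Jdef} (the pairing-with-measures part, applied to the signed measure $\mu = m_0^2 - m_0^1$) that $z(t_0,x) = \langle J(t_0,x,m_0^1,\cdot), m_0^2 - m_0^1\rangle_y$. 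So the quantity to control is
\begin{align*}
    w := u^2 - u^1 - z,
\end{align*}
together with the companion $\theta := (m^2 - m^1) - \rho$, and the goal is the estimate $\sup_{t\in[t_0,T]}\|w(t,\cdot)\|_{C^{3+\sigma}_b(\mR^d)} \le C d_0^{1+\eps}(m_0^1,m_0^2)$, evaluated at $t=t_0$.

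First I would write down the system satisfied by $(w,\theta)$. Subtracting the Hamilton--Jacobi equations for $u^1,u^2$ and the one for $z$, and likewise the Fokker--Planck equations for $m^1,m^2$ and the one for $\rho$, one finds that $(w,\theta)$ solves a linear system of the form \eqref{eq:linsyst} with $V = D_pH(x,Du^1)$, $\Gamma = D^2_{pp}H(x,Du^1)$, $\rho_0 = 0$, $z_T$ coming from the second-order remainder of $\dm{G}$ along the segment between $m^1(T)$ and $m^2(T)$ plus a term measuring $\dm{G}(x,m^1(T)) - \dm{G}(x,m^2(T))$ tested against $\rho(T)$, and $b,c$ built out of: (i) the second-order Taylor remainder of $p\mapsto H(x,p)$ between $Du^1$ and $Du^2$, which is quadratic in $Du^2 - Du^1$; (ii) the remainder of $\dm{F}$ of the type $\langle \dm{F}(x,m^1) - \dm{F}(x,\lambda m^1 + (1-\lambda)m^2),\,m^2-m^1\rangle$; (iii) cross terms of the form $(D_pH(x,Du^1) - D_pH(x,Du^2))\rho$ and $(m^1 - m^2)\Gamma Dz$ and $(D^2_{pp}H(x,Du^1) - D^2_{pp}H(x,Du^2))m^1 Dz$; plus the analogue of (ii) for $G$ at time $T$. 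The organizing principle is exactly as in the proof of Proposition~\ref{prop:Jdef} step 2): every coefficient should be estimable by $C d_0^{1+\eps}(m_0^1,m_0^2)$ (or with an integrable-in-time singularity $t^{-(1+\sigma)/\alow}$ for the $c_1$-type term, handled via Corollary~\ref{cor:l1cneg}), after which Theorem~\ref{th:linsyst} yields the claimed $C^{3+\sigma}_b$ bound on $w$.

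The key estimates, in the order I would carry them out, are: (1) Theorem~\ref{th:Lip} gives $\sup_t (d_0(m^1(t),m^2(t)) + \|u^1(t)-u^2(t)\|_{C^{3+\sigma}_b}) \le C d_0(m_0^1,m_0^2)$, and Proposition~\ref{prop:Jdef} / Theorem~\ref{th:linsyst} give $\sup_t\|\rho(t)\|_{C^{-2-\sigma}_b} + \sup_t\|z(t)\|_{C^{3+\sigma}_b} \le C d_0(m_0^1,m_0^2)$ (here $\rho_0 = m_0^2-m_0^1$ with $\|m_0^2-m_0^1\|_{C^{-2}_b} \le \|m_0^2-m_0^1\|_{TV}$... but in fact one only needs $d_0$, since by Lemma~\ref{lem:dmfacts}-type pairing the relevant norm of $z_y$ controls things — more precisely we bound $z(t_0,x)$ by $d_0$ using the $C^{3+\sigma}_b(\mR^d,C^{2+\sigma}_b(\mR^d))$-bound on $J$, and for $\rho$ we bound by $d_0$ directly as the initial datum appears through the structure estimate). (2) The ``genuine remainder'' terms in $b,c,z_T$ built from second differences of $\dm{F},\dm{G}$ along segments are controlled by $d_0^{1}$ times $\sup_t\|\rho(t)\|_{C^{-2-\sigma}_b} \le C d_0$, hence give $d_0^2$; this uses the Lipschitz estimates in \hyperref[eq:F2]{(F2(1,$\sigma$))}, \hyperref[eq:G2]{(G2(1,$\sigma$))}. (3) The Hamiltonian remainder term: the $H$-term is quadratic, so of size $\|Du^1-Du^2\|^2 \lesssim d_0^2$, but one needs $C^{2+\sigma}_b$ control, and here as in step 2) of Proposition~\ref{prop:Jdef} I would invoke Lemma~\ref{lem:Holdereps} to trade a full derivative of $H$ for a small power $\eps$, producing $d_0^{1+\eps}$ rather than $d_0^2$ — this is where the exponent $1+\eps$ (rather than $2$) comes from. (4) The cross terms: $(D_pH(x,Du^1)-D_pH(x,Du^2))Dz$ is $\lesssim \|u^1-u^2\|_{\cdot}\|z\|_{\cdot} \lesssim d_0\cdot d_0 = d_0^2$ (again with a possible $\eps$-loss via Lemma~\ref{lem:Holdereps} in the H\"older norm); the term $c_1 = (D_pH(x,Du^1)-D_pH(x,Du^2))\rho$ is handled exactly as in Proposition~\ref{prop:Jdef} via Corollary~\ref{cor:l1cneg} giving $\|c_1(t)\|_{C^{\sigma-1}_b}\le C t^{-(1+\sigma)/\alow} d_0$, integrable in time; the remaining cross terms $(m^1 - m^2)\Gamma Dz$ and $(D^2_{pp}H(x,Du^1)-D^2_{pp}H(x,Du^2))m^1 Dz$ are $\lesssim d_0(m^1,m^2)\cdot\|z\|_\cdot \lesssim d_0^2$. (5) Feed all of this into Theorem~\ref{th:linsyst} with $M \le C d_0^{1+\eps}(m_0^1,m_0^2)$ to conclude $\sup_t\|w(t)\|_{C^{3+\sigma}_b}\le C d_0^{1+\eps}$, and evaluate at $t_0$.

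Finally, to extract $\dm{U} = J$: taking $m_0^2 = m_0 + h(m' - m_0)$ for $h\in(0,1)$, we get $d_0(m_0^1,m_0^2) = h\, d_0(m_0,m')$ along $m_0^1 = m_0$, so the Taylor estimate reads
\begin{align*}
    \Big\| U(t_0,\cdot,m_0 + h(m'-m_0)) - U(t_0,\cdot,m_0) - h\,\langle J(t_0,\cdot,m_0,\cdot), m'-m_0\rangle_y \Big\|_{C^{3+\sigma}_b(\mR^d)} \le C h^{1+\eps} d_0^{1+\eps}(m_0,m'),
\end{align*}
using that $J$ is linear in its last slot when paired against the measure $m_0^2 - m_0^1 = h(m'-m_0)$. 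Dividing by $h$ and sending $h\to 0^+$ gives exactly the limit in Definition~\ref{def:UC1} with the value $\int_{\mR^d} J(t_0,x,m_0,y)\,(m'-m_0)(dy)$, uniformly in $x$ (in fact in $C^{3+\sigma}_b$). Continuity of $J = \dm{U}$ in $(m_0,y)$ and boundedness of $J(t_0,m_0,\cdot)$ are already part of Proposition~\ref{prop:Jdef}, so all the requirements of Definition~\ref{def:UC1} are met and $\dm{U}(t_0,x,m_0,y) = J(t_0,x,m_0,y)$. I expect the main obstacle to be the bookkeeping in step (3)--(4): making sure every H\"older norm of every remainder coefficient really is controlled by $d_0^{1+\eps}$ (using Lemma~\ref{lem:Holdereps} exactly where a naive fundamental-theorem-of-calculus argument would cost one too many derivatives of $H$), and that the single $c_1$-type term with the $t^{-(1+\sigma)/\alow}$ blow-up is routed through the $L^1([t_0,T],C^{-k-\sigma+\eps}_b)$ slot of Theorem~\ref{th:linsyst} via Corollary~\ref{cor:l1cneg} rather than the $B([t_0,T],\cdot)$ slot.
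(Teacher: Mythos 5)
Your approach is essentially the paper's: linearize by subtracting the MFG systems and the auxiliary system \eqref{eq:dmusyst}, recognize the difference as a solution of the generic linear system \eqref{eq:linsyst}, bound every coefficient by $Cd_0^{1+\eps}(m_0^1,m_0^2)$, feed this into Theorem~\ref{th:linsyst}, and extract $\dm{U}=J$ by specializing to $m_0^2=m_0+h(m'-m_0)$ and dividing by $h$. The key lemma you cite for the $\eps$-loss, Lemma~\ref{lem:Holdereps}, is exactly the right tool and is used in the same place (the $C^{2+\sigma}_b$ control of the Hamiltonian second-order remainder on the right-hand side of the $z$-equation). Your final-step argument ($d_0(m_0^1,m_0^2)=h\,d_0(m_0,m')$, divide by $h$, send $h\to 0^+$) is correct and in fact more explicit than the paper's.

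However, your inventory of coefficients is off, and one specific claim is wrong: you assert that a term $c_1=(D_pH(x,Du^1)-D_pH(x,Du^2))\rho$ appears and must be routed through Corollary~\ref{cor:l1cneg} because of a $t^{-(1+\sigma)/\alow}$ singularity. This term does \emph{not} appear in the linearized system for $(u^2-u^1-w,\;m^2-m^1-\mu)$ where $(w,\mu)$ solves \eqref{eq:dmusyst} under $(u^1,m^1)$ with $\mu(t_0)=m_0^2-m_0^1$. You have imported it from the proof of Proposition~\ref{prop:Jdef}, step 2), where one compares \emph{two} solutions of \eqref{eq:dmusyst} built on different base MFG solutions and the singular initial datum $\partial^2_{ij}\delta_y$ forces the weighted $L^1$-in-time estimate of Corollary~\ref{cor:l1cneg}. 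Here, by contrast, if you actually carry out the algebra in the Fokker--Planck row you find
\begin{align*}
c \;=\; \underbrace{(m^2-m^1)\,D^2_{pp}H(\cdot,Du^1)(Du^2-Du^1)}_{c_1} \;+\; \underbrace{m^2\!\int_0^1\!\big(D^2_{pp}H(\cdot,\lambda Du^2+(1\!-\!\lambda)Du^1)-D^2_{pp}H(\cdot,Du^1)\big)(Du^2-Du^1)\,d\lambda}_{c_2},
\end{align*}
both products of two first-order differences. The first is estimated in $C^{-1}_b(\mR^d)$ by $d_0(m^1,m^2)\cdot\|Du^2-Du^1\|_{C^1_b}\lesssim d_0^2$, the second in $C^{-0}_b(\mR^d)$ by $\|Du^2-Du^1\|_{\infty}^2\lesssim d_0^2$ via $D^3_{ppp}H$; there is no time singularity, and Corollary~\ref{cor:l1cneg} is not invoked at all in the paper's proof of this proposition. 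Your other cross-terms ($(D^2_{pp}H(\cdot,Du^1)-D^2_{pp}H(\cdot,Du^2))m^1Dz$, etc.) are likewise not the right ones, though they are morally of the same quadratic order. The moral is that your ``main obstacle'' prediction is right but in the opposite direction: the correct bookkeeping is \emph{simpler} than you anticipated, not harder — no weighted-in-time estimates are needed, and the only place you lose an $\eps$ is the $b_2$-term handled by Lemma~\ref{lem:Holdereps}.
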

\begin{remark}
As before, by having only $d_0^{1+\eps}$ instead of $d_0^2$ we may weaken the assumption on $H$ to have only 4 derivatives instead of $4+\sigma$, see the term $b_2$ in the proof.
\end{remark}
\begin{proof}
  Let $(u^1,m^1)$ and $(u^2,m^2)$ be the solutions of the MFG system with initial measures $m_0^1,m_0^2$ respectively. We have $U(t_0,x,m_0^1) = u^1(t_0,x)$ and $U(t_0,x,m_0^2) = u^2(t_0,x)$. Note that by the last part of Proposition~\ref{prop:Jdef} we have that $\langle J(t_0,\cdot,m_0^1,\cdot),m_0^2-m_0^1\rangle_y = w(t,y)$, where $(w,\mu)$ solves the system \eqref{eq:dmusyst} with $(u,m) = (u^1,m^1)$, under initial condition $\mu(t_0)= m_0^2 - m_0^1$. 
  
  By subtracting the respective equations and using the fundamental theorem of calculus we find that the pair $(z,\rho):= (u^2 - u^1 - w,m^2 - m^1 - \mu)$ solves the system \eqref{eq:linsyst} with coefficients
  \begin{align*}
     &V(t,x) = D_pH(x,Du),\quad \Gamma(t,x) = D_{pp}^2H(x,Du),\quad \rho_0 = 0,\\
     &c = (m^2 - m^1)D_{pp}^2H(\cdot,Du)(Du^2 - Du^1)\\
     &\quad\quad\quad+ m^2\int_0^1 \big(D_{pp}^2H(\cdot,\lambda Du^2 + (1-\lambda) Du^1) - D_{pp}^2H(\cdot,Du^1)\big)(Du^2 - Du^1)\, d\lambda =: c_1 + c_2,\\
     &b(t,x) = \int_0^1\int_{\mR^d} \bigg(\dm{F}(x,\lambda m^2(t) + (1-\lambda)m^1(t),y) - \dm{F}(x,m^1(t),y)\bigg)(m^2(t,dy) - m^1(t,dy))\, d \lambda\\
     &\quad\quad\quad - \int_0^1 (D_pH(x,\lambda Du^2 + (1-\lambda)Du^1) - D_pH(x,Du^1))(Du^2 - Du^1)\, d\lambda   =: b_1(t,x) + b_2(t,x),\\
     &z_T(x) = \int_0^1\int_{\mR^d} \bigg(\dm{G}(x,\lambda m^2(T) + (1-\lambda)m^1(T),y) - \dm{G}(x,m^1(T),y)\bigg)(m^2(T,dy) - m^1(T,dy))\, d \lambda.
  \end{align*}
  Therefore, by Theorem~\ref{th:linsyst} it suffices to show that the appropriate norms of $b,c,z_T$ are bounded by $Cd_0^{1+\eps}(m_0^1,m_0^2)$. By Theorems~\ref{th:MFGwp} and \ref{th:Lip}, \ref{eq:H}, \hyperref[eq:F2]{(F2(1,$\sigma$))}, and \hyperref[eq:G2]{(G2(1,$\sigma$))} we find that 
  \begin{align*}
      \|c_1(t)\|_{C^{-1-\sigma}_b(\mR^d)} &\leq \|c_1(t)\|_{C^{-1}_b(\mR^d)} \leq c\sup\limits_{t\in[t_0,T]} \big(d_0(m^1(t),m^2(t)) \cdot \|Du^2(t) - Du^1(t)\|_{C^1_b(\mR^d)}\big) \leq Cd_0^2(m_0^1,m_0^2),\\
       \|c_2(t)\|_{C^{-1-\sigma}_b(\mR^d)} &\leq\|c_2(t)\|_{C^{-0}_b(\mR^d)} \leq \bigg\|\int_0^1 \big(D_{pp}^2H(\cdot,\lambda Du^2 + (1-\lambda) Du^1) - D_{pp}^2H(\cdot,Du^1)\big)(Du^2 - Du^1)\, d\lambda\bigg\|_{\infty}\\
       &\leq \bigg\|\int_0^1\int_0^1 \lambda D_{ppp}^3H(\cdot,s(\lambda Du^2 + (1-\lambda) Du^1) + (1-s)Du^1)|Du^2 - Du^1|^2\, d\lambda\, ds\bigg\|_{\infty}\\
       &\leq c\sup\limits_{t\in[t_0,T]} \|Du^2(t) - Du^1(t)\|_{\infty}^2 \leq Cd_0^2(m_0^1,m_0^2),\\
       \|b_1(t)\|_{C^{2+\sigma}_b(\mR^d)} &\leq \sup\limits_{\substack{t\in[t_0,T]\\ \lambda\in [0,1]}}\bigg\|\dm{F}(x,\lambda m^2(t) + (1-\lambda)m^1(t),y) - \dm{F}(x,m^1(t),y)\bigg\|_{C^{2+\sigma}_b(\mR^d,C^{1}_b(\mR^d))}d_0(m^1(t),m^2(t))\\
       &\leq c d_0^2(m_0^1,m_0^2),\\
       \|b_2(t)\|_{C^{2+\sigma}_b(\mR^d)} &\leq \|Du^2(t,\cdot) - Du^1(t,\cdot)\|_{C^{2+\sigma}_b(\mR^d)} \sup\limits_{\lambda\in [0,1]} \|(D_pH(x,\lambda Du^2 + (1-\lambda)Du^1) - D_pH(x,Du^1))\|_{C^{2+\sigma}_b(\mR^d)}\\
       &\stackrel{(\ast)}{\leq} c \|Du^2(t,\cdot) - Du^1(t,\cdot)\|_{C^{2+\sigma}_b(\mR^d)}^{1+\eps}\\
       &\leq Cd_0^{1+\eps}(m_0^1,m_0^2),\\
       \|z_T\|_{C^{3+\sigma}_b(\mR^d)} &\leq \sup\limits_{\lambda\in [0,1]}\bigg\|\dm{G}(x,\lambda m^2(T) + (1-\lambda)m^1(T),y) - \dm{G}(x,m^1(T),y)\bigg\|_{C^{3+\sigma}_b(\mR^d,C^{1}_b(\mR^d))}d_0(m^1(T),m^2(T))\\
       &\leq Cd_0^2(m_0^1,m_0^2),
  \end{align*}
  where in $(\ast)$ we used Lemma~\ref{lem:Holdereps}. We finish the proof by applying Theorem~\ref{th:linsyst}.
\end{proof}


\section{The master equation}\label{sec:master}
We finally prove well-posedness for the master equation. This is the main result of our paper. It extends previous results to 
nonlocal (and mixed local-nonlocal) diffusions  and   
from compact domains to the whole space. 
We use the scheme of \cite{MR3967062}: having obtained all the regularity needed for $\dm{U}$, we compute the time derivative of $U$ to get existence of solutions. Then we reconstruct the solutions of the MFG system from the master equation in order to get uniqueness.

\begin{definition}
We say that $V\colon [0,T]\times \mR^d \times \mP \to \mR$ is a classical solution of the master equation \eqref{eq:master}, if $\partial_t V, D_x V$,  $D^2_x V$ exist, and are bounded and uniformly continuous in all variables, 
$\dm{V}(t,\cdot,m,\cdot)\in C^{2,2}_{b,x,y}(\mR^{2d})$ with all the derivatives bounded and uniformly continuous in all variables,  
and \eqref{eq:master} is satisfied pointwise.
\end{definition}

\begin{theorem}\label{th:ME}
Let $\sigma\in (0,\alow-1)$ and assume that \ref{eq:H}, \ref{eq:H2}, \ref{eq:H3}, \ref{eq:K}, \ref{eq:F}, \hyperref[eq:F2]{(F2(1,$\sigma$))}, \ref{eq:G}, \hyperref[eq:G2]{(G2(1,$\sigma$))}, and \ref{eq:M1}, \ref{eq:M12}, \ref{eq:M2} hold.
Then $U$ defined in \eqref{eq:Udef} is the unique classical solution of the master equation \eqref{eq:master}.
\end{theorem}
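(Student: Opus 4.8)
The plan is to prove existence and uniqueness separately, using the full force of Section~\ref{sec:further}. For \emph{existence}, I would start from the identity $U(t_0,x,m_0)=u(t_0,x)$ of Definition~\ref{def:Udef} and exploit the dynamic-programming structure of \eqref{eq:MFG}: if $(u,m)$ solves \eqref{eq:MFG} on $[t_0,T]$ with $m(t_0)=m_0$, then for every $s\in[t_0,T]$ the restriction $(u,m)|_{[s,T]}$ solves \eqref{eq:MFG} with initial measure $m(s)$, hence $U(s,x,m(s))=u(s,x)$. I would then compute the right-hand difference quotient $h^{-1}\big(U(t_0+h,x,m_0)-U(t_0,x,m_0)\big)$ by inserting $\pm U(t_0+h,x,m(t_0+h))$. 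The piece $h^{-1}\big(U(t_0+h,x,m(t_0+h))-U(t_0,x,m_0)\big)=h^{-1}\big(u(t_0+h,x)-u(t_0,x)\big)$ converges to $\partial_t u(t_0,x)$ by the $C^1$-in-time regularity from Theorem~\ref{th:HJ}. The piece $h^{-1}\big(U(t_0+h,x,m(t_0+h))-U(t_0+h,x,m_0)\big)$ is handled by Lemma~\ref{lem:fund} and Proposition~\ref{prop:dmU}: it equals $\int_0^1\!\!\int_{\mR^d} \dm{U}(t_0+h,x,\lambda m(t_0+h)+(1-\lambda)m_0,y)\,(m(t_0+h)-m_0)(dy)\,d\lambda$, and as $h\to 0^+$ one passes to the limit using the uniform continuity of $\dm{U}$ in $(t_0,m_0)$ from Proposition~\ref{prop:Jdef} together with the fact that, by the distributional Fokker--Planck equation, $h^{-1}\langle\phi,m(t_0+h)-m_0\rangle\to\langle \mL\phi-D\phi\cdot D_pH(\cdot,D_xU(t_0,\cdot,m_0)),m_0\rangle$ for $\phi\in C^2_b(\mR^d)$. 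Substituting $\partial_t u(t_0,x)=-\mL_xU+H(x,D_xU)-F(x,m_0)$ from the Hamilton--Jacobi equation then produces exactly the right-hand side of \eqref{eq:master}.

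The identical computation from the left yields the matching left derivative, and since the resulting expression is bounded and uniformly continuous in $(t_0,x,m_0)$ — all of $\mL_xU$, $D_xU$ are controlled by Theorems~\ref{th:MFGwp}, \ref{th:HJ}, \ref{th:Lip}, while $\mL_y\dm{U}$ and $D_y\dm{U}$ are controlled by \eqref{eq:Jbounds} and the continuity statement of Proposition~\ref{prop:Jdef} — it follows that $\partial_tU$ genuinely exists, is continuous, and $U$ solves \eqref{eq:master} in the classical sense. The terminal condition $U(T,x,m)=G(x,m)$ is immediate since at $t_0=T$ one has $u(T,\cdot)=G(\cdot,m(T))=G(\cdot,m)$. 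The membership $\dm{U}(t,\cdot,m,\cdot)\in C^{2,2}_{b,x,y}(\mR^{2d})$ with bounded, uniformly continuous derivatives is precisely \eqref{eq:Jbounds} plus Proposition~\ref{prop:Jdef} combined with Proposition~\ref{prop:dmU}, and $D_xU$, $D^2_xU$ inherit boundedness and uniform continuity from $u$ via Theorems~\ref{th:MFGwp}, \ref{th:HJ} (in $x,t_0$) and Theorem~\ref{th:Lip} (in $m_0$).

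For \emph{uniqueness}, let $V$ be a classical solution of \eqref{eq:master}. Fix $(t_0,m_0)$ and solve the nonlinear Fokker--Planck equation $\partial_t m-\mL^*m-\Div\big(m\,D_pH(\cdot,D_xV(t,\cdot,m(t)))\big)=0$, $m(t_0)=m_0$, in $C([t_0,T],\mP)$; existence follows from a fixed-point argument on $C([t_0,T],\mP)$ built on Theorem~\ref{th:FP} and the Lipschitz dependence of $m\mapsto D_xV(t,x,m)$ in $d_0$, which holds by Lemma~\ref{lem:dmfacts} since $\dm{V}$ and $D_x\dm{V}$ are bounded. Set $u(t,x):=V(t,x,m(t))$ and differentiate in $t$: splitting the difference quotient into a time increment, which yields $\partial_tV$, and a measure increment, which by Lemma~\ref{lem:fund} and the distributional Fokker--Planck equation tested against the admissible test function $\dm{V}(t,x,m(t),\cdot)\in C^{2}_b(\mR^d)$ yields $\int_{\mR^d}\big[\mL_y\dm{V}-D_y\dm{V}\cdot D_pH(y,D_yV(t,y,m(t)))\big]m(t,dy)$. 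Plugging in \eqref{eq:master}, the $\mL_y\dm{V}$ and $D_y\dm{V}$ terms cancel and one obtains $-\partial_t u-\mL_x u+H(x,D_xu)=F(x,m(t))$ with $u(T,\cdot)=V(T,\cdot,m(T))=G(\cdot,m(T))$. Since $m$ solves the Fokker--Planck equation with drift $D_pH(\cdot,D_xu)$, the pair $(u,m)$ solves \eqref{eq:MFG} with data $(t_0,m_0)$; by the uniqueness in Theorem~\ref{th:MFGwp} (applicable since \ref{eq:M2} implies \ref{eq:M3}) this forces $u(t_0,x)=U(t_0,x,m_0)$, and because $u(t_0,x)=V(t_0,x,m_0)$ we conclude $V=U$.

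The main obstacle lies not in any single estimate but in making the two chain-rule computations rigorous: one must justify that $s\mapsto U(s,x,m(s))$ (resp.\ $t\mapsto V(t,x,m(t))$) is differentiable with the claimed derivative, passing to the limit in the measure increment using only the uniform continuity of $\dm{U}$ (resp.\ $\dm{V}$) and the $C^{1/2}$-in-time regularity of the flow $m$ from Theorem~\ref{th:FP}, and in particular upgrading the mere continuity of $\partial^\alpha_{x,y}\dm{U}$ in $t_0$ (Proposition~\ref{prop:Jdef}) to genuine differentiability of $U$ in $t_0$ through the equation itself, i.e.\ checking that the one-sided derivatives coincide and are continuous. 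A secondary technical point in the uniqueness part is the solvability of the decoupled nonlinear Fokker--Planck flow attached to $V$, which I would obtain by the fixed-point argument sketched above.
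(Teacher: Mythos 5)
Your proposal is correct and follows essentially the same strategy as the paper's proof: for existence, the split of the difference quotient by inserting $\pm U(t_0+h,x,m(t_0+h))$, the use of Lemma~\ref{lem:fund} and the distributional Fokker--Planck equation tested against $\dm{U}$, and the limit passage via the regularity of Propositions~\ref{prop:Jdef} and \ref{prop:dmU}; for uniqueness, solving the nonlinear Fokker--Planck flow attached to $V$ by a compactness fixed-point argument, setting $v(t,x)=V(t,x,\wtm(t))$, and invoking uniqueness of the MFG system via Theorem~\ref{th:MFGwp}. The one small point where the paper takes a cleaner route is the existence part's conclusion: rather than computing the left derivative as well (which is \emph{not} an ``identical computation'' — the flow $m$ would have to be restarted from an earlier time $t_0-h$, requiring the $t_0$-stability of Lemma~\ref{lem:timestab} to compare $m_h(t_0)$ with $m_0$), the paper computes only the right derivative $\partial_t^+U$, shows it is continuous, and invokes the standard fact that a continuous one-sided derivative implies $C^1$; this sidesteps the extra stability estimate and the asymmetry you gloss over.
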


\begin{remark}
Most of the work in this paper, including new ideas and techniques, was done in the previous sections where we established sufficient regularity of the function $U$. This involved proving new or refined results for mild solutions,  for individual equations and linearized systems,  in a range of function spaces including negative H\"older spaces. Our assumptions on the  nonlocal  operator $\mL$ are quite optimal for the subcritical regime.
This setting is made possible by applying
 the arguments of \cite{2021arXiv210406985C} to get a theory
without moment assumptions 
 -- the first in the context of master equations.
 Considerable effort and original techniques have  been put into making the many approximation and compactness arguments rigorous. 
This includes introducing and working with measure representable functionals and the use of $L^1$ compactness theory for negative H\"older spaces. The classical solution setting for the master equation is known to impose excess regularity assumptions on the data. We were able to reduce these by obtaining more optimal estimates through repeated use of interpolation
as well as by adapting weighted in time regularity results of \cite{MR4420941} to our setting.

 \end{remark}

%

\begin{proof}
1)\quad We start with the existence part. Fix $t_0>0$, $m_0\in \mP$, and $x\in \mR^d$, and let $h>0$.\footnote{We only compute the right derivative and prove that it is continuous. Then, by, e.g., \cite[Corollary 2.1.2]{MR710486} $U$ is continuously differentiable.} Then
\begin{align}
    \frac{U(t_0+h,x,m_0) - U(t_0,x,m_0)}{h} = &\frac{U(t_0+h,x,m(t_0+h)) - U(t_0,x,m_0)}{h}\label{eq:dt1}\\ &- \frac{U(t_0+h,x,m(t_0+h)) - U(t_0+h,x,m_0)}{h}.\label{eq:dt2}
\end{align}
Obviously the left-hand side of \eqref{eq:dt1} converges to $\partial_t^+ U(t_0,x,m_0)$ as $h\to 0^+$, provided that the limit exists. We will show that both \eqref{eq:dt2} and the right-hand side of \eqref{eq:dt1} converge and sum up to the right-hand side of the master equation. By the uniqueness for the MFG system \eqref{eq:MFG} and the definition of $U$ we have $U(t_0+h,x,m(t_0+h)) = u(t_0+h,x)$. By this and the equation for $u$ in \eqref{eq:MFG},
\begin{align*}
    \frac{U(t_0+h,x,m(t_0+h)) - U(t_0,x,m_0)}{h} \mathop{\longrightarrow}\limits_{h\to 0^+} & -\mL_x u(t_0,x) + H(x,D_xu(t_0,x)) - F(x,m_0)\\
    &=-\mL_x U(t_0,x,m_0) + H(x,D_xU(t_0,x,m_0)) - F(x,m_0).
\end{align*}
In order to handle \eqref{eq:dt2} we use the fundamental theorem of calculus \eqref{eq:fund}:
\begin{align*}
    &\frac{U(t_0+h,x,m(t_0+h)) - U(t_0+h,x,m_0)}{h}\\ = &\frac 1h\int_0^1\int_{\mR^d} \dm{U}(t_0+h,x,\lambda m(t_0+h)+(1-\lambda)m_0,y)\, (m(t_0+h)-m_0)(dy)\, d\lambda.
\end{align*}
By Propositions~\ref{prop:Jdef} and 
\ref{prop:dmU} the function $\phi(t,y)= \dm{U}(t_0+h,x,sm(t_0+h)+(1-s)m_0,y)$ (note that $\phi$ is constant in $t$) is twice differentiable in $y$ with the derivatives bounded. Therefore it can be passed as a test function (see \eqref{eq:distFP}) in the equation for $m$ in \eqref{eq:MFG}, yielding
\begin{align}
    &\frac 1h\int_0^1\int_{\mR^d} \dm{U}(t_0+h,x,\lambda m(t_0+h)+(1-\lambda)m_0,y)\, (m(t_0+h)-m_0)(dy)\, d\lambda\nonumber\\
    = &\frac 1h\int_0^1\int_{t_0}^{t_0+h}\int_{\mR^d} \bigg( \mL_y\dm{U}(t_0+h,x,\lambda m(t_0+h) + (1-\lambda)m_0,y)\nonumber\\ &\hspace{80pt}- D_y\dm{U}(t_0+h,x,\lambda m(t_0+h) + (1-\lambda)m_0,y)\cdot D_pH(y,Du(t,y))\bigg)\,m(t,dy)\, dt\,  d\lambda.\label{eq:weakFP}
\end{align}
We will show that, as $h\to 0^+$, the last expression converges to
\begin{align}\label{eq:weakFPlim}
    \int_{\mR^d} \bigg( \mL_y\dm{U}(t_0,x,m_0,y)\ - D_y\dm{U}(t_0,x,m_0,y)\cdot D_pH(y,Du(t_0,y))\bigg)\,m(t_0,dy).
\end{align}
We discuss the limiting procedure only for the term $\mL_y \dm{U}$, the other works similarly. We first split:
\begin{align*}
    &\frac 1h\int_0^1\int_{t_0}^{t_0+h}\int_{\mR^d}  \mL_y\dm{U}(t_0+h,x,\lambda m(t_0+h) + (1-\lambda)m_0,y)\,m(t,dy)\, dt\,  d\lambda\\
    =&\frac 1h\int_0^1\int_{t_0}^{t_0+h}\int_{\mR^d}  \bigg(\mL_y\dm{U}(t_0+h,x,\lambda m(t_0+h) + (1-\lambda)m_0,y) - \mL_y\dm{U}(t_0,x,m_0,y)\bigg)\,m(t,dy)\, dt\,  d\lambda\\
    +&\frac 1h\int_{t_0}^{t_0+h}\int_{\mR^d}  \mL_y\dm{U}(t_0,x,m_0,y)\,m(t,dy)\, dt = I_1(h) + I_2(h).
\end{align*}
By Propositions~\ref{prop:Jdef} and \ref{prop:dmU} and the fact that $m\in C([0,T],\mP)$, we get that $\mL_y\dm{U}$ is uniformly continuous in $t$ and $m$, hence the integrand in $I_1$ converges to 0 uniformly in $y$. Therefore $I_1(h)\to 0$ as $h\to 0^+$.

In $I_2$ we do another splitting:
\begin{align*}
    I_2(h) = \frac 1h\int_{t_0}^{t_0+h}\int_{\mR^d}  \mL_y\dm{U}(t_0,x,m_0,y)(m(t,dy) - m_0(dy))\, dt  + \int_{\mR^d}  \mL_y\dm{U}(t_0,x,m_0,y)\, m_0(dy).
\end{align*}
By Theorem~\ref{th:MFGwp} and Remark~\ref{rem:d0weak} we get that $m(t)$ converges to $m_0$ weakly as $t\to t_0^+$. Therefore, since $\mL_y\dm{U}\in C_b(\mR^d)$ we get that there is a modulus of continuity $\omega$ such that
\begin{align*}
    \bigg|\frac 1h\int_{t_0}^{t_0+h}\int_{\mR^d}  \mL_y\dm{U}(t_0,x,m_0,y)(m(t,dy) - m_0(dy))\, dt\bigg|\leq \frac 1h\int_{t_0}^{t_0+h} \omega(h) \mathop{\longrightarrow}\limits^{h\to 0^+} 0.
\end{align*}
It follows that
\begin{align*}
    I_2(h) \mathop{\longrightarrow}\limits^{h\to 0^+} \int_{\mR^d}  \mL_y\dm{U}(t_0,x,m_0,y)\, m_0(dy).
\end{align*}
By the above arguments and a similar procedure for the second term in \eqref{eq:weakFP} we find that \eqref{eq:weakFP} converges to \eqref{eq:weakFPlim}, which shows that $U$ satisfies the master equation.
2)\quad Now we proceed to the uniqueness. Let $V$ be a classical solution to the master equation. We will show that $V=U$. Fix $t_0\in [0,T)$ and $m_0\in \mP$ and consider the following Fokker--Planck equation:
\begin{align}\label{eq:FP-ME}
    \begin{cases} 
    \partial_t \wtm - \mL^* \wtm - \Div(\wtm D_pH(x,D_xV(t,x,\wtm))) = 0,\quad {\rm in}\ [t_0,T]\times \mR^d,\\
    \wtm(t_0) = m_0.
    \end{cases}
\end{align}
We will show that this equation has a distributional solution in class $C([t_0,T],\mP)$ by using the Schauder--Tychonoff fixed point theorem.  Let $\psi$ be a tightness measuring function for $m_0$ and $\nu\textbf{1}_{B(0,1)^c}$ from Lemma~\ref{lem:tight} and define
$$X = \bigg\{m\in C([t_0,T],\mP): m(0) =m_0,\, \sup\limits_{t\in[t_0,T]} \int_{\mR^d} \psi(x)\, m(t,dx) \leq C_1,\, \sup\limits_{t_0\leq s,t\leq T}\frac{d_0(m(t),m(s))}{|t-s|^{ 1/2}} \leq C_2\bigg\},$$
where the constants $C_1,C_2$ are independent of $m$ and will be specified later. We define the map $T$ on $X$ as follows: $T(m) = \mu$ if $\mu\in C([t_0,T],\mP)$ is a distributional solution of the problem
\begin{align}\label{eq:FPSchauder}
    \begin{cases}
    \partial_t \mu - \mL^* \mu - \Div(\mu D_pH(x,D_xV(t,x,m))) = 0\quad {\rm in}\ [t_0,T]\times \mR^d,\\
    \mu(t_0) = m_0.
    \end{cases}
\end{align}
\begin{enumerate}[label = \arabic*.]
    \item \textit{$T$ is well-defined.} This follows from Theorem~\ref{th:FP} with 
    \begin{align}\label{eq:bdef}b(t,x) = D_pH(x,D_xV(t,x,m(t))).\end{align} We have $b\in (C([t_0,T],C^1_b(\mR^d)))^d$ because of \ref{eq:H}, the definition of the classical solution thanks to which $V,D_xV,$ $D^2_{xx}V$ are bounded and uniformly continuous in all variables, and the definition of $X$ which gives the continuity of $m$ with respect to $t$.
    \item $T\colon X\to X$. We already have $\mu\in C([t_0,T],\mP)$ and $\mu(t_0) = m_0$. Tightness and H\"older continuity also follow from Theorem~\ref{th:FP}. Indeed, it states that
    \begin{align}&d_0(\mu(t),\mu(s)) \leq c_0(1+\|b\|_{\infty})|t-s|^{\frac 12},\quad {\rm and}\label{eq:Holdermu}\\
    &\int_{\mR^d} \psi(x)\, \mu(t,dx)\nonumber\\ &\leq \int_{\mR^d} \psi(x)\, m_0(dx) + \|D\psi\|_{\infty}\bigg(2 + cT\big(\|A\|_2 + \|B\|_{\infty} + \|b\|_{\infty} + \int_{|z|<1} |z|^2\,\nu(dz)\big)\bigg) + T\int_{|z|\geq 1} \psi(z)\, \nu(dz),\label{eq:tightmu}\end{align}
    for some constants $c_0,c$ independent of $m$. Furthermore, $\|b\|_{\infty}\leq C$, for some $C>0$ independent of $m$, therefore by setting
    $$C_1 = \int_{\mR^d} \psi(x)\, m_0(dx) + \|D\psi\|_{\infty}\bigg(2 + cT\big(\|A\|_2 + \|B\|_{\infty}+ C + \int_{|z|<1} |z|^2\,\nu(dz)\big)\bigg) + T\int_{|z|\geq 1} \psi(z)\, \nu(dz),$$
    $$C_2 = c_0(1+C),$$ 
    we get that $\mu\in X$.
    \item \textit{$X$ is convex and compact in $C([t_0,T],\mP)$.} Convexity is straightforward. For compactness we use a metric version of the Arzel\`a--Ascoli theorem, see Kelley \cite[Chapter~7 \S 17]{MR0370454} with $C = C([t_0,T],\mP)$ and $F=X$. By the H\"older condition \eqref{eq:Holdermu} we get equicontinuity, whereas by tightness (\eqref{eq:tightmu} and Lemma~\ref{lem:tight}) we obtain pointwise relative compactness. Therefore, by the Arzel\`a--Ascoli theorem $X$ is relatively compact in $C([t_0,T],\mP)$. To see that $X$ is compact, it suffices to show that it is closed, see Remark~\ref{rem:d0weak} (b). To this end, take a sequence $(m_n)\subset X$ converging to $m$ in $C([t_0,T],\mP)$. The H\"older bound is preserved, because of the uniform bound for $m_n$. The tightness estimate \eqref{eq:tightmu} is obtained from the fact that for every $t\in[t_0,T]$, $r>0$,
    $$\int_{B_r} \psi(x)\, m(t,dx) = \lim\limits_{n\to\infty} \int_{B_r} \psi(x)\, m_n(t,dx)\leq C_1.$$
    Thus $m\in X$, which proves that $X$ is compact.
    \item \textit{$T$ is continuous.} Assume that $(m_n)\subset X$ converges to $m$ in $C([0,T],\mP)$. Since $X$ is closed, $m\in X$. Let $\mu_n$ and $\mu$ be the corresponding solutions of \eqref{eq:FPSchauder}. Continuity of $T$ means that $\mu_n\to \mu$ in $C([0,T],\mP)$. In order to show this convergence we consider $\wtmu_n = \mu - \mu_n$. We have (in the distributional sense)
    \begin{align}\label{eq:FPmun}
    \begin{cases}
        \partial_t \wtmu_n - \mL^* \wtmu_n - \Div(\wtmu_n D_pH(x,D_xV(t,x,m(t)))) = \Div(\mu_n F_n(t,x)),\\
        \wtmu_n(t_0) = 0,
    \end{cases}
    \end{align}
    where
    $$F_n(t,x) = D_pH(x,D_xV(t,x,m_n(t))) - D_pH(x,D_xV(t,x,m(t))).$$
    By \ref{eq:H} and Lemma~\ref{lem:dmfacts}, 
    \begin{align}\label{eq:fnbound}\|F_n\|_{\infty} \leq C_H K_V
    \sup\limits_{t\in[t_0,T]}d_0(m(t),m_n(t))  \qquad \text{where}\qquad K_V=\sup_{t,x,m}\Big\|D_x\dm{V}(t,x,m,\cdot)\Big\|_{C^1_b(\mR^d)},\end{align}
    and $K_U\leq \|D_x\dm{V}\|_{\infty}+\|D_yD_x\dm{V}\|_{\infty}<\infty$ by Lemma \ref{lem:Schwarz}.
    The weak formulation of \eqref{eq:FPmun} reads
    \begin{align}\label{eq:weakfpme}
        \int_{\mR^d} \phi(t,x)\, \wtmu_{n}(t,dx) = \int_{t_0}^t\int_{\mR^d} (\partial_t \phi + \mL \phi - b\cdot D\phi)\, \wtmu_n(s,dx)\, ds + \int_{t_0}^t\int_{\mR^d} D\phi(s,x)F_n(s,x)\, \mu_n(s,dx)\, ds,
    \end{align}
    where $b$ is the same as in \eqref{eq:bdef} (in particular, $b\in (C([t_0,T],C^1_b(\mR^d)))^d$) and $\phi\in C^{1,2}_b([t_0,T]\times \mR^d)$ is arbitrary. Let $\phi_0 \in C^3_b(\mR^d)\cap \Lip$. By Lemma~\ref{lem:cd} there exists $\phi$ such that the integrand in the first term on the right-hand side of \eqref{eq:weakfpme} vanishes, $\phi(t) = \phi_0$, and $\|D\phi\|_{\infty} \leq \widetilde{C}$ with $\widetilde{C}$ independent of $\phi_0$, because $\|\phi_0\|_{C^1_b(\mR^d)} \leq 1$. By this and \eqref{eq:fnbound} we get
    \begin{align*}
        \bigg|\int_{\mR^d} \phi_0(x)\, \wtmu_{n}(t,dx)\bigg| &\leq \int_{t_0}^t\int_{\mR^d} |D\phi(s,x)F_n(s,x)|\, \mu_n(s,dx)\, ds\\ &\leq \|D\phi\|_{\infty}\|F_n\|_{\infty}(t-t_0)\leq C(T-t_0)\|D\phi\|_{\infty}\sup\limits_{t\in [t_0,T]} d_0(m_n(t),m(t)).
    \end{align*}
    By Lemma~\ref{lem:testapprox} it follows that \begin{align*}
        d_0(\mu_n(t),\mu(t)) = \sup\limits_{\phi\in \Lip}\bigg|\int_{\mR^d} \phi_0(x)\, \wtmu_{n}(t,dx)\bigg| \leq C(T-t_0)\sup\limits_{t\in [t_0,T]}d_0(m_n(t),m(t)),
    \end{align*}
    therefore $\mu_n$ converges to $\mu$ in $C([t_0,T],\mP)$, which proves the continuity of $T$.
\end{enumerate}
By the Schauder--Tychonoff theorem we conclude that the Fokker--Planck equation \eqref{eq:FP-ME} has a solution $\wtm$.

For $t\in[t_0,T]$ and $x\in\mR^d$, let $v(t,x) = V(t,x,\wtm(t))$. For $t\in (t_0,T)$ and $h$ small we have
\begin{align}\label{eq:vsplit}
    \frac{v(t+h,x) - v(t,x)}{h} = \frac{V(t+h,x,\wtm(t+h)) - V(t+h,x,\wtm(t))}{h} + \frac{V(t+h,x,\wtm(t)) - V(t,x,\wtm(t))}{h}.
\end{align}
The second term converges to $\partial_t V(t,x,\wtm(t))$ as $h\to 0$. For the first term we use the fundamental theorem of calculus from Lemma~\ref{lem:fund} and the distributional formulation of the Fokker--Planck equation for $\wtm$:
\begin{align*}
    &\frac{V(t+h,x,\wtm(t+h)) - V(t+h,x,\wtm(t))}{h}\\ &= \frac 1h\int_0^1\int_{\mR^d} \dm{V}(t+h,x,\lambda \wtm(t+h) + (1-\lambda)\wtm(t),y)(\wtm(t+h) - \wtm(t))(dy)\, d\lambda\\
    &=\frac 1h\int_0^1\int_{t}^{t+h}\int_{\mR^d} \bigg( \mL_y\dm{V}(t+h,x,\lambda \wtm(t+h) + (1-\lambda)\wtm(t),y)\nonumber\\ &\hspace{50pt}- D_y\dm{V}(t+h,x,\lambda \wtm(t+h) + (1-\lambda)\wtm(t),y)\cdot D_pH(y,D_xv(s,y))\bigg)\,m(s,dy)\, dt\,  d\lambda.
\end{align*}
By the definition of the classical solution, $V$ and $v$ are regular enough so that we can repeat the argument for $U$ in the existence part to get
\begin{align*}
    &\lim\limits_{h\to 0}\frac{V(t+h,x,\wtm(t+h)) - V(t+h,x,\wtm(t))}{h}\\
    &=\int_{\mR^d}\bigg(\mL_y\dm{V}(t,x,\wtm,y) - D_y\dm{V}(t,x,\wtm,y)\cdot D_pH(x,D_xV(t,y,\wtm)))\bigg)\, \wtm(t,y)\, dy.
\end{align*}
By this, \eqref{eq:vsplit} and the master equation for $V$ we obtain
\begin{align*}
    \partial_t v(t,x) &= \partial_t V(t,x,\wtm) + \int_{\mR^d}\bigg(\mL_y\dm{V}(t,x,\wtm,y) - D_y\dm{V}(t,x,\wtm,y)\cdot D_pH(x,D_xV(t,y,\wtm)))\bigg)\, \wtm(t,y)\, dy\\
    &=-\mL V(t,x,\wtm) + H(x,D_xV(t,x,\wtm)) - F(x,\wtm)\\ &= -\mL v(t,x) + H(x,D_xv(t,x)) - F(x,\wtm).
\end{align*}
Therefore $v$ satisfies the Hamilton--Jacobi equation of \eqref{eq:MFG} and so $(v,\wtm)$ satisfies the MFG system with the starting time $t_0$ and initial measure $m_0$. By the uniqueness of solutions to \eqref{eq:MFG} we get that $V(t_0,x,m_0) = v(t_0,x) = U(t_0,x,m_0)$ for every $t_0\in [0,T)$, $x\in \mR^d$ and $m_0\in\mP$, which ends the proof of uniqueness.
\end{proof}
\section*{Acknowledgements} \ar{We thank Pierre Cardaliaguet for enlightening discussions and the anonymous referees for their helpful remarks.} E. R. Jakobsen received funding from the Research Council of Norway under Grant Agreement No. 325114 “IMod. Partial differential equations, statistics and data: An interdisciplinary approach to data-based modelling”. A. Rutkowski was supported by the grant 2019/35/N/ST1/04450 of the National Science Center (Poland). The research was carried out while A. Rutkowski was an ERCIM Alain Bensoussan fellow at NTNU. 
\appendix
\section{Proof of Lemma~\ref{lem:intrep}} \label{sec:c0cb}
\begin{proof} Let $C^{-n}_0(\mR^d) = (C^n_0(\mathbb{R}^d))^\ast$ and let $K = 1 + d + \ldots +d^n$. \ar{For $\mu\in \mathcal{M}(\mathbb{R}^d)^K$ we use the convention
\begin{align*}
    \mu = (\mu_{\displaystyle 0},\, \mu_{\displaystyle(1)},\ldots,\mu_{\displaystyle(d)},\,\mu_{\displaystyle(1,1)},\,\mu_{\displaystyle(1,2)},\ldots,\,\mu_{\displaystyle(d,d)},\,\mu_{\displaystyle(1,1,1)},\ldots,\,\mu_{\underbrace{(1,\ldots,1)}_n},\ldots,\,\mu_{\underbrace{(d,\ldots,d)}_n}),
\end{align*}
so for a multi-index $\alpha$ with $|\alpha|\leq n$, $\mu_\alpha\in \mathcal{M}(\mR^d)$ is the corresponding coordinate of $\mu$.}

\noindent 1)\quad 
We first make an auxiliary claim: for every $\rho \in C^{-n}_0(\mR^d)$ there exists $\mu \in \mathcal{M}(\mathbb{R}^d)^K$ such that $\|\rho\|_{C^{-n}_0(\mR^d)} = \|\mu\|_{\mathcal{M}(\mathbb{R}^d)^K}$ and
\begin{align}\label{repr}
\langle \rho,\phi\rangle = \sum\limits_{|\alpha|\leq n} \int_{\mathbb{R}^d} \partial^{\alpha}\phi\, d\mu_\alpha,\quad \phi \in C^n_0(\mathbb{R}^d).
\end{align}

In order to prove the claim, we first note that $C^n_0(\mathbb{R}^d)$ can be embedded isometrically into $C_0(\mathbb{R}^d)^K$; below we identify $C^n_0(\mathbb{R}^d)$ with that embedding. Thus, if $\rho\in C^{-n}_0(\mR^d)$, then by the Hahn--Banach theorem it can be extended to $\rho^*\in (C_0(\mathbb{R}^d)^K)^\ast$ with $\|\rho^*\|_{(C_0(\mathbb{R}^d)^K)^\ast} = \|\rho\|_{C^{-n}_0(\mR^d)}$. By the Riesz representation theorem for $C_0(\mathbb{R}^d)$ there exists $\mu\in \mathcal{M}(\mathbb{R}^d)^K$ such that $\|\rho^\ast\|_{(C_0(\mathbb{R}^d)^K)^*} = \|\mu\|_{TV} := \sum_{|\alpha|\leq n} \|\mu_\alpha\|_{TV}$ and 
\begin{align}
\langle \rho^\ast,\phi\rangle = \sum\limits_{|\alpha|\leq n} \int_{\mathbb{R}^d} \partial^{\alpha}\phi\, d\mu_\alpha,\quad \phi \in C_0(\mathbb{R}^d)^K.
\end{align}
Then \eqref{repr} holds and $\|\mu\|_{TV} = \|\rho^*\|_{(C_0(\mathbb{R}^d)^K)^\ast} = \|\rho\|_{C^{-n}_0(\mR^d)}$, which proves the claim.\medskip

\noindent 2)\quad We now prove the statement of the Lemma. Let $(\rho_k)\subset \mathfrak{C}^{-n}_b(\mR^d)$, $\rho\in C^{-n}_b(\mR^d)$, and assume that $\|\rho_k - \rho\|_{C^{-n}_b(\mR^d)} \to 0$ as $k\to\infty$. Since $\rho_k \in \mathfrak{C}^{-n}_b(\mR^d)$, there exists $\mu^k\in \mathcal{M}(\mathbb{R}^d)^K$ such that
\begin{align}\label{reprn}
\langle \rho_k,\phi\rangle = \sum\limits_{|\alpha|\leq n} \int_{\mathbb{R}^d} \partial^{\alpha}\phi\, d\mu^k_\alpha,\quad \phi \in C^n_b(\mathbb{R}^d).
\end{align}
By 1) there is $\tilde{\mu}^k$, such that $\|\tilde{\mu}^k\|_{TV} = \|\rho_k|_{C^n_0(\mathbb{R}^d)}\|_{C^{-n}_0(\mR^d)}$ and \begin{align}\label{reprn2}
\langle \rho_k,\phi\rangle = \sum\limits_{|\alpha|\leq n} \int_{\mathbb{R}^d} \partial^{\alpha}\phi\, d\tilde{\mu}^k_\alpha = \sum\limits_{|\alpha|\leq n} \int_{\mathbb{R}^d} \partial^{\alpha}\phi\, d\mu^k_\alpha,\quad \phi \in C^n_0(\mathbb{R}^d).
\end{align}
 We will show that the above equalities hold for all $\phi \in C^n_b(\mR^d)$.  For $R>1$, let $\tau_R$ be a smooth cut-off function, i.e., $0\leq\tau_R\leq 1$, $\tau_R = 1$ on $B_R$, $\tau_R = 0$ on $B_{R+1}^c$, and $\| \tau_R\|_{C^{n}_b(\mathbb{R}^d)} \leq M$, with $M$ independent of $R$. Then for every $\phi\in C^n_b(\mathbb{R}^d)$ and $|\alpha|\leq n$ we have $\partial^{\alpha}(\tau_R\phi) \to \partial^{\alpha}\phi$ pointwise as $R\to \infty$. Therefore, by the dominated convergence theorem,
\begin{align}
\lim\limits_{R\to\infty} \int_{\mathbb{R}^d} \partial^{\alpha}(\tau_R\phi)\, d\mu^k_\alpha = \int_{\mathbb{R}^d} \partial^{\alpha}\phi\, d\mu^k_\alpha.
\end{align}
Since $\tau_R\phi\in C^n_0(\mathbb{R}^d)$, this together with \eqref{reprn2} gives 
\begin{align}\label{reprn3}
\langle \rho_k,\phi\rangle = \sum\limits_{|\alpha|\leq n} \int_{\mathbb{R}^d} \partial^{\alpha}\phi\, d\tilde{\mu}^k_\alpha,\quad \phi \in C^n_b(\mathbb{R}^d).
\end{align}
Furthermore, we have
\begin{align*}
\|\tilde{\mu}^k\|_{TV} = \|\rho_k|_{C^n_0(\mathbb{R}^d)}\|_{C^{-n}_0(\mR^d)} \leq \|\rho_k\|_{C^{-n}_b(\mR^d)} \leq \|\tilde{\mu}^k\|_{TV},
\end{align*}
where the second inequality follows from taking the supremum over a larger set and the last inequality can be checked by hand using \eqref{reprn3}. Thus, $\|\rho_k\|_{C^{-n}_b(\mR^d)} = \|\tilde{\mu}^k\|_{TV}$. Therefore, since $\rho_k \to \rho$ in $C^{-n}_b(\mR^d)$ we find that $(\tilde{\mu}^k)$ is a Cauchy sequence in $\mathcal{M}(\mR^d)^K$ \ar{with the total variation norm}, so it converges to some $\mu\in \mathcal{M}(\mR^d)^K$. Then we have $\|\rho\|_{C^{-n}_b(\mR^d)} = \|\tilde{\mu}\|_{TV}$, and since norm convergence implies weak$^*$ convergence, \eqref{reprn3} yields
\begin{align*}
\langle \rho,\phi\rangle = \sum\limits_{|\alpha|\leq n} \int_{\mathbb{R}^d} \partial^{\alpha}\phi\, d\mu_\alpha,\quad \phi \in C^n_b(\mathbb{R}^d),
\end{align*}
which ends the proof.
\end{proof}
\section{Continuity in time}
In this whole section we assume \ref{eq:K}. We note that continuity in time was not verified in the Banach fixed point arguments in \cite{MR4309434}. The results of this section cover this issue as well.

We will often use the Duhamel formula on subintervals $[s,t]\subseteq [0,t]$. Here is the first version of this result.
\begin{lemma}\label{lem:Duhst}
Assume that $z_0\in L^\infty(\mR^d)$ and $g\in L^\infty([0,T]\times\mR^d)$. If we let
\begin{align}\label{eq:Duhamel1}
    z(t,x) = K_t\ast z_0(x) + \int_0^t\int_{\mR^d} K(t-\tau,x-y)g(\tau,y)\, dy\, d\tau,\quad t\in[0,T],\ x\in\mR^d,
\end{align}
then for every $s\in[0,t)$,
\begin{align}\label{eq:Duhamel2}
    z(t,x) = (K_{t-s}\ast z(s))(x) + \int_s^t \int_{\mR^d}K(t-\tau,x-y)g(\tau,y)\, dy\, d\tau,\quad x\in \mR^d.
\end{align}
\end{lemma}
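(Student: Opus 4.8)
\textbf{Proof plan for Lemma~\ref{lem:Duhst}.}
The statement is the semigroup/reproducing property of the Duhamel (mild) formula: if $z$ is given by the heat-kernel representation \eqref{eq:Duhamel1} on $[0,T]$, then restarting the formula at any intermediate time $s$ with the value $z(s,\cdot)$ as the new initial datum reproduces the same $z$ on $[s,T]$. The plan is to substitute \eqref{eq:Duhamel1} evaluated at $s$ into the right-hand side of \eqref{eq:Duhamel2} and simplify using the convolution semigroup property of $K$.

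First I would recall that the heat kernel $K$ satisfies the Chapman--Kolmogorov identity $K_{t-s}\ast K_{s-\tau} = K_{t-\tau}$ for $0\le\tau\le s\le t$; this follows from $K_t = \mathcal F^{-1}e^{-t\Psi}$ since $e^{-(t-s)\Psi}e^{-(s-\tau)\Psi} = e^{-(t-\tau)\Psi}$, together with $K(0)=\delta_0$ for the boundary cases. Also $K_{t-s}\ast(K_s\ast z_0) = (K_{t-s}\ast K_s)\ast z_0 = K_t\ast z_0$ by associativity of convolution, which is justified here because $z_0\in L^\infty$, $K_\tau\in L^1$ (in fact $K_\tau\in C_0^\infty$ by Remark~\ref{rem:K}(a) and \ref{eq:K}), so all convolutions are of an $L^\infty$ function with $L^1$ functions and Fubini--Tonelli applies; similarly $g\in L^\infty([0,T]\times\mR^d)$ and $\int_0^T\|K_\tau\|_{L^1}\,d\tau<\infty$ make the space-time integrals absolutely convergent.

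Then the computation: take \eqref{eq:Duhamel1} at time $s$, convolve with $K_{t-s}$, and use the two identities above to get
\begin{align*}
 K_{t-s}\ast z(s)(x) &= K_t\ast z_0(x) + \int_0^s\int_{\mR^d} K_{t-s}\ast K(s-\tau,\cdot-y)(x)\, g(\tau,y)\, dy\, d\tau\\
 &= K_t\ast z_0(x) + \int_0^s\int_{\mR^d} K(t-\tau,x-y)\, g(\tau,y)\, dy\, d\tau,
\end{align*}
the interchange of $K_{t-s}\ast$ with the $\tau,y$-integral being legitimate by Fubini. Adding $\int_s^t\int_{\mR^d}K(t-\tau,x-y)g(\tau,y)\,dy\,d\tau$ to both sides recombines the two time-integrals into $\int_0^t$, which is exactly $z(t,x)$ by \eqref{eq:Duhamel1}. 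This proves \eqref{eq:Duhamel2}.

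\textbf{Main obstacle.} There is no serious analytic difficulty here; the only point requiring a little care is the rigorous justification of the Chapman--Kolmogorov identity $K_{t-s}\ast K_{s-\tau}=K_{t-\tau}$ and of the various Fubini interchanges, especially near the endpoints $\tau\to s$ and $s\to t$ where $K$ degenerates to $\delta_0$. These are handled by noting $K_\tau$ is a probability density for $\tau>0$ with $\|K_\tau\|_{L^1}=1$, by the strong continuity $\|K_\tau\ast f-f\|_\infty\to0$ recorded in Subsection~\ref{sec:hk}, and by dominated convergence; I would state the semigroup identity as a known property of the Lévy heat kernel rather than reprove it.
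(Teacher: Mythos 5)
Your proof is correct and takes essentially the same route as the paper: substitute the Duhamel formula at time $s$ into $K_{t-s}\ast z(s)$, apply the Chapman--Kolmogorov identity for the L\'evy heat kernel together with Fubini--Tonelli to collapse $K_{t-s}\ast K_s\ast z_0$ to $K_t\ast z_0$ and $\int_0^s K_{t-s}\ast K_{s-\tau}$ to $\int_0^s K_{t-\tau}$, then add the $\int_s^t$ integral to recover \eqref{eq:Duhamel1}. The paper also reduces to $s\in(0,t)$ and cites the Chapman--Kolmogorov equations from the literature rather than reproving them, exactly as you suggest.
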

\noindent Note that \eqref{eq:Duhamel2} corresponds to solving the same equation on a shorter time interval.
\begin{proof}
  We can assume that $s\in (0,t)$. We use the Fubini--Tonelli theorem and the Chapman--Kolmogorov equations for $K_t$ (see, e.g., \cite[Theorem~3.1.5]{MR2072890}):
  \begin{align*}
   \int_{\mR^d} K(t-s,x-y) K(s,y-z)\, dy = K(t,x-z),\quad 0<s<t,\ x,z\in\mR^d.
  \end{align*}
  By \eqref{eq:Duhamel1},
  \begin{align*}
      (K_{t-s}\ast z(s))(x) &= K_{t-s}\ast K_s\ast z_0(x) + \int_{\mR^d}K(t-s,x-y)\int_0^s\int_{\mR^d} K(s-\tau,y-z)g(\tau,z)\, dz\, d\tau \, dy\\
      &= K_t\ast z_0(x) + \int_0^s\int_{\mR^d} K(t-\tau,x-z)g(\tau,z)\, dz\, d\tau,
  \end{align*}
  which proves our result.
\end{proof}
\begin{lemma}\label{lem:contsmooth}
Let $k\geq 2$, $\sigma \in (0,\alow-1)$, and $\eps\in (0,\sigma)$. Assume that $f\in C([0,T],C^{k-1}_b(\mR^d))$ and $z_0\in C^{k+\sigma}_b(\mR^d)$. If we define
\begin{align*}
    z(t,x) = (K_t\ast z_0)(x) + \int_0^t \int_{\mR^d} K(t-s,x-y)f(s,y)\, dy\, ds,\quad t\in[0,T],\ x\in\mR^d, 
\end{align*}
 then $z\in   B([0,T],C^{k+\sigma}_b(\mR^d))\cap C([0,T],C^{k+\sigma-\eps}_b(\mR^d))$. Furthermore, for $0\leq s< t\leq T$ we have
\begin{align*}
    \|z(t,\cdot) - z(s,\cdot)\|_{C^{k+\sigma-\eps}_b(\mR^d)} \leq (\sup\limits_{t\in [0,T]}\|z(s,\cdot)\|_{C^{k+\sigma}_b(\mR^d)} + \sup\limits_{t\in [0,T]}\|f(s,\cdot)\|_{C^{k-1}_b(\mR^d)})\omega(t-s),
\end{align*}
where the modulus of continuity $\omega$ is independent of $z_0,z,$ and $f$.
\end{lemma}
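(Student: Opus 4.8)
\textbf{Proof proposal for Lemma~\ref{lem:contsmooth}.}

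The plan is to split $z=z_{\mathrm{hom}}+z_{\mathrm{inh}}$ into the homogeneous part $z_{\mathrm{hom}}(t,x)=(K_t\ast z_0)(x)$ and the inhomogeneous (Duhamel) part $z_{\mathrm{inh}}(t,x)=\int_0^t K(t-s,\cdot)\ast f(s,\cdot)(x)\,ds$, and to establish continuity in time for each separately, using throughout the semigroup/Chapman--Kolmogorov representation of Lemma~\ref{lem:Duhst} to rewrite $z(t)$ in terms of $z(s)$. The $B([0,T],C^{k+\sigma}_b)$ bound is already contained in (the computations leading to) \eqref{eq:c3bound} in Lemma~\ref{lem:cd}, so only the time-continuity in the weaker norm $C^{k+\sigma-\eps}_b$ needs to be proved. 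The key mechanism is the standard interpolation trick: for $\theta\in C^{k+\sigma}_b(\mR^d)$ one has $\|K_{\tau}\ast\theta-\theta\|_{C^{k+\sigma-\eps}_b(\mR^d)}\to 0$ as $\tau\to 0^+$, quantitatively $\|K_\tau\ast\theta-\theta\|_{C^{k+\sigma-\eps}_b}\le \omega(\tau)\|\theta\|_{C^{k+\sigma}_b}$ for some modulus $\omega$ independent of $\theta$; this follows from strong continuity of the semigroup on $C_0$ applied to the highest derivatives, together with the interpolation inequality $\|\varphi\|_{C^{k+\sigma-\eps}_b}\le C\|\varphi\|_{C^{k+\sigma}_b}^{1-\eps/(1+\sigma)}\|\varphi\|_{C^{k-1}_b}^{\eps/(1+\sigma)}$ used on $\varphi=K_\tau\ast\theta-\theta$, where the second factor is small because $K_\tau\ast\theta\to\theta$ in $C^{k-1}_b$ (this last convergence comes from uniform continuity of the $(k-1)$-st derivatives of $z_0$ together with $\int K_\tau=1$). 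Applying this with $\theta=z(s)$ and $\tau=t-s$ controls $\|K_{t-s}\ast z(s)-z(s)\|_{C^{k+\sigma-\eps}_b}$ by $\omega(t-s)\sup_t\|z(t)\|_{C^{k+\sigma}_b}$.

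For the remaining Duhamel contribution on the interval $[s,t]$, namely $R(t,x):=\int_s^t \int_{\mR^d}K(t-\tau,x-y)f(\tau,y)\,dy\,d\tau$, I would estimate its $C^{k+\sigma-\eps}_b$ norm directly. First integrate by parts $k-1$ times (as in the passage from \eqref{eq:duhc1} to \eqref{eq:duhcl}) to move $k-1$ derivatives off $K$ onto $f$, then take one further spatial derivative landing on $K$ and estimate the H\"older quotient of order $\sigma-\eps$ of $D^k R$ exactly as in \eqref{eq:interp}, using Remark~\ref{rem:K}(c) and \ref{eq:K}. This yields a bound
\[
\|R(t,\cdot)\|_{C^{k+\sigma-\eps}_b(\mR^d)}\le C\sup_{\tau\in[0,T]}\|f(\tau,\cdot)\|_{C^{k-1}_b(\mR^d)}\int_s^t (t-\tau)^{-\frac{1+\sigma-\eps}{\alow}}\,d\tau = C(t-s)^{1-\frac{1+\sigma-\eps}{\alow}}\sup_{\tau}\|f(\tau,\cdot)\|_{C^{k-1}_b(\mR^d)},
\]
where the exponent $1-\frac{1+\sigma-\eps}{\alow}$ is positive because $\sigma<\alow-1$, so this is a genuine modulus of continuity in $t-s$. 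Combining this with Lemma~\ref{lem:Duhst}, which gives $z(t)=K_{t-s}\ast z(s)+R(t)$, produces
\[
\|z(t,\cdot)-z(s,\cdot)\|_{C^{k+\sigma-\eps}_b(\mR^d)}\le \|K_{t-s}\ast z(s)-z(s)\|_{C^{k+\sigma-\eps}_b(\mR^d)}+\|R(t,\cdot)\|_{C^{k+\sigma-\eps}_b(\mR^d)},
\]
and both terms are bounded by $\big(\sup_\tau\|z(\tau,\cdot)\|_{C^{k+\sigma}_b(\mR^d)}+\sup_\tau\|f(\tau,\cdot)\|_{C^{k-1}_b(\mR^d)}\big)\,\omega(t-s)$ for an appropriate $\omega$ depending only on $T,\alow,\sigma,\eps$ and the constant in \ref{eq:K}, which is the claimed estimate. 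Continuity at $t=0$ is the special case $s=0$ and membership in $C([0,T],C^{k+\sigma-\eps}_b)$ follows.

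The main obstacle I anticipate is the first term: proving the quantitative, data-independent smoothing estimate $\|K_\tau\ast\theta-\theta\|_{C^{k+\sigma-\eps}_b}\le\omega(\tau)\|\theta\|_{C^{k+\sigma}_b}$ with a \emph{uniform} modulus. Strong continuity of the semigroup on $C_0$ only gives, for each fixed $\theta$, that $\|K_\tau\ast D^k\theta-D^k\theta\|_\infty\to0$; upgrading this to a modulus uniform over the unit ball of $C^{k+\sigma}_b$ requires the interpolation argument sketched above, and one must be careful that the "low" norm $\|K_\tau\ast\theta-\theta\|_{C^{k-1}_b}$ really does go to zero uniformly — this uses that $D^{k-1}\theta$ has modulus of continuity controlled by $\|\theta\|_{C^{k+\sigma}_b}$ via $|D^{k-1}\theta(x)-D^{k-1}\theta(y)|\le \|\theta\|_{C^{k+\sigma}_b}|x-y|^{1+\sigma}$ wait, more precisely $\le\|\theta\|_{C^{k+\sigma}_b}(|x-y|\wedge 1)$ after combining with the $C^k$ bound, so that $\|K_\tau\ast\theta-\theta\|_{C^{k-1}_b}\le\|\theta\|_{C^{k+\sigma}_b}\,\tilde\omega(\tau)$ with $\tilde\omega$ universal. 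Once this uniform smoothing lemma is in place, everything else is routine bookkeeping with \ref{eq:K} and integration by parts.
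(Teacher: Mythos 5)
Your decomposition is identical to the paper's: both write $z(t)-z(s)=(K_{t-s}\ast z(s)-z(s))+\int_s^t K(t-\tau)\ast f(\tau)\,d\tau$ via Lemma~\ref{lem:Duhst}, and your treatment of the inhomogeneous tail $R$ is exactly the paper's estimate for $I_2^k$, with the same integrable singularity $(t-\tau)^{-(1+\sigma-\eps)/\alow}$. The difference is in how the homogeneous part $K_{t-s}\ast z(s)-z(s)$ is controlled in $C^{k+\sigma-\eps}_b$. You interpolate: apply the H\"older interpolation inequality between $C^{k-1}_b$ and $C^{k+\sigma}_b$ to $\varphi=K_\tau\ast\theta-\theta$, control the low norm by the uniform smoothing estimate $\|K_\tau\ast\theta-\theta\|_{C^{k-1}_b}\le\|\theta\|_{C^{k+\sigma}_b}\,\tilde\omega(\tau)$, and bound the high norm trivially by $2\|\theta\|_{C^{k+\sigma}_b}$. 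The paper instead estimates the $(\sigma-\eps)$-H\"older quotient of $D^k(K_{t-s}\ast z(s)-z(s))$ directly, splitting into $|h|\ge 1$ and $|h|<1$ and showing that the relevant quantity is controlled by $\int K(t-s,y)(1\wedge|y|^\eps)\,dy$, which tends to zero by concentration of $K_\tau$ near the origin. Both arguments ultimately rest on that same concentration fact, but yours packages it via an abstract interpolation inequality (the same tool the paper invokes elsewhere, in the proof of Lemma~\ref{lem:l1cneg}), while the paper's is a bare-hands two-regime pointwise estimate. Your version is arguably cleaner and more reusable; the paper's is more self-contained and avoids having to justify the H\"older interpolation inequality on all of $\mR^d$. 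One small remark: you correctly flag that the uniform modulus $\tilde\omega$ is the delicate point, and your fix is right — for each $0\le l\le k-1$ use $|D^l\theta(x-y)-D^l\theta(x)|\le\|\theta\|_{C^{k}_b}(|y|\wedge 2)$ and then $\int K_\tau(y)(|y|\wedge 2)\,dy\to 0$; you should state it for all orders $l\le k-1$, not just $l=k-1$, so that the full $C^{k-1}_b$ norm is controlled, but this is immediate.
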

\begin{proof}
     The fact that $z\in   B([0,T],C^{k+\sigma}_b(\mR^d))$ is shown in the proof of Lemma~\ref{lem:cd}. Let $0\leq s<t\leq T$. By using Duhamel's formula on $[s,t]$ (see Lemma~\ref{lem:Duhst}) we get
  \begin{align*}
      z(t,x) - z(s,x) = (K_{t-s}\ast z(s))(x) - z(s,x) + \int_s^t \int_{\mR^d} K(t-\tau,x-y) f(\tau,y)\, dy\, d\tau. 
  \end{align*}
  Therefore, for $1\leq l\leq k$,
  \begin{align*}
      D^l z(t,x) - D^l z(s,x) &= \bigg[(K_{t-s}\ast D^l z(s))(x) - D^l z(s,x)\bigg]\\ &+ \int_s^t \int_{\mR^d} D_xK(t-\tau,x-y)D^{l-1}f(\tau,y)\, dy\, d\tau =: I_1^l(x) + I_2^l(x).
  \end{align*}
  We will only estimate the $(\sigma-\eps)$-H\"older seminorms of the $k$-th derivative, the rest being similar but easier. Let $h\in \mR^d\setminus \{0\}$. We have 
  \begin{align}\label{eq:i1i2}
      |D^k z(t,x+h) - D^kz(t,x) - D^k z(s,x+h) + D^k z(s,x)| \leq |I_1^k(x+h) - I_1^k(x)| + |I_2^k(x+h) - I_2^k(x)|.
  \end{align}
  We first estimate the term with $I_1^k$. For $|h|\geq 1$ we have $\frac{1}{|h|^{\sigma-\eps}} \leq 1$, so we get
  \begin{align*}
      \sup\limits_{|h|\geq 1}\frac{|I_1^k(x+h) - I_1^k(x)|}{|h|^{\sigma-\eps}} &\leq \sup\limits_{|h|\geq 1}\int_{\mR^d} K(t-s,y) |D^k z(s,x+h-y) - D^kz(s,x-y) - D^kz(s,x+h) + D^kz(s,x)|\, dy\\
      &\leq 4\|z(s,\cdot)\|_{C^{k+\sigma}_b(\mR^d)}\int_{\mR^d} K(t-s,y)(1\wedge|y|^{\sigma})\, dy,
  \end{align*}
  which converges to 0 as $t-s\to 0^+$, because $K(t-s) \to \delta_0$. Now, let $0<|h|<1$. Then
  \begin{align*}
      |I_1^k(x+h) - I_1^k(x)| &\leq \int_{\mR^d} K(t-s,y) |D^k z(s,x+h-y) - D^kz(s,x-y) - D^kz(s,x+h) + D^kz(s,x)|\, dy\\
      &\leq 2\|z(s,\cdot)\|_{C^{k+\sigma}_b(\mR^d)}\bigg(\int_{B(0,|h|)} K(t-s,y)|y|^{\sigma}\, dy + |h|^{\sigma}\int_{B(0,|h|)^c} K(t-s,y)\, dy\bigg)\\
      &\leq 2\|z(s,\cdot)\|_{C^{k+\sigma}_b(\mR^d)}|h|^{\sigma-\eps}\bigg(\int_{B(0,|h|)} K(t-s,y)|y|^{\eps}\, dy + |h|^{\eps}\int_{B(0,|h|)^c} K(t-s,y)\, dy\bigg)\\
      &= 2\|z(s,\cdot)\|_{C^{k+\sigma}_b(\mR^d)}|h|^{\sigma-\eps}\int_{\mR^d} K(t-s,y)(|h|^{\eps}\wedge |y|^{\eps})\, dy.
  \end{align*}
  Therefore, 
  \begin{align*}
  \sup\limits_{0<|h|<1} \frac{|I_1^k(x+h) - I_1^k(x)|}{|h|^{\sigma-\eps}} \leq 2\|z(s,\cdot)\|_{C^{k+\sigma}_b(\mR^d)}\int_{\mR^d} K(t-s,y)(1\wedge |y|^{\eps})\, dy
  \end{align*}
  and the remaining integral converges to 0 as $t-s \to 0^+$. Hence we get
  \begin{align*}
      \sup\limits_{h\in \mR^d\setminus \{0\}} \frac{|I_1^k(x+h) - I_1^k(x)|}{|h|^{\sigma-\eps}} \leq \sup\limits_{s\in [0,T]}\|z(s,\cdot)\|_{C^{k+\sigma}_b(\mR^d)}\omega(t-s),
  \end{align*}
  where the modulus of continuity $\omega$ does not depend on $z_0,z,$ and $f$.
  
  \noindent The term with $I_2^k$ in \eqref{eq:i1i2} is more straightforward: by Remark~\ref{rem:K} we get
  \begin{align*}
      \frac{|I_2^k(x+h) - I_2^k(x)|}{|h|^{\sigma-\eps}} &\leq  \sup\limits_{t\in [0,T]}\|f(t,\cdot)\|_{C^{k-1}_b(\mR^d)}\int_s^t \int_{\mR^d}\frac{|D_x K(t - \tau,x+h-y) - D_x K(t-\tau,x-y)|}{|h|^{\sigma-\eps}}\, dy\\
      &\leq C \sup\limits_{t\in [0,T]}\|f(t,\cdot)\|_{C^{k-1}_b(\mR^d)}\int_s^t (t-\tau)^{-\frac{1+\sigma-\eps}{\alow}}\, d\tau,
  \end{align*}
  where the last constant does not depend on $h,s,$ and $t$. Thus,
  \begin{align*}
      \sup\limits_{h\in \mR^d\setminus \{0\}} \frac{|I_2^k(x+h) - I_2^k(x)|}{|h|^{\sigma-\eps}} \leq \sup\limits_{t\in [0,T]}\|f(t,\cdot)\|_{C^{k-1}_b(\mR^d)}\omega(t-s),
  \end{align*}
  where the modulus of continuity $\omega$ does not depend on $z_0,z,$ and $f$.
  This ends the proof.
\end{proof}

Below we give results of the similar type for negative H\"older norms.
\begin{lemma}\label{lem:rhotimeDuh}
Let $\gamma,\delta>0$ and assume that $V_1\in (C([0,T],C^\gamma_b(\mR^d)))^d$, $V_2 \in (L^{\infty}([0,T],\mathcal{M}(\mR^d)))^d$, $\rho_0 \in C^{-\gamma}_b(\mR^d)$, and $\widetilde{\rho}\in L^{\infty}((0,T),C^{-\gamma}_b(\mR^d))$. If we define $\rho$ for $\xi\in C^{\gamma}_b(\mR^d)$ and $t\in [0,T]$ by the formula
\begin{align*}
    \langle \rho(t),\xi\rangle = \langle \rho_0, K_t^\ast \ast  \xi\rangle + \int_0^t \langle \widetilde{\rho}(s), V_1(s,\cdot)(D_yK_{t-s}^\ast \ast\xi)(\cdot)\rangle\, ds + \int_0^t \int_{\mR^d} D_yK^\ast_{t-s}\ast\xi(y)\, V_2(s,dy)\, ds,
\end{align*}
then $\rho\in C([0,T],C^{-\gamma-\delta}_b(\mR^d))$.
\end{lemma}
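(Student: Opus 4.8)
\textbf{Proof strategy for Lemma~\ref{lem:rhotimeDuh}.}

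The plan is to show that the three terms defining $\langle\rho(t),\xi\rangle$ each depend continuously on $t$ in the $C^{-\gamma-\delta}_b(\mR^d)$ norm, i.e. that for $\xi\in C^{\gamma+\delta}_b(\mR^d)$ with $\|\xi\|_{C^{\gamma+\delta}_b(\mR^d)}\le 1$ the quantity $\langle\rho(t)-\rho(s),\xi\rangle$ is bounded by a modulus of continuity in $|t-s|$ that is uniform in such $\xi$. First I would fix $0\le s<t\le T$ and, using the semigroup (Chapman--Kolmogorov) property of $K^\ast$ as in Lemma~\ref{lem:Duhst}, rewrite
$$K_t^\ast\ast\xi - K_s^\ast\ast\xi = (K_{t-s}^\ast - \delta_0)\ast(K_s^\ast\ast\xi),$$
and similarly split the time integrals into $\int_0^s$ (where the difference of kernels $K_{t-\tau}^\ast - K_{s-\tau}^\ast$ appears) plus the short piece $\int_s^t$. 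The key analytic input is a quantitative continuity-in-time estimate for the heat semigroup acting on H\"older spaces: for $\phi\in C^{\gamma+\delta}_b(\mR^d)$ one has $\|K_h^\ast\ast\phi - \phi\|_{C^{\gamma}_b(\mR^d)}\le \omega(h)\|\phi\|_{C^{\gamma+\delta}_b(\mR^d)}$ with $\omega(h)\to 0$; this follows from \ref{eq:K} by the interpolation trick of Remark~\ref{rem:K}(c),(d) together with the fact that $K_h^\ast\,dx\to\delta_0$, exactly in the spirit of the $I_1$-estimate in the proof of Lemma~\ref{lem:contsmooth}. For the first term this immediately gives $|\langle\rho_0, (K_t^\ast-K_s^\ast)\ast\xi\rangle|\le \|\rho_0\|_{C^{-\gamma}_b(\mR^d)}\,\omega(t-s)$.

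For the second term I would bound, for the $\int_s^t$ part, $|\langle\widetilde\rho(\tau),V_1(\tau)(D_yK_{t-\tau}^\ast\ast\xi)\rangle|\le c\|\widetilde\rho\|_{L^\infty C^{-\gamma}_b}\|V_1\|_{C^\gamma_b}\|D_yK_{t-\tau}^\ast\ast\xi\|_{C^\gamma_b}\le C(t-\tau)^{-1/\alow}$ by \ref{eq:K}, whose integral over $[s,t]$ is $O((t-s)^{1-1/\alow})$; and for the $\int_0^s$ part I would write $D_yK_{t-\tau}^\ast\ast\xi - D_yK_{s-\tau}^\ast\ast\xi = D_yK_{s-\tau}^\ast\ast\big((K_{t-s}^\ast-\delta_0)\ast\xi\big)$ and estimate $\|D_yK_{s-\tau}^\ast\ast(K_{t-s}^\ast-\delta_0)\ast\xi\|_{C^\gamma_b}\le C(s-\tau)^{-1/\alow}\,\|(K_{t-s}^\ast-\delta_0)\ast\xi\|_{C^\gamma_b}$, wait---one must be careful that $(K_{t-s}^\ast-\delta_0)\ast\xi$ still lies in $C^{\gamma}_b$ with norm controlled by $\omega(t-s)\|\xi\|_{C^{\gamma+\delta}_b}$, which is exactly the heat-semigroup continuity estimate above; then $\int_0^s(s-\tau)^{-1/\alow}\,d\tau\le C T^{1-1/\alow}$ and we pick up a factor $\omega(t-s)$. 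The third term is handled identically using $V_2\in L^\infty(\mathcal M)$ in place of $V_1\widetilde\rho$, with $\|D_yK_h^\ast\ast\xi\|_\infty\le\mathcal K h^{-1/\alow}\|\xi\|_\infty$, so the $\int_s^t$ piece is $O((t-s)^{1-1/\alow})$ and the $\int_0^s$ piece again contributes $\omega(t-s)\cdot O(T^{1-1/\alow})$.

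Combining, $\sup_{\|\xi\|_{C^{\gamma+\delta}_b}\le1}|\langle\rho(t)-\rho(s),\xi\rangle|\le \widetilde\omega(|t-s|)$ with $\widetilde\omega$ a modulus of continuity depending only on $\alow$, $T$, $\|\rho_0\|_{C^{-\gamma}_b}$, $\|\widetilde\rho\|_{L^\infty C^{-\gamma}_b}$, $\|V_1\|_{C([0,T],C^\gamma_b)}$, and $\|V_2\|_{L^\infty\mathcal M}$, which is precisely $\|\rho(t)-\rho(s)\|_{C^{-\gamma-\delta}_b(\mR^d)}\to 0$; boundedness $\rho\in B([0,T],C^{-\gamma-\delta}_b)$ (indeed in $C^{-\gamma}_b$) comes from the same estimates with $s=0$ and $\xi\in C^\gamma_b$. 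I expect the main obstacle to be the bookkeeping in the $\int_0^s$ decomposition---making sure the kernel-difference is always pushed onto $\xi$ (which has the extra $\delta$ of regularity to absorb the loss) rather than onto $D_yK^\ast$ (which is singular in time), and checking that the heat-semigroup continuity estimate on H\"older spaces, the only nontrivial ingredient, genuinely follows from \ref{eq:K} via interpolation without needing finer kernel bounds.
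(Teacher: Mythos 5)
Your proof is correct and is essentially the same argument as the paper's, just organized slightly differently: the paper first establishes the restarted Duhamel identity \eqref{eq:weakDuhamelst} (testing the mild formula for $\rho(s)$ against $K_{t-s}^\ast\ast\xi$ and using Chapman--Kolmogorov), and then pairs $\rho(s)$ with $K_{t-s}^\ast\ast\xi-\xi$, whereas you expand that same identity by hand, keeping the $\rho_0$ and $\int_0^s$ contributions separate and pushing $(K_{t-s}^\ast-\delta_0)\ast\xi$ through each one. Both routes rest on the same two ingredients you identified -- the Chapman--Kolmogorov factorization that transfers the kernel difference onto the test function, and the heat-semigroup H\"older modulus from the $I_1$-estimate in Lemma~\ref{lem:contsmooth} -- so the arguments are algebraically equivalent.
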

\begin{proof}
    Note that a counterpart of \eqref{eq:Duhamel2} holds: for every $\xi\in C^{\gamma+\delta}_b(\mR^d)$ we have
    \begin{align}\label{eq:weakDuhamelst}
        \langle \rho(t),\xi\rangle = \langle \rho(s),K_{t-s}^\ast\ast\xi\rangle + \int_s^t \langle \widetilde{\rho}(\tau), V_1(\tau,\cdot)(D_yK_{t-\tau}^\ast \ast\xi)(\cdot)\rangle\, d\tau + \int_s^t \int_{\mR^d} D_yK^\ast_{t-\tau}\ast\xi(y)\, V_2(\tau,dy)\, d\tau.
    \end{align}
    Indeed, the equality follows from testing $\rho(s)$ with $K_{t-s}^\ast \ast \xi$ and the fact that by the Chapman--Kolmogorov equations, 
    \begin{align*}
        (D_y K_{s-\tau}^\ast)\ast K_{t-s}^\ast = D_y(K_{s-\tau}^\ast\ast K_{t-s}^\ast) = D_yK_{t-\tau}^\ast.
    \end{align*}
    The remainder of the proof consists in a dual estimate of $\rho(t) - \rho(s)$, which uses the fact that $\|K_{t-s}\ast \xi - \xi\|_{C^{\gamma}_b(\mR^d)} \leq C\omega(t-\tau)\|\xi\|_{C^{\gamma+\delta}_b(\mR^d)}$, obtained in the proof of Lemma~\ref{lem:contsmooth}. We skip the details.
\end{proof}

\noindent In the next estimate we work with very weak solutions appearing in Theorem~\ref{th:linsyst}. \begin{lemma}\label{lem:rhotime}
Suppose that all the assumptions of Theorem~\ref{th:linsyst} hold and that the pair $(z_\lambda,\rho_\lambda)$ solves system \eqref{eq:sigmasyst}. Then $\rho_\lambda \in C([t_0,T],C^{-k-1-\sigma}_b(\mR^d))$.
\end{lemma}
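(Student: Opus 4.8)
\textbf{Proof plan for Lemma~\ref{lem:rhotime}.}

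The plan is to obtain the time continuity of $\rho_\lambda$ in $C^{-k-1-\sigma}_b(\mR^d)$ directly from the very weak (Duhamel-type) formulation of the $\rho_\lambda$-equation, in the same spirit as Lemma~\ref{lem:rhotimeDuh}. First I would record that, since $(z_\lambda,\rho_\lambda)$ solves \eqref{eq:sigmasyst}, the function $z_\lambda$ is the classical solution from Lemma~\ref{lem:cd} of the first equation with right-hand side $\langle\dm{F}(x,m(t)),\rho_\lambda(t)\rangle+b(t,x)$, and by Theorem~\ref{th:linsyst} (more precisely the a priori bounds \eqref{eq:sigmaest} and \eqref{eq:zestimate} established in its proof) we have $z_\lambda\in B([t_0,T],C^{k+2+\sigma}_b(\mR^d))$ and $\sup_{t}\|\rho_\lambda(t)\|_{C^{-k-1-\sigma}_b(\mR^d)}\leq CM<\infty$. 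In particular $Dz_\lambda$ is bounded and continuous on $[t_0,T]\times\mR^d$, so $m\,\Gamma\,Dz_\lambda+c\in L^\infty([t_0,T],\mathcal M(\mR^d))^d$, and $V=V(t,x)\in (C([t_0,T],C^{k+1}_b(\mR^d)))^d$. Hence the data appearing in the forward equation for $\rho_\lambda$ fit exactly the hypotheses of Lemma~\ref{lem:rhotimeDuh} with $\gamma=k+1+\sigma$, $V_1=V$, $V_2=m\Gamma Dz_\lambda+c$, and $\rho_0=\lambda\rho_0$.

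The key step is to verify that $\rho_\lambda$ actually coincides with the function produced by the Duhamel formula in Lemma~\ref{lem:rhotimeDuh}. This is a Holmgren-type duality argument: given $\xi\in C^{k+1+\sigma}_b(\mR^d)$ and $\tau\in(t_0,T]$, let $w$ solve the backward problem \eqref{eq:xi} on $(t_0,\tau)$ with $w(\tau)=\xi$ (existence and the bound $\sup_t\|w(t,\cdot)\|_{C^{k+1+\sigma}_b(\mR^d)}\leq C\|\xi\|_{C^{k+1+\sigma}_b(\mR^d)}$ come from Lemma~\ref{lem:cd}). Using $w$ as a test function in the very weak formulation \eqref{eq:distcm2} for $\rho_\lambda$ (which is legitimate since $w$, $\partial_t w+\mL w-VDw=0$ are in $L^\infty((t_0,\tau),C^{k+1+\sigma}_b(\mR^d))$), one expresses $\langle\xi,\rho_\lambda(\tau)\rangle$ as $\lambda\langle w(t_0),\rho_0\rangle$ plus the time integrals against $m\Gamma Dz_\lambda$ and $c$; iterating the backward heat semigroup identity (the Chapman--Kolmogorov/adjoint relation $D_yK^*_{s-t_0}\ast K^*_{\tau-s}=D_yK^*_{\tau-t_0}$ already used in Lemma~\ref{lem:rhotimeDuh}) shows this equals precisely the Duhamel expression. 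By density of $C^{k+1+\sigma}_b$ testing (or simply because both sides are continuous linear functionals that agree on $C^{k+1+\sigma}_b(\mR^d)$) we conclude $\rho_\lambda(\tau)$ equals the Duhamel $\rho(\tau)$ as an element of $C^{-k-1-\sigma}_b(\mR^d)$ for every $\tau$. Then Lemma~\ref{lem:rhotimeDuh}, applied with $\gamma=k+1$ and $\delta=\sigma$ (so that $\gamma+\delta=k+1+\sigma$), gives $\rho_\lambda\in C([t_0,T],C^{-k-1-\sigma}_b(\mR^d))$, which is the assertion.

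I expect the main obstacle to be purely bookkeeping: checking that $w$ from \eqref{eq:xi} is an admissible test function in \eqref{eq:distcm2} with the right regularity index (one needs $C^{k+1+\sigma}_b$ rather than $C^{k+2+\sigma}_b$, which is exactly what Lemma~\ref{lem:cd} delivers when $V\in C^{k+1}_b$), and verifying carefully that the Duhamel representation extracted from the duality identity matches the one in Lemma~\ref{lem:rhotimeDuh} term by term, including the $\lambda$ prefactor on $\rho_0$ and the absence of the $\Div(\rho_\lambda V)$ term as a separate summand (it is absorbed into the semigroup through the backward equation for $w$). Continuity at the endpoint $t_0$ requires the usual care, but it is covered by the statement of Lemma~\ref{lem:rhotimeDuh}, whose proof handles $C([0,T],\cdot)$ rather than $C((0,T],\cdot)$; alternatively one notes $\rho_\lambda(t_0)=\lambda\rho_0\in\mathfrak C^{-k-1}_b(\mR^d)$ and the Duhamel terms vanish as $t\to t_0^+$ by dominated convergence.
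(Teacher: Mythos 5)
Your plan rests on rewriting $\rho_\lambda$ as a mild (Duhamel) solution and then invoking Lemma~\ref{lem:rhotimeDuh}, but there are two obstacles, and the first one is fatal under the hypotheses of Theorem~\ref{th:linsyst}.

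\textbf{The source term $c$ is not measure-valued.} Lemma~\ref{lem:rhotimeDuh} requires $V_2\in (L^\infty([0,T],\mathcal M(\mR^d)))^d$. In \eqref{eq:sigmasyst} the divergence-form source is $\lambda(m\Gamma Dz_\lambda + c)$, and while $m\Gamma Dz_\lambda$ is indeed a bounded measure, the assumption on $c$ in Theorem~\ref{th:linsyst} is only $c\in (L^1([t_0,T],C^{-k-\sigma+\eps}_b(\mR^d)))^d\cap (C([t_0,T],\mathfrak C^{-n}_b(\mR^d)))^d$ for some $n\in\{0,1,\ldots\}$. For $n\geq 1$, elements of $\mathfrak C^{-n}_b(\mR^d)$ involve distributional derivatives of measures and are strictly more singular than measures; the time integrability is only $L^1$, not $L^\infty$. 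So your claim that $m\Gamma Dz_\lambda + c\in L^\infty([t_0,T],\mathcal M(\mR^d))^d$ is false in general, and Lemma~\ref{lem:rhotimeDuh} cannot be applied. This is not a bookkeeping detail: precisely the singular $c$'s, with $c(t)\approx (t-t_0)^{-\beta}$-type blowup, are the reason why Theorem~\ref{th:linsyst} allows $c\in L^1([t_0,T],C^{-k-\sigma}_b)$ in the first place (see the discussion after \eqref{eq:coefficients}, where $c_1$ has this behavior).

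\textbf{Parameter mismatch.} Even granting a Duhamel formula, Lemma~\ref{lem:rhotimeDuh} applied with $\gamma = k+1$ and $\delta=\sigma$ would require $\widetilde\rho = \rho_\lambda\in L^\infty((t_0,T),C^{-k-1}_b(\mR^d))$, while a priori one only knows $\rho_\lambda\in B([t_0,T],C^{-k-1-\sigma}_b(\mR^d))$, a strictly larger dual space. If you instead take $\gamma = k+1+\sigma$ to match the available bound, any $\delta>0$ gives continuity only in the weaker space $C^{-k-1-\sigma-\delta}_b$.

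\textbf{What the paper does instead.} The paper bypasses the Duhamel representation entirely and works directly with the very weak formulation \eqref{eq:distcm2}. For $t_0\leq t_2<t_1\leq T$ and a fixed $\xi\in C^{k+1+\sigma}_b(\mR^d)$, it solves the backward problem \eqref{eq:xi} with terminal datum $\xi$ on the two intervals $(t_0,t_1)$ and $(t_0,t_2)$, obtaining $\phi_1$ and $\phi_2$. Testing \eqref{eq:distcm2} against each and subtracting, $\langle\rho_\lambda(t_1)-\rho_\lambda(t_2),\xi\rangle$ is expressed in terms of $\phi_1-\phi_2$ (paired against $\rho_0$, $m\Gamma Dz_\lambda$, $c$) plus tail integrals on $[t_2,t_1]$. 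The crucial observation is that $\phi_1-\phi_2$ is itself a backward solution with terminal datum $\phi_1(t_2)-\phi_1(t_1)$, so Lemma~\ref{lem:cd}(b) gives a modulus-of-continuity bound on $\|\phi_1(t)-\phi_2(t)\|_{C^{k+1+\sigma-\delta}_b(\mR^d)}$ in terms of $|t_1-t_2|$. The singular $c(s)$ is paired with $D(\phi_1-\phi_2)(s)\in C^{k+\sigma-\delta}_b(\mR^d)$, which is exactly what $c\in L^1(C^{-k-\sigma+\delta}_b)$ requires, with the small slack $\delta$ absorbed. No mild representation of $\rho_\lambda$ is needed and no measure-representability of $c$ is invoked beyond what the very weak formulation already allows. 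Your instinct to use backward-problem duality is right; the missing idea is to compare $\phi_1$ and $\phi_2$ directly rather than to route through a Duhamel formula that the data cannot support.
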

\begin{proof}
Let $t_0\leq t_2<t_1\leq T$ and $\xi\in C^{k+1+\sigma}_b(\mR^d)$. For $i=1,2$ we define $\phi_i$ to be the solution of the following problem:
\begin{align*}
    \begin{cases}
    \partial_t \phi_i(t,x) + \mL \phi_i(t,x) - V(t,x)D\phi_i(t,x) = 0,\quad &(t,x)\in (t_0,t_i)\times \mR^d,\\
    \phi_i(t_i,x) = \xi(x),\quad &x\in \mR^d.
    \end{cases}
\end{align*}
By Lemma~\ref{lem:cd} we have that $\phi_i\in B([t_0,T],C^{k+1+\sigma}_b(\mR^d))\cap C([t_0,T],C^{k+1+\sigma-\delta}_b(\mR^d))$ for arbitrarily small $\delta>0$. By equations for $\rho_\lambda$ and $\phi_i$ we get
\begin{align*}
    |\langle \rho_\lambda(t_1)-\rho_\lambda(t_2),\xi\rangle| &\leq \lambda|\langle \rho_0,\phi_1(t_0) - \phi_2(t_0) \rangle| +\bigg|\int_{t_0}^{t_1} \langle D\phi_1(s),c(s)\rangle\, ds - \int_{t_0}^{t_2} \langle D\phi_2(s),c(s)\rangle\, ds\bigg| \\
    &+\bigg|\int_{t_0}^{t_1}\int_{\mR^d} D\phi_1\Gamma Dz_\lambda(s,x)\, m(s,dx)\, ds -  \int_{t_0}^{t_2}\int_{\mR^d} D\phi_2\Gamma Dz_\lambda(s,x)\, m(s,dx)\, ds\bigg| =: I_1 + I_2 + I_3.
\end{align*}
 Furthermore, we have
 \begin{align*}
     I_1 &\leq \|\rho_0\|_{C^{-k-1-\sigma+\delta}_b(\mR^d)}\|\phi_1(t_0) - \phi_2(t_0)\|_{C^{k+1+\sigma-\delta}_b(\mR^d)},\\
     I_2 &\leq \bigg|\int_{t_0}^{t_2}\langle D\phi_1(s) - D\phi_2(s),c(s)\rangle\, ds\bigg| + \bigg|\int_{t_2}^{t_1} \langle D\phi_1(s),c(s)\rangle\, ds\bigg|\\
     &\leq \|c\|_{L^1(C^{-k-\sigma+\delta}_b(\mR^d))}\big(\sup\limits_{t\in [t_0,t_2]}\|\phi_1(t) - \phi_2(t)\|_{C^{k+1+\sigma-\delta}_b(\mR^d)} + (t_1 - t_2)\sup\limits_{t\in [t_0,t_1]}\|\phi_1(t)\|_{C^{k+1+\sigma}_b(\mR^d)}\big),\\
     I_3 &\leq \bigg|\int_{t_0}^{t_2}\int_{\mR^d} (D\phi_1 - D\phi_2)\Gamma Dz_\lambda(s,x)\, m(s,dx)\, ds -  \int_{t_2}^{t_1}\int_{\mR^d} D\phi_1\Gamma Dz_\lambda(s,x)\, m(s,dx)\, ds\bigg|\\
     &\leq C\|Dz_\lambda(s)\|_{\infty}\big(T\|D\phi_1 - D\phi_2\|_{\infty} + (t_1-t_2)\|D\phi_1\|_{\infty}\big).
 \end{align*}
 By the above estimates, the statement of the lemma is true provided that \begin{align*}
     &\sup\limits_{t\in [t_0,t_2]}\|\phi_1(t)\|_{C^{k+1+\sigma}_b(\mR^d)}\leq C\|\xi\|_{C^{k+1+\sigma}_b(\mR^d)},\ {\rm and}\\ &\sup\limits_{t\in [t_0,t_2]}\|\phi_1(t) - \phi_2(t)\|_{C^{k+1+\sigma-\delta}_b(\mR^d)} \leq  C\|\xi\|_{C^{k+1+\sigma}_b(\mR^d)}\omega(t_1-t_2),
    \end{align*}
    for some modulus of continuity $\omega$. The first inequality follows from Lemma~\ref{lem:cd}. For the second inequality, note that $\phi = \phi_1 - \phi_2$ solves the problem
    \begin{align*}
    \begin{cases}
    \partial_t \phi(t,x) + \mL \phi(t,x) - V(t,x)D\phi(t,x) = 0,\quad &(t,x)\in (t_0,t_2)\times \mR^d,\\
    \phi(t_2,x) = \phi_1(t_2,x) - \xi(x) = \phi_1(t_2,x) -\phi_1(t_1,x),\quad &x\in \mR^d.
    \end{cases}
    \end{align*}
    By Lemma~\ref{lem:cd} (b) we find that
    \begin{align*}
        \sup\limits_{t\in [t_0,t_2]}\|\phi(t)\|_{C^{k+1+\sigma-\delta}_b(\mR^d)}\leq C \|\phi_1(t_2,x) -\phi_1(t_1,x)\|_{C^{k+1+\sigma-\delta}_b(\mR^d)} \leq C\|\xi\|_{C^{k+1+\sigma}_b(\mR^d)}\omega(t_2-t_1),
    \end{align*}
    with $\omega$ independent of $\xi$. This ends the proof.
\end{proof}
\section{From mild to very weak solutions}
\begin{lemma}\label{lem:mildweak}
Assume that $V_1 \in (L^\infty([0,T]\times \mR^d))^d$, $V_2\in (L^\infty([0,T],L^1(\mR^d)))^d$, and $\rho_0 \in L^1(\mR^d)$. If $\rho\in B([0,T],L^1(\mR^d))$ is product measurable and satisfies
\begin{align}\label{eq:mildsol}
    \rho(t,x) = K_t\ast \rho_0(x) + \int_0^t\int_{\mR^d} D_y K(t-s,x-y)(V_1\rho + V_2)(s,y)\, dy\, ds,
\end{align}
for all $t\in [0,T]$, almost everywhere in  $x\in \mR^d$, then for every $\phi \in C([0,T], C^2_b(\mR^d))$ such that $\partial_t \phi\in C_b([0,T]\times \mR^d)$, and every $t\in[0,T]$ we have
\begin{equation}
    \begin{split}\label{eq:weakl1}
    &\int_{\mR^d} \phi(t,x)\rho(t,x)\, dx - \int_{\mR^d} \phi(0,x)\rho_0(x)\, dx\\ =  &\int_0^t\int_{\mR^d} (\partial_t\phi(s,x) + \mL^*\phi(s,x))\rho(s,x)\, dx\, ds + \int_0^t\int_{\mR^d} D_x\phi(s,x)(V_1\rho+ V_2)(s,x)\, dx\, ds.
    \end{split}
\end{equation}
\end{lemma}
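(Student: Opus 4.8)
\textbf{Proof proposal for Lemma~\ref{lem:mildweak}.}

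The plan is to verify the weak formulation \eqref{eq:weakl1} directly from the mild formula \eqref{eq:mildsol} by plugging in $\rho$, applying Fubini's theorem, and recognizing the resulting kernel integrals as the action of the backward heat semigroup. First I would fix a test function $\phi\in C([0,T],C^2_b(\mR^d))$ with $\partial_t\phi\in C_b([0,T]\times\mR^d)$ and a time $t\in[0,T]$. I multiply \eqref{eq:mildsol} by $\phi(t,x)$ and integrate in $x$; by the boundedness of $\phi$, the integrability of $\rho(t,\cdot)$, and \ref{eq:K} (which controls $\|D_yK(t-s,\cdot)\|_{L^1}$ by $\mathcal{K}(t-s)^{-1/\alow}$ with $1/\alow<1$), all the integrals converge absolutely, so Fubini applies and we may freely interchange orders of integration. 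This expresses $\int_{\mR^d}\phi(t,x)\rho(t,x)\,dx$ as $\int_{\mR^d}(K_t^*\ast\phi(t,\cdot))(y)\rho_0(y)\,dy$ plus a double time-space integral against $(V_1\rho+V_2)(s,y)$ with kernel $(D_yK^*_{t-s}\ast\phi(t,\cdot))(y)$, using $K^*_r(x)=K_r(-x)$ and the convention for $D_y$ hitting the kernel.

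The core of the argument is then to represent both the $\rho_0$-term and the convolution kernel via the solution of the backward equation $\partial_s w + \mL^* w = 0$ — or more precisely, to use the identity
\begin{align*}
    K^*_{t-s}\ast\phi(t,\cdot) - \phi(s,\cdot) = \int_s^t \big(\mL^*(K^*_{t-r}\ast\phi(t,\cdot)) + K^*_{t-r}\ast\partial_r\phi(r,\cdot) - (\text{mixed terms})\big)\,dr,
\end{align*}
that is, to differentiate $r\mapsto (K^*_{t-r}\ast\phi(r,\cdot))(y)$ in $r$ and integrate back. Writing $\Phi(r,y) := (K^*_{t-r}\ast\phi(r,\cdot))(y)$, one has $\partial_r\Phi = -\mL^*_y\big(K^*_{t-r}\ast\phi(r,\cdot)\big) + K^*_{t-r}\ast\partial_r\phi(r,\cdot)$ for $r<t$ (using that $K^*$ solves the heat equation for $\mL^*$ and the semigroup property / Chapman--Kolmogorov, cf. Lemma~\ref{lem:Duhst}), and $\mL^*_y$ commutes with the convolution by $K^*_{t-r}$, so $\mL^*_y\Phi(r,y) = (K^*_{t-r}\ast\mL^*\phi(r,\cdot))(y)$. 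Substituting this Duhamel-type identity into the two terms obtained from Fubini, and performing one more interchange of integration (justified again by \ref{eq:K} since the singularity $(t-r)^{-1/\alow}$ or $(r-s)^{-1/\alow}$ is integrable), collapses everything into $\int_0^t\int_{\mR^d}(\partial_s\phi + \mL^*\phi)(s,x)\rho(s,x)\,dx\,ds + \int_0^t\int_{\mR^d}D_x\phi(s,x)(V_1\rho+V_2)(s,x)\,dx\,ds$ plus the boundary term $\int_{\mR^d}\phi(0,x)\rho_0(x)\,dx$, which is exactly \eqref{eq:weakl1}. The divergence-form term $\int D_yK(t-s,x-y)(V_1\rho+V_2)(s,y)\,dy$ pairs with $\phi(t,x)$ to give, after moving the derivative onto $\phi$ via integration by parts (legitimate because $D_yK(r,\cdot)\in W^{1,1}$ by \ref{eq:K}), the term $\int D_x\phi$ against $(V_1\rho+V_2)$, which survives to the final formula.

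The main obstacle I anticipate is bookkeeping the justification of the repeated Fubini and the differentiation-under-the-integral of $r\mapsto \Phi(r,y)$ near the diagonal $r=t$, where $K^*_{t-r}\to\delta_0$ and derivatives of the kernel blow up; one handles this by working on $[0,t-\eps]$, differentiating there cleanly, and then letting $\eps\to 0^+$ using the strong continuity $\|K_\tau\ast g - g\|_\infty\to 0$ for $g\in C_0$ together with the uniform-in-time $L^1$ bound on $\rho$ and the $L^\infty([0,T],L^1)$ bound on $V_2$ — the same device used in the proof of Lemma~\ref{lem:contsmooth} and in Lemma~\ref{lem:rhotimeDuh}. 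A secondary technical point is that $\rho$ is only defined up to null sets in $x$ for each $t$ and only product measurable, so every pointwise-in-$x$ manipulation must be phrased as an equality in $L^1$; this is harmless since all operations (convolution, pairing with bounded functions) are continuous on $L^1$. No new assumptions beyond \ref{eq:K} are needed.
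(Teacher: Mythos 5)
Your overall plan---pair the mild formula with $\phi(t,\cdot)$, then transfer $\partial_t$ and $\mL^*$ onto the test function via a fundamental-theorem-of-calculus / Duhamel trick---is sound and is essentially the route the paper follows. But the specific auxiliary function you choose to differentiate is the wrong one, and this produces both a sign error and the wrong boundary terms.

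You set $\Phi(r,y)=(K^*_{t-r}\ast\phi(r,\cdot))(y)$ and correctly compute
$\partial_r\Phi=-\mL^*_y\bigl(K^*_{t-r}\ast\phi(r,\cdot)\bigr)+K^*_{t-r}\ast\partial_r\phi(r,\cdot)$,
because $\partial_r K^*_{t-r}=-\mL^*K^*_{t-r}$. Integrating from $r=s$ to $r=t$ gives
$\Phi(t,\cdot)-\Phi(s,\cdot)=\phi(t,\cdot)-K^*_{t-s}\ast\phi(s,\cdot)$,
which is \emph{not} the left-hand side $K^*_{t-s}\ast\phi(t,\cdot)-\phi(s,\cdot)$ of the identity you state, and when paired with $\rho_0$ it yields $\int\phi(t)\rho_0-\int K_t\ast\rho_0\,\phi(0)$ rather than the desired $\int K_t\ast\rho_0\,\phi(t)-\int\rho_0\,\phi(0)$. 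Moreover the $\mL^*$ term carries a minus sign, so the final formula would come out with $-\mL^*\phi$ where \eqref{eq:weakl1} requires $+\mL^*\phi$. The identity you wrote actually arises from differentiating
$\Gamma(r,y)=(K^*_{r-s}\ast\phi(r,\cdot))(y)$, for which $\partial_r\Gamma=K^*_{r-s}\ast(\mL^*\phi+\partial_r\phi)(r)$ (plus sign, since $\partial_r K^*_{r-s}=+\mL^*K^*_{r-s}$), and which has the right boundary values $\Gamma(t)=K^*_{t-s}\ast\phi(t)$, $\Gamma(s)=\phi(s)$. The paper uses precisely this (packaged as differentiating $s\mapsto K_s\ast\rho_0(x)\phi(s,x)$ for the $\rho_0$ term and $\tau\mapsto D_xK(\tau-s,x-y)\phi(\tau,x)$ for the source term). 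So what you have is not a cosmetic typo: $K^*_{t-r}$ and $K^*_{r-s}$ give different derivatives and different endpoints, and your write-up consistently follows the incorrect one.

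A secondary gap: you never reduce from $\phi\in C([0,T],C^2_b(\mR^d))$ to compactly supported test functions. The step that moves $\mL$ from $K_s\ast\rho_0$ to $\mL^*\phi$ (the Fourier argument in the paper's footnote) is cleanly justified only for $\phi(s)\in C^2_c$; the paper then extends to $C^2_b$ using the cut-offs $\chi_R$ and dominated convergence, relying on $\rho,V_2\in L^1$ to control the tails. You should include this reduction or give an alternative justification that $\int \mL(K_s\ast\rho_0)\,\phi=\int (K_s\ast\rho_0)\,\mL^*\phi$ holds for general bounded $\phi$.
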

\begin{proof}
We show the result directly for $\phi \in C([0,T],C^2_c(\mR^d))$ such that $\partial_t \phi \in C_c([0,T]\times\mR^d)$. Other test functions can be approximated using the smooth cut-off functions \ar{$\chi_R$ defined at the beginning of Section~\ref{sec:prelim}. Indeed, since $\phi \chi_R$, $\partial_t(\phi\chi_R)$, $D_x (\phi\chi_R)$, {$D^2_x (\phi\chi_R)$,}  and $\mL^\ast (\phi\chi_R)$ are uniformly bounded and converge to $\phi$, $\partial_t\phi$, $D_x \phi$, {$D^2_x \phi$,} and $\mL^\ast \phi$\footnote{Note that for $x$ fixed and $R>1+|x|$ we have $|\mL^\ast\big(\phi(\cdot,t)-\phi(\cdot,t)\chi_R\big)(x)|\leq C(\|\phi(\cdot,t)-\phi(\cdot,t)\chi_R\|_{C_b^2(B(0,R))}+2\|\phi(\cdot,t)\|_{C_b(\mathbb R^d)}\nu(B(0,R)^c)) \lesssim 0+\nu(B(0,R)^c)\to 0$ as $R\to \infty$. 
} pointwise as $R\to \infty$, and $\rho$ and $V_2$ are bounded in $L^1(\mR^d)$, we are in a position to use the dominated convergence theorem.}

In order to prove the statement for $\phi \in C([0,T],C^2_c(\mR^d))$, we multiply \eqref{eq:mildsol} by $\phi(t,x)$, integrate over $x$, and subtract $\int \phi(0)\rho_0$, getting
\begin{align}
    &\int_{\mR^d}\rho(t,x)\phi(t,x)\, dx - \int_{\mR^d} \phi(0,x)\rho_0(x)\, dx\nonumber\\
    \label{eq:mildint} = &\int_{\mR^d}K_t\ast \rho_0(x)\phi(t,x)- \phi(0,x)\rho_0(x)\, dx + \int_{\mR^d}\int_0^t\int_{\mR^d} D_y K(t-s,x-y)(V_1\rho + V_2)(s,y)\phi(t,x)\, dy\, ds\, dx.
\end{align}
The goal is then to show that the right-hand side in the equation above is equal to the right-hand side of \eqref{eq:weakl1}. We start by investigating the first integral of \eqref{eq:mildint}. By the fundamental theorem of calculus we find that for $h\in (0,t)$,
\begin{align*}
    \int_{\mR^d} K_t\ast \rho_0(x)\phi(t,x)-K_h\ast \rho_0(x)\phi(h,x)\, dx = \int_{\mR^d} \int_h^t\partial_s\big((K_s\ast \rho_0(x))\phi(s,x)\big)\, ds\, dx.
\end{align*}
Since $K$ is a fundamental solution, for every $s\in (h,t)$ we have
\begin{align*}
    \partial_s\big((K_s\ast \rho_0(x))\phi(s,x)\big) = (\partial_s K_s)\ast \rho_0(x)\phi(s,x) + K_s\ast\rho_0(x) \partial_s\phi(s,x) = (\mL K_s)\ast \rho_0(x)\phi(s,x) + K_s\ast\rho_0(x) \partial_s\phi(s,x),
\end{align*}
so by the Fubini-Tonelli theorem, \ref{eq:K} and the fact that $\rho_0\in L^1(\mR^d)$ we may interchange the order of integration and get
\begin{align}\label{eq:rho0}
    \int_{\mR^d} K_t\ast \rho_0(x)\phi(t,x)-K_h\ast \rho_0(x)\phi(h,x)\, dx = \int_h^t\int_{\mR^d} \partial_s\big((K_s\ast \rho_0(x))\phi(s,x)\big)\, dx\, ds.
\end{align}
Furthermore, we have
\begin{equation}\label{eq:dct}
\begin{split}
   \bigg|\int_{\mR^d} \partial_s\big((K_s\ast \rho_0(x))\phi(s,x)\big)\, dx\bigg| &\leq \bigg|\int_{\mR^d}(\mL K_s)\ast \rho_0(x)\phi(s,x)\, dx\bigg| + \bigg| \int_{\mR^d} K_s\ast \rho_0(x)\partial_s \phi(s,x)\, dx\bigg|\\
    &=\bigg|\int_{\mR^d}K_s\ast \rho_0(x)\mL^*\phi(s,x)\, dx\bigg| + \bigg| \int_{\mR^d} K_s\ast \rho_0(x)\partial_s \phi(s,x)\, dx\bigg| \leq C(\phi),
\end{split}
\end{equation}
In the second equality we used \ref{eq:K}, $\phi(s)\in C^2_c(\mR^d)$, and $\rho_0\in L^1(\mR^d)$.\footnote{We have $K_t\in L^2(\mR^d)$ and $\mL K_t\in L^2(\mR^d)$, hence $\mathcal{F}((\mL K_s) \ast \rho_0) = \mathcal{F}(\mL K_s) \mathcal{F}\rho_0 = \Psi \mathcal{F}(K_s \ast \rho_0) = \mathcal{F}(\mL(K_s\ast \rho_0))$, where $\mathcal{F}$ is the Fourier transform and $\Psi$ is the Fourier multiplier of $\mL$. Thus, $(\mL K_s)\ast\rho_0 = \mL(K_s \ast \rho_0)$. Furthermore, since $K_s\ast\rho_0\in C_0^\infty(\mR^d)$, we get $\int_{\mR^d} (\mL K_s)\ast\rho_0(x) \phi(s,x)\, dx = \int_{\mR^d}K_s \ast \rho_0(x) \mL^\ast\phi(s,x)\, dx$, see the discussion in Section~\ref{sec:hk}.} Note that $C(\phi)$ is independent of $s$.

We let $h\to 0^+$ in \eqref{eq:rho0}. By \eqref{eq:dct}, the dominated convergence theorem, and the strong $L^1$-continuity of the semigroup corresponding to $K_t$ \cite[Theorem~3.4.2]{MR2072890}, we obtain
\begin{align*}
    &\int_{\mR^d}\big(K_t\ast \rho_0(x)\phi(t,x)- \phi(0,x)\rho_0(x)\big)\, dx = \int_0^t\int_{\mR^d} \partial_s(K_s\ast \rho_0(x)\phi(s,x))\, dx\, ds\\
    = &\int_0^t \int_{\mR^d}K_s \ast \rho_0(x)(\partial_s\phi(s,x) + \mL^*\phi(s,x))\, dx\, ds =: I_1.
\end{align*}
Now we will investigate the second integral of \eqref{eq:mildint}:
\begin{align}\label{eq:gradterm}\int_{\mR^d}\int_0^t\int_{\mR^d} D_y K(t-s,x-y)(V_1\rho + V_2)(s,y)\phi(t,x)\, dy\, ds\, dx.\end{align}
By using Tonelli's theorem, \ref{eq:K}, and integrating in the order $dx\, dy\, ds$, 
we see that it converges absolutely, therefore we may interchange the order of integration and get that \eqref{eq:gradterm} is equal to
\begin{align*}
   &\int_0^t \int_{\mR^d}\int_{\mR^d} D_x K(t-s,x-y)(V_1\rho + V_2)(s,y)\phi(t,x)\, dx\, dy\, ds\\ = &\int_0^t \int_{\mR^d}\int_{\mR^d} K(t-s,x-y)(V_1\rho + V_2)(s,y)D_x\phi(t,x)\, dx\, dy\, ds\\
   = &\int_0^t \int_{\mR^d}(V_1\rho + V_2)(s,y)\bigg(\int_{\mR^d} K(t-s,x-y)D_x\phi(t,x)\, dx - D_y\phi(s,y)\bigg)\, dy\, ds \\
   + &\int_0^t \int_{\mR^d} (V_1\rho + V_2)(s,y)D_y\phi(s,y)\, dy\, ds =: J_1+J_2.
\end{align*}
Since $D_x\phi \in C([0,T], C_c(\mR^d))$, by using \ref{eq:K}, the strong continuity of the semigroup corresponding to $K^*_t$ (recall that $K_t(x-y) = K^\ast_t(y-x)$), the Fubini--Tonelli theorem, and arguments similar to \eqref{eq:rho0} and \eqref{eq:dct},\footnote{Note, in particular, that because of \ref{eq:K} and $\phi\in C([0,T],C^2_c(\mR^d))$, for every $\tau\in(s,t)$ we have $\int_{\mR^d} \partial_{\tau} D_xK(\tau-s)  \phi(\tau) = \int_{\mR^d} D_x\mL K(\tau-s) \phi(\tau) = \int_{\mR^d}  D_x K(\tau-s) \mL^*\phi(\tau)$ and the last of these integrals is bounded from above by $C(\tau-s)^{-\frac 1{\alow}}$, which is integrable with respect to $\tau$.} we find that 
\begin{align*}
    &\int_{\mR^d} K(t-s,x-y)D_x\phi(t,x)\, dx - D_y\phi(s,y) = \int_s^t\int_{\mR^d} \partial_{\tau}\big(D_xK(\tau - s, x-y)\phi(\tau,x)\big)\, dx\, d\tau\\
    &= \int_s^t\int_{\mR^d} \partial_{\tau} D_xK(\tau - s, x-y)\phi(\tau,x)\, dx\, d\tau + \int_s^t\int_{\mR^d}  D_x K(\tau - s, x-y)\partial_{\tau} \phi(\tau,x)\, dx\, d\tau.
\end{align*}
Therefore,
\begin{align*}
    J_1 = &\int_0^t \int_{\mR^d}\int_s^t\int_{\mR^d}(V_1\rho + V_2)(s,y) \partial_{\tau} K(\tau - s, x-y)D_x\phi(\tau,x)\, dx\, d\tau\, dy\, ds\\
    + &\int_0^t \int_{\mR^d}\int_s^t\int_{\mR^d}(V_1\rho + V_2)(s,y)  D_xK(\tau - s, x-y)\partial_{\tau} \phi(\tau,x)\, dx\, d\tau\, dy\, ds =: J_{1,1} + J_{1,2}.
\end{align*}
We have 
\begin{align*}
    J_{1,1} = &\int_0^t \int_{\mR^d}\int_s^t\int_{\mR^d}(V_1\rho + V_2)(s,y) \mL K(\tau - s, x-y)D_x\phi(\tau,x)\, dx\, d\tau\, dy\, ds\\
    = &\int_0^t \int_{\mR^d}\int_s^t\int_{\mR^d}(V_1\rho + V_2)(s,y) D_x K(\tau - s, x-y)\mL^* \phi(\tau,x)\, dx\, d\tau\, dy\, ds\\
    =&\int_0^t \int_{\mR^d} \mL^* \phi(\tau,x) \int_0^\tau\int_{\mR^d} (V_1\rho + V_2)(s,y) D_x K(\tau - s, x-y)\, dy\, ds\, dx\, d\tau.
\end{align*}
The change in the order of integration is allowed because the second integral is absolutely convergent. Furthermore,
\begin{align*}
    J_{1,2} = \int_0^t \int_{\mR^d} \partial_{\tau} \phi(\tau,x) \int_0^\tau\int_{\mR^d} (V_1\rho + V_2)(s,y) D_x K(\tau - s, x-y)\, dy\, ds\, dx\, d\tau.
\end{align*}
Hence, switching the role of $s$ and $\tau$ we get
\begin{align*}
   J_1= J_{1,1} + J_{1,2}= \int_0^t \int_{\mR^d} (\partial_{s} \phi(s,x) +\mL^*\phi(s,x)) \int_0^s\int_{\mR^d} (V_1\rho + V_2)(\tau,y) D_x K(s-\tau, x-y)\, dy\, d\tau \, dx\, ds.
\end{align*}
We finish the proof by noticing that \eqref{eq:mildsol} yields
\begin{align*}
    (I_1 + J_1) + J_2 = \int_0^t\int_{\mR^d} (\partial_s\phi(s,x) + \mL^*\phi(s,x))\rho(s,x)\, dx\, ds + \int_0^t\int_{\mR^d} D_x\phi(s,x)(V_1\rho+ V_2)(s,x)\, dx\, ds.
\end{align*}
\end{proof}
In the next result we discuss mild solutions in negative order H\"older spaces. The arguments are quite similar, but we present certain parts of the proof in full detail in order to pinpoint the class of admissible test functions. This is crucial for identification of solutions $\rho$ obtained in Theorem~\ref{th:linsyst} and Lemma~\ref{lem:l1cneg}.
\begin{lemma}\label{lem:milddistrcneg}
Suppose that the assumptions of Lemma~\ref{lem:l1cneg} hold true. Let $\rho\in C_b((0,T],C^{-n}_b(\mR^d))$ be the mild solution obtained by fixed point argument in the proof of Lemma~\ref{lem:l1cneg}, i.e.,
\begin{align*}
     \langle\rho(t),\xi\rangle &= \langle \rho_0,K_t^\ast \ast \xi\rangle + \int_0^t \bigg\langle \rho(s),V_1(s,\cdot) (D_y K_{t-s}^\ast \ast \xi)(\cdot)\bigg\rangle\, ds\\
      &+ \int_0^t \int_{\mR^d} \int_{\mR^d}D_yK(t-s,x-y)\, V_2(s,dy) \xi(x)\, dx\, ds,\quad t\in [0,T], \ \xi \in C^n_b(\mR^d).
\end{align*}
Then $\rho$ is a distributional
solution of \eqref{eq:FPcneg}, that is, for every $\phi\in C([0,T],C^n_b(\mR^d))$ such that $D^n\phi$ is uniformly continuous, $\partial_t \phi \in C_b((0,T),C_b(\mR^d))$, and $\partial_t \phi + \mL \phi \in B([0,T],C^{n-1}_b(\mR^d))$
we have
\begin{align*}
    \langle \rho(t),\phi(t)\rangle - \langle \rho_0,\phi(0)\rangle = \int_0^t \langle (\partial_t\phi + \mL\phi - V_1D\phi)(s),\rho(s)\rangle\, ds + \int_0^t\int_{\mR^d} D\phi(s,y)\, V_2(s,dy)\, ds,\quad t\in [0,T].
\end{align*}
\end{lemma}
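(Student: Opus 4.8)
The plan is to follow the template of the proof of Lemma~\ref{lem:mildweak} but in the dual (negative H\"older) setting, being careful to track exactly which properties of the test function $\phi$ are used. First I would reduce to the case $t\in(0,T]$ (the case $t=0$ being trivial since $\rho(0)=\rho_0$) and, if needed, handle the general class of $\phi$ by a cut-off approximation: if the stated regularity of $\phi$ does not already make all pairings below legitimate, replace $\phi$ by $\phi\chi_R$, use that $\phi\chi_R$, $\partial_t(\phi\chi_R)$, $D(\phi\chi_R)$ and $\mL(\phi\chi_R)$ are uniformly bounded and converge pointwise (with the nonlocal term controlled as in the footnote of Lemma~\ref{lem:mildweak} using $\nu(B(0,R)^c)\to 0$), and pass to the limit against $\rho(s)\in C^{-n}_b(\mR^d)$ and $V_2(s,\cdot)\in\mathcal M(\mR^d)$.

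Next I would establish the dual Duhamel identity on subintervals $[h,t]$ with $h\in(0,t)$: testing $\rho(h)$ with $K^\ast_{t-h}\ast\xi$ and using the Chapman--Kolmogorov relation $(D_yK^\ast_{s-h})\ast K^\ast_{t-h}=D_yK^\ast_{t-h}$ (exactly as in Lemma~\ref{lem:rhotimeDuh}, cf.\ \eqref{eq:weakDuhamelst}) gives, for $\xi\in C^n_b(\mR^d)$,
\begin{align*}
    \langle\rho(t),\xi\rangle = \langle\rho(h),K^\ast_{t-h}\ast\xi\rangle + \int_h^t\langle\rho(s),V_1(s,\cdot)(D_yK^\ast_{t-s}\ast\xi)(\cdot)\rangle\, ds + \int_h^t\int_{\mR^d}(D_yK^\ast_{t-s}\ast\xi)(y)\, V_2(s,dy)\, ds.
\end{align*}
With this in hand, I would compute $\langle\rho(t),\phi(t)\rangle - \langle\rho(h),\phi(h)\rangle$ by writing it as an integral in a time parameter. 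Concretely, I would differentiate $s\mapsto\langle\rho(s),K^\ast_{t-s}\ast\phi(s)\rangle$-type expressions; more directly, take $\xi=\phi(t)$ in the mild formula, take $\xi=\phi(h)$ in the mild formula run up to time $h$, subtract, and insert the three-term splitting that arises from writing $K^\ast_t\ast\rho_0$ and the $V_1,V_2$ integrals over $[0,t]$ as the corresponding objects over $[0,h]$ plus a remainder over $[h,t]$. The key computational identities are: (i) $\partial_\tau(K^\ast_\tau\ast\phi(\tau)) = (\mL^\ast K^\ast_\tau)\ast\phi(\tau) + K^\ast_\tau\ast\partial_\tau\phi(\tau)$, used inside the pairing with $\rho_0$, giving the $\partial_t\phi + \mL\phi$ term; (ii) $\partial_\tau(D_yK^\ast_{\tau-s}\ast\phi(\tau)) = D_y\mL^\ast K^\ast_{\tau-s}\ast\phi(\tau) + D_yK^\ast_{\tau-s}\ast\partial_\tau\phi(\tau)$ for the $V_1$- and $V_2$-driven terms, combined with $\int_{\mR^d}(D_yK^\ast_{\tau-s}\ast\mL^\ast\phi)(y)\cdots$ being dominated by $C(\tau-s)^{-1/\alow}$, hence integrable in $\tau$; and (iii) Fubini/Tonelli to reorder the resulting triple integrals, justified by \ref{eq:K} and $V_2\in B([0,T],\mathcal M(\mR^d))$, $\rho\in C_b((0,T],C^{-n}_b(\mR^d))$ together with the blow-up estimate \eqref{eq:cnegexplode}. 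Collecting the pieces and letting $h\to0^+$ using the strong continuity $\|K^\ast_h\ast\xi-\xi\|_{C^n_b}\to 0$ (applied to $\xi=\phi(0)$, which lies in $C^n_b$ with $D^n\phi(0)$ uniformly continuous) plus dominated convergence, one obtains the claimed weak formulation, after relabelling $s\leftrightarrow\tau$ to match $\int_0^t\langle(\partial_t\phi+\mL\phi-V_1D\phi)(s),\rho(s)\rangle\,ds + \int_0^t\int D\phi(s,y)\,V_2(s,dy)\,ds$.

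The main obstacle I anticipate is bookkeeping of integrability near $s=0$ and near $\tau=s$ simultaneously: the factor $(\tau-s)^{-1/\alow}$ (or $(\tau-s)^{-(1+\sigma)/\alow}$ when a derivative lands on $\mL^\ast K^\ast$) must be paired against $\|\rho(s)\|_{C^{-n}_b}$, which is only $\lesssim s^{-\gamma/\alow}$ by \eqref{eq:cnegexplode}, and one has to check that the product $s^{-\gamma/\alow}(\tau-s)^{-(1+\sigma)/\alow}$ is integrable over the relevant simplex --- this works because $\alow>1$ and one is free to choose $\gamma$ slightly above the needed threshold, exactly as in the estimate of $I_2$ in the proof of Lemma~\ref{lem:l1cneg}. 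The other delicate point, already flagged in the paper, is the precise identification of the admissible test function class: the requirements ``$\partial_t\phi+\mL\phi\in B([0,T],C^{n-1}_b(\mR^d))$'' and ``$D^n\phi$ uniformly continuous'' are exactly what is needed to (a) make $\langle(\partial_t\phi+\mL\phi)(s),\rho(s)\rangle$ a well-defined pairing between $C^{n-1}_b$ and $C^{-n+1}_b$ (noting $C^{-n}_b\subset C^{-n+1}_b$ restricts) wait --- more carefully, one pairs $\partial_t\phi+\mL\phi-V_1D\phi\in C^{n-1}_b$ with $\rho(s)$ viewed via its $C^{-n}_b$ norm after noting $C^{n-1}_b\hookrightarrow C^n_b$ is false, so in fact the pairing is legitimate because $\|\langle\psi,\rho\rangle\|\le\|\psi\|_{C^n_b}\|\rho\|_{C^{-n}_b}$ and $C^{n-1}_b\subset C^n_b$ is also false --- so here I would instead use that the restriction map $C^{-n}_b\to C^{-(n-1)}_b$ is bounded (Section~\ref{sec:Holder}) to pair $C^{n-1}_b$ with $C^{-(n-1)}_b$ --- and (b) pass the $h\to0$ limit. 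I would make sure to spell this out so the admissibility of the test functions $w$ from Lemma~\ref{lem:cd} (used in Corollary~\ref{cor:l1cneg}) is transparent.
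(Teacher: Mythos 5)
Your skeleton is the same as the paper's: rewrite the mild formula, express $I_1$ and $I_2$ as integrals of a time derivative, use $(\partial_t-\mL^\ast)K^\ast=0$ plus the transposition $\langle\cdot,\mL^\ast K^\ast_\tau\ast\phi\rangle=\langle\cdot,K^\ast_\tau\ast\mL\phi\rangle$, Fubini/Tonelli the triple integrals, and pass to the $h\to 0^+$ limit with dominated convergence and the strong $C^n_b$-continuity $K^\ast_h\ast\phi(h)\to\phi(0)$ (for which the hypothesis ``$D^n\phi$ uniformly continuous'' is exactly what is needed). But the fix you propose for the key pairing issue at the end is backwards and would not work, and this is precisely the delicate point of the lemma. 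The restriction map of Section~\ref{sec:Holder} goes from $C^{-\gamma_1}_b$ to $C^{-\gamma_2}_b$ for $\gamma_1\le\gamma_2$ (duals of \emph{larger} H\"older spaces restrict to duals of \emph{smaller} ones); there is no canonical bounded map $C^{-n}_b\to C^{-(n-1)}_b$ --- that would be an extension of a functional to a strictly larger space, which requires a choice. So you cannot ``restrict $\rho(s)$'' to pair it with $(\partial_t\phi+\mL\phi-V_1D\phi)(s)\in C^{n-1}_b(\mR^d)$.

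What actually makes the pairing legitimate is the improved interior regularity of the mild solution from Lemma~\ref{lem:l1cneg}: for $s\in(0,T]$ one has $\rho(s)\in C^{\gamma-n}_b(\mR^d)$ for every $\gamma\in(0,\alow)$, with $\|\rho(s)\|_{C^{\gamma-n}_b(\mR^d)}\lesssim s^{-\gamma/\alow}$. Since $\alow>1$ you may fix $\gamma\in(1,\alow)$; then $C^{n-1}_b(\mR^d)\subset C^{n-\gamma}_b(\mR^d)$, so $\rho(s)$ (and $\rho(s)V_1(s)$) acts on $(\partial_t\phi+\mL\phi-V_1D\phi)(s)$, and the $s^{-\gamma/\alow}$ singularity is integrable. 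This is exactly what the paper invokes when it records, in the treatment of $J_1$, that $\rho(s)V_1(s)\in C^{-n+1}_b(\mR^d)$ ``by the mild solution estimates of Lemma~\ref{lem:l1cneg}''. You actually cite \eqref{eq:cnegexplode} for the $(\tau-s)^{-1/\alow}$-vs-$s^{-\gamma/\alow}$ bookkeeping but do not recognize that it is this same estimate that gives the pairing its meaning. A smaller point: the cut-off step you carry over from Lemma~\ref{lem:mildweak} is superfluous here --- the $C^n_b$--$C^{-n}_b$ dual pairing is already defined without any decay in $x$, unlike the $L^1$--$L^\infty$ integral pairing of that lemma, and the paper accordingly omits it.
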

\begin{proof}
  By using the mild formulation we get
  \begin{align*}
      \langle \rho(t),\phi(t)\rangle - \langle \rho_0,\phi(0)\rangle &= \langle \rho_0,K_t^\ast \ast \phi(t) - \phi(0)\rangle + \int_0^t \langle \rho(s),V_1(s) D_y K_{t-s}^\ast \ast \phi(t)\rangle\, ds\\
      &+ \int_0^t \int_{\mR^d} \int_{\mR^d}D_yK(t-s,x-y)\, V_2(s,dy) \phi(t,x)\, dx\, ds \\
      &= \langle \rho_0,K_t^\ast \ast \phi(t) - \phi(0)\rangle + \int_0^t \langle \rho(s)V_1(s) + V_2(s), D_y K_{t-s}^\ast \ast \phi(t)\rangle\, ds =: I_1 + I_2.
  \end{align*}
  Then,
  \begin{align}\label{eq:i1start}
      I_1 = \lim\limits_{s\to 0^+} \langle \rho_0,K_t^\ast \ast \phi(t) - K_s^\ast\ast \phi(s)\rangle = \lim\limits_{s\to 0^+}  \int_s^t \partial_t \langle \rho_0, K_\tau^\ast \ast \phi(\tau)\rangle\, d\tau.
  \end{align}
  We will now justify the above equalities. In the first of them we have used that $\phi\in C([0,T],C^{n}_b(\mR^d))$ and that $\phi(0)$ and its spatial derivatives of order up to $n$ are bounded and
 uniformly continuous in $x$, so that $K_s^\ast\ast\phi(s) \to \phi(0)$ in $C^n_b(\mR^d)$. 
  To see that the second equality in \eqref{eq:i1start} is valid, we will show that the time derivative appearing there exists and is bounded and continuous. For $\tau \in (s,t)$ and $h\neq 0$ small we have
\begin{align}\label{eq:diffquot}
\frac{K_{\tau+h}^\ast\ast\phi(\tau+h) - K_\tau^\ast\ast\phi(\tau)}{h} =  \frac{K_{\tau}^\ast\ast(\phi(\tau+h) - \phi(\tau))}{h} +  \frac{(K_{\tau+h}^\ast-K_\tau^\ast)\ast\phi(\tau+h)}{h}.
  \end{align}
  By \ref{eq:K} and the fact that $\partial_t\phi\in C_b((0,T),C_b(\mR^d))$
  the first term in \eqref{eq:diffquot} converges to $K_\tau^\ast\ast \partial_t\phi (\tau)$ in $C^n_b(\mR^d)$ as $h\to 0$.
  Indeed, we have $|(\phi(\tau+h,x)-\phi(\tau,x))/h - \partial_t\phi(\tau,x)| = |\partial_t\phi (\tau_x,x) - \partial_t\phi(\tau,x)| \leq \omega(h)$, so $\|(\phi(\tau+h)-\phi(\tau))/h - \partial_t\phi(\tau)\|_{\infty} \to 0$ as $h\to 0$. Therefore \begin{align}\label{eq:cbl1}
\bigg\|\frac{K_{\tau}^\ast\ast(\phi(\tau+h) - \phi(\tau))}{h}-K_\tau^\ast\ast \partial_t\phi (\tau)\bigg\|_{C^n_b(\mR^d)} \leq \Big\|\frac{\phi(\tau+h)-\phi(\tau)}{h} - \partial_t\phi(\tau)\Big\|_{\infty} \sum\limits_{k=0}^n \|D^k K^\ast_\tau\|_{L^1(\mR^d)} \mathop{\longrightarrow}\limits_{h\to 0} 0.
  \end{align}
  By \ref{eq:K}, the fact that $(\partial_t - \mL^\ast)K_t^\ast = 0$, and the fact that
 $\phi\in C([0,T],C_b(\mR^d))$ 
 we find that the second term of \eqref{eq:diffquot} converges to $\partial_t K_\tau^\ast\ast \phi(\tau)$ in $C^n_b(\mR^d)$ as $h\to 0^+$. 
In order to see that we split: \begin{align}\label{eq:split2}
     \frac{(K_{\tau+h}^\ast-K_\tau^\ast)\ast\phi(\tau+h)}{h} = \frac{(K_{\tau+h}^\ast-K_\tau^\ast)}{h}\ast (\phi(\tau+h) - \phi(\tau)) + \frac{(K_{\tau+h}^\ast-K_\tau^\ast)\ast\phi(\tau)}{h}.
 \end{align}
 and since 
$\frac{(K_{\tau+h}^\ast-K_\tau^\ast)}{h} - \partial_\tau K^\ast_\tau =  \int_0^1 \partial_\tau (K^*_{\tau+\lambda h} - K^*_\tau)\, d\lambda=   \int_0^1 \int_0^1 \partial_\tau^2 K^\ast_{\tau + \lambda\mu h} \,\lambda h\,d\mu\, d\lambda$,
 it follows that for $0\leq k\leq n$,
 \begin{align*}
\bigg\|D^k\frac{(K_{\tau+h}^\ast-K_\tau^\ast)}{h} - D^k\partial_\tau K^\ast_\tau\bigg\|_{L^1(\mR^d)} 
\leq 
h\int_0^1\int_0^{1} \|D^k\mL^2 K^\ast_{\tau + \lambda\mu h}\|_{L^1(\mR^d)}\, d\mu\, d\lambda,
 \end{align*}
 which goes to 0 as $h\to 0$. Thus we see that the first term on the right-hand side of \eqref{eq:split2} vanishes and the second term converges to $\partial_t K_\tau^\ast\ast \phi(\tau)$ by an estimate similar to \eqref{eq:cbl1}.
 This proves that $\partial_t \langle \rho_0, K_\tau^\ast \ast \phi(\tau)\rangle = \langle \rho_0,  \partial_t(K_\tau^\ast \ast \phi(\tau))\rangle$ exists and is continuous and bounded, therefore \eqref{eq:i1start} is valid. 
 
 Using $(\partial_t - \mL^\ast)K_\tau^\ast = 0$, $K_\tau^\ast\in C_0^2(\mR^d)$, $K_\tau^\ast,\mL^\ast K_\tau^\ast \in L^1(\mR^d)$, and 
$\phi(\tau)\in C^2_b(\mR^d)$ we get $\partial_t K_\tau^\ast\ast \phi(\tau) = K_\tau^\ast\ast \mL \phi(\tau)$.  
Therefore, for every $\tau\in (0,T]$
  we find that
  \begin{align*}
      I_1 = \lim\limits_{s\to 0^+}  \int_s^t  \langle \rho_0, K_\tau^\ast\ast (\partial_t\phi (\tau)+  \mL \phi(\tau))\rangle\, d\tau.
  \end{align*}
  Furthermore, since
 $\partial_t\phi + \mL\phi \in B([0,T],C^{n-1}_b(\mR^d))$,
    by \ref{eq:K} we get
  \begin{align*}
      \|K_{\tau}^\ast\ast (\partial_t + \mL)\phi(\tau)\|_{C^n_b(\mR^d)} \leq C\tau^{-\frac 1{\alow}} \|(\partial_t + \mL)\phi(\tau)\|_{C^{n-1}_b(\mR^d)},\quad \tau\in (0,T).
  \end{align*}
  Therefore, by the dominated convergence theorem,
  \begin{align*}
      I_1  = \int_0^t \langle \rho_0, K_\tau^\ast\ast (\partial_t\phi (\tau)+  \mL \phi(\tau))\rangle\, d\tau.
  \end{align*}
  In $I_2$ we add and subtract a term with $D_y\phi(s)$:
  \begin{align*}
      I_2 &= \int_0^t \langle \rho(s)V_1(s) + V_2(s), D_y K_{t-s}^\ast \ast \phi(t) - D_y\phi(s)\rangle\, ds\\ &+ \int_0^t \langle \rho(s)V_1(s)+V_2(s), D_y\phi(s)\rangle\, ds =: J_1 + J_2.
  \end{align*}
  We have
  \begin{align}
      J_1 &= \int_0^t \lim\limits_{h\to 0^+}\langle \rho(s)V_1(s)+V_2(s), D_y K_{t-s}^\ast \ast \phi(t) - D_y K_{h}^\ast \ast \phi(s+h)\rangle\, ds\nonumber \\
      &= \int_0^t \lim\limits_{h\to 0^+}\int_{s+h}^t \partial_\tau \langle \rho(s)V_1(s)+V_2(s), D_y K_{\tau-s}^\ast \ast \phi(\tau)\rangle\,d\tau\, ds.\label{eq:dtau}
  \end{align}
  The first equality is true because for every $s\in (0,T]$ we have $V_2(s)\in C^{-0}_b(\mR^d)$ and $\rho(s)V_1(s)\in C^{-n+1}_b(\mR^d)$ (by the mild solution estimates of Lemma~\ref{lem:l1cneg}). Therefore, since $\phi\in C([0,T],C^n_b(\mR^d))$ with $D^n\phi$ uniformly continuous, we get that $K_h^\ast\ast D_y\phi(s+h) \to D_y\phi(s)$ in $C^{n-1}_b(\mR^d)$. The second equality is justified by showing that the derivative exists and is continuous and bounded, similar to what was done in $I_1$: for $\tau\in (s,t)$ and $h$ small we have
  \begin{align*}
      \frac{D_y K_{\tau+h-s}^\ast \ast \phi(\tau+h) - D_y K_{\tau-s}^\ast \ast \phi(\tau)}{h} = \frac{(D_y K_{\tau+h-s}^\ast - D_y K_{\tau-s}^\ast) \ast \phi(\tau+h)}{h} + \frac{D_y K_{\tau-s}^\ast \ast (\phi(\tau+h) -  \phi(\tau))}{h}.
  \end{align*}
  For the same reasons as before this expression converges in $C^{n-1}_b(\mR^d)$ to
     $\partial_\tau D_yK_{\tau-s}^\ast\ast \phi(\tau) + D_yK_{\tau - s}^\ast\ast \partial_{\tau}\phi(\tau)\in C([s,t],C^{n-1}_b(\mR^d))$, therefore the $\tau$-derivative in \eqref{eq:dtau} exists and is continuous and bounded. Furthermore, by the dominated convergence theorem, \ref{eq:K}, $(\partial_t - \mL^\ast)K_\tau^\ast = 0$,
     and $\partial_t\phi + \mL\phi \in B([0,T],C^{n-1}_b(\mR^d))$,
     we get that
     \begin{align*}
         J_1 &= \int_0^t \int_{s}^t  \langle \rho(s)V_1(s) + V_2(s), \partial_\tau D_yK_{\tau-s}^\ast\ast \phi(\tau) + D_yK_{\tau - s}^\ast\ast \partial_{\tau}\phi(\tau)\rangle\,d\tau\, ds\\
         &=\int_0^t \int_{s}^t  \langle \rho(s)V_1(s) + V_2(s), D_yK_{\tau-s}^\ast\ast( \mL \phi(\tau) + \partial_{\tau}\phi(\tau))\rangle\,d\tau\, ds\\
         &=\int_0^t \int_0^\tau \langle \rho(s)V_1(s) + V_2(s), D_yK_{\tau-s}^\ast\ast( \mL \phi(\tau) + \partial_{\tau}\phi(\tau))\rangle\, ds \,d\tau.
     \end{align*}
     We finish the proof by adding $I_1,J_1$, and $J_2$ and by again using the definition of mild solution.
\end{proof}

\section{Stability of the MFG system with respect to the initial time}\label{sec:leftder}

Let $h>0$ and let $(u_h,m_h)$ be the solution to the MFG system with initial measure $m_0$ on the time interval $(t_0-h,T)$. We also denote by $(u,m)$ the solution on $(t_0,T)$. We will use the following lemma on stability of the MFG system under the changes of the initial time.

\begin{lemma}\label{lem:timestab}
Let $\sigma\in (0,\alow)$ and $\eps\in(0,\sigma)$. There exist a constant $C>0$ and a modulus of continuity $\omega$ independent of $m_0$, such that for every $t_0\in [0,T)$ and $h\in (0,1)$, there is a constant $C>0$ independent of $h$ such that
\begin{align}
    &\sup\limits_{t\in [t_0,T]} \|u_h(t,\cdot)-u(t,\cdot)\|_{C^{3+\sigma}_b(\mR^d)} +\sup\limits_{t\in [t_0,T]} d_0(m_h(t),m(t)) \leq Ch^{\frac{1}{2}},\label{eq:ts1}\\
    &\sup\limits_{t\in [t_0-h,t_0]} \|u_h(t,\cdot)-u(t_0,\cdot)\|_{C^{3+\sigma-\eps}_b(\mR^d)} + \sup\limits_{t\in [t_0-h,t_0]} d_0(m_h(t),m_0) \leq \omega(h).\label{eq:ts2}
\end{align}
\end{lemma}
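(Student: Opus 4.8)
The idea is to treat both estimates via the stability results already available for single equations and for the full MFG system, being careful about the shift of time origin. The two inequalities \eqref{eq:ts1} and \eqref{eq:ts2} should be proved in sequence: first the H\"older-$\frac12$ bound on $[t_0,T]$, then the modulus-of-continuity bound on the small interval $[t_0-h,t_0]$, because the latter uses the former (through the initial data $u_h(t_0,\cdot)$ and $m_h(t_0)$ on the interval $[t_0,T]$).

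\emph{Step 1: estimate \eqref{eq:ts1}.} On $[t_0,T]$ both pairs $(u_h,m_h)$ and $(u,m)$ solve the MFG system with the \emph{same} terminal data $G$, the \emph{same} coupling $F$, and the same operator; they differ only in that $(u_h,m_h)|_{[t_0,T]}$ has initial measure $m_h(t_0)$ rather than $m_0$ at time $t_0$. By Theorem~\ref{th:Lip} (which applies because the hypotheses of the present lemma should include those of Theorem~\ref{th:Lip}, as is implicit from where this lemma is invoked) we get
\begin{align*}
    \sup\limits_{t\in [t_0,T]}\Big(d_0(m_h(t),m(t)) + \|u_h(t,\cdot)-u(t,\cdot)\|_{C^{3+\sigma}_b(\mR^d)}\Big) \leq C\, d_0(m_h(t_0),m_0).
\end{align*}
It therefore remains to bound $d_0(m_h(t_0),m_0)$. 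But $m_h\in C^{1/2}([t_0-h,T],\mP)$ with a H\"older constant depending only on $\|D_pH(\cdot,Du_h)\|_\infty$, which is controlled uniformly in $h$ and $m_0$ by the a priori bounds of Theorem~\ref{th:MFGwp} (the constants there do not depend on the initial time or the initial measure). Since $m_h(t_0-h)=m_0$, we obtain $d_0(m_h(t_0),m_0)\leq C|t_0-(t_0-h)|^{1/2}=Ch^{1/2}$, and \eqref{eq:ts1} follows.

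\emph{Step 2: estimate \eqref{eq:ts2}.} On the short interval $[t_0-h,t_0]$ we compare $u_h(t,\cdot)$ with the constant-in-time function $u(t_0,\cdot)$ and $m_h(t)$ with $m_0$. For the measure part, $m_h$ is $\frac12$-H\"older uniformly (as above) and $m_h(t_0-h)=m_0$, so $\sup_{t\in[t_0-h,t_0]}d_0(m_h(t),m_0)\leq Ch^{1/2}$. For the $u$-part, write $u_h(t,\cdot)-u(t_0,\cdot) = (u_h(t,\cdot)-u_h(t_0,\cdot)) + (u_h(t_0,\cdot)-u(t_0,\cdot))$. The second term is bounded by $Ch^{1/2}$ by \eqref{eq:ts1}. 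For the first term, $u_h$ solves a viscous Hamilton--Jacobi equation on $[t_0-h,T]$ with source $f_h(t,x)=F(x,m_h(t))$, and by Theorem~\ref{th:HJ} (the modulus-of-continuity estimate \eqref{eq:HJmodcont}) we have
\begin{align*}
    \|u_h(t,\cdot)-u_h(t_0,\cdot)\|_{C^{3+\sigma-\eps}_b(\mR^d)} \leq C\,\omega(|t-t_0|)\leq C\,\omega(h),\quad t\in[t_0-h,t_0],
\end{align*}
with $C$ and $\omega$ independent of $h$ and $m_0$ thanks to the uniform a priori bounds. Combining the two contributions (and absorbing $h^{1/2}$ into a modulus of continuity) gives \eqref{eq:ts2}.

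\emph{Main obstacle.} The delicate point is ensuring that \emph{all} constants and moduli of continuity are genuinely uniform in $h\in(0,1)$, $t_0\in[0,T)$, and $m_0\in\mP$. This requires that the a priori estimates from Theorems~\ref{th:MFGwp}, \ref{th:HJ}, and \ref{th:Lip} are invoked on intervals of the form $[t_0-h,T]$, whose length stays bounded (by $T+1$) but whose left endpoint varies; by Remark~\ref{rem:t0} the constants for equations on $(t_1,T)$ with fixed $T$ do not depend on $t_1$, which is exactly what is needed. A secondary technical wrinkle is the transfer of the $C^{3+\sigma-\eps}$-modulus (rather than the full $C^{3+\sigma}$-norm) across the short interval: one must use \eqref{eq:HJmodcont} rather than the bound \eqref{eq:ubounds}, since continuity of the top-order H\"older norm in time fails in general (cf. the discussion in Section~\ref{sec:approx}). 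Everything else is a routine triangle-inequality assembly.
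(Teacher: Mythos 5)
Your proposal is correct and follows essentially the same route as the paper's own proof: Step~1 invokes Theorem~\ref{th:Lip} on $[t_0,T]$ together with the uniform $\frac12$-H\"older bound for $m_h$ coming from Theorem~\ref{th:FP}(b)/Theorem~\ref{th:MFGwp}, and Step~2 uses the same triangle-inequality split $u_h(t,\cdot)-u(t_0,\cdot)=(u_h(t,\cdot)-u_h(t_0,\cdot))+(u_h(t_0,\cdot)-u(t_0,\cdot))$ together with the modulus estimate \eqref{eq:HJmodcont} in the reduced $C^{3+\sigma-\eps}_b$ scale. The only cosmetic difference is that for the measure part of \eqref{eq:ts2} you bound $d_0(m_h(t),m_0)$ directly via $m_h(t_0-h)=m_0$, whereas the paper inserts the intermediate point $m_h(t_0)$ — these are equivalent.
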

\begin{proof}
Note that $(u,m)$ and $(u_h,m_h)$ solve the MFG system on $(t_0,T)$ with the initial measures $m_0$ and $m_h(t_0)$ respectively. Therefore, by using Theorem~\ref{th:Lip} we get that
\begin{align*}
    \sup\limits_{t\in [t_0,T]} \|u_h(t,\cdot)-u(t,\cdot)\|_{C^{3+\sigma}_b(\mR^d)} + \sup\limits_{t\in [t_0,T]}d_0(m_h(t),m(t))\leq Cd_0(m_0,m_h(t_0)).
\end{align*}
Furthermore, by Theorem~\ref{th:FP} (b) we get that 
\begin{align*}
    d_0(m_0,m_h(t_0)) \leq c_0(1+ \|D_pH(\cdot,u_h,Du_h)\|_{\infty})h^{\frac 1{2}}.
\end{align*}
Since $u_h$ and $Du_h$ are bounded independently of $h$ (see Theorem~\ref{th:MFGwp}), we obtain \eqref{eq:ts1}.

In order to get \eqref{eq:ts2} we first use the triangle inequality. For every $t\in[t_0-h,t_0]$,
\begin{align*}
     \|u_h(t,\cdot)-u(t_0,\cdot)\|_{C^{3+\sigma-\eps}_b(\mR^d)} + d_0(m_h(t),m_0)&\leq \|u_h(t,\cdot)-u_h(t_0,\cdot)\|_{C^{3+\sigma-\eps}_b(\mR^d)} + d_0(m_h(t),m_h(t_0))\\
     &+\|u_h(t_0,\cdot)-u(t_0,\cdot)\|_{C^{3+\sigma-\eps}_b(\mR^d)} + d_0(m_h(t_0),m_0).
\end{align*}
The estimate for the last two terms follows from \eqref{eq:ts1}. By Theorem~\ref{th:FP} (b) we get 
\begin{align*}
    d_0(m_h(t),m_h(t_0)) \leq c h^{\frac 1{2}},
\end{align*}
and by the last part of Theorem~\ref{th:HJ},
\begin{align*}
    \|u_h(t,\cdot)-u_h(t_0,\cdot)\|_{C^{3+\sigma-\eps}_b(\mR^d)} \leq\omega(t_0-t) \leq \omega(h),
\end{align*}
which ends the proof of \eqref{eq:ts2}.
\end{proof}

\bibliographystyle{abbrv}
\bibliography{bib-file}

\begin{thebibliography}{10}

\bibitem{29869}
Density of smooth functions under {H}\"{o}lder metric.
\newblock MathOverflow.
\newblock URL:https://mathoverflow.net/q/29869 (version: 2018-04-12).

\bibitem{MR4214773}
Y.~Achdou, P.~Cardaliaguet, F.~Delarue, A.~Porretta, and F.~Santambrogio.
\newblock {\em Mean field games}, volume 2281 of {\em Lecture Notes in
  Mathematics}.
\newblock Springer, Cham; Centro Internazionale Matematico Estivo (C.I.M.E.),
  Florence, [2020] \copyright 2020.
\newblock Edited by Pierre Cardaliaguet and Alessio Porretta, Fondazione
  CIME/CIME Foundation Subseries.

\bibitem{MR3360395}
F.~Achleitner and C.~Kuehn.
\newblock Traveling waves for a bistable equation with nonlocal diffusion.
\newblock {\em Adv. Differential Equations}, 20(9-10):887--936, 2015.

\bibitem{2021arXiv210503926A}
D.~Ambrose and A.~M\'{e}sz\'{a}ros.
\newblock Well-posedness of mean field games master equations involving
  non-separable local {H}amiltonians.
\newblock {\em Trans. Amer. Math. Soc.}, 376(4):2481--2523, 2023.

\bibitem{MR2072890}
D.~Applebaum.
\newblock {\em L\'{e}vy processes and stochastic calculus}, volume~93 of {\em
  Cambridge Studies in Advanced Mathematics}.
\newblock Cambridge University Press, Cambridge, 2004.

\bibitem{2024arXiv240305426B}
M.~{Bansil} and A.~R. {M{\'e}sz{\'a}ros}.
\newblock {Hidden monotonicity and canonical transformations for mean field
  games and master equations}.
\newblock {\em ArXiv:2403.05426}, 2024.

\bibitem{MR4158808}
C.~Benazzoli, L.~Campi, and L.~Di~Persio.
\newblock Mean field games with controlled jump-diffusion dynamics: existence
  results and an illiquid interbank market model.
\newblock {\em Stochastic Process. Appl.}, 130(11):6927--6964, 2020.

\bibitem{MR4275225}
C.~Bertucci.
\newblock Monotone solutions for mean field games master equations: finite
  state space and optimal stopping.
\newblock {\em J. \'{E}c. Polytech. Math.}, 8:1099--1132, 2021.

\bibitem{MR0107124}
S.~Bochner.
\newblock {\em Lectures on {F}ourier integrals. {W}ith an author's supplement
  on monotonic functions, {S}tieltjes integrals, and harmonic analysis}.
\newblock Annals of Mathematics Studies, No. 42. Princeton University Press,
  Princeton, N.J., 1959.
\newblock Translated by Morris Tenenbaum and Harry Pollard.

\bibitem{MR2267655}
V.~I. Bogachev.
\newblock {\em Measure theory. {V}ol. {I}, {II}}.
\newblock Springer-Verlag, Berlin, 2007.

\bibitem{MR4135293}
P.~Cardaliaguet, M.~Cirant, and A.~Porretta.
\newblock Remarks on {N}ash equilibria in mean field game models with a major
  player.
\newblock {\em Proc. Amer. Math. Soc.}, 148(10):4241--4255, 2020.

\bibitem{2020arXiv200110406C}
P.~Cardaliaguet, M.~Cirant, and A.~Porretta.
\newblock Splitting methods and short time existence for the master equations
  in mean field games.
\newblock {\em J. Eur. Math. Soc. (JEMS)}, 25(5):1823--1918, 2023.

\bibitem{MR3967062}
P.~Cardaliaguet, F.~Delarue, J.-M. Lasry, and P.-L. Lions.
\newblock {\em The master equation and the convergence problem in mean field
  games}, volume 201 of {\em Annals of Mathematics Studies}.
\newblock Princeton University Press, Princeton, NJ, 2019.

\bibitem{MR4451309}
P.~Cardaliaguet and P.~Souganidis.
\newblock Monotone solutions of the master equation for mean field games with
  idiosyncratic noise.
\newblock {\em SIAM J. Math. Anal.}, 54(4):4198--4237, 2022.

\bibitem{MR3091726}
R.~Carmona and F.~Delarue.
\newblock Mean field forward-backward stochastic differential equations.
\newblock {\em Electron. Commun. Probab.}, 18:no. 68, 15, 2013.

\bibitem{MR3752669}
R.~Carmona and F.~Delarue.
\newblock {\em Probabilistic theory of mean field games with applications.
  {I}}, volume~83 of {\em Probability Theory and Stochastic Modelling}.
\newblock Springer, Cham, 2018.
\newblock Mean field FBSDEs, control, and games.

\bibitem{MR3753660}
R.~Carmona and F.~Delarue.
\newblock {\em Probabilistic theory of mean field games with applications.
  {II}}, volume~84 of {\em Probability Theory and Stochastic Modelling}.
\newblock Springer, Cham, 2018.
\newblock Mean field games with common noise and master equations.

\bibitem{2022arXiv221201079C}
T.~{Cavallazzi}.
\newblock {Quantitative weak propagation of chaos for stable-driven
  McKean-Vlasov SDEs}.
\newblock {\em ArXiv:2212.01079}, 2022.
\newblock To appear in \textit{Ann. Inst. H. Poincar\'{e} Anal. Non
  Lin\'{e}aire}.

\bibitem{MR4013871}
A.~Cecchin and G.~Pelino.
\newblock Convergence, fluctuations and large deviations for finite state mean
  field games via the master equation.
\newblock {\em Stochastic Process. Appl.}, 129(11):4510--4555, 2019.

\bibitem{MR3912635}
A.~Cesaroni, M.~Cirant, S.~Dipierro, M.~Novaga, and E.~Valdinoci.
\newblock On stationary fractional mean field games.
\newblock {\em J. Math. Pures Appl. (9)}, 122:1--22, 2019.

\bibitem{2014arXiv1411.3009C}
J.-F. Chassagneux, D.~Crisan, and F.~Delarue.
\newblock A probabilistic approach to classical solutions of the master
  equation for large population equilibria.
\newblock {\em Mem. Amer. Math. Soc.}, 280(1379):v+123, 2022.

\bibitem{MR4056997}
P.-E. Chaudru~de Raynal, S.~Menozzi, and E.~Priola.
\newblock Schauder estimates for drifted fractional operators in the
  supercritical case.
\newblock {\em J. Funct. Anal.}, 278(8):108425, 57, 2020.

\bibitem{2021arXiv210406985C}
I.~Chowdhury, E.~R. Jakobsen, and M.~Krupski.
\newblock On fully nonlinear parabolic mean field games with nonlocal and local
  diffusions.
\newblock {\em SIAM J. Math. Anal.}, 56(5):6302--6336, 2024.

\bibitem{MR3934106}
M.~Cirant and A.~Goffi.
\newblock On the existence and uniqueness of solutions to time-dependent
  fractional {MFG}.
\newblock {\em SIAM J. Math. Anal.}, 51(2):913--954, 2019.

\bibitem{MR4214776}
F.~Delarue.
\newblock Master equation for finite state mean field games with additive
  common noise.
\newblock In {\em Mean field games}, volume 2281 of {\em Lecture Notes in
  Math.}, pages 203--248. Springer, Cham, [2020] \copyright 2020.

\bibitem{MR4079435}
F.~Delarue, D.~Lacker, and K.~Ramanan.
\newblock From the master equation to mean field game limit theory: large
  deviations and concentration of measure.
\newblock {\em Ann. Probab.}, 48(1):211--263, 2020.

\bibitem{MR920166}
R.~A. DeVore and V.~A. Popov.
\newblock Interpolation of {B}esov spaces.
\newblock {\em Trans. Amer. Math. Soc.}, 305(1):397--414, 1988.

\bibitem{2022arXiv220315583D}
L.~{Di Persio}, M.~{Garbelli}, and M.~{Ricciardi}.
\newblock {The master equation in a bounded domain with absorption}.
\newblock {\em ArXiv:2203.15583}, 2022.

\bibitem{MR0453964}
J.~Diestel and J.~J. Uhl, Jr.
\newblock {\em Vector measures}.
\newblock Mathematical Surveys, No. 15. American Mathematical Society,
  Providence, R.I., 1977.
\newblock With a foreword by B. J. Pettis.

\bibitem{MR3803717}
H.~Dong, T.~Jin, and H.~Zhang.
\newblock Dini and {S}chauder estimates for nonlocal fully nonlinear parabolic
  equations with drifts.
\newblock {\em Anal. PDE}, 11(6):1487--1534, 2018.

\bibitem{MR2019032}
J.~Droniou, T.~Gallouet, and J.~Vovelle.
\newblock Global solution and smoothing effect for a non-local regularization
  of a hyperbolic equation.
\newblock {\em J. Evol. Equ.}, 3(3):499--521, 2003.
\newblock Dedicated to Philippe B\'{e}nilan.

\bibitem{MR1932358}
R.~M. Dudley.
\newblock {\em Real analysis and probability}, volume~74 of {\em Cambridge
  Studies in Advanced Mathematics}.
\newblock Cambridge University Press, Cambridge, 2002.
\newblock Revised reprint of the 1989 original.

\bibitem{MR1009162}
N.~Dunford and J.~T. Schwartz.
\newblock {\em Linear operators. {P}art {I}}.
\newblock Wiley Classics Library. John Wiley \& Sons, Inc., New York, 1988.
\newblock General theory, With the assistance of William G. Bade and Robert G.
  Bartle, Reprint of the 1958 original, A Wiley-Interscience Publication.

\bibitem{MR4309434}
O.~Ersland and E.~R. Jakobsen.
\newblock On fractional and nonlocal parabolic mean field games in the whole
  space.
\newblock {\em J. Differential Equations}, 301:428--470, 2021.

\bibitem{MR4191529}
N.~Frikha, V.~Konakov, and S.~Menozzi.
\newblock Well-posedness of some non-linear stable driven {SDE}s.
\newblock {\em Discrete Contin. Dyn. Syst.}, 41(2):849--898, 2021.

\bibitem{MR4509653}
W.~Gangbo and A.~R. M\'{e}sz\'{a}ros.
\newblock Global well-posedness of master equations for deterministic
  displacement convex potential mean field games.
\newblock {\em Comm. Pure Appl. Math.}, 75(12):2685--2801, 2022.

\bibitem{2021arXiv210112362G}
W.~Gangbo, A.~R. M\'{e}sz\'{a}ros, C.~Mou, and J.~Zhang.
\newblock Mean field games master equations with nonseparable {H}amiltonians
  and displacement monotonicity.
\newblock {\em Ann. Probab.}, 50(6):2178--2217, 2022.

\bibitem{MR0473443}
D.~Gilbarg and N.~S. Trudinger.
\newblock {\em Elliptic partial differential equations of second order}.
\newblock Grundlehren der Mathematischen Wissenschaften, Vol. 224.
  Springer-Verlag, Berlin-New York, 1977.

\bibitem{MR3559742}
D.~A. Gomes, E.~A. Pimentel, and V.~Voskanyan.
\newblock {\em Regularity theory for mean-field game systems}.
\newblock SpringerBriefs in Mathematics. Springer, [Cham], 2016.

\bibitem{MR4223351}
P.~J. Graber, V.~Ignazio, and A.~Neufeld.
\newblock Nonlocal {B}ertrand and {C}ournot mean field games with general
  nonlinear demand schedule.
\newblock {\em J. Math. Pures Appl. (9)}, 148:150--198, 2021.

\bibitem{2021arXiv211107020J}
P.~J. Graber and R.~Sircar.
\newblock Master equation for {C}ournot mean field games of control with
  absorption.
\newblock {\em J. Differential Equations}, 343:816--909, 2023.

\bibitem{MR4308627}
T.~Grzywny and K.~Szczypkowski.
\newblock Estimates of heat kernels of non-symmetric {L}\'{e}vy processes.
\newblock {\em Forum Math.}, 33(5):1207--1236, 2021.

\bibitem{MR2734454}
H.~Hanche-Olsen and H.~Holden.
\newblock The {K}olmogorov-{R}iesz compactness theorem.
\newblock {\em Expo. Math.}, 28(4):385--394, 2010.

\bibitem{huang2006large}
M.~Huang, R.~P. Malham{\'e}, and P.~E. Caines.
\newblock Large population stochastic dynamic games: closed-loop
  {M}c{K}ean--{V}lasov systems and the {N}ash certainty equivalence principle.
\newblock {\em Communications in Information \& Systems}, 6(3):221--252, 2006.

\bibitem{MR3617205}
T.~Hyt\"{o}nen, J.~van Neerven, M.~Veraar, and L.~Weis.
\newblock {\em Analysis in {B}anach spaces. {V}ol. {I}. {M}artingales and
  {L}ittlewood-{P}aley theory}, volume~63 of {\em Ergebnisse der Mathematik und
  ihrer Grenzgebiete. 3. Folge. A Series of Modern Surveys in Mathematics
  [Results in Mathematics and Related Areas. 3rd Series. A Series of Modern
  Surveys in Mathematics]}.
\newblock Springer, Cham, 2016.

\bibitem{MR2121115}
C.~Imbert.
\newblock A non-local regularization of first order {H}amilton-{J}acobi
  equations.
\newblock {\em J. Differential Equations}, 211(1):218--246, 2005.

\bibitem{MR2129093}
E.~R. Jakobsen and K.~H. Karlsen.
\newblock Continuous dependence estimates for viscosity solutions of
  integro-{PDE}s.
\newblock {\em J. Differential Equations}, 212(2):278--318, 2005.

\bibitem{2024arXiv240303884J}
E.~R. {Jakobsen} and A.~{Rutkowski}.
\newblock {Towards a Schauder theory for fractional viscous Hamilton--Jacobi
  equations}.
\newblock {\em Pure Appl. Funct. Anal.}, 9(5):1163--1173, 2024.

\bibitem{MR1321597}
A.~S. Kechris.
\newblock {\em Classical descriptive set theory}, volume 156 of {\em Graduate
  Texts in Mathematics}.
\newblock Springer-Verlag, New York, 1995.

\bibitem{MR0370454}
J.~L. Kelley.
\newblock {\em General topology}.
\newblock Graduate Texts in Mathematics, No. 27. Springer-Verlag, New
  York-Berlin, 1975.
\newblock Reprint of the 1955 edition [Van Nostrand, Toronto, Ont.].

\bibitem{MR3587832}
D.~Khoshnevisan and R.~Schilling.
\newblock {\em From {L}\'{e}vy-type processes to parabolic {SPDE}s}.
\newblock Advanced Courses in Mathematics. CRM Barcelona.
  Birkh\"{a}user/Springer, Cham, 2016.
\newblock Edited by Llu\'{\i}s Quer-Sardanyons and Frederic Utzet.

\bibitem{MR0241822}
O.~A. Lady\v{z}enskaja, V.~A. Solonnikov, and N.~N. Uralceva.
\newblock {\em Linear and quasilinear equations of parabolic type}.
\newblock Translations of Mathematical Monographs, Vol. 23. American
  Mathematical Society, Providence, R.I., 1968.
\newblock Translated from the Russian by S. Smith.

\bibitem{MR2295621}
J.-M. Lasry and P.-L. Lions.
\newblock Mean field games.
\newblock {\em Jpn. J. Math.}, 2(1):229--260, 2007.

\bibitem{MR3851543}
J.-M. Lasry and P.-L. Lions.
\newblock Mean-field games with a major player.
\newblock {\em C. R. Math. Acad. Sci. Paris}, 356(8):886--890, 2018.

\bibitem{MR1465184}
G.~M. Lieberman.
\newblock {\em Second order parabolic differential equations}.
\newblock World Scientific Publishing Co., Inc., River Edge, NJ, 1996.

\bibitem{Lions}
P.-L. Lions.
\newblock {\em Courses in the mean field games at {C}oll\`ege de France}.
\newblock 2006--2012.

\bibitem{MR3753604}
A.~Lunardi.
\newblock {\em Interpolation theory}, volume~16 of {\em Appunti. Scuola Normale
  Superiore di Pisa (Nuova Serie) [Lecture Notes. Scuola Normale Superiore di
  Pisa (New Series)]}.
\newblock Edizioni della Normale, Pisa, 2018.
\newblock Third edition [of MR2523200].

\bibitem{MR4686386}
A.~R. M\'{e}sz\'{a}ros and C.~Mou.
\newblock Mean field games systems under displacement monotonicity.
\newblock {\em SIAM J. Math. Anal.}, 56(1):529--553, 2024.

\bibitem{MR1246036}
R.~Mikulevi\v{c}ius and H.~Pragarauskas.
\newblock On the {C}auchy problem for certain integro-differential operators in
  {S}obolev and {H}\"{o}lder spaces.
\newblock {\em Liet. Mat. Rink.}, 32(2):299--331, 1992.

\bibitem{2022arXiv220110762M}
C.~{Mou} and J.~{Zhang}.
\newblock {Mean Field Game Master Equations with Anti-monotonicity Conditions}.
\newblock {\em J. Eur. Math. Soc. (JEMS) (online first)}, 2024.

\bibitem{MR3967045}
V.~Pata.
\newblock {\em Fixed point theorems and applications}, volume 116 of {\em
  Unitext}.
\newblock Springer, Cham, 2019.
\newblock La Matematica per il 3+2.

\bibitem{MR710486}
A.~Pazy.
\newblock {\em Semigroups of linear operators and applications to partial
  differential equations}, volume~44 of {\em Applied Mathematical Sciences}.
\newblock Springer-Verlag, New York, 1983.

\bibitem{MR2945756}
E.~Priola.
\newblock Pathwise uniqueness for singular {SDE}s driven by stable processes.
\newblock {\em Osaka J. Math.}, 49(2):421--447, 2012.

\bibitem{MR4420941}
M.~Ricciardi.
\newblock The {M}aster {E}quation in a bounded domain with {N}eumann
  conditions.
\newblock {\em Comm. Partial Differential Equations}, 47(5):912--947, 2022.

\bibitem{2022arXiv220307882R}
M.~Ricciardi.
\newblock The convergence problem in mean field games with {N}eumann boundary
  conditions.
\newblock {\em SIAM J. Math. Anal.}, 55(4):3316--3343, 2023.

\bibitem{MR0385023}
W.~Rudin.
\newblock {\em Principles of mathematical analysis}.
\newblock International Series in Pure and Applied Mathematics. McGraw-Hill
  Book Co., New York-Auckland-D\"{u}sseldorf, third edition, 1976.

\bibitem{MR3185174}
K.~Sato.
\newblock {\em L\'{e}vy processes and infinitely divisible distributions},
  volume~68 of {\em Cambridge Studies in Advanced Mathematics}.
\newblock Cambridge University Press, Cambridge, 2013.
\newblock Translated from the 1990 Japanese original, Revised edition of the
  1999 English translation.

\bibitem{MR1463946}
M.~V\"{a}th.
\newblock {\em Ideal spaces}, volume 1664 of {\em Lecture Notes in
  Mathematics}.
\newblock Springer-Verlag, Berlin, 1997.

\bibitem{MR2290034}
H.~Ye, J.~Gao, and Y.~Ding.
\newblock A generalized {G}ronwall inequality and its application to a
  fractional differential equation.
\newblock {\em J. Math. Anal. Appl.}, 328(2):1075--1081, 2007.

\bibitem{fracnonsep}
H.~Ye, W.~Zou, and Q.~Liu.
\newblock Strong solution for fractional mean field games with non-separable
  {H}amiltonians.
\newblock {\em Fractal Fract.}, 6(7):362, 2022.

\bibitem{2022arXiv221106514Z}
A.~{Zitridis}.
\newblock {The master equation in a bounded domain under invariance conditions
  for the state space}.
\newblock {\em ArXiv:2211.06514}, 2022.

\end{thebibliography}
\end{document}